\newtheorem{theorem}{Theorem}[section]
\newtheorem{lemma}[theorem]{Lemma}
\newtheorem{corollary}[theorem]{Corollary}% my addition
\theoremstyle{definition}
\newtheorem{definition}[theorem]{Definition}
\theoremstyle{remark}
\numberwithin{equation}{section}
\newcommand{\dom}{\mbox{\rm dom}}
\newcommand{\range}{\mbox{\rm range}}
\newcommand{\diam}{\mbox{\rm diam}}
\newcommand{\rest}{\!\!\upharpoonright\!}
\newcommand{\Aut}{\mbox{\rm Aut}}
\newcommand{\Part}{\mbox{\rm Part}}
\newcommand{\actson}{\curvearrowright}
\newcommand{\Kfin}{\mathcal{K}^{\mathcal{L}}_{\mbox{\rm\scriptsize fin}}}
\newcommand{\Ksep}{\mathcal{K}^{\mathcal{L}}_{\mbox{\rm\scriptsize sep}}}
\begin{document}

% \title[short text for running head]{full title}
\title[AP and Urysohn Structures in Continuous Logic]{The Amalgamation Property and Urysohn Structures in Continuous Logic}

%    Only \author and \address are required; other information is
%    optional.  Remove any unused author tags.

%    author one information
% \author[short version for running head]{name for top of paper}
\author{Su Gao}
\address{School of Mathematical Sciences and LPMC, Nankai University, Tianjin 300071, P.R. China}
%\curraddr{}
\email{sgao@nankai.edu.cn}
\thanks{The first author acknowledges the partial support of his resarch by the National Natural Science Foundation of
China (NSFC) grants 12250710128 and 12271263.
}

%    author two information
\author{Xuanzhi Ren}
\address{School of Mathematical Sciences and LPMC, Nankai University, Tianjin 300071, P.R. China}
%\curraddr{}
\email{renxz@mail.nankai.edu.cn}
\thanks{Research of the second author was partially supported by the Innovation and Entrepreneurship Training Program for
College Students of Tianjin, no. 202210055334.
}

%    \subjclass is required.
\subjclass[2020]{Primary 03C66; Secondary 03C52, 03B60}

\date{\today}

%\dedicatory{}

%    Abstract is required.
\begin{abstract} In this paper we consider the classes of all continuous $\mathcal{L}$-(pre-)structures for a continuous first-order signature $\mathcal{L}$. We characterize the moduli of continuity for which the classes of finite, countable, or all continuous $\mathcal{L}$-(pre-)structures have the amalgamation property. We also characterize when Urysohn continuous $\mathcal{L}$-(pre)-structures exist, establish that certain classes of finite continuous $\mathcal{L}$-structures are countable Fra\"iss\'e classes, prove the coherent EPPA for these classes of finite continuous $\mathcal{L}$-structures, and show that actions by automorphisms on finite $\mathcal{L}$-structures also form a Fra\"iss\'e class. As consequences, we have that the automorphism group of the Urysohn continuous $\mathcal{L}$-structure is a universal Polish group and that Hall's universal locally finite group is contained in the automorphism group of the Urysohn continuous $\mathcal{L}$-structure as a dense subgroup.
\end{abstract}

\maketitle

%\tableofcontents

%    Text of article.

\section{Introduction}
Continuous first-order logic was developed by Ben Yaccov and Usvyatsov in \cite{BU2008} as a variant of the continuous logic studied by Chang and Keisler \cite{CK}. This logic turns out to be very useful in the
study of metric structures. For instance, Ben Yaacov \cite{BY2014} proved that the linear isometry
group of the Gurarij space is universal among all Polish groups by viewing Banach spaces
as continuous first-order structures. Another example is the metric Scott analysis developed
in \cite{BDNT} where the infinitary continuous first-order logic is used.

This paper is a contribution to the continuous model theory as a theory of metric structures. Previous developments of the continuous model theory have been done mostly as a generalization of the classical model theory. In this paper, we take the slightly different point of view of regarding the continuous model theory as a generalization of the rich theory of separable metric spaces.

It is well known that there is a unique complete separable metric space that is both universal (i.e. containing a copy of every separable metric space as a subspace) and ultrahomogeneous (\cite{Urysohn}). This space has been known as the {\em universal Urysohn metric space} and has been denoted as $\mathbb{U}$ to emphasize its canonical nature. Built on results of Kat\v{e}tov (\cite{Katetov}), Uspenskij \cite{Uspenskij} showed that the isometry group of $\mathbb{U}$ is a universal Polish group. Thus one naturally wonders whether for any continuous first-order signature $\mathcal{L}$ there exists a Urysohn continuous $\mathcal{L}$-structure, i.e., a complete separable continuous $\mathcal{L}$-structure which is both universal and ultrahomogeneous. Moreover, if it exists, whether its automorphism group is a universal Polish group.

It turns out that the answer depends on the continuous first-order signature $\mathcal{L}$. In general, a continuous first-order signature $\mathcal{L}$ consists of relation symbols and function symbols of various arities. In this paper, we consider continuous first-order signatures $\mathcal{L}$ with only finitely many relation symbols. Just as in a classical first-order signature we do not formally include the equality relation $=$ but consider it to be a part of the logic, in continuous first-order logic we also tacitly include a symbol $d$ which is always interpreted as a metric on a continuous structure. For each $n$-ary relation symbol $R\in\mathcal{L}$ and $1\leq i\leq n$, we also associate a modulus of continuity $\mathfrak{u}_{R,i}$. In an interpretation of $R$ in a model (which is a function from the domain of the model to the interval $[0,1]$), $R$ as a function of the $i$-th coordinate is required to be a uniformly continuous function with $\mathfrak{u}_{R,i}$ as a modulus of continuity.

We will define two notions which are properties of the moduli of continuity associated to a continuous signature $\mathcal{L}$. They are called {\em proper} and {\em semiproper}, the difference being wether the moduli of continuity for unary relation symbols in $\mathcal{L}$ (if any) are upper semicontinuous. We will prove the following theorem.

\begin{theorem}\label{thm:U} Let $\mathcal{L}$ be a continuous signature with only finitely many relation symbols, where all the associated moduli of continuity are nondecreasing. Then the following hold:
\begin{enumerate}
\item[(i)] There exists a (separable) Urysohn $\mathcal{L}$-structure iff $\mathcal{L}$ is proper.
\item[(ii)] There exists a Urysohn $\mathcal{L}$-pre-structure iff $\mathcal{L}$ is semiproper.
\end{enumerate}
\end{theorem}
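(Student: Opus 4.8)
The plan is to reduce both equivalences to the amalgamation property for the class $\Kfin$ of finite continuous $\mathcal{L}$-(pre-)structures, combined with the theory of continuous Fra\"iss\'e limits. I will use, as a prior ingredient established separately, the characterization of amalgamation: $\Kfin$ (in the category of genuine structures) has the amalgamation property exactly when $\mathcal{L}$ is proper, and $\Kfin$ in the category of pre-structures amalgamates exactly when $\mathcal{L}$ is semiproper. The joint embedding property then comes for free as the instance of amalgamation over the empty structure, and heredity is automatic.

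For the ``if'' direction of (i): assuming $\mathcal{L}$ proper, $\Kfin$ has amalgamation, so after passing to a suitable countable subclass dense in $\Kfin$ — which, as is shown separately, is a countable Fra\"iss\'e class — one forms its Fra\"iss\'e limit $\mathbb{M}$, a separable complete continuous $\mathcal{L}$-structure with age $\Kfin$ that is \emph{exactly} ultrahomogeneous, the exactness being exactly what amalgamation in the strict (not merely approximate) sense provides. I would then upgrade universality from $\Kfin$ to all separable structures: a separable $M$ is the closure of an increasing chain of finite substructures $A_0\subseteq A_1\subseteq\cdots$ (with no function symbols, finite subsets are substructures), each $A_n$ embeds into $\mathbb{M}$, and by ultrahomogeneity of $\mathbb{M}$ these embeddings can be chosen coherently; their direct limit extends, using completeness of $\mathbb{M}$ and uniform continuity of the relations, to an embedding $M\hookrightarrow\mathbb{M}$. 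Hence $\mathbb{M}$ is Urysohn. (Equivalently one may build $\mathbb{M}$ directly as a Kat\v{e}tov-style increasing union of finite one-point extensions and complete; this is the same argument with the Fra\"iss\'e machinery unpacked.) The ``if'' direction of (ii) is word-for-word the same in the category of pre-structures, invoking amalgamation of finite $\mathcal{L}$-pre-structures under the weaker hypothesis of semiproperness, and producing a Urysohn $\mathcal{L}$-pre-structure.

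For the ``only if'' directions, suppose a Urysohn $\mathcal{L}$-structure $\mathbb{U}$ exists. Universality makes every finite $\mathcal{L}$-structure embed into $\mathbb{U}$, so the age of $\mathbb{U}$ is $\Kfin$, and the age of an exactly ultrahomogeneous structure always has amalgamation: given $A\hookrightarrow B$ and $A\hookrightarrow C$ in $\Kfin$, embed $B$ and $C$ into $\mathbb{U}$, apply an automorphism of $\mathbb{U}$ extending the induced isometry between the two copies of $A$ so that they coincide, and take the finite union of the two images — itself a finite substructure of $\mathbb{U}$ — as an amalgam. So $\Kfin$ has amalgamation, whence $\mathcal{L}$ is proper by the amalgamation characterization; the identical argument in the pre-structure category turns a Urysohn $\mathcal{L}$-pre-structure into amalgamation of $\Kfin$ there, hence into semiproperness of $\mathcal{L}$.

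The hard part, apart from importing the amalgamation characterizations, will be the interface with continuous Fra\"iss\'e theory together with the conceptual point separating (i) from (ii). On the technical side one must isolate a countable subclass of $\Kfin$ that is genuinely dense and closed under the amalgamation and one-point-extension operations, so that its Fra\"iss\'e limit has age all of $\Kfin$ and realizes \emph{exact} embeddings and extensions rather than approximate ones; and one must propagate universality from finite to arbitrary separable (pre-)structures, which uses crucially that the moduli are nondecreasing — so that finite subsets span finite substructures with controlled relation interpretations — together with a back-and-forth/approximation argument. On the conceptual side, the delicate point is to see why a unary modulus that fails to be upper semicontinuous obstructs a genuine Urysohn structure but not a Urysohn pre-structure: one must match the obstruction behind the amalgamation failure in the genuine setting — a jump in such a modulus forces two distinct points into a forbidden distance gap — against its resolution in the pre-structure setting, where that gap is absorbed by allowing points at pseudodistance zero. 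That is precisely where ``proper'' and ``semiproper'' enter, and why they cannot be interchanged.
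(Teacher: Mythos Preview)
Your amalgamation characterizations are misstated, and this breaks the ``only if'' direction of (i). In this paper a finite pre-structure is automatically complete, so the class $\Kfin$ of finite $\mathcal{L}$-structures and the class of finite $\mathcal{L}$-pre-structures are one and the same; there is no distinction between ``$\Kfin$ in the category of structures'' and ``$\Kfin$ in the category of pre-structures.'' The actual characterization (Theorem~\ref{thm:finap}) is that $\Kfin$ has the AP iff $\mathcal{L}$ is \emph{semiproper}. Properness characterizes the AP for the class of \emph{countable} (or all) pre-structures (Theorem~\ref{thm:allap}). Consequently your argument ``Urysohn $\mathcal{L}$-structure exists $\Rightarrow$ $\Kfin$ has AP $\Rightarrow$ $\mathcal{L}$ proper'' only delivers semiproperness at the last step.

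The paper extracts upper semicontinuity by a separate argument that uses completeness of the Urysohn structure in an essential way (Theorem~\ref{thm:Uii}(2)): for a unary $R$ and a point $a$ in the interior of $I_{R,1}$, one realizes inside $\mathcal{U}$ a countable configuration $\{x_n\}\cup\{y\}$ with $d(x_n,y)=a+2^{-n}$, $R(x_n)=0$, $R(y)=\lim_{\delta\to 0^+}\mathfrak{u}_{R,1}(a+\delta)$; completeness produces $z=\lim_n x_n\in U$ with $d(y,z)=a$ and $R(z)=0$, whence $\lim_{\delta\to 0^+}\mathfrak{u}_{R,1}(a+\delta)\le \mathfrak{u}_{R,1}(a)$. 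Your closing paragraph also misidentifies the structure/pre-structure distinction: here pre-structures carry genuine metrics (not pseudometrics); the difference is completeness, and it is precisely the \emph{absence} of such limit points in an incomplete Urysohn pre-structure that lets a non--upper-semicontinuous unary modulus survive. Your ``only if'' for (ii) and the ``if'' directions are in the right spirit (the paper's Kat\v{e}tov construction is what you sketch), though for (i) you will still need the nontrivial step that the Urysohn property passes to the completion (Theorem~\ref{prop:UM}).
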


Here a continuous $\mathcal{L}$-pre-structure is one in which the symbol $d$ is interpreted as a metric; in contrast, in a continuous $\mathcal{L}$-structure $d$ is interpreted as a complete metric.

It turns out that properness and semiproperness also characeterize the amalgamation property for various classes of continuous $\mathcal{L}$-structures. In particular, we have the following theorems.

\begin{theorem}\label{thm:pre} Let $\mathcal{L}$ be a continuous signature with only finitely many relation symbols, where all the associated moduli of continuity are nondecreasing. Then the following are equivalent:
\begin{enumerate}
\item[(i)] $\mathcal{L}$ is semiproper.
\item[(ii)] The class of all finite continuous $\mathcal{L}$-pre-structures has the amalgamation property.
\item[(iii)] The class of all finite continuous $\mathcal{L}$-pre-structures has the strong amalgamation property.
\end{enumerate}
\end{theorem}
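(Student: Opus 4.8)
The three-way equivalence is proved by the cycle $(iii)\Rightarrow(ii)\Rightarrow(i)\Rightarrow(iii)$, the first implication being trivial.

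For $(ii)\Rightarrow(i)$ I would argue contrapositively: if $\mathcal{L}$ fails to be semiproper, some defining inequality of one of the moduli is violated, and from such a violation I would manufacture a span $A\subseteq B$, $A\subseteq C$ of finite pre-structures with no amalgam. When a unary modulus $\mathfrak{u}_{R}$ is at fault the span can be taken with $A$ a single point $a_0$ and $B=A\cup\{b\}$, $C=A\cup\{c\}$: one chooses $R^A(a_0)$ and the distances $d(b,a_0)$, $d(c,a_0)$ so that the mere validity of $B$ and $C$ forces $R^B(b)$ and $R^C(c)$ apart by more than $\mathfrak{u}_{R}(d(b,a_0)+d(c,a_0))$, whereas the triangle inequality through $a_0$ gives $d^D(b,c)\le d(b,a_0)+d(c,a_0)$ in any amalgam $D$, contradicting the $\mathfrak{u}_{R}$-continuity of $R^D$. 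When a modulus $\mathfrak{u}_{R,i}$ of a relation of arity $\ge 2$ is at fault I would instead take $A=\{a,a'\}$ with $d(a,a')$ at the offending value, place a point $b$ of $B$ on the $a$--$a'$ geodesic, and, keeping $R^A$ suitably degenerate so that $B$ and $C$ do remain pre-structures, arrange that two tuples of $C$ differing only in coordinate $i$ are as far apart in value as $\mathfrak{u}_{R,i}$-continuity inside $C$ allows; then the mixed tuple obtained by inserting $b$ in coordinate $i$ admits no admissible value. The fiddly part is the bookkeeping that keeps $B$ and $C$ honest pre-structures while still exhibiting the clash.

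The substantive direction is $(i)\Rightarrow(iii)$. Given finite pre-structures $A\subseteq B$ and $A\subseteq C$ I identify $A=B\cap C$ and build $D$ on $B\sqcup_A C$. For the metric I take $d^D$ to extend $d^B$ and $d^C$ and to be the Kat\v{e}tov amalgam $d^D(b,c)=\min_{a\in A}\bigl(d^B(b,a)+d^C(a,c)\bigr)$ on cross pairs (with the obvious modification when $A=\emptyset$); this is a metric extending both, and it is strictly positive off the diagonal because $d^B$ and $d^C$ are metrics, so $B$ and $C$ embed isometrically with intersection exactly $A$. For each relation $R$ of arity $n$, the tuples lying wholly in $B$ or wholly in $C$ keep their values (these agree on $A$-tuples), and on mixed tuples I define $R^D$ by the largest admissible extension, $R^D(\bar x)=\min\bigl(1,\ \inf\{R_0(\bar u)+\rho_R(\bar x,\bar u):\bar u\text{ pure}\}\bigr)$, where $R_0$ is the already-defined partial interpretation and $\rho_R$ is the path metric on $D^n$ whose edges join tuples differing in one coordinate $i$ with weight $\mathfrak{u}_{R,i}(d^D(x_i,y_i))$. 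Using subadditivity of the $\mathfrak{u}_{R,i}$ one checks that each $\mathfrak{u}_{R,i}\circ d^D$ is a metric on $D$, so $\rho_R$ splits as the $\ell^1$-sum $\rho_R(\bar x,\bar y)=\sum_i\mathfrak{u}_{R,i}(d^D(x_i,y_i))$; the extension formula then restricts correctly to $R^B$ and $R^C$ and is $\mathfrak{u}_{R,i}$-continuous coordinatewise, provided the compatibility inequality $|R_0(\bar u)-R_0(\bar w)|\le\sum_i\mathfrak{u}_{R,i}(d^D(u_i,w_i))$ holds for all pure $\bar u,\bar w$. This is immediate when $\bar u,\bar w$ sit on the same side; when $\bar u$ is a $B$-tuple and $\bar w$ a $C$-tuple I pick, in each coordinate $i$, an $a_i\in A$ realizing $d^D(u_i,w_i)=d^B(u_i,a_i)+d^C(a_i,w_i)$, triangulate $R_0(\bar u)\to R_0(\bar a)\to R_0(\bar w)$ through the pure-$A$ tuple $\bar a=(a_i)_i$ (where $R^B$ and $R^C$ agree), and invoke the superadditivity-type clause of semiproperness coordinatewise together with the bound $|R_0(\bar u)-R_0(\bar w)|\le 1$. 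The unary case is this computation with $n=1$, where $R^D$ is already forced and the inequality across $B$ and $C$ is the only thing to verify. Thus $D$ is a finite pre-structure amalgamating $B$ and $C$ over $A$ with $B\cap C=A$, which is the strong amalgamation property.

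I expect the main obstacle to be the precise dovetailing, in $(i)\Rightarrow(iii)$, of the two clauses of semiproperness with the two things the construction needs: the subadditivity clause must be exactly strong enough to make $\rho_R$ an $\ell^1$-sum, and the superadditivity-type clause exactly strong enough to push $|R_0(\bar u)-R_0(\bar w)|$ below that sum after routing through $A$. For unary relations there is no routing to exploit, the value of $R^D$ on a cross pair is forced, and the entire statement collapses to whether $|R^B(b)-R^C(c)|\le\mathfrak{u}_{R}(d^D(b,c))$ is automatic --- so the counterexamples of $(ii)\Rightarrow(i)$ and the verification of $(i)\Rightarrow(iii)$ have to be, in that case, the two faces of one and the same inequality.
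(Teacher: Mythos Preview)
Your $(i)\Rightarrow(iii)$ argument is essentially the paper's (Theorem~3.8 there), though phrased slightly differently: you use the upper McShane extension while the paper uses the lower one, and your routing through a single $\bar a\in A^n$ is a bit cleaner than the paper's four-set decomposition. One terminological correction, however: semiproperness does not contain a ``subadditivity clause'' and a ``superadditivity clause'' as two separate items. The definition is: (a) each $\mathfrak u_{R,i}$ is superadditive on $I_{R,i}=\{r:\mathfrak u_{R,i}(r)<1\}$ and $I_{R,i}$ is bounded; and (b) for arity $n\ge 2$, each $\mathfrak u_{R,i}$ is \emph{linear} on $I_{R,i}$. It is the linearity in (b) that you are invoking as ``subadditivity'' --- linearity gives both directions, and that is precisely what collapses your $\rho_R$ to $\sum_i K_{R,i}\,d^D$ and makes $d^D_R$ restrict correctly to $d^B_R$ and $d^C_R$. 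For unary $R$ you correctly bypass this by noting there is nothing to extend, and the superadditivity in (a) is exactly what makes $|R^B(b)-R^C(c)|\le\mathfrak u_{R,1}(d^D(b,c))$ go through after routing through the minimizing $a\in A$; here finiteness of $A$ is used so that the infimum is attained, and this is what distinguishes the finite case from the countable one, where upper semicontinuity is required instead.

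Your $(ii)\Rightarrow(i)$ sketch has a gap in the bookkeeping of what must be shown. The single-point-$A$ construction you describe ``for the unary case'' in fact works for every arity and yields superadditivity of every $\mathfrak u_{R,i}$; from superadditivity one then deduces boundedness of $I_{R,i}$, which you do not mention. Your geodesic construction for arity $\ge 2$ is aimed at the \emph{sub}additive direction (a midpoint on the geodesic obstructs strict superadditivity), and together with superadditivity this gives linearity. As written, though, your outline only pairs ``unary $\leftrightarrow$ superadditive'' and ``higher arity $\leftrightarrow$ subadditive'', which leaves the superadditive half of the higher-arity case unaddressed. Also, the geodesic construction needs a further ingredient you have not supplied: one side of the span must carry a point far away (outside $I_{R,k}$ for some other coordinate $k$) to serve as an anchor for the extreme $R$-values --- otherwise you cannot force $|R^C(\bar y)-R^C(\bar y')|=\mathfrak u_{R,i}(r_1+r_2)$ while keeping $R^A$ degenerate. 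The paper's version takes $A=\{x_0,u_0\}$, puts the geodesic midpoint $x_2$ in one extension $Q$ with $R^Q\equiv 0$, and places a distant point $x_1$ in the other extension $P$ carrying the large $R$-value in coordinate $n$.
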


\begin{theorem}\label{thm:1} Let $\mathcal{L}$ be a continuous signature with only finitely many relation symbols, where all the associated moduli of continuity are nondecreasing. Then the following are equivalent:
\begin{enumerate}
\item[(i)] $\mathcal{L}$ is proper.
\item[(ii)] The class of all (countable) continuous $\mathcal{L}$-pre-structures has the amalgamation property.
\item[(iii)] The class of all (countable) continuous $\mathcal{L}$-pre-structures has the strong amalgamation property.
\end{enumerate}
\end{theorem}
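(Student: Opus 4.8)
The plan is to prove $(iii)\Rightarrow(ii)\Rightarrow(i)\Rightarrow(iii)$; the first implication is trivial, since a strong amalgam is an amalgam.

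For $(ii)\Rightarrow(i)$ I would prove the contrapositive in two cases. If $\mathcal{L}$ is not semiproper, then by Theorem~\ref{thm:pre} there are finite pre-structures $A,B,C$, with $A$ a common sub-pre-structure, having no finite amalgam; since the restriction of any countable amalgam of $B$ and $C$ over $A$ to the (finite) union of the images of $B$ and $C$ would itself be a finite amalgam, the countable class fails AP too. The remaining case is that $\mathcal{L}$ is semiproper but some unary modulus $\mathfrak{u}=\mathfrak{u}_{R,1}$ is not upper semicontinuous; being nondecreasing, $\mathfrak{u}$ then has $s:=\mathfrak{u}(t_0)<s':=\inf_{t>t_0}\mathfrak{u}(t)$ for some $t_0\in[0,1)$. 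Pick $v\in(s,s']$ and a strictly decreasing sequence $\varepsilon_n\downarrow0$ with $\varepsilon_n\le1-t_0$, and define countable pre-structures: $A=\{a_n:n\ge1\}$ with $d(a_n,a_m)=|\varepsilon_n-\varepsilon_m|$ and $R^A(a_n)=v$; $B=A\cup\{b\}$ with $d(b,a_n)=t_0+\varepsilon_n$ and $R^B(b)=0$; $C=A\cup\{c\}$ with $d(c,a_n)=\varepsilon_n$ and $R^C(c)=v$; and let all other relations be identically $0$. One checks directly that $B,C$ are valid pre-structures extending $A$ (triangle inequalities are immediate, and the bound for $R$ uses $\mathfrak{u}(t_0+\varepsilon_n)\ge s'\ge v$ and $|R^C(c)-R^A(a_n)|=0$). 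In any amalgam $D$ one would have $d(b,c)\le d(b,a_n)+d(a_n,c)=t_0+2\varepsilon_n$ for all $n$, so $d(b,c)\le t_0$, whereas $|R^D(b)-R^D(c)|=v>s=\mathfrak{u}(t_0)\ge\mathfrak{u}(d(b,c))$, violating the modulus of $R$. Hence no amalgam exists and AP fails. This proves $(ii)\Rightarrow(i)$.

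For $(i)\Rightarrow(iii)$, suppose $\mathcal{L}$ is proper, hence semiproper, and consider an amalgamation diagram of countable pre-structures, normalized so the maps are inclusions and $A=B\cap C$. I would put the strong amalgam $D$ on the set-theoretic pushout $B\cup C$ (so joint injectivity is automatic) and handle the metric and the relations separately. For the metric, take the largest metric amalgam: $d(b,c):=\inf_{a\in A}\bigl(d(b,a)+d(a,c)\bigr)$ for $b\in B\setminus A$, $c\in C\setminus A$, keeping the original distances within $B$ and within $C$; this is a pseudometric, and it already respects every unary modulus, because Theorem~\ref{thm:pre} applied to the finite sub-pre-structures on $A_0\cup\{b\}$ and $A_0\cup\{c\}$ ($A_0\subseteq A$ finite) forces $\tau_R(|R(b)-R(c)|):=\inf\{t:\mathfrak{u}_{R,1}(t)\ge|R(b)-R(c)|\}\le\min_{a\in A_0}\bigl(d(b,a)+d(a,c)\bigr)$, whence $\tau_R(|R(b)-R(c)|)\le d(b,c)$, and then upper semicontinuity of $\mathfrak{u}_{R,1}$ --- this is where properness rather than mere semiproperness is used --- gives $\mathfrak{u}_{R,1}(d(b,c))\ge|R(b)-R(c)|$. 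It remains to define the relations on the ``mixed'' tuples (those with coordinates from both $B\setminus A$ and $C\setminus A$) so that all moduli hold; I would do this by an $\omega$-recursion, enumerating the mixed tuples and at each stage choosing an admissible value, the admissibility reducing at each step --- via Theorem~\ref{thm:pre} applied to a suitable finite sub-pre-structure containing the coordinates in play and finitely many previously treated elements --- to semiproperness. Since each triangle inequality involves three points and each modulus inequality finitely many, and all were arranged at some finite stage, $D$ is a pre-structure, and being supported on the pushout it is a strong amalgam.

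The main obstacle is the $(i)\Rightarrow(iii)$ direction, and within it the recursion that extends the relations to mixed tuples: one must organize the enumeration so that each local problem is genuinely finite, and must check that the finite-stage choices cohere globally rather than merely pairwise. Conceptually the heart of the matter is to see exactly where upper semicontinuity of the unary moduli is indispensable --- precisely when an infimum $\inf_{a\in A}(d(b,a)+d(a,c))$ over the infinite base is not attained and equals the threshold $\tau_R(|R(b)-R(c)|)$ --- which is what makes the countable case strictly stronger than the finite one and confines the proper/semiproper dichotomy to the unary relation symbols.
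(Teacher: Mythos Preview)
Your $(ii)\Rightarrow(i)$ direction is essentially the paper's: the same counterexample with a Cauchy-like base sequence forcing $d(b,c)\le t_0$ while $|R(b)-R(c)|>\mathfrak{u}_{R,1}(t_0)$, and the reduction of non-semiproperness to the finite case is sound.

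For $(i)\Rightarrow(iii)$ your route diverges from the paper's, and there is a genuine gap. The paper does not recurse: it shows once and for all that $R$ restricted to $B^n\cup C^n$ is $1$-Lipschitz for the pseudometric $d^D_R(\overline{x},\overline{y})=\sum_i K_{R,i}\,d^D(x_i,y_i)$ (this is where the linearity of the higher-arity moduli from semiproperness is used explicitly), and then applies a one-shot McShane--Whitney style extension
\[
R^D(\overline{w})=\max\Bigl\{0,\ \sup_{\overline{z}\in B^n\cup C^n}\bigl(R(\overline{z})-d^D_R(\overline{w},\overline{z})\bigr)\Bigr\},
\]
which is automatically $1$-Lipschitz (this is the content of the paper's Lemma on conservative extensions). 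Your recursion cannot get off the ground without that same $1$-Lipschitz fact on $B^n\cup C^n$: at step $k$ an admissible value for $R^D(\overline{w}_k)$ exists iff $R(\overline{z}_1)-R(\overline{z}_2)\le d^D_R(\overline{w}_k,\overline{z}_1)+d^D_R(\overline{w}_k,\overline{z}_2)$ for all already-defined $\overline{z}_1,\overline{z}_2$, and for $\overline{z}_1\in B^n$, $\overline{z}_2\in C^n$ this requires exactly $|R(\overline{z}_1)-R(\overline{z}_2)|\le d^D_R(\overline{z}_1,\overline{z}_2)$. Appealing to Theorem~\ref{thm:pre} as a black box does not give this, because a finite amalgam over $A_0\subseteq A$ carries the metric $\min_{a\in A_0}(d(\cdot,a)+d(a,\cdot))$, not $d^D=\inf_{a\in A}(\cdots)$; you would still need to pass to the limit over $A_0$, and that step again uses linearity of the moduli in exactly the way the paper does directly. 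So either way the crux is the $1$-Lipschitz check on $B^n\cup C^n$, and once you have it the recursion is superfluous.

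Your treatment of the unary case via $\tau_R$ and upper semicontinuity is a correct and pleasant repackaging of the paper's direct computation; it is only for arity $\ge 2$ that the plan needs repair.
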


We also define an intermediate notion of strong semiproperness between semiproperness and properness, and characterize the amalgamation property of the class of continuous $\mathcal{L}$-structures.

\begin{theorem}\label{thm:1.5} Let $\mathcal{L}$ be a continuous signature with only finitely many relation symbols, where all the associated moduli of continuity are nondecreasing. Then the following are equivalent:
\begin{enumerate}
\item[(i)] $\mathcal{L}$ is strongly semiproper.
\item[(ii)] The class of all (countable) continuous $\mathcal{L}$-structures has the amalgamation property.
\item[(iii)] The class of all (countable) continuous $\mathcal{L}$-structures has the strong amalgamation property.
\end{enumerate}
\end{theorem}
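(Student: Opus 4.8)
The plan is to prove the cycle (i) $\Rightarrow$ (iii) $\Rightarrow$ (ii) $\Rightarrow$ (i); the middle implication is trivial, so all the work is in the other two.

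For (i) $\Rightarrow$ (iii), suppose $\mathcal{L}$ is strongly semiproper and let $A\subseteq B$ and $A\subseteq C$ be complete continuous $\mathcal{L}$-structures; we may assume $B\cap C=A$, and since $A$ is itself a complete structure it is closed in each of $B$ and $C$. I would first equip $M_0:=B\sqcup_A C$ with the Kat\v{e}tov-type metric
\[
d^{M_0}(b,c):=\inf_{a\in A}\bigl(d^B(b,a)+d^C(a,c)\bigr)\qquad (b\in B\setminus A,\ c\in C\setminus A),
\]
extending $d^B$ and $d^C$; closedness of $A$ in $B$ and $C$ guarantees this is a genuine metric, not merely a pseudometric. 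Next, for each relation symbol $R$ and each tuple $\bar e$ of $M_0$ that mixes points of $B\setminus A$ and of $C\setminus A$, the modulus inequalities forced through the points of $A$ — together with $R^B$, $R^C$ and the metric just defined — confine the admissible value of $R^{M_0}(\bar e)$ to an intersection of closed subintervals of $[0,1]$. I would show this intersection is nonempty: by the one-dimensional Helly property it suffices to check that any two of the intervals meet, and each such pairwise condition reduces, after inserting the triangle inequality in $M_0$ and using that $A$ is a common substructure, to an instance of the defining inequalities of strong semiproperness. Choosing $R^{M_0}(\bar e)$ in the intersection, for all $R$ and all mixed tuples, turns $M_0$ into a continuous $\mathcal{L}$-pre-structure in which $B$ and $C$ embed. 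Finally I would pass to the completion $M:=\widehat{M_0}$: each relation symbol is uniformly continuous on $M_0$ with its prescribed modulus, hence extends uniquely and continuously to $M$ with the same modulus, so $M$ is a complete continuous $\mathcal{L}$-structure; as $B$ and $C$ are already complete they sit inside $M$ with intersection $A$, which is the required strong amalgam. (When $B$ and $C$ are separable so is $M$, yielding the ``countable'' version.)

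For (ii) $\Rightarrow$ (i) I would argue contrapositively. If $\mathcal{L}$ is not even semiproper, then by Theorem~\ref{thm:pre} the class of finite continuous $\mathcal{L}$-pre-structures — all of whose members are complete $\mathcal{L}$-structures — already fails the amalgamation property, and we are done. So assume $\mathcal{L}$ is semiproper but not strongly semiproper, and unwind the definition to extract a concrete witness: distances and relation-values at which the strong-semiproperness inequality breaks, in a configuration that — unlike the obstruction to pre-structure amalgamation supplied by Theorem~\ref{thm:1} — can be realized inside \emph{complete} structures. From this data I would build complete $\mathcal{L}$-structures $A\subseteq B$ and $A\subseteq C$ with $B\cap C=A$, where $A$ is a suitable (possibly infinite but complete) configuration of points and $B=A\cup\{b\}$, $C=A\cup\{c\}$ are one-point extensions, chosen so that in any continuous $\mathcal{L}$-structure $M$ amalgamating them the triangle inequalities through $A$ pin $d^M(b,c)$ to a range on which no admissible value of the relevant relation symbol at the new pair satisfies all the modulus inequalities simultaneously. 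Hence $B$ and $C$ have no amalgam, and (ii) fails.

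The main obstacle, in both directions, is the bookkeeping around \emph{non-attained} infima in the Kat\v{e}tov metric. In (i) $\Rightarrow$ (iii) this is precisely where the upper-semicontinuity clause built into strong semiproperness (weaker than full properness, stronger than bare semiproperness) is used to make the endpoints of the forced intervals line up so that they still overlap; and in (ii) $\Rightarrow$ (i) it is where a failure of that clause must be turned into an honest obstruction witnessed by complete structures, rather than one exploiting a non-closed substructure as in the pre-structure case. Matching the combinatorics of ``paths through $A$'' to the exact quantifier structure of the definitions of proper, semiproper and strongly semiproper is the delicate part; the metric amalgamation and the final completion step are routine.
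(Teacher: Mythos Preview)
Your (i)$\Rightarrow$(iii) sketch misplaces where strong semiproperness is actually used. The clause distinguishing strong semiproperness from semiproperness concerns only \emph{unary} $R$, and for unary $R$ there are no mixed tuples: $R^{M_0}$ is already fully determined by $R^B$ and $R^C$, so your Helly step never touches this case. What must be verified is $|R^B(u)-R^C(v)|\le\mathfrak{u}_{R,1}(d^{M_0}(u,v))$ for $u\in B\setminus A$, $v\in C\setminus A$, and this is precisely where the inequality $\mathfrak{u}_{R,1}(r_1+r_2)\ge\inf_{r>r_1}\mathfrak{u}_{R,1}(r)+\inf_{r>r_2}\mathfrak{u}_{R,1}(r)$ enters: choose $z_n\in A$ with $d^B(u,z_n)+d^C(z_n,v)\to d^{M_0}(u,v)$ and both component sequences monotone with limits $r_1,r_2$, bound $|R^B(u)-R^C(v)|$ through $z_n$, and pass to the limit. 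Your Helly argument handles only $n\ge 2$, where the moduli are linear by plain semiproperness and the argument of Theorem~\ref{thm:finap} already suffices; note also that arbitrary choices in the Helly intersections need not be mutually $1$-Lipschitz across pairs of mixed tuples --- one needs a specific extension formula such as that in Lemma~\ref{lem:gconservative}.

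The completion step is both unnecessary and wrongly justified. Since $B$ and $C$ are complete and $A$ is closed in each, $M_0=B\sqcup_A C$ is \emph{already complete} (every Cauchy sequence has a subsequence lying entirely in $B$ or entirely in $C$, hence converges there). The paper records this and never completes. Your assertion that a map respecting modulus $\mathfrak{u}$ extends to the completion with the same modulus is false when $\mathfrak{u}$ is not upper semicontinuous --- and strong semiproperness permits such $\mathfrak{u}_{R,1}$ --- so had $M_0$ not been complete this step would genuinely fail. For (ii)$\Rightarrow$(i), your plan is correct but the construction is the content; the paper takes $A=\{x_n\}_{n\in\mathbb{N}}$ with constant metric $r_1$ (uniformly discrete, hence complete), one-point extensions $B=A\cup\{y\}$, $C=A\cup\{z\}$ with $d(x_n,y)=(1+2^{-n})r_1$ and $d(x_n,z)=r_2+2^{-n}r_1$, and $R(y)=0$, $R(z)=\min\{1,\inf_{r>r_1}\mathfrak{u}_{R,1}(r)+\inf_{r>r_2}\mathfrak{u}_{R,1}(r)\}$; any amalgam then forces $d(y,z)\le r_1+r_2$.
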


The separable Urysohn continuous $\mathcal{L}$-structure will enjoy similar canonicity as its classical counterpart, and thus we denote it by $\mathbb{U}_\mathcal{L}$. Because $\mathbb{U}_\mathcal{L}$ is constructed with a Kat\v{e}tov-style construction, it will follow from Uspenskij's argument that the automorphism group of $\mathbb{U}_{\mathcal{L}}$ is a universal Polish group.

We will also consider certain classes of finite continuous $\mathcal{L}$-structures for semiproper $\mathcal{L}$ and show that they are countable Fra\"iss\'e classes. In particular, we obtain a {\em rational Urysohn continuous $\mathcal{L}$-pre-structure} $\mathbb{QU}_{\mathcal{L}}$ in analogy with its classical counterpart $\mathbb{QU}$.

We will show that for any semiproper $\mathcal{L}$ the class of all finite continuous $\mathcal{L}$-structures have the coherent EPPA. EPPA stands for the Extension Property for Partial Automorphisms and is a term coined by Hrushovski when he first showed the EPPA for graphs (\cite{Hru}). Later on, the EPPA has been proven for a great number of classes of structures, most notably for classical finite relational structures (Herwig--Lascar \cite{HL}) and for finite metric spaces (Solecki \cite{Solecki}). For a survey of recent results on EPPA see \cite{HKN}. Siniora--Solecki \cite{Siniora} defined the coherent EPPA and proved it for a large number of classes of classical structures. Here we prove the following theorem.

\begin{theorem}\label{thm:2} Let $\mathcal{L}$ be any semiproper continuous signature. The class of all finite continuous $\mathcal{L}$-structures has the coherent EPPA.
\end{theorem}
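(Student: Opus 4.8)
The approach is to reduce the statement to a class of structures with only finitely many possible values of the metric and of the relations, observe that this class is a Fra\"iss\'e class with the strong amalgamation property, code it into a finite relational language, and then feed it to the coherent EPPA machinery descending from the Herwig--Lascar theorem. For $R\in\mathcal{L}$ write $n_R$ for its arity.

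Fix a finite continuous $\mathcal{L}$-structure $A$, and let $D\subseteq[0,1]$ be the finite set of values assumed by $d^A$ and by the $R^A$, $R\in\mathcal{L}$. Let $S^{\mathrm{met}}$ be the closure of $D\cup\{0,1\}$ under truncated addition $(s,t)\mapsto\min(1,s+t)$; as in Solecki's treatment of finite metric spaces \cite{Solecki} this set is finite, each truncated sum having boundedly many nonzero summands drawn from the finite set $D$. Put $U=\{\mathfrak{u}_{R,i}(\delta):R\in\mathcal{L},\ 1\le i\le n_R,\ \delta\in S^{\mathrm{met}}\}$, a finite set, and let $S^{\mathrm{rel}}$ be the closure of $D$ under the finitely many maps $r\mapsto\max(0,r-u)$ for $u\in U$; since each such orbit terminates at $0$, $S^{\mathrm{rel}}$, and hence $S:=S^{\mathrm{met}}\cup S^{\mathrm{rel}}$, is finite. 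Let $\mathcal{K}_S$ be the class of finite continuous $\mathcal{L}$-structures whose metric takes values in $S^{\mathrm{met}}$ and whose relations take values in $S^{\mathrm{rel}}$. Carrying out the proof of Theorem~\ref{thm:pre} within $\mathcal{K}_S$ --- the distances it produces are truncated sums of elements of $S^{\mathrm{met}}$, and the relation values it produces can be taken of the form $\max(0,\max_{c}(R(\bar c)-\mathfrak{u}_{R,i}(d(z_i,c_i))))\in S^{\mathrm{rel}}$ --- shows that $\mathcal{K}_S$ has the strong amalgamation property; since it has only finitely many members of each size up to isomorphism, $\mathcal{K}_S$ is a Fra\"iss\'e class. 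As $A\in\mathcal{K}_S$, as a partial automorphism of $A$ is the same whether $A$ is regarded in $\mathcal{K}_S$ or in the class of all finite continuous $\mathcal{L}$-structures, as an EPPA-witness inside $\mathcal{K}_S$ still lies in the latter class, and as coherence is a purely combinatorial condition on the chosen extensions, it suffices to prove coherent EPPA for $\mathcal{K}_S$.

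Code each $M\in\mathcal{K}_S$ as a structure $M^\ast$ in the finite relational language $\mathcal{L}^\ast$ having, for each $s\in S^{\mathrm{met}}$, a binary symbol interpreted by $\{(x,y):d^M(x,y)=s\}$ and, for each $R\in\mathcal{L}$ and $s\in S^{\mathrm{rel}}$, an $n_R$-ary symbol interpreted by $\{\bar x:R^M(\bar x)=s\}$. The $\mathcal{L}^\ast$-structures arising this way are exactly those omitting a certain \emph{finite} family $\mathcal{F}$ of finite forbidden configurations: a pair or tuple carrying two distinct values or no value; failures of the metric axioms (symmetry, $d(x,x)=0$, $d(x,y)=0\Rightarrow x=y$) and triangle-inequality-violating triples; and, for each $R$ and each $i\le n_R$, the $(n_R+1)$-point configuration consisting of a tuple $\bar b$ and two points $a,a'$ at distance $\delta$ with $R(a,\bar b)=s$, $R(a',\bar b)=s'$ and $|s-s'|>\mathfrak{u}_{R,i}(\delta)$ --- it suffices to forbid these one-coordinate witnesses, the general uniform-continuity inequality $|R(\bar x)-R(\bar y)|\le\sum_i\mathfrak{u}_{R,i}(d(x_i,y_i))$ following by chaining. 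Each member of $\mathcal{F}$ is irreducible, i.e.\ not a nontrivial free amalgam of proper substructures, because the offending triple or tuple is held together by the very edge or inequality that makes it forbidden. Therefore the relational coding of $\mathcal{K}_S$ is a strong amalgamation class, in a finite relational language, axiomatized by omitting a finite family of irreducible finite structures; classes of this kind are governed by the coherent EPPA results descending from Herwig--Lascar, Solecki, and Siniora--Solecki (\cite{HL,Solecki,Siniora}; see also the survey \cite{HKN}). Thus for each $M\in\mathcal{K}_S$ there is $N\in\mathcal{K}_S$ with $M\le N$ and a coherent assignment sending each partial automorphism of $M$ to an automorphism of $N$ extending it. Taking $M=A$ and $B:=N$ gives a coherent EPPA-witness for $A$; since $A$ was arbitrary, the theorem follows.

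I expect the real difficulty to lie in the second step: checking that the relational coding of $\mathcal{K}_S$ actually satisfies the precise hypotheses of one of the available coherent EPPA theorems, i.e.\ that $\mathcal{F}$ genuinely axiomatizes the coding and that its members are irreducible in the required technical sense. This is delicate because the modulus-of-continuity configurations couple the metric relations with the relations $R$, so they must survive the shortest-path completions of the metric that occur inside the Herwig--Lascar construction; it is precisely here, together with the strong amalgamation property of $\mathcal{K}_S$, that semiproperness of $\mathcal{L}$ is used. One must in particular confirm that whenever a relation value must be assigned, the intersection $\bigcap_{c}[\,R(\bar c)-\mathfrak{u}_{R,i}(d(z_i,c_i)),\,R(\bar c)+\mathfrak{u}_{R,i}(d(z_i,c_i))\,]$ is nonempty --- this is what semiproperness buys --- and meets $S^{\mathrm{rel}}$ (the reason $S^{\mathrm{rel}}$ was closed under $r\mapsto\max(0,r-u)$), and that these assignments can be made simultaneously consistently for all newly created tuples, exactly as in the proof of Theorem~\ref{thm:pre}.
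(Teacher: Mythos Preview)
Your high-level strategy---pass to a finite value set, code into a finite relational language, and invoke a Siniora--Solecki coherent EPPA theorem---is exactly the paper's. The gap is in which EPPA theorem you invoke and what you feed it. You claim that the coded class is axiomatized by omitting finitely many \emph{irreducible} structures and hence has free amalgamation, but this fails: the ``no value'' configuration on two points (no $D_s$ holds) is the free amalgam of two one-point structures, so it is not irreducible; and the coded class is not a free amalgamation class at all, since in a free amalgam the cross-pairs carry no distance label. The free-amalgamation/irreducibility route therefore does not apply as stated. (A minor side issue: the metric in this paper is not bounded by $1$, so your truncation should be at some $\delta\ge\diam(A)$ with $\mathfrak{u}_{R,i}(\delta)\ge 1$, as in the good-value-pair construction.)

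The paper sidesteps all of this by using the \emph{homomorphism-avoidance} form of Siniora--Solecki (their Theorem~1.11): one does not try to axiomatize the coding, but only lists a finite family $\mathcal{T}$ of configurations (path-type metric violations, chained uniform-continuity violations, and double labels) and observes that $\mathcal{M}^*$ embeds into a $\mathcal{T}$-free structure, namely $\mathbb{U}_{(\Delta,W)}^*$ for a suitable good value pair. The coherent EPPA witness $\mathcal{X}$ one obtains is then only $\mathcal{T}$-free, not a coded continuous structure: many pairs and tuples carry no label. The substantive extra work---which your sketch does not do---is to \emph{reconstruct} a continuous $\mathcal{L}$-structure from $\mathcal{X}$: restrict to the $D$-connected component containing $M$, define the distance as the path metric (this is where the chain-type metric configurations in $\mathcal{T}$ are needed, not just triangles), treat the labeled tuples as a partially defined structure (the chain-type (UC$_{\mathcal L}$)-configurations make it $1$-Lipschitz for $d_R$), and then fill in unlabeled tuples by the conservative-extension lemma in a way that is automorphism-equivariant. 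Your ``chaining'' remark for uniform continuity is exactly what fails when intermediate tuples may be unlabeled; the paper's $\mathcal{T}$ forbids the whole chain at once precisely to survive this.
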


It turns out that the (coherent) EPPA for the class of all finite continuous $\mathcal{L}$-structures is essentially equivalent to the semiproperness of $\mathcal{L}$.

The method of Herwig--Lascar made deep connections between properties of free groups and extensions of partial automorphisms or group actions. These group-theoretic properties were studied further by other authors, including Coulbois \cite{Coulbois}, Rosendal \cite{RosendalI} \cite{Rosendal}, and Etedadialiabadi--Gao \cite{EG0} \cite{EG}. These investigations culminated in \cite{EGLMM} to show that the actions by automorphisms of finite groups on finite classical relational structures form a Fra\"iss\'e class. Here we prove an analogous result for continuous structures as follows.

\begin{theorem}\label{thm:3} Let $\mathcal{L}$ be any semiproper continuous signature. The class of all actions by automorphisms of finite groups on finite continuous $\mathcal{L}$-structures form a Fra\"iss\'e class. Consequently, if $\mathcal{L}$ is proper, the automorphism group of $\mathbb{U}_{\mathcal{L}}$ contains Hall's universal locally finite group $\mathbb{H}$ as a dense subgroup.
\end{theorem}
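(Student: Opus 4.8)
The plan is to organize the class as the objects of a category of finite ``$G$-structures'', verify the Fra\"iss\'e conditions (the only substantial one being strong amalgamation), and then read off the statement about $\mathbb{H}$ from the Fra\"iss\'e limit. An object is a triple $(G,\mathbf{A},\alpha)$ consisting of a finite group $G$, a finite continuous $\mathcal{L}$-structure $\mathbf{A}$, and an action $\alpha\colon G\to\Aut(\mathbf{A})$ by automorphisms; a morphism $(G,\mathbf{A},\alpha)\to(H,\mathbf{B},\beta)$ is a pair $(\varphi,f)$ where $\varphi\colon G\hookrightarrow H$ is a group embedding and $f\colon\mathbf{A}\hookrightarrow\mathbf{B}$ an embedding of continuous $\mathcal{L}$-structures with $f\circ\alpha(g)=\beta(\varphi(g))\circ f$ for every $g\in G$. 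The hereditary property (pass to a subgroup $H\le G$ and an $H$-invariant substructure) and the joint embedding property are immediate, so the work lies in the strong amalgamation property.

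For amalgamation, consider a span $(H_1,\mathbf{B}_1)\hookleftarrow(G,\mathbf{A})\hookrightarrow(H_2,\mathbf{B}_2)$, identifying the common part with $(G,\mathbf{A})$. First I would amalgamate the groups: the amalgamated free product $H_1\ast_G H_2$ of the finite groups $H_1,H_2$ is residually finite and has $H_1\cap H_2=G$, so a finite quotient that is injective on the finite set $H_1\cup H_2$ and preserves $H_1\cap H_2=G$ gives a finite group $K=\langle H_1,H_2\rangle$ with $H_1\cap H_2=G$. Then I would amalgamate the structures. Form the strong amalgam $\mathbf{D}=\mathbf{B}_1\cup_{\mathbf{A}}\mathbf{B}_2$, which by Theorem~\ref{thm:pre} is again a finite continuous $\mathcal{L}$-structure — this is the point where semiproperness of $\mathcal{L}$ enters — and on which $G$ acts. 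Next take $\mathbf{C}$ to have as underlying set the $K$-induced amalgam, i.e. $[K:G]$ copies of $D$ glued so that $K$ acts by translating the copies and each $\mathbf{B}_i$ sits inside $H_i$-equivariantly over $\mathbf{A}$, and define the metric and the relations of $\mathbf{C}$ on the newly created tuples by a canonical Kat\v{e}tov-style maximal choice — maximal subject to the triangle inequality and all the moduli $\mathfrak{u}_{R,i}$ — exactly as in the amalgamation argument behind Theorem~\ref{thm:pre}; maximality makes the choice $K$-invariant. Alternatively $\mathbf{C}$ can be produced from the coherent EPPA of Theorem~\ref{thm:2}: extend $\mathbf{D}$ to a finite $\mathbf{E}$ in which each element of $H_1$, viewed as a partial automorphism of $\mathbf{D}$ with domain $\mathbf{B}_1$, and each element of $H_2$ with domain $\mathbf{B}_2$, extends to an automorphism, coherently; the coherence turns this into a homomorphism $H_1\ast_G H_2\to\Aut(\mathbf{E})$ which one arranges to factor through $K$. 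Either way one obtains a strong amalgam $(K,\mathbf{C})$, so the class is a Fra\"iss\'e class. I expect the structure step to be the main obstacle: checking that the new distances and relation values can be assigned so that the triangle inequality, $K$-equivariance, and every modulus $\mathfrak{u}_{R,i}$ hold simultaneously (and that the two embeddings meet exactly in $(G,\mathbf{A})$) is the continuous, modulus-sensitive analogue of the Herwig--Lascar/Hodkinson--Otto free amalgam carrying a finite group action, essentially the same difficulty as in Theorems~\ref{thm:pre} and \ref{thm:2}.

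For the consequence, restrict to the countable subclass of structures whose metric and relation values lie in $\mathbb{Q}\cap[0,1]$, as in the construction of $\mathbb{QU}_\mathcal{L}$, and let $(\mathbb{G}_\mathcal{L},\mathbb{M}_\mathcal{L})$ be its Fra\"iss\'e limit; here $\mathbb{G}_\mathcal{L}$ is a countable locally finite group acting by automorphisms on $\mathbb{M}_\mathcal{L}\cong\mathbb{QU}_\mathcal{L}$. Since every finite group occurs as a group part (pair it with, say, the left-regular action on a discrete $\mathcal{L}$-structure) and every embedding of a group part lifts to the whole structure (a special case of the induced-amalgam construction above), $\mathbb{G}_\mathcal{L}$ is the group reduct of the Fra\"iss\'e limit of the class of all finite groups, which is Hall's universal locally finite group; hence $\mathbb{G}_\mathcal{L}\cong\mathbb{H}$. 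If $\mathcal{L}$ is proper, the completion of $\mathbb{M}_\mathcal{L}$ is $\mathbb{U}_\mathcal{L}$ by Theorem~\ref{thm:U}, and the isometric action $\mathbb{G}_\mathcal{L}\actson\mathbb{M}_\mathcal{L}$ extends uniquely to an action by automorphisms of $\mathbb{U}_\mathcal{L}$, so $\mathbb{H}\cong\mathbb{G}_\mathcal{L}$ embeds into $\Aut(\mathbb{U}_\mathcal{L})$.

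It remains to see that this copy of $\mathbb{H}$ is dense in $\Aut(\mathbb{U}_\mathcal{L})$ with its topology of pointwise convergence. Given $g\in\Aut(\mathbb{U}_\mathcal{L})$, a finite $F\subseteq\mathbb{U}_\mathcal{L}$ and $\varepsilon>0$, approximate $F$ and $g(F)$ within $\varepsilon$ by points of $\mathbb{M}_\mathcal{L}$, let $\mathbf{A}\subseteq\mathbb{M}_\mathcal{L}$ be the finite substructure they generate, and use the EPPA of Theorem~\ref{thm:2} to extend the resulting (approximate) partial automorphism of $\mathbf{A}$ to an automorphism $\sigma$ of some finite continuous $\mathcal{L}$-structure $\mathbf{B}\supseteq\mathbf{A}$. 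Then $(\langle\sigma\rangle,\mathbf{B},\sigma)$ is an object of the class, so by universality and homogeneity of $(\mathbb{G}_\mathcal{L},\mathbb{M}_\mathcal{L})$ it embeds into the limit over $\mathbf{A}$; the image of $\sigma$ is an element of $\mathbb{H}$ that agrees with $g$ on $F$ to within $\varepsilon$. Thus $\mathbb{H}$ is dense in $\Aut(\mathbb{U}_\mathcal{L})$, completing the argument.
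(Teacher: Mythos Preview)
Your overall architecture matches the paper's: verify HP/JEP/AP for the class of actions, pass to a countable $(\Delta,V)$-valued subclass, take the Fra\"iss\'e limit, identify the structure part with $\mathbb{QU}_{\mathcal{L}}$ and the group part with $\mathbb{H}$, and then transfer density to $\Aut(\mathbb{U}_{\mathcal{L}})$. But two of your key steps have genuine gaps.

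\textbf{The amalgamation step.} Your coherent-EPPA alternative does not produce a homomorphism $H_1\ast_G H_2\to\Aut(\mathbf{E})$. Coherence only gives $\phi(\pi\circ\tau)=\phi(\pi)\phi(\tau)$ when $\range(\tau)=\dom(\pi)$; for $h_1\in H_1$ (domain and range $B_1$) and $h_2\in H_2$ (domain and range $B_2$) this hypothesis fails, and worse, for $g\in G$ the two partial maps $g|_{B_1}$ and $g|_{B_2}$ may be sent to \emph{different} automorphisms of $\mathbf{E}$, so the two homomorphisms $H_i\to\Aut(\mathbf{E})$ need not agree on $G$. Your primary route (fix a finite quotient $K$ via residual finiteness, then build an induced structure) is closer in spirit, but the order is backwards relative to what makes the argument go through. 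The paper first builds an \emph{infinite} continuous $\mathcal{L}$-pre-structure $\mathcal{N}_\infty$ carrying a $(H_1\ast_G H_2)$-action by iterating an explicit extension lemma, and only then passes to a finite quotient---and for that it needs the Herwig--Lascar property of $H_1\ast_G H_2$, not merely residual finiteness. The HL-property is precisely what lets one choose the finite quotient so that the finitely many metric and relation constraints (encoded as unsolvable left systems of coset equations) remain unsolvable; residual finiteness alone only handles the group-theoretic constraint $H_1\cap H_2=G$. Your ``maximal choice'' gambit does not obviously make $R^C$ $1$-Lipschitz with respect to $d^C_R$ across different $K$-translates when $R$ has arity $\ge 2$.

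\textbf{The density step.} You propose to approximate $F$ and $g(F)$ by points of $\mathbb{M}_{\mathcal{L}}$ and then apply EPPA. But the resulting map between the approximating tuples is only an \emph{approximate} isomorphism, not a partial automorphism, so EPPA does not apply. The paper separates this into two distinct statements: first, $\mathbb{H}\cong\Gamma_\infty$ is dense in $\Aut(\mathbb{QU}_{\mathcal{L}})$ for the permutation-group topology (where ``close'' means exact agreement on a finite set, so one really does get a partial automorphism and can invoke the Fra\"iss\'e/Rosendal machinery); second, the continuous extension map $\Aut(\mathbb{QU}_{\mathcal{L}})\to\Aut(\mathbb{U}_{\mathcal{L}})$ has dense image. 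The latter is a standalone technical result (an inductive approximation argument controlling $\mu,\rho,\lambda$ at each stage) and is not a consequence of EPPA.
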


The rest of the paper is organized as follows. In Section 2 we recall some basic definitions. In Section 3 we define properness, semiproperness, and strong semiproperness for continuous first-order signatures and prove Theorems~\ref{thm:pre},  \ref{thm:1} and \ref{thm:1.5}. In Section 4 we give a Kat\v{e}tov-style construction of the Urysohn continuous structure for a proper continuous first-order signature. In particular, we prove Theorem~\ref{thm:U}. In Section 5 we consider certain classes of finite continuous structures and show that they form countable Fra\"iss\'e classes. We also show that the completion of $\mathbb{QU}_{\mathcal{L}}$ is isomorphic to $\mathbb{U}_{\mathcal{L}}$ when $\mathcal{L}$ is proper. In Section 6 we prove Theorem~\ref{thm:2}. In Section 7 we prove the first half of Theorem~\ref{thm:3}. For this the main effort is to prove an analog of a theorem of Rosendal on finite approximations of actions on continuous structures. In Section 8 we prove the second half of Theorem~\ref{thm:3}.

\section{Preliminaries}

We first recall some basic definitions in continuous first-order logic. This is mostly following \cite{BU2008} but we do deviate at various places for the convenience of our presentation.

\begin{definition}{\ }
\begin{enumerate}
\item[(i)] A {\em modulus of continuity} is a function $$\mathfrak{u}:(0,\infty)\to (0, \infty]$$ such that $\lim_{\delta\to 0}\mathfrak{u}(\delta)=0$.
\item[(ii)] Let $\mathfrak{u}$ be a modulus of continuity. Let $(X_1, d_1)$ and $(X_2, d_2)$ be metric spaces. We say that a mapping $f: X_1\to X_2$ is {\em uniformly continuous  with respect to $\mathfrak{u}$} (or simply that $f$ {\em respects} $\mathfrak{u}$) if for all $x, y\in X_1$, we have
$$d_2(f(x),f(y))\leq\mathfrak{u}(d_1(x,y)).$$
\end{enumerate}
\end{definition}

Whenever convenient, we consider a modulus of continuity function $\mathfrak{u}$ to be defined at $0$ and set $\mathfrak{u}(0)=0$. Then $\mathfrak{u}$ is continuous at $0$. If $f$ respects a modulus of continuity $\mathfrak{u}$, then by redefining
$$ \mathfrak{u}'(\delta)=\sup_{\tau\leq\delta}\mathfrak{u}(\tau) $$
we get that $\mathfrak{u}'$ is nondecreasing but still a modulus of continuity that $f$ respects. Later when we consider superadditive and upper semicontinuous moduli of continuity, this redefinition with the supremum operation can still keep the superadditivity and upper semicontinuity, respectively, and at the same time makes the modulus of continuity nondecreasing. So throughout this paper we assume that all of the moduli of continuity we consider are nondecreasing.

\begin{definition} {\ }
\begin{enumerate}
\item[(i)] A function $u:[a, b]\to \mathbb{R}$ is {\em subadditive} if for all $x, y\in [a, b]$, if $x+y\in [a, b]$, then
$$ u(x+y)\leq u(x)+u(y). $$
\item[(ii)] A function $u:[a, b]\to \mathbb{R}$ is {\em superadditive} if for all $x, y\in [a, b]$, if $x+y\in [a, b]$, then
$$ u(x+y)\geq u(x)+u(y). $$
\item[(iii)] A function $u:[a, b]\to \mathbb{R}$ is {\em upper semicontinuous} if for all $x\in [a, b]$ and $\epsilon>0$, there is $\delta>0$ such that for all $x'\in (x-\delta, x+\delta)\cap [a,b]$, we have $u(x')<u(x)+\epsilon$.
\end{enumerate}
\end{definition}

It is easy to see that if $u:[0, b]\to\mathbb{R}$ is superadditive and nondecreasing then $u$ is strictly increasing. For a nondecreasing $u:[a,b]\to\mathbb{R}$, $u$ is upper semicontinuous iff for all $x\in [a,b]$, $\lim_{\delta\to 0^+} u(x+\delta)=u(x)$. The following lemma is also easy.

\begin{lemma}\label{lem:u*} Let $\mathfrak{u}$ be a modulus of continuity and $c>0$ be such that $\mathfrak{u}$ is nondecreasing and superadditive on $[0,c]$. Let $a=\mathfrak{u}(c)$. Define $\mathfrak{u}^*:[0,a)\to [0, c]$ by
$$ \mathfrak{u}^*(t)=\inf\{r\in (0,c]\,:\, t<\mathfrak{u}(r)\}. $$
Then $\mathfrak{u}^*(0)=0$ and $u^*$ is a subadditive, nondecreasing, and upper semicontinuous function. Moreover,
\begin{enumerate}
\item $\mathfrak{u}^*$ is continuous at $0$,

\item for all $r\in [0, c]$, $\mathfrak{u}^*(\mathfrak{u}(r))=r$, and
\item if $\mathfrak{u}$ is upper semicontinuous, then for all $t\in[0,a)$, $\mathfrak{u}(\mathfrak{u}^*(t))\geq t$.
\end{enumerate}
\end{lemma}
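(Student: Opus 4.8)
The plan is to analyze $\mathfrak{u}^*$ directly from its definition as an infimum of the set $S_t=\{r\in(0,c]: t<\mathfrak{u}(r)\}$. First I would record the easy monotonicity facts: since $\mathfrak{u}$ is nondecreasing and superadditive on $[0,c]$, it is strictly increasing there (as noted just before the lemma), and hence $S_t$ is an up-set in $(0,c]$ (if $r\in S_t$ and $r<r'\le c$ then $r'\in S_t$); consequently $t\mapsto S_t$ is nonincreasing in $t$, so $\mathfrak{u}^*$ is nondecreasing. For $\mathfrak{u}^*(0)=0$: by the defining property of a modulus of continuity, $\mathfrak{u}(\delta)>0$ for all $\delta>0$ (it maps into $(0,\infty]$), so every $r\in(0,c]$ lies in $S_0$, whence $\inf S_0=0$. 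Continuity of $\mathfrak{u}^*$ at $0$ (item (1)) is essentially the same computation run backwards: given $\varepsilon>0$, since $\lim_{\delta\to 0}\mathfrak{u}(\delta)=0$ pick $r_0\in(0,\min(\varepsilon,c)]$ with $\mathfrak{u}(r_0)>0$; then for all $t<\mathfrak{u}(r_0)$ we have $r_0\in S_t$, so $\mathfrak{u}^*(t)\le r_0<\varepsilon$.

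Next I would prove item (2), $\mathfrak{u}^*(\mathfrak{u}(r))=r$ for $r\in[0,c]$. One inequality is immediate: for any $r'>r$ in $(0,c]$ we have $\mathfrak{u}(r')>\mathfrak{u}(r)$ by strict monotonicity, so $r'\in S_{\mathfrak{u}(r)}$; taking the infimum over such $r'$ gives $\mathfrak{u}^*(\mathfrak{u}(r))\le r$. For the reverse inequality, suppose $r''\in S_{\mathfrak{u}(r)}$, i.e.\ $\mathfrak{u}(r)<\mathfrak{u}(r'')$; since $\mathfrak{u}$ is nondecreasing this forces $r''>r$, so $r$ is a lower bound for $S_{\mathfrak{u}(r)}$ and $\mathfrak{u}^*(\mathfrak{u}(r))\ge r$. (When $r=0$ both sides are $0$ by the previous paragraph.)

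For item (3), assume $\mathfrak{u}$ is upper semicontinuous and fix $t\in[0,a)$. Write $s=\mathfrak{u}^*(t)=\inf S_t$. Pick a sequence $r_n\in S_t$ with $r_n\downarrow s$; then $\mathfrak{u}(r_n)>t$ for all $n$, and by upper semicontinuity of the nondecreasing $\mathfrak{u}$ we have $\mathfrak{u}(s)=\lim_{n\to\infty}\mathfrak{u}(r_n)\ge t$ (using the characterization $\lim_{\delta\to 0^+}\mathfrak{u}(s+\delta)=\mathfrak{u}(s)$ recorded before the lemma, noting $s<c$ since $t<a=\mathfrak{u}(c)$ and $S_t$ contains values arbitrarily close to $s$ but $s$ may or may not lie in $S_t$; in the edge case $s=0$ we have $\mathfrak{u}(0)=0\ge t$ only when $t=0$, which is consistent since $\mathfrak{u}^*(0)=0$). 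This gives $\mathfrak{u}(\mathfrak{u}^*(t))\ge t$.

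Finally, the two remaining global properties. Upper semicontinuity of $\mathfrak{u}^*$: since $\mathfrak{u}^*$ is nondecreasing, by the characterization quoted before the lemma it suffices to show $\lim_{\delta\to0^+}\mathfrak{u}^*(t+\delta)=\mathfrak{u}^*(t)$ for each $t$; this follows because if $r>\mathfrak{u}^*(t)$ then $r\in S_t$ means $\mathfrak{u}(r)>t$, hence $\mathfrak{u}(r)>t+\delta$ for small $\delta$, so $r\in S_{t+\delta}$ and $\mathfrak{u}^*(t+\delta)\le r$; letting $r\downarrow\mathfrak{u}^*(t)$ finishes it. For subadditivity: given $t_1,t_2$ with $t_1+t_2<a$, I want $\mathfrak{u}^*(t_1+t_2)\le\mathfrak{u}^*(t_1)+\mathfrak{u}^*(t_2)$. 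The key point is that the superadditivity of $\mathfrak{u}$ on $[0,c]$ dualizes to subadditivity of the "inverse" $\mathfrak{u}^*$: pick $r_1>\mathfrak{u}^*(t_1)$, $r_2>\mathfrak{u}^*(t_2)$ with $r_1+r_2\le c$ (possible by pushing $r_1,r_2$ close to the infima, using continuity-at-$0$ type control when the infima are small), so that $\mathfrak{u}(r_i)>t_i$; then by superadditivity $\mathfrak{u}(r_1+r_2)\ge\mathfrak{u}(r_1)+\mathfrak{u}(r_2)>t_1+t_2$, hence $r_1+r_2\in S_{t_1+t_2}$ and $\mathfrak{u}^*(t_1+t_2)\le r_1+r_2$; letting $r_i\downarrow\mathfrak{u}^*(t_i)$ gives the claim. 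I expect the main obstacle to be the bookkeeping around the endpoint cases --- ensuring $r_1+r_2\le c$ stays in range when forming sums, handling $t=0$ or $\mathfrak{u}^*(t)=0$ separately, and correctly invoking upper semicontinuity of $\mathfrak{u}$ as a one-sided limit from the right at $s=\mathfrak{u}^*(t)$ --- rather than any single substantive idea; each individual implication is a short monotonicity argument.
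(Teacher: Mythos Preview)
The paper does not give a proof of this lemma; it is introduced with ``The following lemma is also easy'' and left to the reader. Your proposal supplies exactly the kind of direct verification the authors presumably had in mind: reading off monotonicity and $\mathfrak{u}^*(0)=0$ from the up-set structure of $S_t$, getting item~(2) from strict monotonicity of $\mathfrak{u}$, item~(3) from right-continuity of a nondecreasing upper semicontinuous $\mathfrak{u}$, upper semicontinuity of $\mathfrak{u}^*$ from the stability of the strict inequality $\mathfrak{u}(r)>t$ under small perturbations of $t$, and subadditivity by dualizing superadditivity. The arguments are correct.

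Two small remarks on the bookkeeping you flagged. For subadditivity, the worry about keeping $r_1+r_2\le c$ disappears once you observe that if $\mathfrak{u}^*(t_1)+\mathfrak{u}^*(t_2)\ge c$ the desired inequality is automatic since $\mathfrak{u}^*$ takes values in $[0,c]$; only the case $\mathfrak{u}^*(t_1)+\mathfrak{u}^*(t_2)<c$ needs the superadditivity step, and there you have room to choose $r_i$ strictly above $\mathfrak{u}^*(t_i)$ with $r_1+r_2\le c$. For item~(2), note that the endpoint $r=c$ is actually outside the stated domain of $\mathfrak{u}^*$ (since $\mathfrak{u}(c)=a\notin[0,a)$); this is a harmless imprecision in the lemma as stated, and your argument is fine for $r\in[0,c)$.
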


\begin{definition}
A {\em continuous} ({\em first-order}) {\em signature} consists of
\begin{enumerate}
\item[(i)] A specific binary predicate symbol $d$, known as the {\em distance symbol};
\item[(ii)] A set of relation symbols, each with a finite arity, such that for each $n$-ary relation symbol $R$ and for each $1\leq i\leq n$ there is a  modulus of continuity $\mathfrak{u}_{R,i}$, which we call the {\em uniform continuity modulus} of $R$ with respect to the $i$-th argument;
\item[(iii)] A set of function symbols, each with a finite arity, such that for each $n$-ary function symbol $F$ and for each $1\leq i\leq n$ there is a modulus of continuity $\mathfrak{u}_{F,i}$, known as the {\em uniform continuity modulus} of $F$ with respect to the $i$-th argument.
\end{enumerate}
\end{definition}

When we speak of a continuous signature $\mathcal{L}$, it is customary to omit the distance symbol $d$ and consider only the relation symbols and function symbols as elements of $\mathcal{L}$.

\begin{definition}
Let $\mathcal{L}$ be a continuous signature. A {\em continuous $\mathcal{L}$-pre-structure} $\mathcal{M}$ is a set $M$ together with an interpretation $d^M$ for $d$, $R^M$ for every relation symbol $R$ in $\mathcal{L}$, and $F^M$ for every function symbol $F$ in $\mathcal{L}$, such that
\begin{enumerate}
\item[(i)] $d^M$ is a metric on $M$;
\item[(ii)] for each $n$-ary relation symbol $R$, $R^M: M^n\to [0,1]$ is such that for each $1\leq i\leq n$ and for all $x_1,\dots,x_{i-1},y, z, x_{i+1},\dots x_n\in M$,
\[\label{eqn:ucl} \begin{array}{l}|R^M(x_1,\dots, x_{i-1},y,x_{i+1},\dots, x_n)-\\
\ \ \ \ \ \ \ \ \ \ R^M(x_1,\dots, x_{i-1},z,x_{i+1},\dots, x_n)| \leq \mathfrak{u}_{R,i}(d^M(y,z));\end{array}\tag{$\mbox{UC}_{\mathcal{L}}$}
\]
\item[(iii)] for each $n$-ary function symbol $F$, $F^M: M^n\to M$ is such that for each $1\leq i\leq n$ and for all $x_1,\dots,x_{i-1},y, z, x_{i+1},\dots x_n\in M$,
$$\begin{array}{l} d^M(F^M(x_1,\dots, x_{i-1},y,x_{i+1},\dots, x_n), \\
\ \ \ \ \ \ \ \ \ \ F^M(x_1,\dots, x_{i-1},z,x_{i+1},\dots, x_n))\leq \mathfrak{u}_{F,i}(d^M(y,z)).\end{array}$$
\end{enumerate}

A {\em continuous $\mathcal{L}$-structure} is a continuous $\mathcal{L}$-pre-structure in which $d^M$ is a complete metric.
\end{definition}

Note that in \cite{BU2008} the interpretation $d^{M}$ in an $\mathcal{L}$-pre-structure $\mathcal{M}$ can be a pseudo-metric instead of a metric. Our definition here is more restrictive. Also, in this paper we will be considering relational continuous structures, that is, those continuous $\mathcal{L}$-structures for which the continuous signature $\mathcal{L}$ does not contain function symbols. Thus only the moduli of continuity $\mathfrak{u}_{R,i}$ for relational symbols $R\in\mathcal{L}$ will be of interest to us. For this reason we use (\ref{eqn:ucl}) to denote the uniform continuity conditions only for relation symbols.

\begin{definition} Let $\mathcal{L}$ be a continuous signature and $\mathcal{M}$ be a continuous $\mathcal{L}$-pre-structure. We say that $\mathcal{M}$ is {\em finite, countable, separable, etc.} if $(M, d^M)$ is finite, countable, separable, etc. respectively.
\end{definition}

Note that a finite continuous $\mathcal{L}$-pre-structure is necessarily a continuous $\mathcal{L}$-structure. In general, given a continuous signature $\mathcal{L}$ and a continuous $\mathcal{L}$-pre-structure $\mathcal{M}$, one may consider the completion $(\bar{M}, d^{\bar{M}})$ of $(M, d^M)$ and define, for any relation symbol $R\in\mathcal{L}$ and function symbol $F\in\mathcal{L}$, $R^{\bar{M}}$ and $F^{\bar{M}}$ naturally. However, the resulting structure might not satisfy (\ref{eqn:ucl}) and hence might not be a continuous $\mathcal{L}$-structure. Under the assumption that all the moduli of continuity associated to $\mathcal{L}$ are upper semicontinuous, one can deduce (\ref{eqn:ucl}) and obtain a continuous $\mathcal{L}$-structure by completion.

\begin{definition}
Let $\mathcal{L}$ be a continuous signature and $\mathcal{M}$, $\mathcal{N}$ be continuous $\mathcal{L}$-pre-structures. We say that $\mathcal{N}$ is a {\em substructure} of $\mathcal{M}$ if all of the following hold:
\begin{enumerate}
\item[(i)] $N\subseteq M$;
\item[(ii)] $d^M\rest (N\times N)=d^N$;
\item[(iii)] for each $n$-ary relation symbol $R$ in $\mathcal{L}$ and for all $u_1,\dots, u_n\in N$,
$$ R^M(u_1,\dots, u_n)=R^N(u_1,\dots, u_n);$$
\item[(iv)] for each $n$-ary function symbol $F$ in $\mathcal{L}$ and for all $u_1,\dots, u_n\in N$,
$$ F^M(u_1,\dots, u_n)=F^N(u_1,\dots, u_n). $$
\end{enumerate}
When $\mathcal{N}$ is a substructure of $\mathcal{M}$, we also say that $\mathcal{M}$ is an {\em extension} of $\mathcal{N}$.
\end{definition}

Note that in case $\mathcal{M}$ and $\mathcal{N}$ are $\mathcal{L}$-structures, from the requirements of completeness of the specified metrics, it follows that the domain of a substructure $\mathcal{N}$ is necessarily a closed subset of the domain of the ambient structure $\mathcal{M}$.

\begin{definition}
Let $\mathcal{L}$ be a continuous signature and $\mathcal{M}, \mathcal{N}$ be continuous $\mathcal{L}$-pre-structures. An {\em isomorphism} from $\mathcal{M}$ to $\mathcal{N}$ is a bijection $\varphi: M\to N$ such that
\begin{enumerate}
\item[(i)] $\varphi$ is an isometry from $(M, d^M)$ to $(N, d^N)$;
\item[(ii)] for each $n$-ary relation symbol $R$ in $\mathcal{L}$ and for all $x_1,\dots, x_n\in M$,
$$ R^M(x_1,\dots, x_n)=R^N(\varphi(x_1),\dots, \varphi(x_n));$$
\item[(iii)] for each $n$-ary function symbol $F$ in $\mathcal{L}$ and for all $x_1,\dots, x_n\in M$,
$$ \varphi(F^M(x_1,\dots, x_n))=F^N(\varphi(x_1),\dots, \varphi(x_n)). $$
\end{enumerate}
An isomorphism from $\mathcal{M}$ to itself is called an {\em automorphism}. The set of all automorphisms of $\mathcal{M}$ is denoted $\Aut(\mathcal{M})$.
\end{definition}

\begin{definition}
Let $\mathcal{L}$ be a continuous signature and $\mathcal{M}, \mathcal{N}$ be continuous $\mathcal{L}$-pre-structures. An {\em isomorphic embedding} from $\mathcal{N}$ into $\mathcal{M}$ is an isomorphism from $\mathcal{N}$ to a substructure of $\mathcal{M}$.
\end{definition}

\begin{definition}
Let $\mathcal{L}$ be a continuous signature and $\mathcal{M}$ be a continuous $\mathcal{L}$-pre-structure. A {\em partial isomorphism} of $\mathcal{M}$ is an isomorphism $p$ from $\mathcal{N}_1$ to $\mathcal{N}_2$, where $\mathcal{N}_1$ and $\mathcal{N}_2$ are substructures of $\mathcal{M}$. The set of all partial isomorphisms of $\mathcal{M}$ is denoted $\Part(\mathcal{M})$.
\end{definition}

\section{Characterizations of Amalgamation Properties}

Throughout the rest of the paper we consider continuous signatures with only finitely many relation symbols. In this section we give characterizations of those moduli of continuity associated with a continuous signature $\mathcal{L}$ for which the classes of finite, countable, or all continuous $\mathcal{L}$-structures have the amalgamation property.

We first recall the definitions of the amalgamation property and of the strong amalgamation property.

\begin{definition} Let $\mathcal{L}$ be a continuous signature and $\mathcal{K}$ be a set of continuous $\mathcal{L}$-pre-structures.
\begin{enumerate}
\item[(i)] $\mathcal{K}$ has the {\em amalgamation property} (AP for short) if for any $\mathcal{M}$, $\mathcal{P}$, $\mathcal{Q}\in\mathcal{K}$ and isomorphic embeddings $\varphi: \mathcal{M}\to \mathcal{P}$ and $\psi:\mathcal{M}\to\mathcal{Q}$, there exist $\mathcal{N}\in\mathcal{K}$ and isomorphic embeddings $\iota:\mathcal{P}\to\mathcal{N}$ and $\tau:\mathcal{Q}\to \mathcal{N}$ such that $\iota\circ \varphi=\tau\circ\psi$, i.e., the following diagram commutes:
\begin{center}
\begin{tikzcd}
& \mathcal{P}\arrow[dr, dashed, "\iota"] & \\
\mathcal{M} \arrow[ur,"\varphi"]\arrow[dr,"\psi"] & & \mathcal{N} \\
& \mathcal{Q}\arrow[ur, dashed, "\tau"] &
\end{tikzcd}
\end{center}
The continuous $\mathcal{L}$-pre-structure $\mathcal{N}$ is called an {\em amalgam} of $\mathcal{P}$ and $\mathcal{Q}$ over $\mathcal{M}$.
\item[(ii)] $\mathcal{K}$ has the {\em strong amalgamation property} (SAP for short) if in the above definition we have in addition that
$$\mbox{range}(\iota)\cap\mbox{range}(\tau)=\mbox{range}(\iota\circ\varphi)=\mbox{range}(\tau\circ\psi). $$
\end{enumerate}
\end{definition}

Our characterizations will involve the following definition.

\begin{definition} Let $\mathcal{L}$ be a continuous signature with only finitely many relation symbols, where all the associated moduli of continuity are nondecreasing.
\begin{enumerate}
\item[(i)] $\mathcal{L}$ is {\em semiproper} if
\begin{itemize}
\item for any $n$-ary $R\in\mathcal{L}$ and $1\leq i\leq n$,
$$I_{R,i}=\{ r\in [0,+\infty)\,:\, \mathfrak{u}_{R,i}(r)<1\}$$ is bounded, and $\mathfrak{u}_{R,i}$ is superadditive on $I_{R,i}$; and \item for any $n$-ary $R\in\mathcal{L}$, where $n\geq 2$, and $1\leq i\leq n$, there is $K_{R,i}>0$ such that $\mathfrak{u}_{R,i}(r)=K_{R,i}r$ for all $r\in I_{R,i}$.
\end{itemize}
\item[(ii)] $\mathcal{L}$ is {\em proper} if $\mathcal{L}$ is semiproper and each $\mathfrak{u}_{R,i}$ is upper semicontinuous on $I_{R,i}$.
\end{enumerate}
\end{definition}

The amalgamation property requires a method to construct extensions of partially defined continuous structures to fully defined ones. In the following we first develop this method. For this, we fix a continuous signature $\mathcal{L}$ with only finitely many relation symbols, where all the associated moduli of continuity are nondecreasing.

\begin{definition} Let $(X, d^X)$ be a metric space and $\mathfrak{u}$ be a nondecreasing modulus of continuity. For any $x, y\in X$, define a pseudo-metric on $X$ by
$$\begin{array}{rcl} d^X_{\mathfrak{u}}(x,y)&=&\inf\left\{\displaystyle\sum_{i=1}^{m}\mathfrak{u}(d^X(z_{i-1}, z_i))\,:\, z_0=x, z_1,\dots, z_m=y\in X\right\}.
\end{array}
$$
\end{definition}

\begin{definition} Let $(X, d^X)$ be a metric space and let  $R\in\mathcal{L}$ be $n$-ary. Define
$$ d^X_R(\overline{x}, \overline{y})=\displaystyle\sum_{i=1}^n d^X_{\mathfrak{u}_{R,i}}(x_i, y_i)$$
for $\overline{x}=(x_1,\dots, x_n), \overline{y}=(y_1,\dots, y_n)\in X^n$.
\end{definition}

For each $n$-ary $R\in \mathcal{L}$, $d^X_R$ is a pseudo-metric on $X^n$.

\begin{lemma}\label{lem:1L} Let $(M,d^M)$ be a metric space with $R^M$ defined on $M^n$ for all $n$-ary $R\in\mathcal{L}$. Then $\mathcal{M}=(M, d^M, (R^M)_{R\in\mathcal{L}})$ is a continuous $\mathcal{L}$-pre-structure, i.e., {\rm (\ref{eqn:ucl})} holds for $\mathcal{M}$ iff for any $n$-ary $R\in\mathcal{L}$, $R^M$ is $1$-Lipschitz with respect to $d^M_R$.
\end{lemma}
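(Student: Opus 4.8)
The plan is to unwind both conditions coordinate by coordinate and bridge them through the chain-infimum defining $d^M_{\mathfrak{u}}$. For the easy direction $(\Leftarrow)$ --- assuming $R^M$ is $1$-Lipschitz with respect to $d^M_R$ --- I would fix an $n$-ary $R\in\mathcal{L}$, an index $i$, points $y,z\in M$, and parameters $x_1,\dots,x_{i-1},x_{i+1},\dots,x_n\in M$ in the other coordinates, and form the two tuples $\overline{y},\overline{z}\in M^n$ that agree off coordinate $i$ and carry $y$, resp.\ $z$, there. Since the single-step chain $z_0=y$, $z_1=z$ is admissible in the infimum defining $d^M_{\mathfrak{u}_{R,i}}(y,z)$, one has $d^M_{\mathfrak{u}_{R,i}}(y,z)\le\mathfrak{u}_{R,i}(d^M(y,z))$, and the remaining summands of $d^M_R(\overline{y},\overline{z})$ vanish on the diagonal; hence $d^M_R(\overline{y},\overline{z})\le\mathfrak{u}_{R,i}(d^M(y,z))$ and the Lipschitz hypothesis gives {\rm (\ref{eqn:ucl})}.

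For $(\Rightarrow)$ I would argue in two moves. First, reduce to a single coordinate: given $\overline{x},\overline{y}\in M^n$, interpolate through the tuples $\overline{w}^{(k)}=(y_1,\dots,y_k,x_{k+1},\dots,x_n)$ for $0\le k\le n$, whose consecutive members differ only in one coordinate, so that by the triangle inequality $|R^M(\overline{x})-R^M(\overline{y})|\le\sum_{i=1}^n|R^M(\overline{w}^{(i-1)})-R^M(\overline{w}^{(i)})|$; it thus suffices to bound each single-coordinate difference by $d^M_{\mathfrak{u}_{R,i}}$ of the two differing entries. Second, for a fixed coordinate $i$ with the other coordinates frozen and an arbitrary chain $z_0=y,z_1,\dots,z_m=z$ in $M$, apply the triangle inequality along the chain and then {\rm (\ref{eqn:ucl})} to each consecutive pair to get $|R^M(\cdots y\cdots)-R^M(\cdots z\cdots)|\le\sum_{k=1}^m\mathfrak{u}_{R,i}(d^M(z_{k-1},z_k))$; taking the infimum over all such chains produces exactly $d^M_{\mathfrak{u}_{R,i}}(y,z)$, and summing the resulting bounds over the $n$ coordinates yields $|R^M(\overline{x})-R^M(\overline{y})|\le d^M_R(\overline{x},\overline{y})$, as required.

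I do not expect a genuine obstacle here: the statement is essentially a transcription of the standard path-pseudo-metric construction, and everything reduces to the triangle inequality together with the fact (noted above) that each $d^M_{\mathfrak{u}_{R,i}}$, and hence $d^M_R$, is a pseudo-metric, so the coordinatewise interpolation is legitimate. The only points I would take care to state explicitly are that $\mathfrak{u}_{R,i}(0)=0$ under the convention fixed in the preliminaries, so the diagonal terms drop out of $d^M_R$, and that the infimum defining $d^M_{\mathfrak{u}_{R,i}}$ is matched exactly --- not merely bounded --- by ranging over all finite chains, which is immediate from the definition.
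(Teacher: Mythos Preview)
Your proposal is correct and follows essentially the same approach as the paper: both directions proceed exactly as you describe, with the paper's proof using the identical coordinate-by-coordinate interpolation $\overline{z}^p=(x_1,\dots,x_p,y_{p+1},\dots,y_n)$ and then inserting arbitrary chains $w_{i,0}=x_i,\dots,w_{i,m_i}=y_i$ in each coordinate before taking the infimum. The only cosmetic difference is that the paper writes the chain step and the coordinate interpolation in a single combined estimate rather than in two separate moves.
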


\begin{proof} Suppose $R\in\mathcal{L}$ is $n$-ary and $R^M$ is $1$-Lipschitz with respect to $d^M_R$. Let $\overline{x}, \overline{y}\in M^n$ only differ at the $i$-th coordinate. Then
$$ |R^M(\overline{x})-R^M(\overline{y})|\leq d^M_{\mathfrak{u}_{R,i}}(x_i,y_i)\leq \mathfrak{u}_{R,i}(d^M(x_i,y_i)). $$

Conversely, suppose (\ref{eqn:ucl}) holds for $\mathcal{M}$ for $n$-ary $R\in\mathcal{L}$. Let $\overline{x}, \overline{y}\in M^n$. For $0\leq p\leq n$, let
$$ \overline{z}^p=(x_1,\dots, x_p, y_{p+1},\dots, y_n). $$
For each $1\leq i\leq n$, let $w_{i,0}=x_i, w_{i,1},\dots, w_{i,m_i}=y_i\in M$ be arbitrary and let
$$ \overline{z}^{i,j}=(x_1,\dots, x_{i-1}, w_{i,j}, y_{i+1},\dots, y_n) $$
for $0\leq j\leq m_i$. Then
$$\begin{array}{rcl}
|R^M(\overline{x})-R^M(\overline{y})|&\leq &\displaystyle\sum_{i=1}^{n}|R^M(\overline{z}^{i-1})-R^M(\overline{z}^{i})| \\
&\leq & \displaystyle\sum_{i=1}^n\sum_{j=1}^{m_i}|R^M(\overline{z}^{i,j-1})-R^M(\overline{z}^{i,j})| \\
&\leq &\displaystyle\sum_{i=1}^n \sum_{j=1}^{m_i} \mathfrak{u}_{R,i}(d^M(w_{i,j-1}, w_j))
\end{array}
$$
Taking the infimum among all $w_{i,0}, \dots, w_{i,m_i}$, we get
$$ |R^M(\overline{x})-R^M(\overline{y})|\leq \displaystyle\sum_{i=1}^n d^M_{\mathfrak{u}_{R,i}}(x_i,y_i)=d^M_R(\overline{x}, \overline{y}). $$
\end{proof}

\begin{definition}{\ }
\begin{enumerate}
\item[(i)] A {\em partially defined continuous $\mathcal{L}$-pre-structure} is a tuple
    $$\mathcal{X}=(X, d^X, (R^X)_{R\in\mathcal{L}})$$ where $(X, d^X)$ is a metric space, and for each $n$-ary $R\in \mathcal{L}$, $R^X:\dom(R^X)\to [0,1]$ is a function with domain $\dom(R^X)\subseteq X^n$ which is $1$-Lipschitz with respect to $d^X_R$.
\item[(ii)] Let $\mathcal{X}$ be a partially defined continuous $\mathcal{L}$-pre-structure. A continuous $\mathcal{L}$-pre-structure $\mathcal{M}$ is a {\em conservative extension} of $\mathcal{X}$ if $(M,d^M)=(X,d^X)$ and for any $R\in\mathcal{L}$, $R^M\rest \dom(R^X)=R^X$.
\end{enumerate}
\end{definition}

\begin{lemma}\label{lem:gconservative} Any partially defined continuous $\mathcal{L}$-pre-structure has a conservative extension.
\end{lemma}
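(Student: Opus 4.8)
The plan is to construct the conservative extension $\mathcal{M}$ of $\mathcal{X} = (X, d^X, (R^X)_{R\in\mathcal{L}})$ by extending each partial relation $R^X$ to a total function $R^M : X^n \to [0,1]$ one relation symbol at a time (there are only finitely many), in such a way that the extended function remains $1$-Lipschitz with respect to $d^X_R$; by Lemma~\ref{lem:1L} this is exactly what is needed for $\mathcal{M}$ to be a continuous $\mathcal{L}$-pre-structure. Fix an $n$-ary $R \in \mathcal{L}$. The natural candidate is the McShane–Whitney-style extension: for $\overline{x} \in X^n$ set
$$ R^M(\overline{x}) = \inf_{\overline{y} \in \dom(R^X)} \bigl( R^X(\overline{y}) + d^X_R(\overline{x}, \overline{y}) \bigr), $$
truncated into $[0,1]$ if necessary (though the truncation at $0$ will not be needed and the truncation at $1$ only improves Lipschitz-ness, so one should check the unbounded infimum case, i.e.\ $\dom(R^X) = \emptyset$, separately and just put $R^X \equiv 0$ there, or more simply replace $R^X(\overline y)$ by $1$ as the base term when the domain is empty).

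The key steps, in order, are: (1) verify that $R^M$ restricted to $\dom(R^X)$ agrees with $R^X$ — this uses that $R^X$ is itself $1$-Lipschitz w.r.t.\ $d^X_R$, so $R^X(\overline y) + d^X_R(\overline x, \overline y) \ge R^X(\overline x)$ for $\overline x \in \dom(R^X)$, while taking $\overline y = \overline x$ gives the reverse inequality; (2) verify $R^M$ is $1$-Lipschitz w.r.t.\ $d^X_R$ by the standard triangle-inequality argument, namely for $\overline x, \overline x' \in X^n$ and any $\overline y \in \dom(R^X)$ one has $R^X(\overline y) + d^X_R(\overline x, \overline y) \le R^X(\overline y) + d^X_R(\overline x', \overline y) + d^X_R(\overline x', \overline x)$, and taking infima over $\overline y$ gives $R^M(\overline x) \le R^M(\overline x') + d^X_R(\overline x, \overline x')$, whence by symmetry $|R^M(\overline x) - R^M(\overline x')| \le d^X_R(\overline x, \overline x')$; (3) observe that $R^M$ takes values in $[0,1]$ — it is $\le 1$ because the infimum is $\le R^X(\overline y_0) \le 1$ for any fixed $\overline y_0 \in \dom(R^X)$, and one arranges $R^M \ge 0$ either by the truncation or by noting $R^X \ge 0$ and $d^X_R \ge 0$; (4) note the $1$-Lipschitz property is preserved under truncation into $[0,1]$, since $t \mapsto \min(1,\max(0,t))$ is $1$-Lipschitz. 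Doing this independently for each of the finitely many $R \in \mathcal{L}$, and keeping $(M,d^M) = (X,d^X)$, yields the desired $\mathcal{M}$ by Lemma~\ref{lem:1L}.

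I do not expect any serious obstacle here; this is a routine adaptation of the classical Lipschitz extension theorem, with $d^X_R$ playing the role of the ambient metric on $X^n$. The one genuinely non-vacuous point to be careful about is that the extension must be carried out with respect to the pseudo-metric $d^X_R$ on $X^n$ (not with respect to, say, $d^X$ coordinatewise), and that one must invoke Lemma~\ref{lem:1L} to translate "$1$-Lipschitz w.r.t.\ $d^X_R$ for every $R$" back into the uniform-continuity condition (\ref{eqn:ucl}). A minor edge case worth a sentence is when $\dom(R^X) = \emptyset$ for some $R$, in which case one simply sets $R^M \equiv 0$ (or any constant), which is trivially $1$-Lipschitz.
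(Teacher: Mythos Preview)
Your approach is correct and is essentially the paper's own argument: both perform a McShane--Whitney extension of each partial $R^X$ with respect to the pseudo-metric $d^X_R$ and then invoke Lemma~\ref{lem:1L}. The paper uses the dual formula
\[
R^M(\overline{x}) = \max\bigl\{0,\ \sup\{ R^X(\overline{y}) - d^X_R(\overline{x},\overline{y}) : \overline{y}\in\dom(R^X)\}\bigr\}
\]
rather than your $\inf$-formula, but the verification is the same in spirit.

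Two small points. First, your claim in step~(3) that the untruncated infimum is $\le R^X(\overline{y}_0)\le 1$ is not right: the bound you get from a single $\overline{y}_0$ is $R^X(\overline{y}_0) + d^X_R(\overline{x},\overline{y}_0)$, and the $d^X_R$ term can be arbitrarily large (even $+\infty$, since moduli of continuity are allowed to take the value $+\infty$). So the truncation at $1$ that you mention in your setup and in step~(4) is genuinely needed, not optional. Second, the paper later relies on the \emph{specific} extension formula chosen here (e.g., in the proof of Theorem~\ref{thm:Fraisee} to show the extension stays $V$-valued, and in Section~6 to show it is invariant under the group action), so while your dual formula proves the lemma as stated, be aware that the paper's choice is not arbitrary.
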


\begin{proof} Let $\mathcal{X}$ be a partially defined continuous $\mathcal{L}$-pre-structure. Define $(M, d^M)=(X,d^X)$ and for any $n$-ary $R\in\mathcal{L}$ and $\overline{x}\in M^n$, define
$$ R^M(\overline{x})=\max\{0,\sup\{ R^X(\overline{y})-d^X_R(\overline{x}, \overline{y})\,:\, \overline{y}\in\dom(R^X)\}\}.
$$
It is clear that if $\overline{x}\in\dom(R^X)$, then $R^M(\overline{x})=R^X(\overline{x})$.

To complete the proof, it suffices to verify that $\mathcal{M}$ satisfies (\ref{eqn:ucl}). By Lemma~\ref{lem:1L} it suffices to show that for any $n$-ary $R\in\mathcal{L}$, $R^M$ is $1$-Lipschitz with respect to $d^M_R$. Suppose $\overline{x}, \overline{y}\in M^n$. Assume toward a contradiction that $R^M(\overline{x})-R^M(\overline{y})>d^M_R(\overline{x},\overline{y})$. Let $\epsilon>0$ such that
$$ R^M(\overline{x})-R^M(\overline{y})>d^M_R(\overline{x},\overline{y})+\epsilon. $$
In particular $R^M(\overline{x})>0$.
By the definition of $R^M(\overline{x})$ there is $\overline{z}\in \dom(R^X)$ such that
$$ R^X(\overline{z})-d^X_R(\overline{x},\overline{z})> R^M(\overline{x})-\epsilon. $$
Then
$$\begin{array}{rcl} R^X(\overline{z})-R^M(\overline{y})&=& R^X(\overline{z})-R^M(\overline{x})+R^M(\overline{x})-R^M(\overline{y}) \\
&>&  d^X_R(\overline{x},\overline{z})-\epsilon+ d^X_R(\overline{x},\overline{y})+\epsilon \\
&\geq&  d^X_R(\overline{z},\overline{y}).
\end{array}$$
Thus $R^X(\overline{z})- d^X_R(\overline{z},\overline{y})>R^M(\overline{y})$, contradicting the definition of $R^M(\overline{y})$.
\end{proof}

We are now ready to prove the first characterization.

\begin{theorem}\label{thm:finap} Let $\mathcal{L}$ be a continuous signature with only finitely many relation symbols, where all the associated moduli of continuity are nondecreasing. The following are equivalent:
\begin{enumerate}
\item $\mathcal{L}$ is semiproper.
\item The class of all finite continuous $\mathcal{L}$-structures has the AP.
\item The class of all finite continuous $\mathcal{L}$-structures has the SAP.
\end{enumerate}
\end{theorem}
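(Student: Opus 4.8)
The plan is to prove the cycle $(1)\Rightarrow(3)\Rightarrow(2)\Rightarrow(1)$, with the bulk of the work in $(1)\Rightarrow(3)$ and in the contrapositive of $(2)\Rightarrow(1)$. The implication $(3)\Rightarrow(2)$ is trivial. For $(1)\Rightarrow(3)$, suppose $\mathcal{L}$ is semiproper and we are given finite continuous $\mathcal{L}$-structures $\mathcal{M},\mathcal{P},\mathcal{Q}$ with embeddings $\varphi\colon\mathcal{M}\to\mathcal{P}$ and $\psi\colon\mathcal{M}\to\mathcal{Q}$. As usual we may assume $\mathcal{M}=\mathcal{P}\cap\mathcal{Q}$ is a common substructure and the embeddings are inclusions. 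Take the underlying set $N=P\cup Q$; the first task is to put a metric $d^N$ on $N$ extending $d^P$ and $d^Q$. Here I would use the standard metric-amalgamation recipe: for $p\in P\setminus M$ and $q\in Q\setminus M$ set $d^N(p,q)=\min_{a\in M}\big(d^P(p,a)+d^Q(a,q)\big)$, which (since $M\neq\emptyset$; the empty case is handled separately) is a genuine metric on $N$ restricting correctly, and which moreover is a \emph{strong} amalgam of the two metric spaces, i.e.\ no new coincidences are created. The second task is to define the relations. Form the partially defined continuous $\mathcal{L}$-pre-structure $\mathcal{X}=(N,d^N,(R^X)_{R\in\mathcal{L}})$ where $R^X$ is the common function $R^P\cup R^Q$ with domain $\dom(R^X)=P^n\cup Q^n$. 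The key point to check is that $R^X$ is $1$-Lipschitz with respect to $d^N_R$; granting that, Lemma~\ref{lem:gconservative} yields a conservative extension $\mathcal{N}$, which is then a finite continuous $\mathcal{L}$-structure amalgamating $\mathcal{P}$ and $\mathcal{Q}$ over $\mathcal{M}$, and the strong-amalgam property of the metric forces $\range(\iota)\cap\range(\tau)=M$, giving SAP.

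The crux is therefore: \emph{why is $R^P\cup R^Q$ $1$-Lipschitz with respect to $d^N_R$?} By Lemma~\ref{lem:1L} this reduces, for each coordinate $i$ and each pair of tuples one from $P^n$ and one from $Q^n$ differing appropriately, to controlling the path pseudo-metric $d^N_{\mathfrak u_{R,i}}$ on $N$. The only genuinely new phenomenon, compared to working inside $P$ or inside $Q$ separately, is a path $z_0,\dots,z_m$ in $N$ that crosses between $P$ and $Q$; every such crossing passes through a point of $M$, so such a path decomposes into $P$-segments and $Q$-segments glued at points of $M$. Here is exactly where semiproperness enters. For a binary-or-higher $R$ we have $\mathfrak u_{R,i}(r)=K_{R,i}r$ on $I_{R,i}$ (and $\mathfrak u_{R,i}\geq 1$ off $I_{R,i}$, while $R^M$ values lie in $[0,1]$), so $\mathfrak u_{R,i}$ behaves essentially like a scaled metric and the triangle inequality for $d^N$ transfers directly; the non-linear behaviour only occurs where $\mathfrak u_{R,i}$ already saturates at $1$, which cannot hurt the Lipschitz bound since the total oscillation of $R$ is at most $1$. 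For a unary $R$, where $d^N_{\mathfrak u_{R,i}}$ need not simplify, one uses superadditivity of $\mathfrak u_{R,i}$ on the bounded set $I_{R,i}$: superadditivity exactly says that refining a path (inserting intermediate points) can only increase $\sum\mathfrak u_{R,i}(d(z_{i-1},z_i))$ when the steps stay small, so an optimal path need not subdivide, and in particular a crossing path is no cheaper than staying put; combined with $d^N$ being a strong metric amalgam this yields that $d^N_{\mathfrak u_{R,i}}$ restricted to $P$ (resp.\ $Q$) agrees with $d^P_{\mathfrak u_{R,i}}$ (resp.\ $d^Q_{\mathfrak u_{R,i}}$), and between $P$ and $Q$ it is bounded below by what is needed. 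I expect this path-analysis — showing the amalgamated path pseudo-metric $d^N_{\mathfrak u_{R,i}}$ does not collapse distances that $d^P_{\mathfrak u_{R,i}}$ and $d^Q_{\mathfrak u_{R,i}}$ kept apart — to be the main obstacle, and semiproperness (linearity for arity $\geq 2$, superadditivity on a bounded interval for arity $1$) is precisely the hypothesis that makes it go through.

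For $(2)\Rightarrow(1)$ I argue by contraposition: suppose $\mathcal{L}$ is not semiproper and produce a failure of AP among finite continuous $\mathcal{L}$-structures. There are three ways semiproperness can fail, and each is handled by a small explicit example. If $I_{R,i}$ is unbounded for some $R$, then $\mathfrak u_{R,i}$ imposes no real constraint at large distances, and one builds $\mathcal{M}\subseteq\mathcal{P}$, $\mathcal{M}\subseteq\mathcal{Q}$ where the free metric amalgam would place two points at a distance on which $\mathfrak u_{R,i}$ is still $<1$ but the required relation values (inherited from $\mathcal P$ and $\mathcal Q$) differ by too much to satisfy (\ref{eqn:ucl}) at \emph{any} allowed distance — since the metric distance between a point of $P\setminus M$ and a point of $Q\setminus M$ is bounded below by the triangle inequality through $M$, a contradiction with $\mathfrak u_{R,i}$ being $<1$ there is forced. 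If $\mathfrak u_{R,i}$ fails to be superadditive on $I_{R,i}$, say $\mathfrak u_{R,i}(s+t)<\mathfrak u_{R,i}(s)+\mathfrak u_{R,i}(t)$ with $s+t\in I_{R,i}$, one uses a three-point configuration: $\mathcal M$ a single point $a$ (or two points), $\mathcal P$ adding a point $p$ with $d(a,p)=s$ and a suitable value of $R$, $\mathcal Q$ adding $q$ with $d(a,q)=t$, chosen so that the gap in $R$-values between $p$ and $q$ exceeds $\mathfrak u_{R,i}(s+t)$ but is consistent inside $\mathcal P$ and inside $\mathcal Q$; since in any amalgam $d(p,q)\le s+t$, condition (\ref{eqn:ucl}) must fail. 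Finally, if some $R$ with arity $\ge 2$ has $\mathfrak u_{R,i}$ not of the form $K_{R,i}r$ on $I_{R,i}$, one exploits the extra coordinate: in a binary relation one can, via a point that is far in coordinate $j\ne i$ so that the $R$-value is ``anchored'', force $R$ to be affine-but-nonlinear in coordinate $i$, and then a two-step vs.\ one-step comparison using the failure of linearity produces two structures with no common extension. I would organise these as Lemmas, each of the form ``if such-and-such defect occurs then AP fails,'' and conclude. The arity-$\ge 2$ case is the fiddliest of the three and I would present it last.
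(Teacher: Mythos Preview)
Your overall architecture matches the paper's exactly: for $(1)\Rightarrow(3)$ build the metric amalgam $N=P\cup Q$, treat $R^P\cup R^Q$ as a partially defined relation, verify $1$-Lipschitz with respect to $d^N_R$, and invoke Lemma~\ref{lem:gconservative}; for $(2)\Rightarrow(1)$ build small counterexamples. Two points in your write-up are off, though.

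First, in $(1)\Rightarrow(3)$ for unary $R$ you have the role of superadditivity backwards. Superadditivity says $\mathfrak u_{R,1}(a)+\mathfrak u_{R,1}(b)\le\mathfrak u_{R,1}(a+b)$, so refining a step \emph{decreases} the $\mathfrak u$-path sum, not increases it; your sentence ``an optimal path need not subdivide'' is therefore unsupported. What the paper actually does is simpler than path analysis: for $u\in P\setminus M$ and $v\in Q\setminus M$ one has, for each $z\in M$,
\[
|R^X(u)-R^X(v)|\le\mathfrak u_{R,1}(d^P(u,z))+\mathfrak u_{R,1}(d^Q(z,v))\le\mathfrak u_{R,1}\bigl(d^P(u,z)+d^Q(z,v)\bigr),
\]
the last step by superadditivity. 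Then, crucially using that $M$ is \emph{finite} so the infimum defining $d^X(u,v)$ is a minimum, monotonicity of $\mathfrak u_{R,1}$ gives $|R^X(u)-R^X(v)|\le\mathfrak u_{R,1}(d^X(u,v))$. For $n\ge 2$ the linearity indeed makes $d^X_{\mathfrak u_{R,i}}=K_{R,i}d^X$, but the cross case $\bar x\in P^n$, $\bar y\in Q^n$ still needs a coordinate-by-coordinate decomposition via two auxiliary tuples in $P^n$ and $Q^n$; this is the only place requiring care.

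Second, in $(2)\Rightarrow(1)$ your Case~1 is both misstated and unnecessary. If $I_{R,i}$ is unbounded then $\mathfrak u_{R,i}(r)<1$ for \emph{every} $r$, so the constraint is active at all distances---the opposite of ``imposes no real constraint.'' Your proposed construction (force $|R(p)-R(q)|$ to exceed $\mathfrak u_{R,i}$ at every possible distance) cannot work when $\sup\mathfrak u_{R,i}=1$, since there is no uniform gap. The paper instead proves superadditivity first (your Case~2) via a one-point base; unboundedness of $I_{R,i}$ then contradicts superadditivity directly, since $\mathfrak u_{R,i}(nr)\ge n\,\mathfrak u_{R,i}(r)\to\infty$. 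For Case~3 the paper uses the extra coordinate to anchor $R$-values (a point placed beyond $\sup I_{R,n}$) and derives \emph{sub}additivity of $\mathfrak u_{R,i}$, which together with superadditivity forces linearity; your sketch is heading there but should be made explicit.
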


\begin{proof} We prove the implications (1)$\Rightarrow$(3) and (2)$\Rightarrow$(1).

For (1)$\Rightarrow$(3), let $\mathcal{M}$, $\mathcal{P}$, $\mathcal{Q}$ be finite continuous $\mathcal{L}$-structures, and let $\varphi: \mathcal{M}\to \mathcal{P}$ and $\psi:\mathcal{M}\to\mathcal{Q}$ be isomorphic embeddings. To ease notation we regard $\varphi$ and $\psi$ as identity maps. Let $X$ be the disjoint union of $M$, $P\setminus M$, and $Q\setminus M$. Define a metric $d^X$ on $X$ by the obvious definitions except for $x\in P\setminus M$ and $y\in Q\setminus M$ we set
$$ d^X(x, y)=\inf\{d^P(x,z)+d^Q(z,y)\,:\,z\in M\}.$$
For an $n$-ary $R\in\mathcal{L}$, $R^X$ is naturally defined on
$\dom(R^X)= P^n \cup Q^n$.

We verify that $\mathcal{X}=(X,d^X, (R^X)_{R\in\mathcal{L}})$ is a partially defined continuous $\mathcal{L}$-pre-structure. For this, fix an $n$-ary $R\in\mathcal{L}$ and consider $\overline{x}=(x_1,\dots, x_n), \overline{y}=(y_1,\dots, y_n)\in \dom(R^X)$.

First suppose $n=1$. In this case we claim that for any $u, v\in P\cup Q$,
$$ |R^X(u)-R^X(v)|\leq \mathfrak{u}_{R,1}(d^X(u,v)). $$
From the claim it follows quickly that
$$ |R^X(x_1)-R^X(y_1)|\leq d^X_R(x_1, y_1). $$
To prove the claim, we only need to consider the situation where $u\in P\setminus M$ and $v\in Q\setminus M$. Note that for any $z\in M$,
$$\begin{array}{rcl} |R^X(u)-R^X(v)|&\leq& |R^X(u)-R^X(z)|+|R^X(z)-R^X(v)|\\
&=&|R^P(u)-R^P(z)|+|R^Q(z)-R^Q(v)| \\
&\leq& \mathfrak{u}_{R,1}(d^P(u, z))+\mathfrak{u}_{R,1}(d^Q(z,v))
\end{array}
$$
by (\ref{eqn:ucl}) for $R^P$ and $R^Q$.
If $d^X(u,v)\not\in I_{R,1}$, then we certainly have
$$ |R^X(u)-R^X(v)|\leq 1\leq \mathfrak{u}_{R,1}(d^X(u,v)). $$
Otherwise, we may consider only those $z\in M$ with $d^P(u, z)+d^Q( z,v)\in I_{R,1}$. By the superadditivity of $\mathfrak{u}_{R,1}$ on $I_{R,1}$, we have
$$ |R^X(u)-R^X(v)|\leq \mathfrak{u}_{R,1}(d^P(u,z)+d^Q(z,v)).
$$
Taking the infimum over such $z\in M$, and noting that $M$ is finite, we have by the monotonicity of $\mathfrak{u}_{R,1}$ that
$$ |R^X(u)-R^X(v)|\leq \mathfrak{u}_{R,1}(d^X(u,v)). $$

Next suppose $n\geq 2$. In this case we have that for all $1\leq i\leq n$, there is some $K_{R,i}>0$ such that $\mathfrak{u}_{R,i}(r)=K_{R,i} r$ for all $r\in I_{R,i}$. It is easy to check that for any $u, v\in P$,
$$d^X_{\mathfrak{u}_{R,i}}(u, v)= d^P_{\mathfrak{u}_{R,i}}(u, v)=K_{R,i}d^P(u, v). $$
Similarly for $u, v\in Q$. From these, the cases when $\overline{x}, \overline{y}\in P^n$ and $\overline{x}, \overline{y}\in Q^n$ quickly follow from Lemma~\ref{lem:1L}.

Next we consider the case where $\overline{x}\in P^n$ and $\overline{y}\in Q^n$. Define
$$\begin{array}{rcl}
S_0 & =& \{  i\,:\, x_i, y_i\in M\} \\
S_1 & =& \{  i\,:\, x_i\in P\setminus M, y_i\in M\} \\
S_2 & =& \{  i\,:\, x_i\in M, y_i\in Q\setminus M\} \\
S_3 & =& \{ i \,:\, x_i\in P\setminus M, y_i\in Q\setminus M\}.
\end{array}
$$
Define $\overline{u}=(u_1,\dots, u_n), \overline{v}=(v_1,\dots, v_n)\in\dom(R^X)$ by
$$ u_i=\left\{\begin{array}{ll} x_i & \mbox{ if $i\in S_2\cup S_3$} \\ y_i & \mbox{ if $i\in S_0\cup S_1$} \end{array}\right. $$
and
$$ v_i=\left\{\begin{array}{ll} x_i & \mbox{ if $i\in S_2$} \\ y_i & \mbox{ if $i\in S_0\cup S_1\cup S_3$.} \end{array}\right. $$

Given any $\overline{z}\in M^n$, by Lemma~\ref{lem:1L} we have
$$\begin{array}{rcl} |R^X(\overline{u})-R^X(\overline{v})| &\leq & |R^X(\overline{u})-R^X(\overline{z})|+|R^X(\overline{z})-R^X(\overline{v})| \\
& \leq & d^P_R(\overline{u},\overline{z})+d^Q_R(\overline{z},\overline{v}) \\
& =& \displaystyle\sum_{i=1}^n d^P_{\mathfrak{u}_{R,i}}(u_i,z_i)+\sum_{i=1}^n d^Q_{\mathfrak{u}_{R,i}}(z_i,v_i) \\
&=& \displaystyle\sum_{i=1}^n K_{R,i}(d^P(u_i, z_i)+d^Q(z_i, v_i))
\end{array}
$$
Taking the infimum over all $\overline{z}\in M^n$, we get that
$$ |R^X(\overline{u})-R^X(\overline{v})|\leq \displaystyle\sum_{i=1}^n K_{R,i}d^X(u_i,v_i)=\sum_{i=1}^nd^X_{\mathfrak{u}_{R,i}}(u_i, v_i)= d^X_R(\overline{u},\overline{v}).
$$
Thus
$$\begin{array}{rcl} & & |R^X(\overline{x})-R^X(\overline{y})|\\
& \leq & |R^X(\overline{x})-R^X(\overline{u})|+|R^X(\overline{u})-R^X(\overline{v})|+|R^X(\overline{v})-R^X(\overline{y})| \\
& \leq & d_R^X(\overline{x},\overline{u})+d^X_R(\overline{u},\overline{v})+d^X_R(\overline{v},\overline{y}) \\
& \leq & \displaystyle\sum_{i\in S_0\cup S_1} d^X_{\mathfrak{u}_{R,i}}(x_i, y_i) + \sum_{i\in S_3} d^X_{\mathfrak{u}_{R,i}}(x_i, y_i)+\sum_{i\in S_2} d^X_{\mathfrak{u}_{R,i}}(x_i,y_i) \\
&=& d^X_R(\overline{x},\overline{y}).
\end{array}
$$

We have thus completed the verification that $\mathcal{X}$ is a partially defined continuous $\mathcal{L}$-pre-structure.
Applying  Lemma~\ref{lem:gconservative} to $\mathcal{X}$, we obtain a conservative extension $\mathcal{N}$ of $\mathcal{X}$. This $\mathcal{N}$ satisfies the requirements of the strong amalgamation property.

(2)$\Rightarrow$(1). Let $R\in\mathcal{L}$ be $n$-ary and $1\leq i\leq n$. We claim that
$$\mathfrak{u}_{R,i}(r_1+r_2)\geq \mathfrak{u}_{R,i}(r_1)+\mathfrak{u}_{R,i}(r_2)$$ for all $r_1, r_2>0$ with $r_1+r_2\in I_{R,i}=\{r\,:\, \mathfrak{u}_{R,i}(r)< 1\}$. To prove the claim, let $r_1, r_2>0$ so that $r_1+r_2\in I_{R,i}$. Consider finite continuous $\mathcal{L}$-structures $\mathcal{M}$, $\mathcal{P}$, $\mathcal{Q}$ where for all $R'\in\mathcal{L}$ with $R'\neq R$, the values of ${R'}^M$, ${R'}^P$, ${R'}^Q$ are identically $0$, and
\begin{itemize}
\item $M=\{x_0\}$, $R^M(\overline{x})=\mathfrak{u}_{R,i}(r_1)$ where $\overline{x}=(x_0,\dots, x_0)\in M^n$;
\item $P= \{ x_0, x_1\}$, $d^P(x_0, x_1)=r_1$, and for all $\overline{y}=(y_1, \dots, y_n)\in P^n$,
$$R^P(\overline{y})=\left\{\begin{array}{ll} \mathfrak{u}_{R, i}(r_1) & \mbox{ if $y_i=x_0$} \\ 0 & \mbox{otherwise;} \end{array}\right.$$
\item $Q=\{x_0, x_2\}$, $d^Q(x_0,x_2)=r_2$, and for all $\overline{y}=(y_1, \dots, y_n)\in Q^n$,
$$R^Q(\overline{y})=\left\{\begin{array}{ll} \mathfrak{u}_{R, i}(r_1) & \mbox{ if $y_i=x_0$} \\ \min\{1, \mathfrak{u}_{R, i}(r_1)+\mathfrak{u}_{R, i}(r_2)\} & \mbox{otherwise.} \end{array}\right.$$
\end{itemize}
Let $\mathcal{N}$ be an amalgam of $\mathcal{P}$ and $\mathcal{Q}$ over $\mathcal{M}$. Let $\overline{x}'$ be obtained from $\overline{x}$ by replacing the $i$-th coordinate of $\overline{x}$ by $x_1$, and similarly $\overline{x}''$ be obtained from $\overline{x}$ by replacing the $i$-th coordinate of $\overline{x}$ by $x_2$. Then by the monotonicity of $\mathfrak{u}_{R,i}$, we have
$$\begin{array}{rcl} \mbox{min}\{1, \mathfrak{u}_{R,i}(r_1)+\mathfrak{u}_{R,i}(r_2)\}&=&|R^N(\overline{x}')-R^N(\overline{x}'')| \\
&\leq& \mathfrak{u}_{R,i}(d^N(x_1, x_2)) \\
&\leq& \mathfrak{u}_{R,i}(d^P(x_1,x_0)+d^Q(x_0,x_2))=\mathfrak{u}_{R,i}(r_1+r_2).
\end{array}$$
Since $\mathfrak{u}_{R,i}(r_1+r_2)<1$, we have $\mathfrak{u}_{R,i}(r_1)+\mathfrak{u}_{R,i}(r_2)\leq \mathfrak{u}_{R,i}(r_1+r_2)$ as desired.

It follows that $I_{R,i}$ is bounded. In fact, if $I_{R,i}$ were unbounded then $I_{R,i}=[0,+\infty)$ and $M=\sup\{\, \mathfrak{u}_{R,i}(r)\,:\, r\geq 0\}\leq 1$. Let $r_1, r_2>0$ be such that $\mathfrak{u}_{R,i}(r_1),\mathfrak{u}_{R,i}(r_2)>M/2$. Then $\mathfrak{u}_{R,i}(r_1+r_2)>M$, contradicting the definition of $M$.

For the rest of the semiproperness of $\mathcal{L}$, suppose $n\geq 2$. Without loss of generality consider $i=1$. The proof for $1<i\leq n$ is similar. Let $r_1, r_2$ be such that $r_1+r_2\in I_{R,i}$. Let $r>r_1+r_2$ be an upper bound for $I_{R,n}$. Consider finite continuous $\mathcal{L}$-structures $\mathcal{M}$, $\mathcal{P}$, $\mathcal{Q}$ where for all $R'\in\mathcal{L}$ with $R'\neq R$, the values of ${R'}^M$, ${R'}^P$, ${R'}^Q$ are identically $0$, and
\begin{itemize}
\item $M=\{x_0, u_0\}$, $d^M(x_0, u_0)=r_1+r_2$, $R^M(\overline{x})=0$ for all $\overline{x}\in M^n$;
%$$R^M(\overline{y})=\left\{\begin{array}{ll} \mathfrak{u}_{R,1}(r_1+r_2) & \mbox{ if $y_1=x_0$} \\
%0 & \mbox{ if $y_1=u_0$;} \end{array}\right.$$
\item $P= \{ x_0, u_0, x_1\}$, $d^P(x_0, x_1)=d^P(u_0, x_1)=r>r_1+r_2$,
$$R^P(\overline{y})=\left\{\begin{array}{ll} \mathfrak{u}_{R,1}(r_1+r_2) & \mbox{ if $y_1=x_0$ and $y_n=x_1$} \\
0 & \mbox{ otherwise;} \end{array}\right.$$
\item $Q=\{x_0, u_0, x_2\}$, $d^Q(x_0,x_2)=r_1$, $d^Q(u_0, x_2)=r_2$,
$R^Q(\overline{z})=0$ for all $\overline{z}\in Q^n$.
\end{itemize}
Let $\mathcal{N}$ be an amalgam of $\mathcal{P}$ and $\mathcal{Q}$ over $\mathcal{M}$. Let
$$\begin{array}{rcl}
\overline{y}&=& (x_0, x_0, \dots, x_0, x_1)\in P \\
\overline{y}'&=& (u_0, x_0, \dots, x_0, x_1)\in P \\
\overline{w}&=& (x_2, x_0, \dots, x_0, x_1)\in N.
\end{array}
$$
Then
$$\begin{array}{rcl}
\mathfrak{u}_{R,1}(r_1+r_2)&=& |R^N(\overline{y})-R^N(\overline{y}')| \\
&\leq& |R^N(\overline{y})-R^N(\overline{w})|+|R^N(\overline{w})-R^N(\overline{y}')| \\
&\leq& \mathfrak{u}_{R,1}(d^N(x_0, x_2))+\mathfrak{u}_{R,1}(d^N(u_0, x_2)) \\
&=& \mathfrak{u}_{R,1}(r_1)+\mathfrak{u}_{R,1}(r_2).
\end{array}
$$
Thus we actually have $\mathfrak{u}_{R,1}(r_1+r_2)=\mathfrak{u}_{R,1}(r_1)+\mathfrak{u}_{R,1}(r_2)$ for all $r_1+r_2\in I_{R,1}$. This implies that there is $K_1>0$ such that $\mathfrak{u}_{R,1}(r)=K_1r$ for all $r\in I_{R,1}$.
\end{proof}

\begin{theorem}\label{thm:allap} Let $\mathcal{L}$ be a continuous signature with only finitely many relation symbols, where all the associated moduli of continuity are nondecreasing. The following are equivalent:
\begin{enumerate}
\item $\mathcal{L}$ is proper.
\item The class of all countable continuous $\mathcal{L}$-pre-structures has the AP.
\item The class of all countable continuous $\mathcal{L}$-pre-structures has the SAP.
\item The class of all continuous $\mathcal{L}$-pre-structures has the AP.
    \item The class of all continuous $\mathcal{L}$-pre-structures has the SAP.
\end{enumerate}
\end{theorem}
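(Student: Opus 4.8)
The plan is to establish $(1)\Rightarrow(5)$ and $(1)\Rightarrow(3)$ by a single amalgamation construction, to note that $(5)\Rightarrow(4)$ and $(3)\Rightarrow(2)$ are trivial, and to establish $(2)\Rightarrow(1)$ together with $(4)\Rightarrow(1)$ by a single obstruction construction; this closes two cycles through all five statements. So there are really only two things to do: (a) amalgamate arbitrary continuous $\mathcal{L}$-pre-structures when $\mathcal{L}$ is proper, in a way that produces a strong amalgam and preserves countability; (b) exhibit, when $\mathcal{L}$ is not proper, a failure of AP already inside the class of countable continuous $\mathcal{L}$-pre-structures.

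For (a), I would re-run the construction from the proof of Theorem~\ref{thm:finap}, $(1)\Rightarrow(3)$, essentially verbatim. Given $\mathcal{M},\mathcal{P},\mathcal{Q}$ with the embeddings treated as inclusions, let $X$ be the disjoint union of $M$, $P\setminus M$ and $Q\setminus M$, let $d^X$ be given by the obvious formulas together with $d^X(x,y)=\inf\{d^P(x,z)+d^Q(z,y):z\in M\}$ for $x\in P\setminus M$ and $y\in Q\setminus M$ (passing to the associated metric space, and checking by (\ref{eqn:ucl}) and continuity of the moduli at $0$ that the relations descend, in case $d^X$ is only a pseudo-metric), and set $\dom(R^X)=P^n\cup Q^n$ with $R^X$ inherited from $\mathcal{P}$ and $\mathcal{Q}$. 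One then verifies that $\mathcal{X}$ is a partially defined continuous $\mathcal{L}$-pre-structure, i.e.\ that each $R^X$ is $1$-Lipschitz with respect to $d^X_R$, and applies Lemma~\ref{lem:gconservative}; the resulting conservative extension $\mathcal{N}$ is an amalgam of $\mathcal{P}$ and $\mathcal{Q}$ over $\mathcal{M}$, a strong one as in the finite case, and countable whenever $\mathcal{M},\mathcal{P},\mathcal{Q}$ are, which also yields $(1)\Rightarrow(3)$. The $1$-Lipschitz verification copies Theorem~\ref{thm:finap} line by line except at a single point: in the unary case it requires $\inf_{z\in M}\mathfrak{u}_{R,1}(d^P(u,z)+d^Q(z,v))\le\mathfrak{u}_{R,1}(d^X(u,v))$, and where a finite $M$ attains the infimum defining $d^X(u,v)$, an infinite $M$ merely approaches it from above. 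Since $\mathfrak{u}_{R,1}$ is nondecreasing, the left-hand side equals $\lim_{r\to d^X(u,v)^+}\mathfrak{u}_{R,1}(r)$, and this equals $\mathfrak{u}_{R,1}(d^X(u,v))$ \emph{exactly because} $\mathfrak{u}_{R,1}$ is upper semicontinuous on $I_{R,1}$ (upper semicontinuity being right continuity, for nondecreasing functions). For $R$ of arity $\ge 2$ the modulus $\mathfrak{u}_{R,i}$ is linear on $I_{R,i}$, hence continuous, and factors out of the infima in question, so the computation of Theorem~\ref{thm:finap} carries over unchanged. This is the only use of properness beyond semiproperness.

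For (b): every finite continuous $\mathcal{L}$-structure is a countable continuous $\mathcal{L}$-pre-structure, and the proof of $(2)\Rightarrow(1)$ in Theorem~\ref{thm:finap} used only the existence of \emph{some} amalgam, so either (2) or (4) already forces $\mathcal{L}$ to be semiproper; it remains to force upper semicontinuity of each unary $\mathfrak{u}_{R,1}$ on $I_{R,1}$ (for arity $\ge 2$ this is automatic from linearity). Suppose it fails at some $r_0\in I_{R,1}$, so that $c:=\lim_{r\to r_0^+}\mathfrak{u}_{R,1}(r)-\mathfrak{u}_{R,1}(r_0)>0$. Let $\mathcal{M}$ have universe $\{z_k:k\ge 1\}$ with $d^M(z_k,z_\ell)=|1/k-1/\ell|$ — a countable, non-complete pre-structure, in which the $z_k$ form a Cauchy sequence with no limit — let $\mathcal{P}=\mathcal{M}\cup\{x\}$ with $d^P(x,z_k)=1/k$ and $\mathcal{Q}=\mathcal{M}\cup\{y\}$ with $d^Q(y,z_k)=r_0+1/k$, interpret every relation symbol other than $R$ as identically $0$, and put $R^M\equiv 0$, $R^P(x)=0$, $R^Q(y)=v$ where $\mathfrak{u}_{R,1}(r_0)<v\le\mathfrak{u}_{R,1}(r_0)+c$ is chosen in $[0,1]$ (possible since $\mathfrak{u}_{R,1}(r_0)<1$). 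These are legitimate continuous $\mathcal{L}$-(pre-)structures with $\mathcal{M}$ a common substructure, the condition (\ref{eqn:ucl}) inside $\mathcal{Q}$ holding because $\mathfrak{u}_{R,1}(r_0+1/k)\ge\mathfrak{u}_{R,1}(r_0)+c\ge v$ for every $k$. But any amalgam $\mathcal{N}$ of $\mathcal{P}$ and $\mathcal{Q}$ over $\mathcal{M}$ would satisfy $d^N(x,y)\le d^P(x,z_k)+d^Q(z_k,y)=r_0+2/k$ for all $k$, hence $d^N(x,y)\le r_0$, so $|R^N(x)-R^N(y)|\le\mathfrak{u}_{R,1}(d^N(x,y))\le\mathfrak{u}_{R,1}(r_0)$ by monotonicity, contradicting $|R^N(x)-R^N(y)|=v>\mathfrak{u}_{R,1}(r_0)$. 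Thus no amalgam exists and AP fails in both classes.

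The hard part is (a): passing from a finite common part $\mathcal{M}$, where all the relevant infima are attained, to an arbitrary one. The content of the argument is precisely that upper semicontinuity of the unary moduli is what allows those infima to be pushed through the moduli — and, dually, that a non-convergent Cauchy sequence inside $\mathcal{M}$ is what manufactures an unattained infimum witnessing the failure of AP when upper semicontinuity is absent.
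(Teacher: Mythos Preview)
Your argument is essentially the paper's own. For $(1)\Rightarrow(3),(5)$ both rerun the amalgamation from Theorem~\ref{thm:finap} and replace the single appeal to finiteness of $M$ (attaining the infimum in the unary $1$-Lipschitz check) by upper semicontinuity of $\mathfrak{u}_{R,1}$; for $(2),(4)\Rightarrow(1)$ both inherit semiproperness from Theorem~\ref{thm:finap} and then, at a jump point $r_0$ of some unary modulus, take $\mathcal{M}$ to be a non-convergent Cauchy sequence and adjoin single points at distances tending to $0$ and to $r_0$, with $R$-values differing by more than $\mathfrak{u}_{R,1}(r_0)$. Your $1/k$ in place of the paper's $2^{-k}$, and your swap of which of $\mathcal{P},\mathcal{Q}$ carries the nonzero $R$-value, are cosmetic.

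You are in fact more careful than the paper on one point: you note that $d^X$ may be only a pseudo-metric when $M$ is infinite, and that one must pass to the associated metric quotient. The paper is silent here. Your observation that the relations descend is correct and secures AP; but be aware that quotienting can identify a point of $P\setminus M$ with one of $Q\setminus M$ (whenever both realize the same missing limit of a Cauchy sequence in $M$), so the phrase ``a strong one as in the finite case'' is too quick --- indeed any amalgam must make that identification, by the triangle inequality. The paper's argument for the SAP clauses has the same lacuna.
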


\begin{proof} The proof of (1)$\Rightarrow$(3) follows exactly the same proof of (1)$\Rightarrow$(3) of Theorem~\ref{thm:finap}, except in the last step of the verification of the $1$-Lipschitz property for the unary relation symbol, instead of using the finiteness of $M$ we use the upper semicontinuity of $\mathfrak{u}_{R,1}$. The proof of (1)$\Rightarrow$(5) is identical.

Conversely, for (2)$\Rightarrow$(1) or (4)$\Rightarrow$(1), we make the following observation. If $R\in\mathcal{L}$ is unary and not upper semicontinuous at $r_0\in I_{R,1}$, then choose some $t_0\in (0,1)$ such that
$$ \mathfrak{u}_{R,1}(r_0)<t_0<\min\{\inf\{\mathfrak{u}_{R,1}(r): r>r_0\}, 1\}. $$

Consider countable continuous $\mathcal{L}$-pre-structures $\mathcal{M}$, $\mathcal{P}$, $\mathcal{Q}$ where for all $R'\in\mathcal{L}$ with $R'\neq R$, the values of ${R'}^M$, ${R'}^P$, ${R'}^Q$ are identically $0$, and
\begin{itemize}
\item $M=\{x_i\,:\, i\geq 1\}$, $d^M(x_i, x_j)=|2^{-i}-2^{-j}|$, for all $i, j\geq 1$, and $R^M(x_0)=0$ for all $i\geq 1$;
%$$R^M(\overline{y})=\left\{\begin{array}{ll} \mathfrak{u}_{R,1}(r_1+r_2) & \mbox{ if $y_1=x_0$} \\
%0 & \mbox{ if $y_1=u_0$;} \end{array}\right.$$
\item $P= M\cup\{x_0\}$, $d^P(x_0, x_i)=r_0+2^{-i}$,
$R^P(x_0)=t_0$;
\item $Q=M\cup\{y_0\}$, $d^Q(y_0,x_i)=2^{-i}$, $R^Q(y_0)=0$.
\end{itemize}
Let $\mathcal{N}$ be an amalgam of $\mathcal{P}$ and $\mathcal{Q}$ over $\mathcal{M}$. Then
$$\begin{array}{rcl} r_0=|d^N(x_0, x_i)-d^N(y_0, x_i)|&\leq& d^N(x_0,y_0) \\
 &\leq& d^N(x_0, x_i)+d^N(y_0,x_i)=r_0+2^{-i+1}
\end{array} $$
for all $i\geq 1$. Letting $i\to\infty$, we get $d^N(x_0,y_0)=r_0$. Now
$$ t_0=|R^N(x_0)-R^N(y_0)|\leq \mathfrak{u}_{R,1}(d^N(x_0,y_0))=\mathfrak{u}_{R,1}(r_0),$$
a contradiction.
\end{proof}

Finally we give a characterization the AP and the SAP for the class of all (countable) $\mathcal{L}$-structures. For this we need to introduce the following notion of strong semiproperness.

\begin{definition} Let $\mathcal{L}$ be a continuous signature with only finitely many relation symbols, where all the associated moduli of continuity are nondecreasing. We say that $\mathcal{L}$ is {\em strongly semiproper} if $\mathcal{L}$ is semiproper and for any unary $R\in \mathcal{L}$ and $r_1, r_2>0$ with $r_1+r_2\in I_{R,1}$,
$$ \mathfrak{u}_{R,1}(r_1+r_2)\geq \inf\{ \mathfrak{u}_{R,1}(r)\,:\, r> r_1\}+\inf \{\mathfrak{u}_{R,1}(r)\,:\, r> r_2\}. $$
\end{definition}

It is obvious that strong semiproperness implies semiproperness, and it is easy to see that properness implies strong semiproperness.

\begin{theorem}\label{thm:allstructuresap} Let $\mathcal{L}$ be a continuous signature with only finitely many relation symbols, where all the associated moduli of continuity are nondecreasing. The following are equivalent:
\begin{enumerate}
\item $\mathcal{L}$ is strongly semiproper.
\item The class of all countable continuous $\mathcal{L}$-structures has the AP.
\item The class of all countable continuous $\mathcal{L}$-structures has the SAP.
\item The class of all continuous $\mathcal{L}$-structures has the AP.
    \item The class of all continuous $\mathcal{L}$-structures has the SAP.
\end{enumerate}
\end{theorem}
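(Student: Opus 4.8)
The plan is to prove $(1)\Rightarrow(5)\Rightarrow(4)\Rightarrow(1)$ together with $(1)\Rightarrow(3)\Rightarrow(2)\Rightarrow(1)$. Here $(5)\Rightarrow(4)$ and $(3)\Rightarrow(2)$ are immediate since the SAP implies the AP, so the content lies in the implications out of $(1)$ and in the two converses. Throughout I write $\mathfrak{u}^+_{R,1}(r)=\inf\{\mathfrak{u}_{R,1}(s):s>r\}$; since each $\mathfrak{u}_{R,1}$ is nondecreasing, $\mathfrak{u}^+_{R,1}$ is nondecreasing and right-continuous, hence upper semicontinuous, and $\mathfrak{u}_{R,1}\le\mathfrak{u}^+_{R,1}$.

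For $(1)\Rightarrow(5)$ — and $(1)\Rightarrow(3)$ by exactly the same construction, which preserves countability — I would follow the proof of $(1)\Rightarrow(3)$ of Theorem~\ref{thm:finap}. Given continuous $\mathcal{L}$-structures $\mathcal{M},\mathcal{P},\mathcal{Q}$ with isomorphic embeddings, regarded as inclusions with $M=P\cap Q$, form $X=M\sqcup(P\setminus M)\sqcup(Q\setminus M)$ with the amalgamated metric $d^X$ of that proof and define $R^X$ on $P^n\cup Q^n$ as there. Two new ingredients are needed. First, $d^X$ is a complete metric: it is positive-definite because $d^X(u,v)=\inf_{w\in M}(d^P(u,w)+d^Q(w,v))=0$ for $u\in P\setminus M$, $v\in Q\setminus M$ would produce a $d^P$-Cauchy sequence in $M$ converging to $u$, forcing $u\in M$ since $M$ is closed in $P$; and it is complete because a $d^X$-Cauchy sequence has a subsequence in $M$, or is eventually in $P$, or eventually in $Q$, or meets both $P\setminus M$ and $Q\setminus M$ infinitely often, and in each case it reduces to $d^P$- or $d^Q$-Cauchy sequences, which converge in $X$ by completeness of $\mathcal{P},\mathcal{Q},\mathcal{M}$ and closedness of $M$ in $P$ and in $Q$. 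Second, $\mathcal{X}=(X,d^X,(R^X)_{R\in\mathcal{L}})$ must be shown to be a partially defined continuous $\mathcal{L}$-pre-structure. For $n$-ary $R$ with $n\ge 2$ the argument of Theorem~\ref{thm:finap} carries over verbatim (it uses only linearity of $\mathfrak{u}_{R,i}$ on $I_{R,i}$). For unary $R$ the issue is to estimate $|R^X(u)-R^X(v)|$ for $u\in P\setminus M$, $v\in Q\setminus M$ with $\delta:=d^X(u,v)\in I_{R,1}$: choosing $w_k\in M$ with $d^P(u,w_k)+d^Q(w_k,v)\to\delta$, pass to a subsequence along which $d^P(u,w_k)\to\alpha$ and $d^Q(w_k,v)\to\beta$, so $\alpha+\beta=\delta$, and observe that completeness of $\mathcal{P}$ and closedness of $M$ in $P$ force $\alpha>0$, and symmetrically $\beta>0$. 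Since $R^X(u)=R^P(u)$, $R^X(v)=R^Q(v)$ and $R^X(w_k)=R^M(w_k)$, the triangle inequality and (\ref{eqn:ucl}) for $\mathcal{P},\mathcal{Q}$ give for each $k$
\[ |R^X(u)-R^X(v)|\le \mathfrak{u}_{R,1}(d^P(u,w_k))+\mathfrak{u}_{R,1}(d^Q(w_k,v))\le \mathfrak{u}^+_{R,1}(d^P(u,w_k))+\mathfrak{u}^+_{R,1}(d^Q(w_k,v)), \]
and letting $k\to\infty$, by upper semicontinuity of $\mathfrak{u}^+_{R,1}$ and then strong semiproperness,
\[ |R^X(u)-R^X(v)|\le \mathfrak{u}^+_{R,1}(\alpha)+\mathfrak{u}^+_{R,1}(\beta)\le \mathfrak{u}_{R,1}(\delta). \]
The same bound holds across any edge joining a point of $P\setminus M$ to a point of $Q\setminus M$, while on edges inside $P$ or inside $Q$ one uses that $\mathcal{P},\mathcal{Q}$ are structures; summing along an arbitrary path yields that $R^X$ is $1$-Lipschitz with respect to $d^X_R$. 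Finally Lemma~\ref{lem:gconservative} gives a conservative extension $\mathcal{N}$ of $\mathcal{X}$, which is a continuous $\mathcal{L}$-structure because $d^X$ is complete, and which is a strong amalgam since its universe is $X$ itself.

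For $(4)\Rightarrow(1)$ and $(2)\Rightarrow(1)$, suppose $\mathcal{L}$ is not strongly semiproper. If $\mathcal{L}$ fails to be semiproper, Theorem~\ref{thm:finap} already kills the AP for finite — hence for all — continuous $\mathcal{L}$-structures. Otherwise fix unary $R$ and $r_1,r_2>0$ with $r_1+r_2\in I_{R,1}$ and $\mathfrak{u}_{R,1}(r_1+r_2)<\mathfrak{u}^+_{R,1}(r_1)+\mathfrak{u}^+_{R,1}(r_2)$, pick $\epsilon\in(0,2\min(r_1,r_2)]$, and define countable complete $\mathcal{L}$-structures: $M=\{x_i:i\ge1\}$, uniformly discrete with $d^M(x_i,x_j)=\epsilon$ for $i\ne j$, $R^M\equiv\mathfrak{u}^+_{R,1}(r_2)$, all other relations $\equiv 0$; $P=M\cup\{x_0\}$ with $d^P(x_0,x_i)=r_1+\epsilon 2^{-i}$ and $R^P(x_0)=\min\{1,\mathfrak{u}^+_{R,1}(r_1)+\mathfrak{u}^+_{R,1}(r_2)\}$; $Q=M\cup\{y_0\}$ with $d^Q(y_0,x_i)=r_2+\epsilon 2^{-i}$ and $R^Q(y_0)=0$. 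Using $\mathfrak{u}^+_{R,1}(r)\le\mathfrak{u}_{R,1}(s)$ for $s>r$ one checks these are legitimate complete continuous $\mathcal{L}$-structures with $\mathcal{M}$ a substructure of each. In any amalgam $\mathcal{N}$ one has $d^N(x_0,y_0)\le d^P(x_0,x_i)+d^Q(y_0,x_i)\to r_1+r_2$, so $d^N(x_0,y_0)\le r_1+r_2$, whence $\min\{1,\mathfrak{u}^+_{R,1}(r_1)+\mathfrak{u}^+_{R,1}(r_2)\}=|R^N(x_0)-R^N(y_0)|\le\mathfrak{u}_{R,1}(r_1+r_2)$, contradicting the choice of $r_1,r_2$.

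The main obstacle is the unary case of the verification that $\mathcal{X}$ is a partially defined continuous $\mathcal{L}$-pre-structure. Superadditivity of $\mathfrak{u}_{R,1}$ on its own yields only the weaker bound $\mathfrak{u}^+_{R,1}(d^X(u,v))$, which may strictly exceed $\mathfrak{u}_{R,1}(d^X(u,v))$; upper semicontinuity of the $\mathfrak{u}_{R,1}$ would repair this but is precisely the hypothesis being avoided; and it is the combination of strong semiproperness with completeness of $\mathcal{P}$ and $\mathcal{Q}$ — the latter ruling out the degenerate splittings $\alpha=0$ and $\beta=0$ — that bridges the gap. Keeping straight which hypothesis governs which configuration is the delicate bookkeeping of the proof.
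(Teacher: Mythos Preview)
Your proposal is correct and follows essentially the same route as the paper. The paper carries out $(1)\Rightarrow(3),(5)$ by the identical amalgam construction from Theorem~\ref{thm:finap}, picks $z_n\in M$ with $d^P(u,z_n)+d^Q(z_n,v)\to d^X(u,v)$ and both component sequences monotone, and then bounds $|R^X(u)-R^X(v)|$ by $\mathfrak u^+_{R,1}(r_1)+\mathfrak u^+_{R,1}(r_2)\le \mathfrak u_{R,1}(r_1+r_2)$; for $(2)\Rightarrow(1)$ and $(4)\Rightarrow(1)$ it builds a uniformly discrete countable complete $\mathcal M$ and one-point extensions $\mathcal P,\mathcal Q$ forcing $d^N(y,z)\le r_1+r_2$ in any amalgam, exactly as you do. Your introduction of the right-continuous envelope $\mathfrak u^+_{R,1}$ and your explicit observation that completeness of $\mathcal M$ forces $\alpha,\beta>0$ (so that strong semiproperness, stated only for strictly positive arguments, actually applies) make the unary step cleaner than in the paper, which leaves this point implicit.
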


\begin{proof} The proof of (1)$\Rightarrow$(3) again follows exactly the same proof of (1)$\Rightarrow$(3) of Theorem~\ref{thm:finap}, except that we need to modify the argument in the last step of the verification of the $1$-Lipschitz property for the unary relation symbol. First we note that the metric space $(X,d^X)$ is complete if both $(P,d^P)$ and $(Q,d^Q)$ are complete, and countable if both $P$ and $Q$ are countable. Let $R\in\mathcal{L}$ be unary, $u\in P\setminus M$ and $v\in Q\setminus M$. Then
$$ |R^X(u)-R^X(v)|\leq \inf\{ \mathfrak{u}_{R,1}(d^P(u,z))+\mathfrak{u}_{R,1}(d^Q(z, v))\,:\, z\in M\}. $$
We may choose $\{z_n\}\subseteq M$ such that
$$ d^X(u,v)=\inf\{ d^P(u,z)+d^Q(z, v)\,:\, z\in M\}=\lim_n \left(d^P(u,z_n)+d^Q(z_n, v)\right) $$
and both $\{d^P(u, z_n)\}_n$ and $\{d^Q(z_n, v)\}_n$ are monotone. Let
$$ r_1=\lim_n d^P(u, z_n) \mbox{ and } r_2=\lim_n d^Q(u, z_n). $$
If $r_1+r_2\not\in I_{R,1}$ then we have
$$ |R^X(u)-R^X(v)|\leq \mathfrak{u}_{R,1}(r_1+r_2)=\mathfrak{u}_{R,1}(d^X(u,v)). $$
Suppose $r_1+r_2\in I_{R,1}$. Then
$$\begin{array}{rcl} |R^X(u)-R^X(v)|&\leq& \inf\{ \mathfrak{u}_{R,1}(d^P(u,z))+\mathfrak{u}_{R,1}(d^Q(z, v))\,:\, z\in M\} \\
&\leq& \lim_n \left(\mathfrak{u}_{R,1}(d^P(u,z_n))+\mathfrak{u}_{R,1}(d^Q(z_n, v))\right)\\
&\leq & \inf\{\mathfrak{u}_{R,1}(r)\,:\, r>r_1\}+\inf\{\mathfrak{u}_{R,1}(r)\,:\, r>r_2\} \\
&\leq& \mathfrak{u}_{R,1}(r_1+r_2)=\mathfrak{u}_{R,1}(d^X(u,v)).
\end{array}$$
The proof of (1)$\Rightarrow$(5) is identical.

Conversely, we prove (2)$\Rightarrow$(1) and (4)$\Rightarrow$(1). By the proof of (2)$\Rightarrow$(1) of Theorem~\ref{thm:finap}, $\mathcal{L}$ is semiproper. To see that $\mathcal{L}$ is strongly semiproper, let $R\in\mathcal{L}$ be unary and let $r_1, r_2>0$ be such that $r_1+r_2\in I_{R,1}$. Without loss of generality assume $r_1\leq r_2$. Let $M=\{x_n\,:\, n\in\mathbb{N}\}$ be a countably infinite set and define $d^M(x_n,x_m)=r_1$ for all distinct $n,m\in \mathbb{N}$. Define $R^M(x)=\inf\{\mathfrak{u}_{R,1}(r)\,:\,r>r_1\}$ for all $x\in M$ and for any other $\tilde{R}\in\mathcal{L}$, let $\tilde{R}^M$ be identically $0$. This defines a countable continuous $\mathcal{L}$-structure $\mathcal{M}$. Let $P=M\cup\{y\}$ where $y$ is a fresh point. Define $d^P(x_n,y)=(1+2^{-n})r_1$ for all $n\in\mathbb{N}$, $R^P(y)=0$. For $\tilde{R}\in\mathcal{L}$, let $\tilde{R}^P$ be identically $0$. This defines a countable continuous $\mathcal{L}$-structure $\mathcal{P}$ that is an extension of $\mathcal{M}$. Let $Q=M\cup\{z\}$ where $z$ is a fresh point. Define $d^Q(x_n, z)=r_2+2^{-n}r_1$ and
$$ R^Q(z)=\mbox{min}\left\{1, \inf\{ \mathfrak{u}_{R,1}(r)\,:\, r>r_1\}+\inf\{\mathfrak{u}_{R,1}(r)\,:\, r>r_2\}\right\}. $$
For $\tilde{R}\in\mathcal{L}$, let $\tilde{R}^Q$ be identically $0$. This defines a countable continuous $\mathcal{L}$-structure $\mathcal{Q}$ that is an extension of $\mathcal{M}$. Let $\mathcal{N}$ be an amalgam of $\mathcal{P}$ and $\mathcal{Q}$ over $\mathcal{M}$. Then for any $n\in\mathbb{N}$,
$$ d^N(y,z)\leq d^P(x_n,y)+d^Q(x_n, z)=r_1+r_2+2^{-n+1}r_1. $$
Since $n$ is arbitrary, we have $d^N(y,z)\leq r_1+r_2$. Then
$$\begin{array}{rcl}
 & & \mbox{min}\left\{1, \inf\{ \mathfrak{u}_{R,1}(r)\,:\, r>r_1\}+\inf\{\mathfrak{u}_{R,1}(r)\,:\, r>r_2\}\right\} \\
 &=&|R^Q(z)-R^P(y)|=|R^N(z)-R^N(y)|\\
 &=& \mathfrak{u}_{R,1}(d^N(y,z))\leq \mathfrak{u}_{R,1}(r_1+r_2).
 \end{array}$$
This shows that $\mathcal{L}$ is strongly semiproper.
\end{proof}

\section{Urysohn Continuous Structures}

In this section we give a Kat\v{e}tov-style construction of a Urysohn continuous $\mathcal{L}$-structure for any proper continuous signature $\mathcal{L}$. This will not only establish the usual properties of the Urysohn structure such as universality, ultrahomogeneity, and uniqueness, but also prove that its automorphism group is a universal Polish group.

Throughout this section we assume that $\mathcal{L}$ is a proper continuous signature unless explicitly stated otherwise. For each unary $R\in \mathcal{L}$, the modulus of continuity $\mathfrak{u}_{R,1}$ is nondecreasing, superadditive, and upper semicontinous on $I_{R,1}$. As in Lemma~\ref{lem:u*}, we define $\mathfrak{u}_{R,1}^*: [0,1)\to I_{R,1}$, which is in particular subadditive on $[0,1)$. For $n$-ary $R\in\mathcal{L}$ where $n\geq 2$, we fix $K_{R,i}>0$ for $1\leq i\leq n$ so that $\mathfrak{u}_{R,i}(r)=K_{R,i}r$ for all $r\in I_{R,i}$.

\begin{definition} Let $\mathcal{L}$ be a proper continuous signature, $\mathcal{M}$, $\mathcal{N}$, $\mathcal{U}$ be continuous $\mathcal{L}$-pre-structures, and $\mathcal{K}$ a class of continuous $\mathcal{L}$-pre-structures.
\begin{enumerate}
\item[(i)] $\mathcal{N}$ is a {\em one-point extension} of $\mathcal{M}$ if $\mathcal{M}$ is a substructure of $\mathcal{N}$ and $N\setminus M$ is a singleton.
\item[(ii)] $\mathcal{U}$ has the {\em Urysohn property} if given any finite continuous $\mathcal{L}$-structure $\mathcal{M}$, a one-point extension $\mathcal{N}$ of $\mathcal{M}$, and an isomorphic embedding $\varphi$ from $\mathcal{M}$ into $\mathcal{U}$, there is an isomorphic embedding $\psi$ from $\mathcal{N}$ into $\mathcal{U}$ such that $\psi\!\upharpoonright\! M=\varphi$.
We say $\mathcal{U}$ is {\em Urysohn} if it satisfies the Urysohn property.
\item[(iii)] $\mathcal{U}$ is {\em universal} for $\mathcal{K}$ if for any $\mathcal{M}\in\mathcal{K}$ there is an isomorphic embedding from $\mathcal{M}$ into $\mathcal{U}$.
\item[(iv)] $\mathcal{U}$ is {\em ultrahomogeneous} if for any finite substructures $\mathcal{M}$ and $\mathcal{N}$ of $\mathcal{U}$ and an isomorphism $\varphi$ between $\mathcal{M}$ and $\mathcal{N}$, there is an automorphism $\psi$ of $\mathcal{U}$ such that $\psi\rest M=\varphi$.
\end{enumerate}
\end{definition}

Given a proper continuous signature $\mathcal{L}$, let
$$ \Kfin=\mbox{ the class of all finite continuous $\mathcal{L}$-structures} $$
and
$$ \Ksep=\mbox{ the class of all separable continuous $\mathcal{L}$-structures}. $$
By a standard argument, a separable continuous $\mathcal{L}$-structure has the Urysohn property iff it is ultrahomogeneous and universal for $\Kfin$. In this case, it is in fact universal for $\Ksep$ and unique up to isomorphism.

By Theorem~\ref{thm:allap}, the class of all continuous $\mathcal{L}$-pre-structures has the SAP. We call the continuous $\mathcal{L}$-pre-structure $\mathcal{N}$ constructed in the proof of Theorem~\ref{thm:allap} the {\em canonical amalgam} of $\mathcal{P}$ and $\mathcal{Q}$ over $\mathcal{M}$.

\begin{definition} Let $\mathcal{M}, \mathcal{N}$ be continuous $\mathcal{L}$-pre-structures such that $\mathcal{M}$ is a substructure of $\mathcal{N}$. $\mathcal{N}$ is said to be a {\em finitely supported extension} of $\mathcal{M}$ if there is a finite subset $F\subseteq M$ such that, letting $\mathcal{F}$ be the substructure of $\mathcal{M}$ with domain $F$ and $\mathcal{A}$ be the substructure of $\mathcal{N}$ with domain $A=F\cup (N\setminus M)$, $\mathcal{N}$ is the canonical amalgam of $\mathcal{A}$ and $\mathcal{M}$ over $\mathcal{F}$. In this case the set $F$ is called a {\em finite support} of $\mathcal{N}$ over $\mathcal{M}$.
\end{definition}

We will be working with finitely supported, one-point extensions of continuous $\mathcal{L}$-pre-structures. In the following first step, we focus on one-point extensions.

\subsection{One-point extensions\label{subsec:4.1}}

In this subsection we study one-point extensions of continuous $\mathcal{L}$-pre-structures and their amalgams.

Fix a continuous $\mathcal{L}$-pre-structure $\mathcal{M}$. If $\mathcal{N}$ is a one-point extension of $\mathcal{M}$, we denote the unique element of $N\setminus M$ by $x_{\mathcal{N}}$. Conversely, if $x$ is the unique element of $N\setminus M$, we also denote $\mathcal{N}$ by $\mathcal{M}_x$.

If $\mathcal{M}_x$ and $\mathcal{M}_y$ are two one-point extensions of $\mathcal{M}$, consider the map $\varphi: M_x\to M_y$ with $\varphi(x)=y$ and $\varphi(z)=z$ for all $z\in M$. Define an equivalence relation $x\sim y$ if $\varphi$ is an isomorphism between $\mathcal{M}_x$ and $\mathcal{M}_y$ as continuous $\mathcal{L}$-pre-structures.

Let $\tilde{E}(\mathcal{M})$ be the collection of all $x_{\mathcal{N}}$ for one-point extensions $\mathcal{N}$ of $\mathcal{M}$.  Let $E'(\mathcal{M})$ be the quotient space $\tilde{E}(\mathcal{M})/\!\sim$. In $E'(\mathcal{M})$ we identify $\sim$-equivalent $x$ and $y$ and consider them the same object, and write $x=y$. Finally, let
$$ E(\mathcal{M})=M\cup E'(\mathcal{M}) $$
be the disjoint union of $M$ and $E'(\mathcal{M})$.

We define a continuous $\mathcal{L}$-pre-structure on $E(\mathcal{M})$ as follows. For any $n\geq 1$,  $x, y\in E(\mathcal{M})$ and $\overline{u}=(u_1,\dots, u_n)\in E(\mathcal{M})^n$, define $\overline{u}(y|x)=(v_1,\dots, v_n)\in E(\mathcal{M})^n$ by letting, for $1\leq i\leq n$,
$$ v_i=\left\{\begin{array}{ll} u_i & \mbox{ if $u_i\neq x$} \\ y & \mbox{ if $u_i=x$.} \end{array}\right. $$
Also, for $\overline{u}\in E(\mathcal{M})^n$, let
$$\|\overline{u}\|=\displaystyle\sum_{u_i\not\in M}K_{R,i}.$$
For $x, y\in E(\mathcal{M})$, define a metric $d^E(x,y)$ so that
$$ d^E(x, y)=\left\{\begin{array}{ll} d^M(x,y) & \mbox{ if $x, y\in M$} \\ d^{M_x}(x, y) & \mbox{ if $x\in E'(\mathcal{M})$ and $y\in M$} \\ d^{M_y}(x,y) & \mbox{ if $x\in M$ and $y\in E'(\mathcal{M})$.}\end{array}\right. $$
If $x, y\in E'(\mathcal{M})$, we define
$$\begin{array}{c} \mu(x,y) = \max\left\{\ \mathfrak{u}_{R,1}^*(|R^{M_x}(x)-R^{M_y}(y)|)\,:\, R\in\mathcal{L} \mbox{ is unary}\right\} \\ \\
\rho(x, y)=\sup\left\{\  \|\overline{u}\|^{-1}|R^{M_x}(\overline{u})-R^{M_y}(\overline{u}(y|x))|\ :\right. \ \ \ \ \ \ \ \ \ \ \ \ \ \ \ \\
\ \ \ \ \ \ \ \ \ \ \  \ \ \ \ \ \ \ \ \ \ \ \ \ \ \ \ \ \ \ \ \ \overline{u}\in M_x^n\setminus M^n, R\in {\mathcal{L}} \mbox{ is $n$-ary for $n\geq 2$}\bigr\}
\end{array}$$
$$ \lambda(x,y)=\sup\left\{\ |d^{M_x}(x, z)-d^{M_y}(y,z)|\,:\, z\in M\right\} $$
and
$$ d^E(x,y)=\max\left\{\,\mu(x,y),\ \rho(x,y),\ \lambda(x,y)\right\}. $$

\begin{lemma}\label{lem:dEM} $d^E$ is a metric on $E(\mathcal{M})$.
\end{lemma}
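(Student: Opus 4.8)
The plan is to verify the three axioms of a metric: that $d^E$ is symmetric, that $d^E(x,y)=0$ iff $x=y$, and that the triangle inequality holds. Symmetry is immediate from the symmetry of each of $\mu$, $\rho$, $\lambda$, $d^M$, and $d^{M_x}$, so the real content is separation and the triangle inequality. For the triangle inequality the cases split according to how many of the three points lie in $M$ versus $E'(\mathcal{M})$, and I expect the case of three points in $E'(\mathcal{M})$ to be the main obstacle; the mixed cases will reduce to it or to the triangle inequality already known in $\mathcal{M}_x$.

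First I would handle separation. If $x,y\in M$, this is just that $d^M$ is a metric. If $x\in E'(\mathcal{M})$ and $y\in M$, then $d^E(x,y)=d^{M_x}(x,y)>0$ since $x\notin M$ and $d^{M_x}$ is a metric. If both $x,y\in E'(\mathcal{M})$ and $d^E(x,y)=0$, then in particular $\lambda(x,y)=0$, so $d^{M_x}(x,z)=d^{M_y}(y,z)$ for all $z\in M$; also $\mu(x,y)=0$ forces $\mathfrak{u}_{R,1}^*(|R^{M_x}(x)-R^{M_y}(y)|)=0$ for all unary $R$, and since $\mathfrak{u}_{R,1}^*$ vanishes only at $0$ (it is nondecreasing with $\mathfrak{u}_{R,1}^*(0)=0$, and by Lemma~\ref{lem:u*}(2) $\mathfrak{u}_{R,1}^*(\mathfrak{u}_{R,1}(r))=r$, so $\mathfrak{u}_{R,1}^*(t)=0 \Rightarrow t=0$), we get $R^{M_x}(x)=R^{M_y}(y)$; finally $\rho(x,y)=0$ gives $R^{M_x}(\overline{u})=R^{M_y}(\overline{u}(y|x))$ for all higher-arity $R$ and all tuples involving $x$. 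Together these say exactly that the map fixing $M$ and sending $x$ to $y$ is an isomorphism of $\mathcal{M}_x$ onto $\mathcal{M}_y$, i.e. $x\sim y$, which in $E'(\mathcal{M})$ means $x=y$. (Here one uses that the $K_{R,i}$ are positive, so the normalization $\|\overline{u}\|^{-1}$ causes no trouble.)

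For the triangle inequality $d^E(x,z)\le d^E(x,y)+d^E(y,z)$, the easy cases are when at most one of the three points is in $E'(\mathcal{M})$: then all relevant distances are computed inside a single continuous $\mathcal{L}$-pre-structure ($\mathcal{M}$, or some $\mathcal{M}_w$), where the triangle inequality already holds. The case with exactly two points in $E'(\mathcal{M})$, say $x,y\in E'(\mathcal{M})$ and $z\in M$: here $\lambda(x,y)\ge |d^{M_x}(x,z)-d^{M_y}(y,z)|\ge d^{M_x}(x,z)-d^{M_y}(y,z)$ and similarly $\ge d^{M_y}(y,z)-d^{M_x}(x,z)$, so $d^E(x,z)=d^{M_x}(x,z)\le d^{M_y}(y,z)+\lambda(x,y)=d^E(y,z)+d^E(x,y)$; and for the other orientation, $d^E(x,y)=\max\{\mu,\rho,\lambda\}$ must be bounded by $d^E(x,z)+d^E(z,y)=d^{M_x}(x,z)+d^{M_y}(y,z)$, which follows termwise: for $\lambda$ use the triangle inequality for each $w\in M$ in $\mathcal{M}_x$ and $\mathcal{M}_y$; for $\mu$ use subadditivity of $\mathfrak{u}_{R,1}^*$ together with $|R^{M_x}(x)-R^{M_y}(y)|\le |R^{M_x}(x)-R^{M_x}(z)|+|R^{M_y}(z)-R^{M_y}(y)|\le\mathfrak{u}_{R,1}(d^{M_x}(x,z))+\mathfrak{u}_{R,1}(d^{M_y}(y,z))$, then apply $\mathfrak{u}_{R,1}^*$ and Lemma~\ref{lem:u*}(2); for $\rho$ use (\ref{eqn:ucl}) in $\mathcal{M}_x$ and $\mathcal{M}_y$ together with the equality $d^{M_x}_{\mathfrak{u}_{R,i}}(x,z)=K_{R,i}d^{M_x}(x,z)$ valid for higher-arity $R$.

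The remaining case, all of $x,y,z\in E'(\mathcal{M})$, is the crux, and I would prove $d^E(x,z)\le d^E(x,y)+d^E(y,z)$ by bounding each of $\mu(x,z)$, $\rho(x,z)$, $\lambda(x,z)$ separately. For $\lambda$: $|d^{M_x}(x,w)-d^{M_z}(z,w)|\le |d^{M_x}(x,w)-d^{M_y}(y,w)|+|d^{M_y}(y,w)-d^{M_z}(z,w)|\le\lambda(x,y)+\lambda(y,z)$, take the sup over $w\in M$. For $\mu$: bound $|R^{M_x}(x)-R^{M_z}(z)|\le|R^{M_x}(x)-R^{M_y}(y)|+|R^{M_y}(y)-R^{M_z}(z)|$, apply the subadditive, nondecreasing $\mathfrak{u}_{R,1}^*$, and use $\mathfrak{u}_{R,1}^*(a+b)\le\mathfrak{u}_{R,1}^*(a)+\mathfrak{u}_{R,1}^*(b)$ to split; since $\mathfrak{u}_{R,1}^*(|R^{M_x}(x)-R^{M_y}(y)|)\le\mu(x,y)\le d^E(x,y)$ and similarly for the other term, we conclude $\mu(x,z)\le d^E(x,y)+d^E(y,z)$. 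For $\rho$: given an $n$-ary $R$ ($n\ge2$) and $\overline{u}\in M_x^n\setminus M^n$, write $\overline{u}(z|x)-\overline{u}$ as obtained in two steps through $\overline{u}(y|x)$, so $|R^{M_x}(\overline{u})-R^{M_z}(\overline{u}(z|x))|\le|R^{M_x}(\overline{u})-R^{M_y}(\overline{u}(y|x))|+|R^{M_y}(\overline{u}(y|x))-R^{M_z}(\overline{u}(z|x))|$; divide by $\|\overline{u}\|$ (note $\overline{u}$, $\overline{u}(y|x)$, $\overline{u}(z|x)$ all have the same norm since the coordinates outside $M$ are in the same positions) and bound each quotient by $\rho(x,y)$ and $\rho(y,z)$ respectively, hence by $d^E(x,y)$ and $d^E(y,z)$. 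Taking the supremum over $\overline{u}$ and $R$ gives $\rho(x,z)\le d^E(x,y)+d^E(y,z)$. Since $d^E(x,z)$ is the maximum of these three quantities, the triangle inequality follows. The one point demanding care throughout is the interplay between $\mathfrak{u}_{R,1}$ and $\mathfrak{u}_{R,1}^*$: one must invoke Lemma~\ref{lem:u*} precisely — parts (1) and (2) for separation and the $\mu$-estimates, and subadditivity of $\mathfrak{u}_{R,1}^*$ on $[0,1)$ for splitting sums — and check that all the arguments of $\mathfrak{u}_{R,1}^*$ lie in $[0,1)$, which holds because each $R^{M_w}$ takes values in $[0,1]$ and the relevant differences are bounded by, in fact strictly below, $1$ whenever $\mu$ is the active term (if a difference equals $1$ then $\mathfrak{u}_{R,1}^*$ of it is simply interpreted as $\sup I_{R,1}$, the upper bound from semiproperness, and the same estimates go through).
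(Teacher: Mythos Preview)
Your argument is correct and follows essentially the same route as the paper's proof, only with the details unpacked. The paper's version is terser: after separation (handled exactly as you do), it records two facts: (i) each of $\mu,\rho,\lambda$ is a pseudometric on $E'(\mathcal{M})$, the only nontrivial point being subadditivity of $\mathfrak{u}_{R,1}^*$ for $\mu$; and (ii) the single inequality $\max\{\mu,\rho,\lambda\}(x,y)\le\inf_{z\in M}\{d^{M_x}(x,z)+d^{M_y}(y,z)\}$, which in one stroke covers every mixed triangle where the middle vertex lies in $M$. Your explicit case analysis with the ``two orientations'' is precisely an unfolding of (ii), and your three-in-$E'(\mathcal{M})$ case is (i).
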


\begin{proof} We verify that if $d^E(x,y)=0$ then $x=y$. This is clear when at least one of $x, y$ is in $M$. Assume $x, y\in E'(\mathcal{M})$. Let $\varphi: M_x\to M_y$ be the map with $\varphi(x)=y$ and $\varphi(z)=z$ for all $z\in M$. If $d^E(x,y)=0$ then $\mu(x,y)=\rho(x,y)=\lambda(x,y)=0$, which implies that $\varphi$ is an isomorphism between $\mathcal{M}_x$ and $\mathcal{M}_y$ as continuous $\mathcal{L}$-pre-structures. Thus $x=y$.

Note that $\mu$ satisfies the triangle inequality because by Lemma~\ref{lem:u*} $\mathfrak{u}_{R,1}^*$ is subadditive for unary $R\in\mathcal{L}$. For all $x, y, z\in E'(\mathcal{M})$, $d(x, y)+d(y, z)\geq d(x, z)$, because all of $\mu$, $\rho$ and $\lambda$ satisfy the triangle inequality. Also, it is easy to verify that $\max\{\mu(x, y), \rho(x, y), \lambda(x, y)\}\leq \inf\left\{d^{M_x}(x, z)+d^{M_y}(y, z)\,:\, z\in M\right\}$, thus $d^E$ also satisfies the triangle inequality.
\end{proof}

Define $(X, d^X)=(E(\mathcal{M}), d^E)$ and for any $n$-ary $R\in\mathcal{L}$ define $R^X$ naturally on
$$ \dom(R^X)=\bigcup\{ M_x^n\,:\, x\in E'(\mathcal{M})\}. $$

\begin{lemma}\label{lem:preEM} $\mathcal{X}=(X, d^X, (R^X)_{R\in\mathcal{L}})$ is a partially defined continuous $\mathcal{L}$-pre-structure.
\end{lemma}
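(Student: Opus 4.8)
The plan is to verify, using Lemma~\ref{lem:1L}, that for each $n$-ary $R\in\mathcal{L}$ the function $R^X$ is $1$-Lipschitz with respect to $d^X_R$ on its domain $\dom(R^X)=\bigcup\{M_x^n : x\in E'(\mathcal{M})\}$. Fix such an $R$ and two tuples $\overline{a},\overline{b}\in\dom(R^X)$; say $\overline{a}\in M_x^n$ and $\overline{b}\in M_y^n$ for some $x,y\in E'(\mathcal{M})$ (possibly $x=y$). If $x=y$ there is nothing to prove since $R^X$ restricted to $M_x^n$ is just $R^{M_x}$, which satisfies (\ref{eqn:ucl}) and hence is $1$-Lipschitz w.r.t.\ $d^{M_x}_R$ by Lemma~\ref{lem:1L}, and $d^{M_x}_R\le d^X_R$ on $M_x^n$ because $d^{M_x}$ is the restriction of $d^X$. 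So assume $x\ne y$. The idea is to interpolate through the tuple $\overline{c}=\overline{a}(y|x)\in M_y^n$ obtained by replacing every occurrence of $x$ in $\overline{a}$ by $y$, and bound
$$ |R^X(\overline{a})-R^X(\overline{b})|\le |R^{M_x}(\overline{a})-R^{M_y}(\overline{c})|+|R^{M_y}(\overline{c})-R^{M_y}(\overline{b})|. $$
The second term is controlled by $d^{M_y}_R(\overline{c},\overline{b})$ via Lemma~\ref{lem:1L}, and $\overline{c},\overline{b}$ differ from $\overline{a},\overline{b}$ only in that coordinates equal to $x$ have been moved to $y$; we will need that this replacement costs at most $\sum_{a_i=x}K_{R,i}\,d^E(x,y)$ in the $d^X_R$-metric when $R$ is $(\ge 2)$-ary, which is exactly matched by the first term (see below). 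The first term is precisely what $\rho(x,y)$ (for $n\ge 2$) or $\mu(x,y)$ (for $n=1$, unary $R$) was designed to bound.

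More concretely: if $\overline{a}\in M^n$ then $\overline{a}=\overline{c}$ and the whole estimate reduces to the $x=y$ case inside $M_y$, so we may assume some coordinate of $\overline{a}$ equals $x$. For $n=1$ and unary $R$, if $a_1=x$ and $b_1=y$ then $|R^{M_x}(x)-R^{M_y}(y)|\le \mathfrak{u}_{R,1}(\mathfrak{u}_{R,1}^*(|R^{M_x}(x)-R^{M_y}(y)|))$ provided the value lies in $[0,1)$, by part (3) of Lemma~\ref{lem:u*} (upper semicontinuity of $\mathfrak{u}_{R,1}$ is available since $\mathcal{L}$ is proper); and $\mathfrak{u}_{R,1}^*(|R^{M_x}(x)-R^{M_y}(y)|)\le \mu(x,y)\le d^E(x,y)$, so monotonicity of $\mathfrak{u}_{R,1}$ gives $|R^X(a_1)-R^X(b_1)|\le\mathfrak{u}_{R,1}(d^E(x,y))=d^X_{\mathfrak{u}_{R,1}}\!$-admissible after routing through $y$; if instead the value is $\ge 1$ there is nothing to prove. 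The case $a_1=x$, $b_1\in M$ is handled by first comparing $R^{M_x}(x)$ with $R^{M_y}(y)$ and then $R^{M_y}(y)$ with $R^{M_y}(b_1)$ inside $M_y$. For $n\ge 2$, write $\overline{a}=\overline{c}(x|y)$ and telescope coordinate by coordinate over the (nonempty) set $S=\{i : a_i=x\}$; each single-coordinate change from $y$ back to $x$ in an otherwise-fixed tuple $\overline{u}\in M_x^n$ with $u_i=x$ contributes, after dividing by $\|\overline u\|=\sum_{u_j\notin M}K_{R,j}$, at most $\rho(x,y)\le d^E(x,y)$; reassembling the telescoped sum and using $\mathfrak{u}_{R,i}(r)=K_{R,i}r$ on $I_{R,i}$ yields $|R^{M_x}(\overline{a})-R^{M_y}(\overline{c})|\le \sum_{i\in S}K_{R,i}\,d^E(x,y)=\sum_{i\in S}d^E_{\mathfrak{u}_{R,i}}(x,y)$, which is at most the $d^X_R$-distance contribution of moving those coordinates. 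Combining with the $d^{M_y}_R(\overline{c},\overline{b})$ bound and noting that for $i\notin S$ the coordinates of $\overline{a}$ and $\overline{c}$ agree, we get $|R^X(\overline{a})-R^X(\overline{b})|\le d^X_R(\overline{a},\overline{b})$.

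I expect the main obstacle to be the bookkeeping in the $n\ge 2$ case: one must carefully check that telescoping through the intermediate tuples (changing one $x$-coordinate to $y$ at a time, or vice versa) stays inside $M_x^n\cup M_y^n=\dom$-pieces where $R$ is actually defined — in fact each intermediate tuple has all non-$M$ entries equal to a single point, so it lies in some $M_w^n$ — and that the normalization factor $\|\overline{u}\|^{-1}$ in the definition of $\rho$ interacts correctly with the linear moduli $\mathfrak{u}_{R,i}=K_{R,i}\cdot(-)$, so that the per-coordinate cost is exactly $K_{R,i}\,d^E(x,y)$ and the total telescoped bound collapses to $\sum_{i\in S}K_{R,i}\,d^E(x,y)$ rather than something larger. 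A secondary subtlety is the boundary behavior of $\mathfrak{u}^*$: one must split on whether the relevant relation-value differences fall inside $I_{R,1}$ (equivalently, in $[0,1)$), invoking part~(3) of Lemma~\ref{lem:u*} only in that range and handling the saturated case $\ge 1$ trivially. Once these points are in place the verification is routine, and the lemma follows from Lemma~\ref{lem:1L}.
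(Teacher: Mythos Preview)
Your unary case is fine and matches the paper. The $n\ge 2$ case has a real gap.

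The problem is your choice of interpolation point $\overline{c}=\overline{a}(y|x)$. Consider a coordinate $i$ with $a_i=x$ but $b_i\in M$ (what the paper calls $S_1$). Your bound pays $K_{R,i}\,d^E(x,y)$ in the $\rho$-step (moving $x$ to $y$) and then $K_{R,i}\,d^{M_y}(y,b_i)$ in the within-$M_y$ step (moving $y$ to $b_i$), for a total of $K_{R,i}\bigl[d^E(x,y)+d^{M_y}(y,b_i)\bigr]$. But the $d^X_R$-contribution at this coordinate is $d^X_{\mathfrak{u}_{R,i}}(x,b_i)\le K_{R,i}\,d^{M_x}(x,b_i)$, and the triangle inequality only gives $d^{M_x}(x,b_i)\le d^E(x,y)+d^{M_y}(y,b_i)$, not the reverse. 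So your upper bound on $|R^X(\overline{a})-R^X(\overline{b})|$ can exceed $d^X_R(\overline{a},\overline{b})$, and the argument does not close. (Your coordinate-by-coordinate telescoping is also not well-posed: if $|S|\ge 2$, changing one $x$ to $y$ at a time produces tuples with both $x$'s and $y$'s, which lie in no $M_w^n$ and hence outside $\dom(R^X)$; but this is secondary, since $\rho$ already bounds the full substitution $\overline{a}\mapsto\overline{a}(y|x)$ directly by $\|\overline{a}\|\,\rho(x,y)$ with no telescoping needed.)

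The paper's remedy is to choose the intermediate tuple more carefully. Partition $\{1,\dots,n\}$ according to whether $a_i,b_i$ lie in $M$ or equal $x,y$, and let $\overline{w}$ agree with $\overline{b}$ on the coordinates where $b_i\in M$ and with $\overline{a}$ on the coordinates where $b_i=y$. Then $\overline{w}\in M_x^n$, the passage $\overline{a}\to\overline{w}$ happens entirely inside $M_x$ and costs exactly $\sum_{b_i\in M}K_{R,i}\,d^X(a_i,b_i)$, the passage $\overline{w}\to\overline{w}(y|x)$ uses $\rho$ and costs $\sum_{a_i=x,\,b_i=y}K_{R,i}\,d^E(x,y)=\sum_{a_i=x,\,b_i=y}K_{R,i}\,d^X(a_i,b_i)$, and the final passage $\overline{w}(y|x)\to\overline{b}$ inside $M_y$ handles the remaining coordinates. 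The point is that $\rho$ is applied only at coordinates where the target is $y$, so no detour is introduced.
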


\begin{proof} We verify that for any $n$-ary $R\in\mathcal{L}$, $R^X$ is $1$-Lipschitz with respect to $d^X_R$.

Suppose first $n=1$. It suffices to show that for any $x, y\in E'(\mathcal{M})$,
$$ |R^X(x)-R^X(y)|\leq \mathfrak{u}_{R,1}(d^X(x,y)). $$
For this, note that by Lemma~\ref{lem:u*} (3), we have
$$\begin{array}{rcl} |R^X(x)-R^X(y)|&=&|R^{M_x}(x)-R^{M_y}(y)| \\&\leq& \mathfrak{u}_{R,1}(\mathfrak{u}_{R,1}^*(|R^{M_x}(x)-R^{M_y}(y)|)) \\
&\leq& \mathfrak{u}_{R,1}(\mu(x,y)) \\
&\leq& \mathfrak{u}_{R,1}(d^X(x,y)).
\end{array}
$$

Next suppose $n\geq 2$. The statement follows from Lemma~\ref{lem:1L} in all cases except when $\overline{u}\in M^n_x\setminus M^n, \overline{v}\in M_y^n\setminus M^n$ for distinct $x, y\in E'(\mathcal{M})$.

Suppose $\overline{u}=(u_1,\dots, u_n),\overline{v}=(v_1,\dots, v_n)$ are given as above. Let
$$\begin{array}{rcl}
S_0&=& \{ i\,:\, u_i, v_i\in M\} \\
S_1&=& \{i\,:\, u_i=x, v_i\in M\} \\
S_2&=& \{i\,:\, u_i\in M, v_i=y\} \\
S_3&=& \{i\,:\, u_i=x, v_i=y\}.
\end{array}
$$
Define $\overline{w}=(w_1,\dots, w_n)\in M_x^n$ by
$$ w_i=\left\{\begin{array}{ll}u_i & \mbox{ if $i\in S_2\cup S_3$}\\  v_i & \mbox{ if $i\in S_0\cup S_1$.} \end{array}\right. $$
Then
$$\begin{array}{rcl} & & |R^X(\overline{u})-R^X(\overline{v})| \\
&\leq & |R^X(\overline{u})-R^X(\overline{w})|+|R^X(\overline{w})-R^X(\overline{w}(y|x))|+|R^X(\overline{w}(y|x))-R^X(\overline{v})| \\
&\leq & d^X_R(\overline{u},\overline{w})+\|w\|\rho(x, y)+d^X_R(\overline{w}(y|x),\overline{v}) \\
&\leq & \displaystyle\sum_{i\in S_0\cup S_1}K_{R,i}d^X(u_i,v_i)+\sum_{i\in S_3}K_{R,i}d^X(u_i,v_i)+\sum_{i\in S_2} K_{R,i}d^X(u_i,v_i) \\
&=& d^X_R(\overline{u},\overline{v})
\end{array}
$$
as required.
\end{proof}

By Lemma~\ref{lem:gconservative} we obtain a continuous $\mathcal{L}$-pre-structure which is a conservative extension of $\mathcal{X}$. We denote this continuous $\mathcal{L}$-pre-structure as
$$ \mathcal{E}(\mathcal{M})=(E(\mathcal{M}), d^E, (R^E)_{R\in\mathcal{L}}). $$

From the proof of Lemma~\ref{lem:gconservative}, we have that for any $n$-ary $R\in\mathcal{L}$ and $\overline{x}\in E(\mathcal{M})^n$,
$$ R^E(\overline{x})= \max\left\{\,0,\ \sup\{R^{M_y}(\overline{u})-d^E_R(\overline{x},\overline{u})\,:\, y\in E'(\mathcal{M}), \overline{u}\in M_y^n\}\right\}.
$$

\subsection{Finitely-supported one-point extensions}

In this subsection we turn to finitely-supported one-point extensions.

Suppose $\mathcal{M}_x$ is a one-point extension of $\mathcal{M}$ with finite support $F\subseteq M$. Then $\mathcal{M}_x$ is a canonical amalgam of $\mathcal{M}$ and $\mathcal{F}_x$ over $\mathcal{F}$. This implies that for any $y\in M$,
$$ d^{M_x}(x, y)=\inf\{ d^{M_x}(x, z)+d^M(z, y)\,:\, z\in F\}, $$
and for any $n$-ary $R\in \mathcal{L}$ where $n\geq 2$ and $\overline{u}\in M_x^n$,
$$ R^{M_x}(\overline{u})=\max\left\{\,0,\ \sup\{ R^{M_x}(\overline{v})-d^{M_x}_R(\overline{u},\overline{v})\,:\, \overline{v}\in F_x^n\cup M^n\}\right\}. $$

We define
$$\begin{array}{rcl} E(\mathcal{M},\omega)=M\cup \{ x\in E'(\mathcal{M})&\!\!\!\!:\!\!\!\!&\mbox{ $\mathcal{M}_x$ is a finitely-supported} \\
& & \ \mbox{ one-point extension of $\mathcal{M}$}\,\}
\end{array} $$
and let $\mathcal{E}(\mathcal{M},\omega)$ be the substructure of $\mathcal{E}(\mathcal{M})$ with domain $E(\mathcal{M},\omega)$.

\begin{theorem}\label{prop:EMo} Let $\mathcal{L}$ be a proper continuous signature and $\mathcal{M}$ be a continuous $\mathcal{L}$-pre-structure. If $\mathcal{M}$ is separable, then so is $\mathcal{E}(\mathcal{M},\omega)$.
\end{theorem}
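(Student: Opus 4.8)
The plan is to show that if $\mathcal{M}$ is separable, then $E(\mathcal{M},\omega)$ with the metric $d^E$ has a countable dense subset. Fix a countable dense set $D\subseteq M$. The key idea is that a finitely-supported one-point extension $\mathcal{M}_x$ is completely determined by a finite package of data: the finite support $F\subseteq M$, the distances $d^{M_x}(x,z)$ for $z\in F$, and the values $R^{M_x}(\overline{v})$ for $n$-ary $R\in\mathcal{L}$ ($n\geq 2$) and $\overline{v}\in F_x^n$, together with $R^{M_x}(x)$ for unary $R$. Indeed the two displayed formulas at the start of the "finitely-supported" subsection recover all of $d^{M_x}$ and all of $R^{M_x}$ from this data (using $\mathcal{M}$ itself). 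So first I would make this precise: define, for a finite $F\subseteq M$, a finite tuple of reals (distances to points of $F$, plus the relevant relation values), show that every such admissible tuple arises from at most one point of $E'(\mathcal{M})$, and that the point's $d^E$-distance to points of $M$ and to other such points depends continuously (indeed, in a controlled Lipschitz manner via $\mu$, $\rho$, $\lambda$) on this tuple.

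Next I would approximate. Given $x\in E(\mathcal{M},\omega)$ with support $F=\{z_1,\dots,z_k\}$, pick points $z_1',\dots,z_k'\in D$ with $d^M(z_j,z_j')$ small, and pick rational numbers approximating $d^{M_x}(x,z_j)$ and approximating the finitely many relation values $R^{M_x}(\overline{v})$, $\overline{v}\in F_x^n$, and $R^{M_x}(x)$. One must check that the resulting rational/$D$-data is still \emph{admissible}, i.e. actually defines a partially defined continuous $\mathcal{L}$-pre-structure on $F'\cup\{x'\}$ which is $1$-Lipschitz with respect to the relevant $d_R$ pseudometrics, so that it extends (via Lemma~\ref{lem:gconservative}, or rather via the canonical amalgam construction) to a genuine finitely-supported one-point extension $\mathcal{M}_{x'}$ of $\mathcal{M}$; here the linearity $\mathfrak{u}_{R,i}(r)=K_{R,i}r$ for arities $\geq 2$ and the superadditivity plus upper semicontinuity for unary $R$ are what let small perturbations stay admissible (possibly after an arbitrarily small outward nudge of the distances, as in the strong semiproperness arguments). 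Then I would estimate $d^E(x,x')=\max\{\mu(x,x'),\rho(x,x'),\lambda(x,x')\}$ and show each term is small: $\lambda$ is small because distances to $M$ are recovered by an infimum over $F$ versus $F'$ of nearly equal quantities; $\rho$ is small because the relation values over $M_x^n\setminus M^n$ are recovered by a sup/inf formula from the finitely many values on $F_x^n\cup M^n$, which have been perturbed only slightly, and the coefficients $K_{R,i}$ are fixed; $\mu$ is small because $\mathfrak{u}_{R,1}^*$ is continuous at $0$ (Lemma~\ref{lem:u*}) and $|R^{M_x}(x)-R^{M_{x'}}(x')|$ is small. Since there are countably many choices of finite $F'\subseteq D$ and countably many rational data tuples, the collection of all such $x'$ is a countable subset of $E(\mathcal{M},\omega)$, and $D$ together with this collection is the desired countable dense set.

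The main obstacle I expect is the \emph{admissibility} of the perturbed data — ensuring that after replacing support points by nearby dense points and rounding distances and relation values to rationals, the resulting finite partial structure still satisfies the $1$-Lipschitz conditions with respect to all the $d_R^{F'\cup\{x'\}}$ pseudometrics, so that it genuinely is (the finite core of) a one-point extension of $\mathcal{M}$ and not merely formal data. For arities $\geq 2$ this is a linear/convexity bookkeeping argument (the pseudometrics $d_{\mathfrak{u}_{R,i}}$ are just $K_{R,i}$ times $d$ on the relevant pieces, as observed in the proof of Theorem~\ref{thm:finap}); for the unary symbols one must be slightly careful, using superadditivity of $\mathfrak{u}_{R,1}$ on $I_{R,1}$ and upper semicontinuity to absorb the error, exactly as in the proof of Theorem~\ref{thm:allap}. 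Once admissibility is secured, the distance estimates are routine given Lemmas~\ref{lem:u*}, \ref{lem:1L}, \ref{lem:dEM}, and \ref{lem:preEM} and the explicit amalgam formulas, and the separability conclusion follows immediately by a counting argument.
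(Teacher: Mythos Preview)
Your approach is essentially the paper's: build the countable dense set from finitely-supported one-point extensions whose support lies in a fixed countable dense $D\subseteq M$ and whose finite data package takes values in a fixed countable set of reals, then approximate an arbitrary $x\in E(\mathcal{M},\omega)$ by moving its support into $D$ and rounding its data. You also correctly single out admissibility of the perturbed data as the crux.

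One point to sharpen: for $n$-ary $R$ with $n\geq 2$ the admissibility step is more than ``linear bookkeeping''. The $1$-Lipschitz inequalities for $R^{A_x}$ on $A_x^n$ may already be \emph{tight}, so independently rounding each value $R^{A_x}(\overline{v}')$ to a nearby rational can break them. The paper's device is to first contract all the gaps by a fixed factor, defining the new values so that $|R^{F_u}(\overline{v})-R^{F_u}(\overline{w})|\le (1-\epsilon_0/\delta)\,|R^{A_x}(\overline{v}')-R^{A_x}(\overline{w}')|$; this manufactures uniform slack, after which one can round into the countable value set and still satisfy the Lipschitz constraints against both $F^n$ and $F_u^n\setminus F^n$. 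Likewise the new distances $d^{F_u}(u,b_i)$ are pushed outward by \emph{staggered} amounts $(3i+1)\epsilon_0+\tfrac{\epsilon}{2}+\epsilon_i'$, not a uniform nudge, so that the triangle inequalities on $F_u$ become strict and survive replacing each $a_i$ by the nearby $b_i\in D$. These two tricks are the only substantive additions to your outline; once they are in place, the estimates for $\mu$, $\rho$, $\lambda$ and hence $d^E(x,u)$ go through just as you sketch.
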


\begin{proof}  Let $N$ be the largest arity of all $R\in\mathcal{L}$. Let $K$ be the least of all $K_{R,i}$ for $n$-ary $R\in\mathcal{L}$ where $n\geq 2$ and $1\leq i\leq n$. Let $J$ be the largest value of all $K_{R,i}/K$ for $n$-ary $R\in\mathcal{L}$ where $n\geq 2$ and $1\leq i\leq n$.

Let $D$ be a countable dense subset of $M$. Let
$$ V=\{ d^M(x,y), R^M(\overline{u})\,:\, x, y\in D, \overline{u}\in D^n, R\in\mathcal{L} \mbox{ is $n$-ary}\} $$
and let $G$ be the additive subgroup of $\mathbb{R}$ generated by $V\cup\mathbb{Q}$. Then $G$ is a countable dense subset of $\mathbb{R}$.

We say that a continuous $\mathcal{L}$-pre-structure $\mathcal{A}$ is {\em $G$-valued} if
$$ \{d^A(x, y), R^A(\overline{u})\,:\, x, y\in A, \overline{u}\in A^n, R\in\mathcal{L} \mbox{ is $n$-ary}\}\subseteq G. $$

Let
$$\begin{array}{rcl} B&\!\!\!\!\!=\!\!\!\!\!&\{ x\in E'(\mathcal{M})\,:\, \mbox{ there is a finite support $F\subseteq D$ for $\mathcal{M}_x$} \\
& & \ \ \ \ \ \ \ \ \ \ \ \ \ \ \ \ \ \ \ \ \ \ \ \ \ \ \ \ \ \ \ \ \ \ \ \ \ \ \ \ \ \ \ \ \ \ \ \ \  \mbox{ such that $\mathcal{F}_x$ is $G$-valued}\}.
\end{array} $$
Then $B\subseteq E(\mathcal{M},\omega)\cap E'(\mathcal{M})$ is countable. It suffices to show that $B$ is dense in $E(\mathcal{M},\omega)\cap E'(\mathcal{M})$.

Let $x\in E(\mathcal{M},\omega)\cap E'(\mathcal{M})$ and $1>\epsilon>0$. Suppose $A=\{a_0, \dots, a_m\}\subseteq M$ is a finite support of $\mathcal{M}_x$ such that
$$ d^{M_x}(x, a_0)\geq d^{M_x}(x, a_1)\geq \cdots\geq d^{M_x}(x, a_m)>0. $$
Let
$$ \delta=\min\{ d^{M_x}(y,z)\,:\, y\neq z\in A_x=A\cup\{x\}\}>0. $$
Choose
$$ 0<\epsilon_0<\displaystyle\frac{\epsilon\min\{1,\delta\}\min\{1,K\}}{(9m+9)N}. $$
Then
$$ \epsilon_0<\min\left\{\displaystyle\frac{\min\{\epsilon, \delta\}}{9m+9}, \frac{\epsilon\delta}{(9m+9)N}, \frac{K\epsilon\delta}{4}\right\}. $$
Since $D$ is dense, we can find distinct $b_0,\dots, b_m\in D$ such that
$$ d^M(a_i, b_i)<\displaystyle\frac{\epsilon_0}{2NJ} $$
for all $0\leq i\leq m$. Let $F=\{b_0,\dots, b_m\}\subseteq D$.

Define $g: F\to  G$ by
$$ g(b_i)=d^{M_x}(x, a_i)+(3i+1)\epsilon_0+\frac{\epsilon}{2}+\epsilon_i'$$
for $0\leq i\leq m$, where we choose $\epsilon_i'\in (0,\epsilon_0)$ so that $g(b_i)\in G$. Then the following computations demonstrate that for all $0\leq i<j\leq m$,
$$ |g(b_i)-g(b_j)|\leq d^M(b_i, b_j)\leq g(b_i)+g(b_j). $$
Indeed, suppose $0\leq i<j\leq m$. We have
$$\begin{array}{rcl} g(b_j)-g(b_i)&=&
d^{M_x}(x, a_j)-d^{M_x}(x, a_i)+3(j-i)\epsilon_0+\epsilon_j'-\epsilon_i' \\
&<& (3m+1)\epsilon_0 \\
&=& 3(m+1)\epsilon_0-2\epsilon_0 \\
&<& \delta-(d^M(a_i, b_i)+d^M(a_j,b_j)) \\
&\leq & d^M(a_i, a_j)-d^M(a_i,b_i)-d^M(a_j, b_j) \\
&\leq & d^M(b_i,b_j),
\end{array}
$$
$$ \begin{array}{rcl} g(b_i)-g(b_j)&=& d^{M_x}(x, a_i)-d^{M_x}(x, a_j)-3(j-i)\epsilon_0+\epsilon_i'-\epsilon_j' \\
&\leq & d^{M_x}(x, a_i)-d^{M_x}(x, a_j)-2\epsilon_0 \\
&<& d^M(a_i,a_j)-d^M(a_i, b_i)-d^M(a_j, b_j) \\
&\leq& d^M(b_i,b_j)
\end{array}
$$
and
$$\begin{array}{rcl} d^M(b_i,b_j)& \leq & d^{M_x}(x, a_i)+d^{M_x}(x, a_j)+d^M(a_i, b_i)+d^M(a_j, b_j) \\
& <& d^{M_x}(x, a_i)+d^{M_x}(x, a_j)+2\epsilon_0 \\
&<& g(b_i)+g(b_j).
\end{array}
$$
We can thus define a one-point extension of the finite metric space $(F,d^F)$ by a point $u$ such that for all $0\leq i\leq m$, $d^{F_u}(u, b_i)=g(b_i)$. By defining
$$ d^{M_u}(u, y)=\inf\{ g(b_i)+d^M(b_i, y)\,:\, 0\leq i\leq m\} $$
for any $y\in M$, we extend the metric to $M_u=M\cup\{u\}$.

Next we define a continuous $\mathcal{L}$-structure $\mathcal{F}_u$ with domain $F_u=F\cup\{u\}$ by defining the values of $R^{F_u}$ for all $R\in\mathcal{L}$.

Suppose first $R\in\mathcal{L}$ is unary. Since $G$ is dense in $\mathbb{R}$, we may choose $R^{F_u}(u)\in G\cap [0,1]$ such that
$$ |R^{F_u}(u)-R^{M_x}(x)|<\mathfrak{u}_{R,1}\left(\frac{\epsilon}{2}\right). $$
We verify that (\ref{eqn:ucl}) holds for $R$ in $F_u$. In fact, for any $0\leq i\leq m$, if $g(b_i)\in I_{R,1}$, then by the superadditivity of $\mathfrak{u}_{R,1}$, we have
$$\begin{array}{rcl} & & |R^M(b_i)-R^{F_u}(u)| \\
&\leq & |R^M(b_i)-R^M(a_i)|+|R^M(a_i)-R^{M_x}(x)|+|R^{M_x}(x)-R^{F_u}(u)|  \\
&\leq & \mathfrak{u}_{R,1}(d^M(b_i,a_i))+\mathfrak{u}_{R,1}(d^{M_x}(x, a_i))+\mathfrak{u}_{R,1}(\frac{\epsilon}{2}) \\
&\leq& \mathfrak{u}_{R,1}(\epsilon_0)+\mathfrak{u}_{R,1}(d^{M_x}(x, a_i))+\mathfrak{u}_{R,1}(\frac{\epsilon}{2}) \\
&\leq& \mathfrak{u}_{R,1}(g(b_i))=\mathfrak{u}_{R,1}(d^{F_u}(u,b_i)).
\end{array}
$$
If $g(b_i)\not\in I_{R,1}$ then
$$ |R^M(b_i)-R^{F_u}(u)|\leq 1\leq \mathfrak{u}_{R,1}(g(b_i))=\mathfrak{u}_{R,1}(d^{F_u}(u,b_i)). $$

Next suppose $R\in\mathcal{L}$ is $n$-ary for $n\geq 2$. If $\overline{v}\in F^n$, then let $R^{F_u}(\overline{v})=R^M(\overline{v})$. Next we define $R^{F_u}(\overline{v})$ for $\overline{v}=(v_1,\dots, v_n)\in F_u^n\setminus F^n$. For this, let $\overline{v}'=(v_1',\dots, v_n')\in A_x^n\setminus A^n$ be defined by
$$ v_i'=\left\{\begin{array}{ll} a_j & \mbox{ if $v_i=b_j$} \\ x & \mbox{ if $v_i=u$.}\end{array}\right. $$
We define $R^{F_u}: F_u^n\setminus F^n\to G\cap [0,1]$ such that
\begin{enumerate}
\item[(a)] for all $\overline{v}\in F_u^n\setminus F^n$, $|R^{F_u}(\overline{v})-R^{A_x}(\overline{v}')|<K\displaystyle\frac{\epsilon}{2}$, and
\item[(b)] for all $\overline{v}, \overline{w}\in F_u^n\setminus F^n$,
$$ |R^{F_u}(\overline{v})-R^{F_u}(\overline{w})|\leq \left( 1-\displaystyle\frac{\epsilon_0}{\delta}\right) |R^{A_x}(\overline{v}')-R^{A_x}(\overline{w}')|. $$
\end{enumerate}

Note that our choice of $\epsilon_0$ guarantees that
$$ \displaystyle\frac{\epsilon_0}{\delta}<\frac{\epsilon}{(9m+9)N}\ll 1 \mbox{ and } \displaystyle\frac{2\epsilon_0}{\delta}<K\frac{\epsilon}{2}. $$
To define $R^{F_u}$ we enumerate all elements of $F_u^n\setminus F^n$ as
$$ \overline{v}^0, \overline{v}^1,\dots, \overline{v}^{\ell} $$
such that
$$ R^{A_x}({\overline{v}^0}')\leq R^{A_x}({\overline{v}^1}')\leq\cdots \leq R^{A_x}({\overline{v}^{\ell}}'). $$
If $R^{A_x}({\overline{v}^i}')=0$ we define $R^{F_u}(\overline{v})=0$. We also make the commitment that if $R^{A_x}({\overline{v}^i}')=R^{A_x}({\overline{v}^j}')$ then $R^{F_u}(\overline{v}^i)=R^{F_u}(\overline{v}^j)$. With this commitment we assume without loss of generality that
$$ 0<R^{A_x}({\overline{v}^0}')< R^{A_x}({\overline{v}^1}')<\cdots < R^{A_x}({\overline{v}^{\ell}}'). $$
We will then make our definition by induction on $i=0,\dots, \ell$ with the following inductive hypotheses:
\begin{enumerate}
\item[(H1)] $R^{F_u}(\overline{v}^i)\in G$ and
$$ \left(1-\displaystyle\frac{2\epsilon_0}{\delta}\right) R^{A_x}({\overline{v}^i}')< R^{F_u}(\overline{v}^i)<\left(1-\displaystyle\frac{\epsilon_0}{\delta}\right) R^{A_x}({\overline{v}^i}'); $$
\item[(H2)] for all $j<i$,
$$ R^{F_u}(\overline{v}^i)\leq R^{F_u}(\overline{v}^j)+\left(1-\displaystyle\frac{\epsilon_0}{\delta}\right)(R^{A_x}({\overline{v}^i}')-R^{A_x}({\overline{v}^j}')). $$
\end{enumerate}
For $i=0$ (H2) is vacuous and (H1) can be accomplished by the density of $G$. In general, suppose we have defined $R^{F_u}(\overline{v}^j)$ for $j\leq i$ to satisfy (H1) and (H2). Note that
$$  \left(1-\displaystyle\frac{2\epsilon_0}{\delta}\right) R^{A_x}({\overline{v}^{i+1}}')<R^{F_u}(\overline{v}^i)+\left(1-\displaystyle\frac{\epsilon_0}{\delta}\right)(R^{A_x}({\overline{v}^{i+1}}')-R^{A_x}({\overline{v}^i}')). $$
We can then define $R^{F_u}(\overline{v}^{i+1})\in G$ to be a value in between the above two quantities. To see that the inductive hypotheses are maintained, note that by this definition (H2) and the left half of (H1) are immediate. For the right half of (H1), just note that
$$ R^{F_u}(\overline{v}^i)+\left(1-\displaystyle\frac{\epsilon_0}{\delta}\right)(R^{A_x}({\overline{v}^{i+1}}')-R^{A_x}({\overline{v}^i}'))<\left(1-\displaystyle\frac{\epsilon_0}{\delta}\right) R^{A_x}({\overline{v}^{i+1}}'). $$
This finishes the definition of $R^{F_u}$. It is easy to see that (a) and (b) hold.

We verify (\ref{eqn:ucl}) for $R$ in $F_u$. For this, let $\overline{v},\overline{w}\in F_u^n$ differ at exactly one coordinate, say the $i$-th coordinate. If both $\overline{v},\overline{w}\in F^n$ then
$$ |R^{F_u}(\overline{v})-R^{F_u}(\overline{w})|=|R^M(\overline{v})-R^M(\overline{w})|\leq K_{R,i}d^M(v_i,w_i) $$
by the (\ref{eqn:ucl}) for $R$ in $M$. If both $\overline{v},\overline{w}\in F_u^n\setminus F^n$, then by (b) we have
$$\begin{array}{rcl} |R^{F_u}(\overline{v})-R^{F_u}(\overline{w})|&\leq& \left( 1-\displaystyle\frac{\epsilon_0}{\delta}\right) |R^{A_x}(\overline{v}')-R^{A_x}(\overline{w}')| \\
&\leq& \left( 1-\displaystyle\frac{\epsilon_0}{\delta}\right)K_{R,i}d^{A_x}(v_i', w_i') \\
&\leq& K_{R,i}d^{A_x}(v_i',w_i') -K_{R,i}\epsilon_0.
\end{array}
$$
Now if $v_i, w_i\in F$ then we have $v_i', w_i'\in A$, $d^{A_x}(v_i', w_i')=d^M(v_i', w_i')$,
$$ d^M(v_i', v_i), d^M(w_i', w_i)<\displaystyle\frac{\epsilon_0}{2NJ} \leq \frac{\epsilon_0}{2}$$
and
$$\begin{array}{rcl} |R^{F_u}(\overline{v})-R^{F_u}(\overline{w})|&\leq& K_{R,i}d^M(v_i',w_i')-K_{R,i}\epsilon_0 \\
& \leq &  K_{R,i}d^M(v_i', w_i')-K_{R,i}(d^M(v_i',v_i)+d^M(w_i',w_i)) \\
&\leq & K_{R,i}d^M(v_i, w_i).
\end{array}$$
If $v_i\in F$ and $w_i=u$, then $v_i'\in A$ and $w_i'=x$, and
$$ |R^{F_u}(\overline{v})-R^{F_u}(\overline{w})|\leq K_{R,i}d^{A_x}(v_i', x)-K_{R,i}\epsilon_0\leq K_{R,i}d^{F_u}(v_i, u). $$
This finishes the proof of (\ref{eqn:ucl}) for $R$ for the case $\overline{v},\overline{w}\in F_u^n\setminus F^n$. Finally, suppose $\overline{v}\in F_u^n\setminus F^n$ and $\overline{w}\in F^n$. Since $\overline{v}$ and $\overline{w}$ differ at only coordinate $i$, we have $v_i=u$ and $w_i\in F$.
We have
$$\begin{array}{rcl} & &  |R^{F_u}(\overline{v})-R^{F_u}(\overline{w})| \\
&\leq & |R^{F_u}(\overline{v})-R^{A_x}(\overline{v}')|+|R^{A_x}(\overline{v}')-R^{M}(\overline{w}')| +|R^{M}(\overline{w}')-R^M(\overline{w})| \\
&\leq & K\displaystyle\frac{\epsilon}{2}+d^{A_x}_R(\overline{v}', \overline{w}')+d^M_R(\overline{w}',\overline{w}) \\
&\leq & K\displaystyle\frac{\epsilon}{2}+K_{R,i}d^{A_x}(x, w_i')+\sum_{j=1}^n K_{R,j}d^M(w_j', w_j) \\
&\leq & K_{R,i}d^{A_x}(x, w_i')+K_{R,i}\displaystyle\frac{\epsilon}{2}+NJK_{R,i}\frac{\epsilon_0}{2NJ} \\
&< & K_{R,i}d^{F_u}(u, w_i).
\end{array}
$$
Now we have established (\ref{eqn:ucl}) for all $R\in\mathcal{L}$ in $F_u$, we get that $\mathcal{F}_u$ is indeed a one-point extension of $\mathcal{F}$ as a continuous $\mathcal{L}$-structure.

 By Theorem~\ref{thm:allap}, the canonical amalgam of $\mathcal{F}_u$ and $\mathcal{M}$ over $\mathcal{F}$ gives us a one-point extension $\mathcal{M}_u$ of $\mathcal{M}$. Hence $u\in E(\mathcal{M},\omega)\cap E'(\mathcal{M})$.

To complete the proof of the theorem, we claim that $d^E(x, u)<(NJ+1)\epsilon$. For this, we note that for any $y\in M$,
$$ d^{M_x}(x, y)=\inf\{d^{A_x}(x, a_i)+d^M(a_i, y)\,:\, 0\leq i\leq m\} $$
and
$$ d^{M_u}(u, y)=\inf\{d^{F_u}(u, b_i)+d^M(b_i, y)\,:\, 0\leq i\leq m\}. $$
Fix an arbitrary $y\in M$. Suppose first $d^{M_u}(u, y)< d^{M_x}(x,y)$, and let $d^{M_u}(u,y)=d^{F_u}(u, b_i)+d^M(b_i,y)$. Then
$$\begin{array}{rcl} & & d^{M_x}(x, y)-d^{M_u}(u,y) \\
&\leq & d^{A_x}(x, a_i)+d^M(a_i, y)-(d^{F_u}(u, b_i)+d^M(b_i, y)) \\
&\leq & d^{A_x}(x, a_i)-d^{F_u}(u, b_i)+d^M(a_i, b_i) \\
&\leq & \displaystyle\frac{\epsilon_0}{2NJ}-\displaystyle\frac{\epsilon}{2}<0,
\end{array}
$$
contradicting our assumption. Hence we must have $d^{M_u}(u, y)\geq d^{M_x}(x, y)$. Let $d^{M_x}(x,y)=d^{A_x}(x, a_i)+d^M(a_i, y)$. Then
$$\begin{array}{rcl} & & d^{M_u}(u, y)-d^{M_x}(x, y) \\
&\leq& d^{F_u}(u, b_i)+d^M(b_i,y)-(d^{A_x}(x, a_i)+d^M(a_i, y)) \\
&\leq & d^{F_u}(u,b_i)-d^{A_x}(x, a_i)+d^M(a_i, b_i) \\
&\leq & (3m+2)\epsilon_0+\displaystyle\frac{\epsilon}{2}+\frac{\epsilon_0}{2NJ}<\frac{5}{6}\epsilon.
\end{array}
$$
It follows that $\lambda(x, u)<\epsilon$.

For any unary $R\in\mathcal{L}$, we have
$$ \mathfrak{u}^*_{R,1}(|R^{M_x}(x)-R^{M_u}(u)|)\leq
\mathfrak{u}^*_{R,1}(\mathfrak{u}_{R,1}\left(\displaystyle\frac{\epsilon}{2}\right))=\frac{\epsilon}{2}. $$
Thus $\mu(x, u)<\epsilon$.

To compute $\rho(x,u)$, we fix an $n$-ary $R\in\mathcal{L}$ for $n\geq 2$ and $\overline{v}\in M_u^n\setminus M^n$. Since $\mathcal{M}_u$ is the canonical amalgam of $\mathcal{F}_u$ and $\mathcal{M}$ over $\mathcal{F}$, we have that
$$ R^{M_u}(\overline{v})=\max\{0, \sup\{R^{M_u}(\overline{z})-d^{M_u}_R(\overline{v},\overline{z})\,:\, \overline{z}\in F_u^n\cup M^n\}\}. $$
Similarly,
$$ R^{M_x}(\overline{v}(x|u))=\max\{0, \sup\{R^{M_x}(\overline{w})-d^{M_x}_R(\overline{v}(x|u), \overline{w})\,:\, \overline{w}\in A_x^n\cup M^n\}\}. $$
Suppose first $R^{M_u}(\overline{v})\geq R^{M_x}(\overline{v}(x|u))$. If $R^{M_u}(\overline{v})=0$ we must have $R^{M_x}(\overline{v}(x|u))=0$ and there is nothing to prove. Assume $R^{M_u}(\overline{v})>0$. Let $\eta>0$ be arbitrary. Then for some $\overline{z}\in F_u^n\cup M^n$,
$$ R^{M_u}(\overline{v})\leq R^{M_u}(\overline{z})-d^{M_u}_R(\overline{v},\overline{z})+\eta. $$
Consider first the case $\overline{z}\in M^n$. Then
$$\begin{array}{rcl} & &  R^{M_u}(\overline{v})-R^{M_x}(\overline{v}(x|u)) \\
&\leq &
R^{M_u}(\overline{z})-d^{M_u}_R(\overline{v},\overline{z})+\eta-(R^{M_x}(\overline{z})-d^{M_x}_R(\overline{v}(x|u), \overline{z})) \\
&\leq & \|\overline{v}\|\lambda(u,x)+\eta.
\end{array}
$$
Since $\eta$ is arbitrary, we have
$$ \|\overline{v}\|^{-1}|R^{M_u}(\overline{v})-R^{M_x}(\overline{v}(x|u))|<\epsilon. $$
Next consider the case $\overline{z}\in F_u^n\setminus F^n$. Then
$$\begin{array}{rcl} & &  R^{M_u}(\overline{v})-R^{M_x}(\overline{v}(x|u)) \\
&\leq & R^{M_u}(\overline{z})-d^{M_u}_R(\overline{v},\overline{z})+\eta-(R^{M_x}(\overline{z}')-d^{M_x}_R(\overline{v}(x|u), \overline{z}') \\
&= & R^{M_u}(\overline{z})-R^{M_x}(\overline{z}')+(d^{M_x}_R(\overline{v}(x|u), \overline{z}')-d^{M_u}_R(\overline{v}, \overline{z}))+\eta \\
& \leq & K\displaystyle\frac{\epsilon}{2}+NKJ \epsilon+\eta.
\end{array}
$$
The last inequality follows from the observation that there are four cases for the values of $v_i, z_i\in M_u$:
\begin{quote}
\begin{enumerate}
\item[Case 1.] $v_i=u$ and $z_i=u$. Then $z_i'=x$ and $v_i(x|u)=x$. In this case
$d^{M_x}(v_i(x|u), z_i')-d^{M_u}(v_i, z_i)=0$.
\item[Case 2.] $v_i\in M$ and $z_i=b_j\in F$. Then $z_i'=a_j\in A$ and $v_i(x|u)=v_i$. In this case
$$|d^{M_x}(v_i(x|u), z_i')-d^{M_u}(v_i,z_i)|\leq d^M(a_j, b_j)<\displaystyle\frac{\epsilon_0}{2NJ}<\epsilon. $$
\item[Case 3.] $v_i=u$ and $z_i=b_j\in F$. Then $z_i'=a_j\in A$ and $v_i(x|u)=x$. In this case
$$\begin{array}{rcl} |d^{M_x}(v_i(x|u),z_i')-d^{M_u}(v_i, z_i)|&=&|d^{M_x}(x, a_j)-d^{M_u}(u, b_j)| \\
&\leq& (3m+2)\epsilon_0+\displaystyle\frac{\epsilon}{2}<\epsilon.
\end{array} $$
\item[Case 4.] $v_i\in M$ and $z_i=u$. Then $z_i'=x$ and $v_i(x|u)=v_i$. In this case
$$\begin{array}{rcl} |d^{M_x}(v_i(x|u), z_i')-d^{M_u}(v_i,z_i)|&=&|d^{M_x}(v_i, x)-d^{M_u}(v_i, u)| \\
&\leq& \lambda(u,x)<\epsilon.
\end{array}$$
\end{enumerate}
\end{quote}
Since $\eta$ is arbitrary, we have
$$ R^{M_u}(\overline{v})-R^{M_x}(\overline{v}(x|u))\leq K(NJ+1)\epsilon. $$
Since $\|\overline{v}\|\geq K$, we have
$$ \|\overline{v}\|^{-1} |R^{M_u}(\overline{v})-R^{M_x}(\overline{v}(x|u))|\leq (NJ+1)\epsilon. $$
For $R^{M_u}(\overline{v})\leq R^{M_x}(\overline{v}(x|u))$ we get a similar estimate by a symmetric argument. Thus we have $\rho(x, u)<(NJ+1)\epsilon$ and $d^E(x,u)<(NJ+1)\epsilon$.
\end{proof}

\subsection{Construction of the Urysohn structure}

Given any proper continuous signature $\mathcal{L}$ and continuous $\mathcal{L}$-pre-structure $\mathcal{M}$, define by induction
$$ \begin{array}{c} \mathcal{M}_0=\mathcal{M} \ \ \ \ \ \   \\ \mathcal{M}_{n+1}=\mathcal{E}(\mathcal{M}_n,\omega)\end{array}$$
and let
$$ \mathcal{M}_\omega=\bigcup_{n\in\omega}\mathcal{M}_{n}. $$
Then by Theorem~\ref{prop:EMo}, if $\mathcal{M}$ is separable, then so is $\mathcal{M}_\omega$. From the construction of $\mathcal{M}_\omega$ it is easy to see that it always has the Urysohn property.

By the properness of $\mathcal{L}$, $\mathcal{M}_\omega$ has a unique completion $\bar{\mathcal{M}}_\omega$ which is a continuous $\mathcal{L}$-structure. We show that $\bar{\mathcal{M}}_\omega$ still has the Urysohn property. For this we first note the following corollary of the proof of Theorem~\ref{prop:EMo}.

\begin{lemma}\label{lem:EMU} Let $\mathcal{L}$ be a proper continuous signature and let $\mathcal{M}$ be a continuous $\mathcal{L}$-pre-structure. Suppose $\mathcal{M}$ has the Urysohn property. Given any finite substructure $\mathcal{A}$ of $\bar{\mathcal{M}}$, a one-point extension $\mathcal{A}_x$ of $\mathcal{A}$, and $\epsilon>0$, there is $u\in M$ such that
$$ |d^{A_x}(x, a)-d^{\bar{M}}(u,a)|<\epsilon \mbox{ for all $a\in A$,} $$
and for any $n$-ary $R\in\mathcal{L}$ and $\overline{y}\in A_x^n$,
$$ |R^{A_x}(\overline{y})-R^{\bar{M}}(\overline{y}(u|x))|<\epsilon. $$
\end{lemma}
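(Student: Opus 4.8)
The plan is to transport the abstractly given one-point extension $\mathcal{A}_x$ down to a one-point extension over a finite substructure of $\mathcal{M}$ that lies $\eta$-close to $\mathcal{A}$, and then to realize that extension inside $\mathcal{M}$ using the Urysohn property of $\mathcal{M}$. Properness enters only through the fact that the completion $\bar{\mathcal{M}}$ satisfies (\ref{eqn:ucl}); this is exactly what lets us control the errors introduced by replacing $\mathcal{A}$ with a nearby substructure.

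First I would fix a small $\eta>0$, to be pinned down at the end in terms of $\epsilon$, the largest arity in $\mathcal{L}$, and the finitely many moduli $\mathfrak{u}_{R,i}$. Writing $A=\{a_0,\dots,a_m\}$ and using that $M$ is dense in $\bar{M}$, choose distinct $a_0',\dots,a_m'\in M$ with $d^{\bar{M}}(a_i,a_i')<\eta$ for all $i$ (possible once $\eta<\tfrac12\min_{i\ne j}d^{\bar{M}}(a_i,a_j)$). Let $\mathcal{A}'$ and $\mathcal{B}$ be the substructures of $\bar{\mathcal{M}}$ with domains $A'=\{a_0',\dots,a_m'\}$ and $A\cup A'$; these are finite continuous $\mathcal{L}$-structures, $\mathcal{A}$ and $\mathcal{A}'$ are substructures of $\mathcal{B}$, and $\mathcal{A}'$ is a substructure of $\mathcal{M}$. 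Since $\mathcal{L}$ is proper it is semiproper, so by Theorem~\ref{thm:finap} the class of finite continuous $\mathcal{L}$-structures has the SAP; applying it to the span $\mathcal{A}_x\hookleftarrow\mathcal{A}\hookrightarrow\mathcal{B}$ and truncating the amalgam to the union of the two images yields, by a cardinality count (the images meet exactly in the copy of $A$, so the amalgam has $|A_x|+|B|-|A|=|A\cup A'|+1$ points), a one-point extension $\mathcal{B}_x$ of $\mathcal{B}$ whose restriction over $A$ is $\mathcal{A}_x$; in particular $d^{B_x}(x,a_i)=d^{A_x}(x,a_i)$ and $R^{B_x}(\overline{y})=R^{A_x}(\overline{y})$ for all $\overline{y}\in A_x^n$. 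Put $\mathcal{A}'_x=\mathcal{B}_x\rest(A'\cup\{x\})$, a one-point extension of the finite continuous $\mathcal{L}$-structure $\mathcal{A}'$.

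Then I would apply the Urysohn property of $\mathcal{M}$ to the inclusion $\mathcal{A}'\hookrightarrow\mathcal{M}$ together with its one-point extension $\mathcal{A}'_x$, obtaining an isomorphic embedding $\psi\colon\mathcal{A}'_x\to\mathcal{M}$ with $\psi\rest A'=\mathrm{id}$, and set $u=\psi(x)\in M$. Since $\psi$ is an isomorphic embedding fixing $A'$ with $\psi(x)=u$ and $\mathcal{M}$ is a substructure of $\bar{\mathcal{M}}$, we have $d^{\bar{M}}(u,a_i')=d^{A'_x}(x,a_i')=d^{B_x}(x,a_i')$, so the triangle inequality in $\mathcal{B}_x$ and in $\bar{\mathcal{M}}$ gives
\[
|d^{A_x}(x,a_i)-d^{\bar{M}}(u,a_i)|\le|d^{B_x}(x,a_i)-d^{B_x}(x,a_i')|+|d^{\bar{M}}(u,a_i')-d^{\bar{M}}(u,a_i)|\le 2d^{\bar{M}}(a_i,a_i')<2\eta .
\]
Likewise, for $n$-ary $R$ and $\overline{y}\in A_x^n$, let $\overline{z}\in(A'\cup\{x\})^n$ be obtained from $\overline{y}$ by replacing each occurrence of $a_i$ with $a_i'$; then $R^{A'_x}(\overline{z})=R^{\bar{M}}(\overline{z}(u|x))$ for the same reasons. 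Applying (\ref{eqn:ucl}) coordinate by coordinate, in $\mathcal{B}_x$ to compare $R^{A_x}(\overline{y})=R^{B_x}(\overline{y})$ with $R^{B_x}(\overline{z})=R^{A'_x}(\overline{z})$ and in $\bar{\mathcal{M}}$ to compare $R^{\bar{M}}(\overline{z}(u|x))$ with $R^{\bar{M}}(\overline{y}(u|x))$, and using that each $d^{\bar{M}}(a_i,a_i')<\eta$ together with the monotonicity of the moduli, we obtain
\[
|R^{A_x}(\overline{y})-R^{\bar{M}}(\overline{y}(u|x))|\le 2\sum_{i=1}^n\mathfrak{u}_{R,i}(\eta) .
\]
Since there are only finitely many relation symbols and $\mathfrak{u}_{R,i}(\eta)\to 0$ as $\eta\to 0$, choosing $\eta$ small enough at the outset makes both right-hand sides less than $\epsilon$, as required.

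The one delicate point is the amalgamation step: one must verify that the amalgam can genuinely be taken to be a \emph{one-point} extension of $\mathcal{B}$ --- this is precisely what the strong amalgamation property (rather than plain AP) provides, after the harmless truncation to the union of the two images --- and keep careful track of which structure is a substructure of which. Once that bookkeeping is in place, the remaining estimates are routine applications of (\ref{eqn:ucl}) for $\bar{\mathcal{M}}$ together with the density of $M$ in $\bar{M}$.
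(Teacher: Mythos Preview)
Your proof is correct and in fact cleaner than the paper's. The paper proves this lemma by invoking ``the proof of Theorem~\ref{prop:EMo}'': there a rather delicate explicit construction builds a one-point extension $\mathcal{F}_u$ of a nearby finite $\mathcal{F}\subseteq M$ approximating $\mathcal{A}_x$ (with careful perturbations of distances and an inductive definition of the $R$-values so that they land in a prescribed countable group $G$ while still satisfying (\ref{eqn:ucl})), and then one realizes $\mathcal{F}_u$ inside $\mathcal{M}$ via the Urysohn property. You instead use the SAP for $\Kfin$ (Theorem~\ref{thm:finap}) as a black box to manufacture the intermediate one-point extension $\mathcal{B}_x$ of $\mathcal{B}=\mathcal{A}\cup\mathcal{A}'$, restrict to $\mathcal{A}'_x$, and realize it.

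The difference is that the paper's route recycles machinery already built for separability (where the $G$-valued constraint is genuinely needed), while yours observes that for the present lemma no such constraint is required, so ordinary strong amalgamation suffices. Your approach is more conceptual and self-contained; the paper's approach avoids re-proving amalgamation but at the cost of importing a long technical construction whose extra features are irrelevant here. Both yield the same error bounds via (\ref{eqn:ucl}) applied in $\mathcal{B}_x$ and in $\bar{\mathcal{M}}$.
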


\begin{proof} Let $\mathcal{A}_x$ and $\epsilon>0$ be given. Let $N$, $K$, $J$ be defined as in the proof of Theorem~\ref{prop:EMo}. Suppose $A=\{a_0,\dots, a_m\}\subseteq \bar{M}$. Since $M$ is dense in $\bar{M}$, we get a finite subset $F=\{b_0,\dots, b_m\}\subseteq M$ such that
$$d^{\bar{M}}(a_i, b_i)<\displaystyle\frac{\epsilon}{2NKJ} $$
for all $0\leq i\leq m$. By the proof of Theorem~\ref{prop:EMo}, we obtain an one-point extension $\mathcal{F}_u$ of $\mathcal{F}$ such that
$$ |d^{A_x}(x, a)-d^{F_u}(u, b)|<\displaystyle\frac{\epsilon}{2} $$
for all $a\in A$ where $b=b_j$ if $a=a_j$ for $0\leq j\leq m$, and for any $n$-ary $R\in\mathcal{L}$ and $\overline{y}\in A^n_x$,
$$ |R^{A_x}(\overline{y})-R^{F_u}(\overline{y}')|<\displaystyle\frac{\epsilon}{2}, $$
where $\overline{y}'$ is defined as in the proof of Theorem~\ref{prop:EMo}.

Since $M$ has the Urysohn property, we may find such a $u\in M$. Now for any $a\in A$, letting $b\in F$ be defined as above, we have
$$\begin{array}{rcl} & & |d^{A_x}(x, a)-d^{\bar{M}}(u,a)| \\&\leq& |d^{A_x}(x, a)-d^{F_u}(u, b)|+|d^{F_u}(u, b)-d^{\bar{M}}(u, a)| \\
&<& \displaystyle\frac{\epsilon}{2}+\frac{\epsilon}{2}=\epsilon.
\end{array}$$

For $n$-ary $R\in\mathcal{L}$ and $\overline{y}\in A_x^n$,
$$\begin{array}{rcl} & & |R^{A_x}(\overline{y})-R^{\bar{M}}(\overline{y}(u|x))| \\
&\leq & |R^{A_x}(\overline{y})-R^{F_u}(\overline{y}')|+|R^{F_u}(\overline{y}')-R^{\bar{M}}(\overline{y}(u|x))| \\
&\leq & \displaystyle\frac{\epsilon}{2}+d_R^{\bar{M}}(\overline{y}',\overline{y}(u|x)) \\
&\leq & \displaystyle\frac{\epsilon}{2}+NKJ\frac{\epsilon}{2NKJ}=\epsilon.
\end{array}
$$
\end{proof}

The following lemma is also obvious.

\begin{lemma}\label{lem:dEe} Let $\mathcal{M}$ be a continuous $\mathcal{L}$-pre-structure, and $\mathcal{M}_x$ and $\mathcal{M}_y$ be two nonisomorphic one-point extensions of $\mathcal{M}$. Then $\mathcal{M}$ has an extension $\mathcal{M}_{xy}$ with $M_{xy}=M\cup\{x, y\}$ such that
$$d^{M_{xy}}(x, y)=\max\{\mu(x,y), \rho(x,y), \lambda(x,y)\}=d^E(x,y).$$
\end{lemma}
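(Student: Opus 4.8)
The plan is to build $\mathcal{M}_{xy}$ directly on the set $M_{xy}=M\cup\{x,y\}$ by specifying the metric and all the relation interpretations, and then verify that the result is a continuous $\mathcal{L}$-pre-structure that restricts correctly to $\mathcal{M}_x$ and $\mathcal{M}_y$. First I would define $d^{M_{xy}}$ to agree with $d^{M_x}$ on $M\cup\{x\}$, with $d^{M_y}$ on $M\cup\{y\}$, and set $d^{M_{xy}}(x,y)=\max\{\mu(x,y),\rho(x,y),\lambda(x,y)\}$; the computation in the proof of Lemma~\ref{lem:dEM} already shows this quantity is at most $\inf\{d^{M_x}(x,z)+d^{M_y}(y,z):z\in M\}$ and is nonzero (since $\mathcal{M}_x$ and $\mathcal{M}_y$ are nonisomorphic, $\mu$, $\rho$, or $\lambda$ is strictly positive), so the triangle inequality holds and $d^{M_{xy}}$ is a genuine metric — this is essentially the content of Lemma~\ref{lem:dEM} restricted to the two-point set $E'(\mathcal{M})=\{x,y\}$.

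Next I would define the relation interpretations. On tuples lying entirely in $M_x^n$ use $R^{M_x}$, on tuples entirely in $M_y^n$ use $R^{M_y}$ (these are consistent on $M^n$), and on the remaining tuples — those using both $x$ and $y$ — invoke Lemma~\ref{lem:gconservative}: the assignment so far makes $(M_{xy},d^{M_{xy}})$ together with the partially defined $R$'s (with domain $M_x^n\cup M_y^n$) a partially defined continuous $\mathcal{L}$-pre-structure, and a conservative extension $\mathcal{M}_{xy}$ fills in the rest while preserving all prescribed values. So the substance is to check the $1$-Lipschitz condition of Definition~(i) of partially defined structures for the partial interpretation, i.e. that $R^{M_x}\cup R^{M_y}$ is $1$-Lipschitz with respect to $d^{M_{xy}}_R$ on $M_x^n\cup M_y^n$.

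The main obstacle — and the only place any work is required — is this last verification, which is exactly the same type of argument as the one carried out in the proof of Lemma~\ref{lem:preEM} for $\mathcal{X}=\mathcal{E}(\mathcal{M})$, specialized to the case where $E'(\mathcal{M})$ has been replaced by the two points $x,y$. For a unary $R$ and $\overline{x}=x$, $\overline{y}=y$ one uses $\mu(x,y)\le d^{M_{xy}}(x,y)$ together with Lemma~\ref{lem:u*}(3) to get $|R^{M_x}(x)-R^{M_y}(y)|\le\mathfrak{u}_{R,1}(d^{M_{xy}}(x,y))$; for $n$-ary $R$ with $n\ge2$ and tuples $\overline{u}\in M_x^n\setminus M^n$, $\overline{v}\in M_y^n\setminus M^n$, one partitions the coordinates into the sets $S_0,S_1,S_2,S_3$ exactly as in the proof of Lemma~\ref{lem:preEM}, introduces the intermediate tuple $\overline{w}$, and bounds $|R^{M_x}(\overline{u})-R^{M_y}(\overline{v})|$ using the definition of $\rho(x,y)$ on the middle term and $1$-Lipschitzness of $R^{M_x}$, $R^{M_y}$ on the outer terms; since $\rho(x,y)\le d^{M_{xy}}(x,y)$ this gives the required inequality. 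I would simply remark that the verification is identical to that in Lemma~\ref{lem:preEM} and not reproduce it. Finally, since $\mathcal{M}_{xy}$ restricts to $\mathcal{M}_x$ on $M\cup\{x\}$ and to $\mathcal{M}_y$ on $M\cup\{y\}$ by construction (the conservative extension does not alter those values), $\mathcal{M}_{xy}$ is the desired extension, and $d^{M_{xy}}(x,y)=d^E(x,y)$ by definition.
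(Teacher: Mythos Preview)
Your proposal is correct, and the verification you outline is exactly what underlies the paper's argument. The paper, however, observes that all of this work has already been carried out in constructing $\mathcal{E}(\mathcal{M})$ itself (via Lemmas~\ref{lem:dEM}, \ref{lem:preEM}, and \ref{lem:gconservative}), so its proof is the single line: take $\mathcal{M}_{xy}$ to be the substructure of $\mathcal{E}(\mathcal{M})$ with domain $M_{xy}$. Your approach rebuilds this substructure from scratch by restricting the construction of $\mathcal{E}(\mathcal{M})$ to just the two new points $x,y$ and then reapplying the conservative-extension lemma; this is logically the same and yields a valid $\mathcal{M}_{xy}$, but it duplicates effort that Section~\ref{subsec:4.1} already invested in the ambient object $\mathcal{E}(\mathcal{M})$.
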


\begin{proof} Take $\mathcal{M}_{xy}$ to be the substructure of $\mathcal{E}(\mathcal{M})$ with domain $M_{xy}$.
\end{proof}

\begin{theorem}\label{prop:UM} Let $\mathcal{L}$ be a proper continuous signature and let $\mathcal{M}$ be a continuous $\mathcal{L}$-pre-structure. If $\mathcal{M}$ has the Urysohn property, then so does its completion $\bar{\mathcal{M}}$.
\end{theorem}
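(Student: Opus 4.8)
The plan is to realize the ``type'' of $x$ over $\varphi(A)$ as the limit of a Cauchy sequence of points of $M$: Lemma~\ref{lem:EMU} provides approximate realizations of $x$ inside $M$, and Lemma~\ref{lem:dEe} is used to force each successive approximation to lie near the previous one, so that the sequence converges in the complete space $\bar{M}$. For the setup, let $\mathcal A$ be a finite continuous $\mathcal L$-structure, $\mathcal A_x$ a one-point extension of $\mathcal A$, and $\varphi\colon\mathcal A\to\bar{\mathcal M}$ an isomorphic embedding; transporting $\mathcal A_x$ along $\varphi$ we may assume that $A\subseteq\bar{M}$ and that $\varphi$ is the inclusion, so it suffices to produce $w\in\bar{M}\setminus A$ for which the map fixing $A$ pointwise and sending $x$ to $w$ is an isomorphism onto the substructure of $\bar{\mathcal M}$ on $A\cup\{w\}$. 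Call $u\in\bar{M}\setminus A$ an \emph{$\epsilon$-realization of $\mathcal A_x$ over $A$} if $|d^{A_x}(x,a)-d^{\bar{M}}(u,a)|<\epsilon$ for all $a\in A$ and $|R^{A_x}(\overline{y})-R^{\bar{M}}(\overline{y}(u|x))|<\epsilon$ for every $n$-ary $R\in\mathcal L$ and $\overline{y}\in A_x^n$, and for such $u$ let $\mathcal A_u$ be the substructure of $\bar{\mathcal M}$ on $A\cup\{u\}$, viewed as a one-point extension of $\mathcal A$.

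The key recursive step is the following. Suppose $u\in M$ is an $\epsilon$-realization of $\mathcal A_x$ over $A$ with $\epsilon<1$. If $\mathcal A_u$ and $\mathcal A_x$ are isomorphic over $A$ we are finished, taking $w=u$. Otherwise, reading the definitions of $\lambda,\mu,\rho$ off the $\epsilon$-realization inequalities yields $\lambda(x,u)\le\epsilon$, $\mu(x,u)\le\omega(\epsilon):=\max\{\mathfrak u_{R,1}^*(\epsilon)\colon R\in\mathcal L\text{ unary}\}$, and $\rho(x,u)\le\epsilon/K$, where $K$ is as in the proof of Theorem~\ref{prop:EMo} (the bound on $\rho$ uses $\|\overline{u}\|\ge K$ for the tuples $\overline{u}$ involved); hence $d^E(x,u)\le\eta(\epsilon)$, where $\eta(\epsilon)$ is the maximum of these three quantities, and $\eta(\epsilon)\to0$ as $\epsilon\to0^+$ by Lemma~\ref{lem:u*}(1) and the finiteness of $\mathcal L$. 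Applying Lemma~\ref{lem:dEe} to $\mathcal A$ with the one-point extensions $\mathcal A_x$ and $\mathcal A_u$ produces an extension of $\mathcal A$ with domain $A\cup\{x,u\}$; regarded as a one-point extension $(\mathcal A_u)_x$ of $\mathcal A_u$, it restricts to $\mathcal A_x$ on $A\cup\{x\}$ and has $d^{(\mathcal A_u)_x}(x,u)=d^E(x,u)\le\eta(\epsilon)$.

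Now fix $\epsilon_0<\min(\{1\}\cup\{d^{A_x}(x,a)\colon a\in A\})$ and use Lemma~\ref{lem:EMU} (applied to $\mathcal A$ and $\mathcal A_x$) to choose $u_0\in M$ an $\epsilon_0$-realization of $\mathcal A_x$ over $A$; then $u_0\notin A$. Given $u_n\in M\setminus A$ an $\epsilon_n$-realization of $\mathcal A_x$ over $A$ with $\epsilon_n\le\epsilon_0$, form $(\mathcal A_{u_n})_x$ as above, choose $\epsilon_{n+1}<\epsilon_n$ with $\eta(\epsilon_n)+\epsilon_{n+1}<2^{-n}$, and apply Lemma~\ref{lem:EMU} to the finite substructure $\mathcal A_{u_n}$ of $\bar{\mathcal M}$ and its one-point extension $(\mathcal A_{u_n})_x$ to get $u_{n+1}\in M$. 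Evaluating the conclusion at $b=u_n$ gives $d^{\bar{M}}(u_n,u_{n+1})<\eta(\epsilon_n)+\epsilon_{n+1}<2^{-n}$, while restricting the remaining inequalities to $A$ and to tuples in $A_x^n$ (and using that $(\mathcal A_{u_n})_x$ restricts to $\mathcal A_x$ on $A\cup\{x\}$) shows $u_{n+1}$ is an $\epsilon_{n+1}$-realization of $\mathcal A_x$ over $A$, with $u_{n+1}\notin A$. Thus $(u_n)$ is Cauchy in the complete space $\bar{M}$ and converges to some $w\in\bar{M}$. Continuity of $d^{\bar{M}}$ with $\epsilon_n\to0$ gives $d^{\bar{M}}(w,a)=d^{A_x}(x,a)$ for all $a\in A$, so $w\notin A$; continuity of each $R^{\bar{M}}$, which holds by (\ref{eqn:ucl}), together with $\overline{y}(u_n|x)\to\overline{y}(w|x)$ and $\epsilon_n\to0$ gives $R^{\bar{M}}(\overline{y}(w|x))=R^{A_x}(\overline{y})$ for every $n$-ary $R\in\mathcal L$ and $\overline{y}\in A_x^n$. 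Hence the map fixing $A$ and sending $x$ to $w$ is an isomorphic embedding of $\mathcal A_x$ into $\bar{\mathcal M}$ extending $\varphi$, which is what the Urysohn property of $\bar{\mathcal M}$ requires.

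The main obstacle is the recursive step: Lemma~\ref{lem:EMU} on its own yields approximate realizations of $x$ over $A$ with no control over how far apart successive choices are, so the resulting sequence need not converge. Enlarging the base to $A\cup\{u_n\}$ and invoking Lemma~\ref{lem:dEe} to manufacture a one-point extension $(\mathcal A_{u_n})_x$ that simultaneously restricts to $\mathcal A_x$ on $A\cup\{x\}$ and places the new point within $\eta(\epsilon_n)$ of $u_n$ is precisely what makes the next approximation $u_{n+1}$ land near $u_n$; checking the estimate $d^E(x,u)\le\eta(\epsilon)\to0$ through the $\lambda,\mu,\rho$ bookkeeping is the computational core of the proof.
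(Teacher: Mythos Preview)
Your proof is correct and follows the same strategy as the paper's: use Lemma~\ref{lem:EMU} to produce approximate realizations of $x$ inside $M$, and Lemma~\ref{lem:dEe} to force successive approximations close together so the sequence is Cauchy. The only notable difference is that the paper carries the growing set $B_m=A\cup\{y_0,\dots,y_m\}$ through the induction, whereas you keep only the immediately previous approximant $A\cup\{u_n\}$; since the Cauchy estimate at step $n$ depends only on $u_n$ and the final verification uses only tuples from $A_x^n$, your streamlining is valid. Your abstract packaging of the error via $\eta(\epsilon)=\max\{\epsilon,\epsilon/K,\omega(\epsilon)\}$ is also a clean substitute for the paper's explicit choices $\delta/2^m$, $\mathfrak u_{R,1}(\delta/2^m)$, $\delta K/2^m$, which are calibrated so that $d^E(x,y_m)<\delta/2^m$ exactly; both bookkeepings give a summable control on $d^{\bar M}(u_n,u_{n+1})$.
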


\begin{proof} Let $A\subseteq \bar{M}$ and $\mathcal{A}_x$ be a one-point extension of $\mathcal{A}$. Let
$$ \delta=\min\{ d^{A_x}(a, x)\,:\, a\in A\}>0. $$
We define a sequence $(y_m)$ in $M$ by induction on $m$. By Lemma~\ref{lem:EMU} there is $y_0\in M$ such that
$$ |d^{A_x}(x,a)-d^{\bar{M}}(y_0, a)|<\delta \mbox{ for all $a\in A$,} $$
for any unary $R\in\mathcal{L}$,
$$ |R^{A_x}(x)-R^{\bar{M}}(y_0)|<\mathfrak{u}_{R,1}(\delta), $$
and for any $n$-ary $R\in\mathcal{L}$ and $\overline{z}\in A^n_x$,
$$ |R^{A_x}(\overline{z})-R^{\bar{M}}(\overline{z}(y_0|x))|<\delta K, $$
where $K$ is defnied as in the proof of Theorem~\ref{prop:EMo}.
Let $B_0=A\cup\{y_0\}\subseteq \bar{M}$. By Lemma~\ref{lem:dEe} we obtain a one-point extension $(\mathcal{B}_0)_x=\mathcal{A}_{xy_0}$ such that
$$ d^{(B_0)_x}(x, y_0)=\max\{\mu(x, y_0), \rho(x,y_0),\lambda(x,y_0)\}<\delta. $$
In general, assume $y_m$ has been defined to satisfy the following inductive hypothesis: for
$$ B_m=A\cup \{y_0,\dots, y_m\}\subseteq \bar{M} $$
there is an one-point extension $(\mathcal{B}_m)_x$ of $\mathcal{B}_m$ (obtained from Lemma~\ref{lem:dEe}) such that
$$ d^{(B_m)_x}(x, y_m)=\max\{\mu(x, y_m), \rho(x,y_m), \lambda(x,y_m)\}<\displaystyle\frac{\delta}{2^{m}}. $$ By Lemma~\ref{lem:EMU} there is $y_{m+1}\in M$ such that
$$ |d^{(B_m)_x}(x, b)-d^{\bar{M}}(y_{m+1},b)|<\displaystyle\frac{\delta}{2^{m+1}} \mbox{ for all $b\in B_m$,} $$
for any unary $R\in\mathcal{L}$,
$$ |R^{(B_m)_x}(x)-R^{\bar{M}}(y_{m+1})|<\mathfrak{u}_{R,1}\left(\displaystyle\frac{\delta}{2^{m+1}}\right), $$
and for any $n$-ary $R\in\mathcal{L}$ and $\overline{z}\in (B_m)_x^n$,
$$ |R^{(B_m)_x}(\overline{z})-R^{\bar{M}}(\overline{z}(y_{m+1}|x))|<\displaystyle\frac{\delta K}{2^{m+1}}. $$
Now let
$$ B_{m+1}=B_m\cup\{y_{m+1}\}=A\cup \{y_0, \dots, y_m, y_{m+1}\}\subseteq \bar{M} $$
and $(\mathcal{B}_{m+1})_x$ be given by Lemma~\ref{lem:dEe} such that
$$ d^{(B_{m+1})_x}(x, y_{m+1})=\max\{\mu(x, y_{m+1}), \rho(x, y_{m+1}), \lambda(x, y_{m+1})\}<\displaystyle\frac{\delta}{2^{m+1}}. $$
The inductive step of the definition is complete, and the inductive hypothesis is maintained. Note that
$$\begin{array}{rcl} d^M(y_m, y_{m+1})&\leq & d^{(B_m)_x}(x, y_{m})+d^{(B_{m+1})_x}(x, y_{m+1}) \\
&< &\displaystyle\frac{\delta}{2^m}+\frac{\delta}{2^{m+1}}<\frac{\delta}{2^{m-1}}.
\end{array}
$$
Thus $(y_m)$ is a $d^M$-Cauchy sequence in $M$. Let $y=\lim_m y_m\in\bar{M}$. Then for any $a\in A$, since
$$ |d^{A_x}(x, a)- d^{\bar{M}}(y_m, a)|<\displaystyle\frac{\delta}{2^m}, $$
letting $m\to\infty$, we get that
$$ d^{A_x}(x,a)=d^{\bar{M}}(y, a). $$
For any unary $R\in \mathcal{L}$, we have
$$ |R^{A_x}(x)-R^{\bar{M}}(y_m)|<\mathfrak{u}_{R,1}
\left(\displaystyle\frac{\delta}{2^{m}}\right).$$
Since $\mathfrak{u}$ is continuous at $0$, by letting $m\to\infty$ we get
$$ R^{A_x}(x)=R^{\bar{M}}(y). $$
For any $n$-ary $\mathcal{R}\in\mathcal{L}$ and $\overline{z}\in A_x^n$,
$$ |R^{A_x}(\overline{z})-R^{\bar{M}}(\overline{z}(y_m|x))|<\displaystyle\frac{\delta K}{2^m}. $$
Letting $m\to \infty$, we obtain
$$ R^{A_x}(\overline{z})=R^{\bar{M}}(\overline{z}(y|x)). $$
This shows that $\mathcal{A}_x$ and $\mathcal{A}_y$ are isomorphic, and in particular there is an isomorphic embedding $\varphi: \mathcal{A}_x\to \mathcal{\bar{M}}$ with $\varphi(a)=a$ for all $a\in A$.
\end{proof}

Thus we have shown that for any proper continuous signature $\mathcal{L}$ and continuous $\mathcal{L}$-pre-structure $\mathcal{M}$, $\bar{\mathcal{M}}_\omega$ is a continuous $\mathcal{L}$-structure with the Urysohn property. Since any two separable continuous $\mathcal{L}$-structures with the Urysohn property are isomorphic, by the above procedure we obtain this unique separable continuous $\mathcal{L}$-structure as $\bar{\mathcal{M}}_\omega$ for any separable continuous $\mathcal{L}$-pre-structure $\mathcal{M}$.

We denote this unique separable continuous $\mathcal{L}$-structure with the Urysohn property as $\mathbb{U}_{\mathcal{L}}$.

\subsection{Properties of the Urysohn structures}

In the preceding subsection we showed one direction of Theorem~\ref{thm:U} (i), i.e., if $\mathcal{L}$ is a proper continuous signature, then there exists a (separable) continuous $\mathcal{L}$-structure with the Urysohn property. In the following we prove the rest of Theorem~\ref{thm:U}.

We first show that if $\mathcal{L}$ is semiproper, then for any continuous $\mathcal{L}$-pre-structure $\mathcal{M}$, $\mathcal{M}_\omega$ is a continuous $\mathcal{L}$-pre-structure with the Urysohn property. Note that $\mathcal{E}(\mathcal{M}, \omega)$ is not necessarily complete or even has a completion. We also do not need to address the separability of $\mathcal{M}_\omega$. The proof requires only an observation that the upper semicontinuity is not used in the construction of $\mathcal{M}_\omega$.

\begin{theorem}\label{thm:Uiihalf} Let $\mathcal{L}$ be a semiproper continuous signature and $\mathcal{M}$ be a continuous $\mathcal{L}$-pre-structure. Then $\mathcal{M}_\omega$ is a continuous $\mathcal{L}$-pre-structure with the Urysohn property.
\end{theorem}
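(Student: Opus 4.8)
The plan is to re-run the Kat\v{e}tov-style construction of Subsections~\ref{subsec:4.1}--4.3 for a merely semiproper signature, tracking exactly where properness was used and arranging to bypass it. The one essential use of upper semicontinuity in Section~4, apart from taking completions, sits in the \emph{simultaneous} amalgam $\mathcal{E}(\mathcal{M})$: its metric is built from $\mathfrak{u}^*_{R,1}$ together with the inequality $\mathfrak{u}_{R,1}(\mathfrak{u}^*_{R,1}(t))\ge t$ of Lemma~\ref{lem:u*}(3), which can genuinely fail when $\mathfrak{u}_{R,1}$ is not upper semicontinuous; so for semiproper $\mathcal{L}$ one should not expect to pack all one-point extensions of $\mathcal{M}$ into a single continuous $\mathcal{L}$-pre-structure with canonical distances. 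The remedy is to produce $\mathcal{M}_\omega$ by a countable iteration of \emph{single} finitely-supported one-point extensions, each of which I claim survives semiproperness; one may then simply drop the completeness and separability statements of Section~4, as these are the only other places where properness is invoked.

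The key point to establish is therefore: the canonical amalgam of an arbitrary continuous $\mathcal{L}$-pre-structure $\mathcal{N}$ with a finite one-point extension $\mathcal{A}_x$ of a finite substructure $\mathcal{A}\subseteq\mathcal{N}$, amalgamated over $\mathcal{A}$, is again a continuous $\mathcal{L}$-pre-structure. This is read off from the proof of Theorem~\ref{thm:finap}: the construction of the amalgam and the verification of (\ref{eqn:ucl}) use only superadditivity of $\mathfrak{u}_{R,1}$ on $I_{R,1}$ for unary $R$, the linearity $\mathfrak{u}_{R,i}(r)=K_{R,i}r$ on $I_{R,i}$ for $R$ of arity $\ge 2$, and --- crucially --- the \emph{finiteness of the base} $\mathcal{A}$, which is exactly what makes the infimum defining each new distance attained, so that the unary instance of (\ref{eqn:ucl}) closes by plain monotonicity of $\mathfrak{u}_{R,1}$ rather than by the limiting argument used for countable bases in Theorem~\ref{thm:allap}. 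Lemma~\ref{lem:gconservative} is purely formal and is unaffected by dropping upper semicontinuity.

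Granting this, $\mathcal{M}_\omega$ is obtained as a directed union $\mathcal{M}=\mathcal{M}^{(0)}\subseteq\mathcal{M}^{(1)}\subseteq\cdots$, where along a bookkeeping enumeration of all pairs (finite substructure $\mathcal{A}$ of the structure built so far, one-point extension $\mathcal{A}_x$ of $\mathcal{A}$) each such $\mathcal{A}_x$ is realized over the current structure by the finitely-supported one-point extension of the previous paragraph; for semiproper $\mathcal{L}$ one simply reads the construction of $\mathcal{E}(\cdot,\omega)$ and hence of $\mathcal{M}_\omega$ as such a countable iteration. As a directed union of continuous $\mathcal{L}$-pre-structures, $\mathcal{M}_\omega$ is a continuous $\mathcal{L}$-pre-structure, with no completion taken and no separability asserted. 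For the Urysohn property, a finite substructure $\mathcal{A}$ of $\mathcal{M}_\omega$ together with an embedding $\varphi:\mathcal{A}\to\mathcal{M}_\omega$ has image inside some $\mathcal{M}^{(n)}$; pushing a prescribed one-point extension $\mathcal{A}_x$ forward along $\varphi$ and realizing it at some later stage produces an embedding $\mathcal{A}_x\to\mathcal{M}_\omega$ extending $\varphi$. The step I expect to be the only real obstacle is the finiteness-of-the-base point of the second paragraph: what must be checked with care is that over a finite base the infimum defining the amalgamated distance is actually attained, so that superadditivity of $\mathfrak{u}_{R,1}$ by itself suffices to recover (\ref{eqn:ucl}) where the proper case appealed to upper semicontinuity.
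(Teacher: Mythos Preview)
Your second paragraph has the right mechanism: over a finite base the amalgam distance is an attained minimum, so monotonicity of $\mathfrak{u}_{R,1}$ alone closes the unary (\ref{eqn:ucl}) check and upper semicontinuity is not needed. This is precisely the insight the paper uses. But your third paragraph contains a genuine gap. A countable iteration $\mathcal{M}^{(0)}\subseteq\mathcal{M}^{(1)}\subseteq\cdots$ of single one-point extensions adjoins only countably many new points, whereas the Urysohn property requires realizing \emph{every} one-point extension of \emph{every} finite substructure, and there are continuum many such extensions already over a singleton $\{a\}$ (the distance $d(x,a)$ alone ranges over $(0,\infty)$). No ``bookkeeping enumeration of all pairs'' exists, and the limit of your iteration cannot be Urysohn. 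Nor does it recover $\mathcal{M}_\omega=\bigcup_n\mathcal{E}(\mathcal{M}_n,\omega)$ as defined in the paper, which is uncountable already at stage~$1$.

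The paper does not abandon the simultaneous amalgam. Contrary to your expectation, $\mathcal{E}(\mathcal{M},\omega)$ \emph{is} a continuous $\mathcal{L}$-pre-structure for merely semiproper $\mathcal{L}$; what can fail without upper semicontinuity is only the larger $\mathcal{E}(\mathcal{M})$. The paper re-proves the unary case of Lemma~\ref{lem:preEM} on $E(\mathcal{M},\omega)$ without invoking Lemma~\ref{lem:u*}(3), via a preliminary observation that finite supports are upward closed: if $\mathcal{M}_x$ has support $F$ then it also has support any finite $G\supseteq F$. Given $x,y\in E'(\mathcal{M},\omega)$ with supports $F,G$, both then have common support $H=F\cup G$, and the relevant infimum over $M$ reduces to a minimum over the finite set $H$. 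From there superadditivity plus monotonicity of $\mathfrak{u}_{R,1}$ finish exactly as in your second paragraph---but applied to the pair $x,y$ inside $\mathcal{E}(\mathcal{M},\omega)$ rather than to a single extension over $\mathcal{M}$. In short, your finite-base insight is correct; it just needs to be deployed pairwise within the simultaneous amalgam, not sequentially.
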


\begin{proof}
We first claim that for any continuous $\mathcal{L}$-pre-structure $\mathcal{M}$ and finite subsets $F\subseteq G$ of $M$, if the one-point extension $\mathcal{M}_x$ has support $F$, then it also has support $G$. It is easy to verify that for any $y\in M$, 
$$d^{M_x}(x, y)=\min\{d^{M_x}(x, z)+d^{M}(z, y)\,:\, z\in F\}=\min\{d^{M_x}(x, z)+d^M(z, y)\,:\,z\in G\}.$$
If $R\in\mathcal{L}$ is $n$-ary, where $n\geq 2$, and $\bar{u}\in M^n_x$, then
\begin{align*}
R^{M_x}(\bar{u})&=\max\{0, \sup\{R^{M_x}(\bar{v})-d^{M_x}_R(\bar{u}, \bar{v}): \bar{v}\in M^n\cup F^n_x\}\}\\
&\leq\max\{0, \sup\{R^{M_x}(\bar{v})-d^{M_x}_R(\bar{u}, \bar{v}): \bar{v}\in M^n\cup G^n_x\}\};
\end{align*}
On the other hand, if $\bar{v}\in M^n\cup G^n_x$, 
\begin{align*}
R^{M_x}(\bar{v})-d^{M_x}_R(\bar{u}, \bar{v})&\leq\sup\{R^{M_x}(\bar{w})-d^{M_x}_R(\bar{v}, \bar{w})-d^{M_x}_R(\bar{u}, \bar{v}): \bar{w}\in M^n\cup F^n_x\}\\
&\leq\sup\{R^{M_x}(\bar{w})-d^{M_x}_R(\bar{u}, \bar{w}): \bar{w}\in M^n\cup F^n_x\}.
\end{align*}
So we have $$R^{M_x}(\bar{u})=\max\{0, \sup\{R^{M_x}(\bar{v})-d^{M_x}_R(\bar{u}, \bar{v}): \bar{v}\in M^n\cup G^n_x\}\}$$ 
as required.

To complete the proof, it suffices to show that $\mathcal{E}(\mathcal{M}, \omega)$ is a continuous $\mathcal{L}$-pre-structure, since $\mathcal{M}_\omega$ would be easily defined from iterating the definition of $\mathcal{E}(\mathcal{M},\omega)$, and $\mathcal{M}_\omega$ would have the Urysohn property by the construction. For this, we only observe that Lemmas~\ref{lem:dEM} and \ref{lem:preEM} can be proved for $\mathcal{E}(\mathcal{M},\omega)$ without using upper semicontinuity. For Lemma~\ref{lem:dEM}, only note that the use of Lemma~\ref{lem:u*} to obtain subadditivity of $\mathfrak{u}^*_{R,1}$ does not require upper semicontinuity of $\mathfrak{u}_{R,1}$. For Lemma~\ref{lem:preEM}, the use of Lemma~\ref{lem:u*} (3) in its proof does require upper semicontinuity. Below we provide an alternative proof of the relevant part for $\mathcal{E}(\mathcal{M},\omega)$ without using upper semicontinuity. 

Let $R\in\mathcal{L}$ be unary and $x,y\in E'(\mathcal{M},\omega)$, we need to show
$$ |R^{M_x}(x)-R^{M_y}(y)|\leq \mathfrak{u}_{R,1}(\inf\{d^{M_x}(x,z)+d^{M_y}(z,y)\,:\, z\in M\}). $$
Suppose $\mathcal{M}_x$ has finite support $F\subseteq M$ and $\mathcal{M}_y$ has finite support $G\subseteq M$. Then by the above claim both $\mathcal{M}_x$ and $\mathcal{M}_y$ have finite support $H=F\cup G$. It follows that
$$ \inf\{d^{M_x}(x, z)+d^{M_y}(z, y)\,:\, z\in M\}=\min\{d^{H_x}(x, z)+d^{H_y}(z, y)\,:\, z\in H\}.$$ By the superadditivity of $\mathfrak{u}_{R,1}$, we have that for any $z\in H$,
$$\begin{array}{rcl}
|R^{M_x}(x)-R^{M_y}(y)|&=& |R^{H_x}(x)-R^{H_y}(y)| \\
&\leq & |R^{H_x}(x)-R^H(z)|+|R^H(z)-R^{H_y}(y)| \\
&\leq& \mathfrak{u}_{R,1}(d^{H_x}(x,z))+\mathfrak{u}_{R,1}(d^{H_y}(z,y)) \\
&\leq& \mathfrak{u}_{R,1}(d^{H_x}(x,z)+d^{H_y}(z,y)).
\end{array}
$$
The required inequality follows from the monotonicity of $\mathfrak{u}_{R,1}$ because $H$ is finite.
\end{proof}

\begin{theorem}\label{thm:Uii} Let $\mathcal{L}$ be a continuous signature with only finitely many relation symbols, where all the associated moduli of continuity are nondecreasing. 
\begin{enumerate}
\item If there exists a continuous $\mathcal{L}$-pre-structure $\mathcal{U}$ with the Urysohn property, then $\mathcal{L}$ is semiproper.
\item If there exists a (separable) continuous $\mathcal{L}$-structure $\mathcal{U}$ with the Urysohn property, then $\mathcal{L}$ is proper.
\end{enumerate}
\end{theorem}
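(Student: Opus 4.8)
The plan is to derive both statements from the amalgamation analysis of Section~3, after first extracting the following bookkeeping consequence of the Urysohn property. \emph{Observation.} If a continuous $\mathcal{L}$-pre-structure $\mathcal{U}$ has the Urysohn property, then (a) every countable continuous $\mathcal{L}$-pre-structure embeds into $\mathcal{U}$, and (b) whenever $\mathcal{M}$ is a finite substructure of $\mathcal{U}$ and $\mathcal{P}$ is a finite extension of $\mathcal{M}$, the inclusion $\mathcal{M}\hookrightarrow\mathcal{U}$ extends to an isomorphic embedding $\mathcal{P}\hookrightarrow\mathcal{U}$. Both follow by the same device: enumerate the new points and adjoin them one at a time, presenting the target as a (finite or countably infinite) chain of one-point extensions of finite continuous $\mathcal{L}$-structures, and apply the Urysohn property at each step; for (a) the chain starts from the empty structure, the base step being the Urysohn property with $\mathcal{M}=\emptyset$.

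\emph{Proof of (1).} Assume $\mathcal{U}$ has the Urysohn property. First I would show that $\Kfin$ has the AP. Given finite $\varphi:\mathcal{M}\to\mathcal{P}$ and $\psi:\mathcal{M}\to\mathcal{Q}$, use (a) to embed $\mathcal{Q}$ into $\mathcal{U}$; restricting yields an embedding of $\mathcal{M}$ into $\mathcal{U}$, which by (b) extends to an embedding of $\mathcal{P}$ into $\mathcal{U}$ that agrees with it on $\mathcal{M}$. The substructure $\mathcal{N}$ of $\mathcal{U}$ on the union of the two images is finite and is an amalgam of $\mathcal{P}$ and $\mathcal{Q}$ over $\mathcal{M}$. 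Hence $\Kfin$ has the AP, and by the implication (2)$\Rightarrow$(1) of Theorem~\ref{thm:finap}, $\mathcal{L}$ is semiproper.

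\emph{Proof of (2).} Now assume in addition that $\mathcal{U}$ is complete. By (1), $\mathcal{L}$ is semiproper, and for $n$-ary $R$ with $n\ge 2$ the modulus $\mathfrak{u}_{R,i}$ is linear, hence continuous, on $I_{R,i}$, so upper semicontinuity there is automatic; it therefore suffices to show $\mathfrak{u}_{R,1}$ is upper semicontinuous on $I_{R,1}$ for every unary $R\in\mathcal{L}$. Suppose not: there are a unary $R$ and $r_0\in I_{R,1}$ with $\mathfrak{u}_{R,1}(r_0)<\inf\{\mathfrak{u}_{R,1}(r):r>r_0\}$, and we fix $t_0$ with $\mathfrak{u}_{R,1}(r_0)<t_0<\min\{1,\inf\{\mathfrak{u}_{R,1}(r):r>r_0\}\}$. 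As in the proof of (2)$\Rightarrow$(1) of Theorem~\ref{thm:allap}, let $\mathcal{P}$ be the countable continuous $\mathcal{L}$-pre-structure with underlying set $\{x_i:i\ge 0\}$, $d^P(x_0,x_i)=r_0+2^{-i}$ for $i\ge 1$, $d^P(x_i,x_j)=|2^{-i}-2^{-j}|$ for $i,j\ge 1$, $R^P(x_0)=t_0$, $R^P(x_i)=0$ for $i\ge 1$, and every other relation symbol interpreted identically as $0$; one checks that $d^P$ is a metric and that (\ref{eqn:ucl}) holds, the only relevant point being that $d^P(x_0,x_i)=r_0+2^{-i}>r_0$ forces $\mathfrak{u}_{R,1}(d^P(x_0,x_i))>t_0$. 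By the Observation, fix an embedding of $\mathcal{P}$ into $\mathcal{U}$ with $x_i\mapsto p_i$. Then $(p_i)_{i\ge 1}$ is $d^{\mathcal U}$-Cauchy, so by completeness $p_i\to p$ for some $p\in\mathcal{U}$; hence $d^{\mathcal U}(p_0,p)=\lim_i(r_0+2^{-i})=r_0$, while uniform continuity of $R^{\mathcal U}$ together with $\mathfrak{u}_{R,1}(0)=0$ gives $R^{\mathcal U}(p)=\lim_i R^{\mathcal U}(p_i)=0$. Thus $|R^{\mathcal U}(p_0)-R^{\mathcal U}(p)|=t_0>\mathfrak{u}_{R,1}(r_0)=\mathfrak{u}_{R,1}(d^{\mathcal U}(p_0,p))$, contradicting (\ref{eqn:ucl}) in $\mathcal{U}$. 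Therefore $\mathcal{L}$ is proper.

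\emph{Main obstacle.} There is essentially no computation here: the weight of the proof is the Observation, and the only mildly delicate points are to confirm that a one-point extension of a finite substructure of $\mathcal{U}$ is again a genuine finite continuous $\mathcal{L}$-structure (so that the Urysohn property applies literally) and that the base step in (a) is covered by $\mathcal{M}=\emptyset$; both are immediate from the definitions. Everything else is imported from Theorems~\ref{thm:finap} and \ref{thm:allap}.
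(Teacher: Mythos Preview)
Your proof is correct and follows essentially the same route as the paper's: part~(1) deduces the AP for $\Kfin$ from the Urysohn property and invokes Theorem~\ref{thm:finap}, and part~(2) embeds into $\mathcal{U}$ the same Cauchy-sequence witness used in Theorem~\ref{thm:allap}, then uses completeness to realize the limit point and obtain the contradiction. Your ``Observation'' just makes explicit the iterated one-point-extension argument that the paper leaves implicit when it asserts ``since $\mathcal{U}$ has the Urysohn property, there is a countable substructure of $\mathcal{U}$ where\dots''.
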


\begin{proof} (1) By Theorem~\ref{thm:finap} it suffices to show that the AP holds for $\Kfin$. Let $\mathcal{M}$, $\mathcal{P}$, $\mathcal{Q}$ be finite continuous $\mathcal{L}$-structures with isomorphic embeddings $\varphi: M\to P$ and $\psi: M\to Q$. By the Urysohn property of $\mathcal{U}$, we can find a substructure $\mathcal{P}'$ of $\mathcal{U}$ that is isomorphic to $\mathcal{P}$. Let $i:P\to P'$ be an isomorphism. Let $M'=i\circ \varphi(M)\subseteq U$. Then $i\circ \varphi$ is an isomorphism between $\mathcal{M}$ and $\mathcal{M}'$ as a substructure of $\mathcal{U}$ (or $\mathcal{P}'$). Let $e:\psi(M)\to M'$ be $i\circ\varphi\circ \psi^{-1}$. Then $e$ is an isomorphism, and we have $i\circ \varphi=e\circ\psi$.

By the Urysohn property of $\mathcal{U}$ we can find an extension $\mathcal{Q}'$ of $\mathcal{M}'$ with $Q'\subseteq U$ such that $\mathcal{Q}'$ is isomorphic to $\mathcal{Q}$, and letting $j: Q\to Q'$ be the isomorphism, we have $j\rest \psi(M)=e$. Now let $N=P'\cup Q'\subseteq U$. Then $\mathcal{N}$ is a substructure of $U$, $i: P\to N$ is an isomorphic embedding from $\mathcal{P}$ into $\mathcal{N}$, and $j: P\to N$ is an isomorphic embedding from $\mathcal{Q}$ into $\mathcal{N}$. By our construction, $i\circ \varphi=j\circ \psi$. Thus $\mathcal{N}$ is an amalgam of $\mathcal{P}$ and $\mathcal{Q}$ over $\mathcal{M}$.

(2) Assume that $\mathcal{U}$ is a continuous $\mathcal{L}$-structure with the Urysohn property. By (1), $\mathcal{L}$ is semiproper. It remains to show that given any unary $R\in\mathcal{L}$, $\mathfrak{u}_{R, 1}$ is upper semicontinuous on $I_{R, 1}$. Let $I^\circ_{R, 1}$ be the interior of $I_{R, i}$ and assume $a\in I^\circ_{R, i}$. Since $\mathcal{U}$ has the Urysohn property, there is a countable substructure $\mathcal{M}$ of $\mathcal{U}$ where 
\begin{itemize}
\item $M=\{x_n\}_{n\geq 1}\cup\{y\}$,
\item for all $m, n\geq 1$, $d^M(x_n,y)=a+2^{-n}$, $d^M(x_m, x_n)=|2^{-m}-2^{-n}|$, 
\item $R^M(y)=\lim_{\delta\rightarrow 0^+}\mathfrak{u}_{R, 1}(a+\delta)<1$, $R^M(x_n)=0$ for all $n\geq 1$, and
\item for all other $R'\in\mathcal{L}$, ${R'}^M$ is identically $0$.
\end{itemize}
Then by the completeness of $\mathcal{U}$, there is $z\in\mathcal{U}$ such that $d^U(y, z)=a$ and $R^U(z)=0$, which implies that 
$$
\lim_{\delta\rightarrow 0^+}\mathfrak{u}_{R, 1}(a+\delta)=|R(y)-R(z)|\leq\mathfrak{u}_{R, 1}(a).
$$
\end{proof}

%\textcolor{red}{We do not know if properness characterizes the existence of Urysohn continuous structures.}

In case $\mathcal{L}$ is proper, one naturally wonders whether the metric space underlying $\mathbb{U}_{\mathcal{L}}$ is isometric to the universal Urysohn metric space $\mathbb{U}$. The answer is not always.

\begin{definition} A continuous signature $\mathcal{L}$ is {\em Lipschitz} if $\mathcal{L}$ consists of only finitely many relation symbols and for each $n$-ary $R\in\mathcal{L}$ and $1\leq i\leq n$,
$I_{R,i}$ is bounded and there is $K_{R,i}>0$ such that $\mathfrak{u}_{R,i}(r)=K_{R,i}r$ for all $r\in I_{r,i}$.
\end{definition}

Any Lipschitz continuous signature is proper. If $\mathcal{L}$ does not contain any unary relation symbols, then $\mathcal{L}$ is proper iff it is Lipschitz.

\begin{theorem}\label{thm:LUU} Let $\mathcal{L}$ be a proper continuous signature. Then the following are equivalent:
\begin{enumerate}
\item[(i)] $\mathcal{L}$ is Lipschitz.
\item[(ii)] The metric space underlying $\mathbb{U}_{\mathcal{L}}$ is isometric to $\mathbb{U}$.
\end{enumerate}
\end{theorem}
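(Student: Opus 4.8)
## Proof Proposal for Theorem~\ref{thm:LUU}

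The plan is to prove both directions. For (i)$\Rightarrow$(ii): when $\mathcal{L}$ is Lipschitz, I would show that the metric space underlying $\mathbb{U}_{\mathcal{L}}$ is universal and ultrahomogeneous among separable metric spaces, hence isometric to $\mathbb{U}$ by Urysohn's characterization. Universality is easy: any separable metric space $(X,d)$ can be made into a continuous $\mathcal{L}$-structure by interpreting every relation symbol identically as $0$ (this respects $(\mbox{UC}_{\mathcal{L}})$ since $\mathfrak{u}_{R,i}(0)=0$ and the function is constant), so $X$ embeds isometrically into $\mathbb{U}_{\mathcal{L}}$ by universality of the latter. For ultrahomogeneity, the point is that in the Lipschitz case the metric data alone does \emph{not} constrain the relational data when we are given freedom to choose it: given a finite isometry $\varphi$ between finite subsets $A,B$ of $\mathbb{U}_{\mathcal{L}}$, I want to promote it to an isomorphism of finite substructures and then apply ultrahomogeneity of $\mathbb{U}_{\mathcal{L}}$. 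The subtlety is that $\varphi$ need not preserve the interpretations $R^{\mathbb{U}_{\mathcal{L}}}$. The fix is to use the Urysohn property to find, inside $\mathbb{U}_{\mathcal{L}}$, a fresh isometric copy $A'$ of $A$ on which all relations take value $0$ (build the one-point extensions adding points of $A$ one at a time, each time with all relational values set to $0$; one must check these are legitimate continuous $\mathcal{L}$-structures, which holds precisely because in the Lipschitz case $d^X_{\mathfrak{u}_{R,i}}=K_{R,i}d^X$ and $1$-Lipschitzness with respect to $d^X_R$ is automatic for the constant-$0$ interpretation). Do the same for $B$, obtaining $B'$. Then $A\cong A'$, $B\cong B'$ as structures (both all-zero is not required — rather we get that the identity-on-metric map with relations collapsed works), and composing the structure isomorphisms with $\varphi$ gives a structure isomorphism; ultrahomogeneity of $\mathbb{U}_{\mathcal{L}}$ finishes. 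Actually the cleanest route is: the relational reduct interpretation is irrelevant for moving points around once we can always realize the all-zero relational type over any finite set, which is exactly what the Urysohn property and the Lipschitz condition jointly give.

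For (ii)$\Rightarrow$(i), I argue the contrapositive: suppose $\mathcal{L}$ is proper but not Lipschitz. Then there is a unary $R\in\mathcal{L}$ for which $\mathfrak{u}_{R,1}$ is not linear on $I_{R,1}$, i.e., there is no $K>0$ with $\mathfrak{u}_{R,1}(r)=Kr$ on $I_{R,1}$ (for $n$-ary $R$ with $n\geq 2$, semiproperness already forces linearity, so the failure is at a unary symbol). Properness gives that $\mathfrak{u}_{R,1}$ is nondecreasing, superadditive, and upper semicontinuous on $I_{R,1}$, with $I_{R,1}$ bounded. Superadditive and not linear means $\mathfrak{u}_{R,1}$ is \emph{strictly} superadditive somewhere: there exist $r_1,r_2>0$ with $r_1+r_2\in I_{R,1}$ and $\mathfrak{u}_{R,1}(r_1+r_2)>\mathfrak{u}_{R,1}(r_1)+\mathfrak{u}_{R,1}(r_2)$. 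Using this I want to exhibit a metric configuration that is realizable in $\mathbb{U}$ (as a metric space) but not in $\mathbb{U}_{\mathcal{L}}$ — contradicting (ii). The idea: take three points $p,q,s$ with $d(p,q)=r_1$, $d(q,s)=r_2$, $d(p,s)=r_1+r_2$ (a degenerate/collinear triple, perfectly fine in a metric space). In any continuous $\mathcal{L}$-structure containing such a triple, $(\mbox{UC}_{\mathcal{L}})$ forces $|R(p)-R(s)|\leq \mathfrak{u}_{R,1}(d(p,s)) = \mathfrak{u}_{R,1}(r_1+r_2)$, but also we can try to force the values of $R$ at $p,q,s$ to be, say, $0$, $\mathfrak{u}_{R,1}(r_1)$, and $\mathfrak{u}_{R,1}(r_1)+\mathfrak{u}_{R,1}(r_2)$ respectively; the constraints $|R(p)-R(q)|\leq\mathfrak{u}_{R,1}(r_1)$ and $|R(q)-R(s)|\leq\mathfrak{u}_{R,1}(r_2)$ are tight but satisfiable. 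This shows $\mathbb{U}_{\mathcal{L}}$ \emph{can} realize a configuration where $R(p),R(s)$ differ by $\mathfrak{u}_{R,1}(r_1)+\mathfrak{u}_{R,1}(r_2) < \mathfrak{u}_{R,1}(r_1+r_2)$; that by itself is not a contradiction with (ii). So the real argument must go the other way: I should find a purely metric obstruction. The correct observation is that when $\mathfrak{u}_{R,1}$ is strictly superadditive, $\mathfrak{u}_{R,1}$ is strictly increasing (noted in the preliminaries: superadditive + nondecreasing $\Rightarrow$ strictly increasing), and via $\mathfrak{u}_{R,1}^*$ one can "read off" metric distances from differences of $R$-values. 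Concretely, in $\mathbb{U}_{\mathcal{L}}$ one can build, for any prescribed $\alpha\in[0,1)$, a one-point extension of a single point $x_0$ (with $R(x_0)=0$) by a point $x$ with $R(x)=\alpha$, and the canonical amalgam / Kat\v{e}tov construction forces $d(x_0,x)\geq \mathfrak{u}_{R,1}^*(\alpha)$ but also allows $d(x_0,x)$ to be essentially $\mathfrak{u}_{R,1}^*(\alpha)$. The failure of linearity will manifest as: there are two one-point extensions $x$ and $y$ of a set with $R$ forcing $d^E(x,y)$ (computed via $\mu,\rho,\lambda$) to be \emph{strictly larger} than what the triangle inequality through the base would give in a pure metric space, making $\mathbb{U}_{\mathcal{L}}$'s metric fail to be universal or fail ultrahomogeneity. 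I expect the sharpest formulation is: pick a point whose $R$-value is $t$ with $\mathfrak{u}_{R,1}^*$ discontinuous or "super-linear" behavior at $t$, giving a metric type over a finite set that is consistent as a metric space but whose realization in $\mathbb{U}_{\mathcal{L}}$ is forced to sit at a different distance, contradicting either universality (some finite metric space does not embed) or homogeneity.

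\textbf{The main obstacle.} The hard direction is (ii)$\Rightarrow$(i), specifically pinning down the right finite metric-space witness. The delicate point is that a single isolated relational constraint never contradicts metric universality — one can always just not realize the tight relational type. The contradiction must come from the \emph{rigidity} the relational structure imposes on the metric: because $\mathbb{U}_{\mathcal{L}}$ is ultrahomogeneous \emph{as an $\mathcal{L}$-structure}, any finite metric configuration that embeds must embed \emph{respecting} some consistent relational labeling, and the Kat\v{e}tov-style construction of $\mathbb{U}_{\mathcal{L}}$ pins down minimal distances via $\mathfrak{u}_{R,1}^*$. When $\mathfrak{u}_{R,1}$ is strictly superadditive (non-Lipschitz), $\mathfrak{u}_{R,1}^*$ is strictly \emph{sub}additive, and one can arrange a three-point metric space $\{a,b,c\}$ with $d(a,c)=\mathfrak{u}_{R,1}^*(\beta)$, $d(a,b)+d(b,c) = \mathfrak{u}_{R,1}^*(\beta_1)+\mathfrak{u}_{R,1}^*(\beta_2)$ where $\beta_1+\beta_2 = \beta$ but $\mathfrak{u}_{R,1}^*(\beta_1)+\mathfrak{u}_{R,1}^*(\beta_2) > \mathfrak{u}_{R,1}^*(\beta)$, i.e., a genuinely non-degenerate triangle. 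The obstruction is then that in $\mathbb{U}_{\mathcal{L}}$ the second point $b$ (carrying $R$-value $\beta_1$ relative to $a$, and forced to have $R$-value $\beta$ relative to... ) — making this precise, and checking that one really cannot embed the desired metric triangle because the forced relational labels would violate $(\mbox{UC}_{\mathcal{L}})$, is the crux. I would handle it by an explicit three- or four-point construction, using Lemma~\ref{lem:u*}(3) (upper semicontinuity, available since $\mathcal{L}$ is proper) to get $\mathfrak{u}_{R,1}(\mathfrak{u}_{R,1}^*(t))\geq t$, which converts the $\mathfrak{u}^*$ inequalities back into genuine violations of the relational continuity modulus. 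The routine parts — verifying that all intermediate objects are legitimate continuous $\mathcal{L}$-structures and that the Urysohn property applies — follow the patterns already established in Sections 3 and 4.
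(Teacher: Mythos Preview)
Your proposal has genuine gaps in both directions.

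\textbf{(i)$\Rightarrow$(ii).} Your plan to build fresh all-zero copies $A'$ and $B'$ does produce such sets inside $\mathbb{U}_{\mathcal{L}}$, but it does not help: you obtain an $\mathcal{L}$-automorphism sending $A'$ to $B'$, not one sending $A$ to $B$. To bridge $A$ to $A'$ you would need an isometry of $\mathbb{U}_{\mathcal{L}}$ taking $A$ to $A'$, which is exactly the metric ultrahomogeneity you are trying to prove --- the argument is circular. (Your remark that one can ``realize the all-zero relational type over any finite set'' is also false as stated: if $a\in A$ has $R(a)=1/2$ and you add $x$ with $d(a,x)$ small, then $R(x)$ is forced near $1/2$, not $0$.) The paper's route is cleaner and direct: verify the \emph{metric Urysohn property} of the underlying space. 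Given finite $A\subseteq\mathbb{U}_{\mathcal{L}}$ and a metric one-point extension $A_x$, view $R^A$ on $A^n$ as a partially defined relation on $A_x^n$. In the Lipschitz case $d^{A_x}_R(\overline{u},\overline{v})=\sum_i K_{R,i}\,d^{A_x}(u_i,v_i)=d^A_R(\overline{u},\overline{v})$ for $\overline{u},\overline{v}\in A^n$, so the partial structure is still $1$-Lipschitz with respect to $d^{A_x}_R$; Lemma~\ref{lem:gconservative} then yields a conservative extension $\mathcal{A}_x$, and the $\mathcal{L}$-Urysohn property of $\mathbb{U}_{\mathcal{L}}$ realizes $x$.

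\textbf{(ii)$\Rightarrow$(i).} You correctly locate the failure at a unary $R$ with strict superadditivity, and you even write down the right collinear triple $p,q,s$ --- but you assemble it backwards. You try to prescribe all three $R$-values simultaneously and embed the resulting three-point $\mathcal{L}$-structure; as you note, that embeds fine and yields no contradiction. The paper's trick is to prescribe only the \emph{two} endpoints with extremal values: embed the two-point $\mathcal{L}$-structure $\{x,y\}$ with $d(x,y)=r_1+r_2$, $R(x)=0$, $R(y)=\mathfrak{u}_{R,1}(r_1+r_2)$ (this is a legitimate finite $\mathcal{L}$-structure, so it sits inside $\mathbb{U}_{\mathcal{L}}$). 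Now if the underlying metric space were $\mathbb{U}$, the metric one-point extension by $z$ with $d(x,z)=r_1$, $d(z,y)=r_2$ would be realized in $\mathbb{U}_{\mathcal{L}}$. But then
\[
\mathfrak{u}_{R,1}(r_1+r_2)=|R(x)-R(y)|\leq |R(x)-R(z)|+|R(z)-R(y)|\leq \mathfrak{u}_{R,1}(r_1)+\mathfrak{u}_{R,1}(r_2),
\]
contradicting strict superadditivity. No detour through $\mathfrak{u}_{R,1}^*$ is needed; the point is that you do not choose $R(z)$ --- its mere existence forces the inequality.
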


\begin{proof} For (i)$\Rightarrow$(ii) let $A\subseteq \mathbb{U}_{\mathcal{L}}$ be finite and $A_x$ be a metric space which is a one-point extension of $A$ as a metric space. It suffices to show that $x$ can be realized as a point in $\mathbb{U}_{\mathcal{L}}$. For this we only need to make a continuous $\mathcal{L}$-pre-structure $\mathcal{A}_x$ as a one-point extension of the substructure $\mathcal{A}$. Observe that for any $n$-ary $R\in\mathcal{L}$, since $\mathcal{L}$ is Lipschitz, we have that for any metric space $(X, d^X)$ and $\overline{u}=(u_1,\dots, u_n), \overline{v}=(v_1,\dots, v_n)\in X^n$,
$$ d^X_R(\overline{u},\overline{v})=\displaystyle\sum_{i=1}^n K_id^X(u_i, v_i). $$
Now for any $n$-ary $R\in\mathcal{L}$, $R^A$ is defined on $A^n$ and is $1$-Lipschitz with respect to $d^A_R$ by Lemma~\ref{lem:1L}. It follows that $R^{A_x}$ is partially defined on $\dom(R^{A_x})=A^n$ and is $1$-Lipschitz with respect to $d^{A_x}_R$. Thus by Lemma~\ref{lem:gconservative} we can define $R^{A_x}$ on $A_x^n$ so that $\mathcal{A}_x$ is an extension of $\mathcal{A}$. By the Urysohn property of $\mathbb{U}_{\mathcal{L}}$, $x$ can now be realized as a point in $\mathbb{U}_{\mathcal{L}}$ as desired.

For (ii)$\Rightarrow$(i) assume $\mathcal{L}$ is proper but not Lipschitz. Then for some unary $R\in\mathcal{L}$ we have $r_1, r_2>0$ with $r_1+r_2\in I_{R,1}$ and
$$ \mathfrak{u}_{R,1}(r_1+r_2)>\mathfrak{u}_{R,1}(r_1)+\mathfrak{u}_{R,1}(r_2). $$
Now consider the continuous $\mathcal{L}$-structure $\mathcal{M}$ defined as follows.
$$M=\{a, b\},\ d^M(a, b)=r_1+r_2,\ R^M(a)=0,\ R^M(b)=\mathfrak{u}_{R,1}(r_1+r_2),$$ and for any other $R'\in\mathcal{L}$, ${R'}^M$ is identically $0$. Let $\mathcal{A}$ be a substructure of $\mathbb{U}_{\mathcal{L}}$ that is isomorphic to $\mathcal{M}$. Suppose $A=\{x, y\}\subseteq \mathbb{U}_{\mathcal{L}}$ where $d^U(x,y)=r_1+r_2$, $R^U(x)=0$ and $R^U(y)=\mathfrak{u}_{R,1}(r_1+r_2)$. Now consider a metric one-point extension of $(A, d^A)$ with an extra point $z$ such that
$$ d^{A_z}(z, x)=r_1 \mbox{ and } d^{A_z}(z, y)=r_2. $$
Assume toward a contradiction that $\mathbb{U}_{\mathcal{L}}$ is isometric to $\mathbb{U}$. Then there is such a $z\in \mathbb{U}_{\mathcal{L}}$. Then
$$\begin{array}{rcl} \mathfrak{u}_{R,1}(r_1+r_2)=|R^U(x)-R^U(y)|&\leq& |R^U(x)-R^U(z)|+|R^U(z)-R^U(y)| \\
&\leq&
\mathfrak{u}_{R,1}(r_1)+\mathfrak{u}_{R,2}(r_2),
\end{array}$$
contradicting our assumption.
\end{proof}

When $\mathcal{L}$ is proper but not Lipschitz, we can decompose $\mathbb{U}_{\mathcal{L}}$ into continuum many substructures each of which has an isometric copy of $\mathbb{U}$ as its underlying metric space. The parameter space is $[0,1]^k$ for some finite $k$. In fact, let $R_1,\dots, R_k$ be all the unary relation symbols in $\mathcal{L}$ whose associated moduli of continuity is not Lipschitz. For each $\overline{p}=(p_1,\dots, p_k)\in [0,1]^k$, let
$$ \mathbb{U}_{\mathcal{L},\overline{p}}=\{x\in \mathbb{U}_{\mathcal{L}}\,:\, R_i^U(x)=p_i \mbox{ for all } 1\leq i\leq p\}. $$
Then for each $\overline{p}\in [0,1]^k$, the metric space underlying each $\mathbb{U}_{\mathcal{L}, \overline{p}}$ is isometric to $\mathbb{U}$.

%\textcolor{red}{Question: Can our method be adapted to construct countable continuous $\mathcal{L}$-pre-structures with appropriate Urysohn properties? For instance, is there a $\mathbb{QU}_{\mathcal{L}}$ where the distances and relations all take rational values? What about other distance value sets, such as $\Delta=\{1\}$ (corresponding in the classical case to the random graph)?}

Let $\mathcal{L}$ be a proper continuous signature. For any separable continuous $\mathcal{L}$-structure $\mathcal{M}$, equip $\Aut(\mathcal{M})$ with the pointwise convergence topology. Then $\Aut(\mathcal{M})$ becomes a Polish group.

It follows easily from Uspenskij's method \cite{Uspenskij} and our construction that $\Aut(\mathbb{U}_{\mathcal{L}})$ is a universal Polish group.

\section{Fra\"iss\'e Classes of Finite Continuous Structures}

In this section we consider some classes of finite continuous $\mathcal{L}$-structures for semiproper $\mathcal{L}$ and show that they are Fra\"iss\'e classes. In particular, we construct the rational Urysohn $\mathcal{L}$-pre-structure $\mathbb{QU}_{\mathcal{L}}$ and show that its completion is isomorphic to $\mathbb{U}_{\mathcal{L}}$ when $\mathcal{L}$ is proper.

\begin{definition}\label{def:goodpair} Let $\mathcal{L}$ be a continuous signature. Let $\Delta\subseteq \mathbb{R}^+$ and $V\subseteq [0,1]$.
\begin{enumerate}
\item[(i)] We call $\Delta$ a {\em distance value set} if for all $a, b\in \Delta$,
    $$ \min\{a+b, \sup(\Delta)\}\in \Delta. $$
\item[(ii)] We call the pair $(\Delta, V)$ a {\em good value pair} for $\mathcal{L}$ if
    \begin{itemize}
    \item $\Delta$ is a distance value set,
    \item if $\Delta$ is bounded, then for any $n$-ary $R\in\mathcal{L}$ and $1\leq i\leq n$, $\mathfrak{u}_{R,i}(\sup(\Delta))\geq 1$,
    \item $0\in V$, and
    \item for any $v\in V$, any $\delta\in \Delta$, any $n$-ary $R\in\mathcal{L}$ and $1\leq i\leq n$, if $v>\mathfrak{u}_{R,i}(\delta)$, then $v-\mathfrak{u}_{R,i}(\delta)\in V$.
    \end{itemize}
\item[(iii)] We say that $(\Delta, V)$ is {\em finite} ({\em countable}, respectively) if both $\Delta$ and $V$ are finite (countable, respectively).
\end{enumerate}
\end{definition}

\begin{definition}\label{def:DV} Let $\mathcal{L}$ be a continuous signature and $\mathcal{M}$ a continuous $\mathcal{L}$-pre-structure. Let $(\Delta, V)$ be a good value pair for $\mathcal{L}$.
\begin{enumerate}
\item[(i)] We say that $\mathcal{M}$ is {\em $(\Delta, V)$-valued} if for all $x, y\in M$, $d^M(x,y)\in \Delta$ and for all $n$-ary $R\in\mathcal{L}$ and $\overline{x}\in M^n$, $R^M(\overline{x})\in V$.
\item[(ii)] Let $\mathcal{K}_{(\Delta, V)}$ be the class of all finite $(\Delta, V)$-valued continuous $\mathcal{L}$-structures.
\end{enumerate}
\end{definition}

\begin{lemma}\label{lem:goodvaluepair} Let $\mathcal{L}$ be a semiproper continuous signature.
 \begin{enumerate}
 \item[(i)] For any finite continuous $\mathcal{L}$-structure $\mathcal{M}$, there exists a finite good value pair $(\Delta, V)$ for $\mathcal{L}$ such that $\mathcal{M}$ is $(\Delta, V)$-valued.
 \item[(ii)] For any finite (countable, respectively) distance value set $\Delta$ and finite (countable, respectively) $W\subseteq [0,1]$, there exists a finite (countable, respectively) $V$ such that $W\subseteq V$ and $(\Delta, V)$ is a good value pair for $\mathcal{L}$.
     \end{enumerate}
\end{lemma}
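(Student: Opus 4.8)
For part (i), the plan is to first produce a distance value set $\Delta$ and a value set $V$ separately, then merge. Start with the finite set $\Delta_0 = \{d^M(x,y) : x,y\in M\}\subseteq\mathbb{R}^+$ and close it off under the operation $(a,b)\mapsto\min\{a+b,\sup(\Delta)\}$; since $\mathcal{L}$ is semiproper, every $I_{R,i}$ is bounded, so I may take $\Delta$ to be any finite distance value set with $\Delta_0\subseteq\Delta$ whose supremum $s$ is large enough that $\mathfrak{u}_{R,i}(s)\geq 1$ for every $n$-ary $R\in\mathcal{L}$ and every $1\le i\le n$ (such an $s$ exists because each $I_{R,i}$ is bounded, so one can pick $s$ above all the finitely many bounds). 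The point is that closing a finite set of reals under $a+b$ capped at a fixed ceiling $s$ yields a finite set: all generated values lie in the finite set $\{$sums of at most $|\Delta_0|\cdot\lceil s/\min\Delta_0\rceil$ terms from $\Delta_0$, capped at $s\}$, or more cleanly, once a partial sum reaches $\geq s$ it is replaced by $s$ and the process stabilizes. This is a routine finiteness argument. Having fixed such a $\Delta$, apply part (ii) with $W = \{R^M(\overline{x}) : R\in\mathcal{L}\ n\text{-ary},\ \overline{x}\in M^n\}$, which is finite, to obtain a finite $V\supseteq W$ with $(\Delta,V)$ a good value pair; then $\mathcal{M}$ is $(\Delta,V)$-valued by construction. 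So (i) reduces to (ii).

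For part (ii), given the distance value set $\Delta$ and $W\subseteq[0,1]$, I close $W\cup\{0\}$ under the single operation required by the fourth bullet of the definition of a good value pair: given $v$ already in the set, $\delta\in\Delta$, $n$-ary $R\in\mathcal{L}$, and $1\le i\le n$ with $v>\mathfrak{u}_{R,i}(\delta)$, adjoin $v-\mathfrak{u}_{R,i}(\delta)$. Define $V$ to be the smallest subset of $[0,1]$ containing $W\cup\{0\}$ and closed under this operation. Then $0\in V$, $W\subseteq V$, and $(\Delta,V)$ satisfies the fourth bullet by construction; the second bullet (the condition $\mathfrak{u}_{R,i}(\sup\Delta)\geq 1$ when $\Delta$ is bounded) is a property of $\Delta$ alone, unaffected by the choice of $V$, so it is either already assumed as a hypothesis on $\Delta$ or it can be arranged exactly as in part (i) by enlarging $\Delta$ first — I will note that for (ii) we may assume $\Delta$ already has this property (or simply that the statement of (ii) is used only in contexts, like (i), where $\Delta$ is chosen to have it). The real content is showing $V$ is finite when $\Delta$ and $W$ are finite, and countable when they are countable.

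The countable case is immediate: $V$ is obtained from the countable set $W\cup\{0\}$ by iterating a countable family of partial unary operations (indexed by the countably many triples $(\delta,R,i)$) for $\omega$ many stages, so $V$ is a countable union of countable sets. The finite case is the main obstacle and the step requiring an actual argument. Here I would use semiproperness crucially: since each $\mathfrak{u}_{R,i}$ is superadditive on $I_{R,i}$, it is strictly increasing there (as noted in the remark after the definition of superadditivity), so $\mathfrak{u}_{R,i}(\delta)>0$ for every $\delta\in\Delta$ with $\delta>0$ and $\mathfrak{u}_{R,i}(\delta)<1$; let $c>0$ be the minimum of the finitely many positive values $\mathfrak{u}_{R,i}(\delta)$ over $\delta\in\Delta$, $R\in\mathcal{L}$, $1\le i\le n$ with $\mathfrak{u}_{R,i}(\delta)\in(0,1)$. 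Each application of the closure operation that actually produces a new value must subtract a quantity that is either $\geq c$ (if $\mathfrak{u}_{R,i}(\delta)<1$ and positive) or else leaves nothing to subtract (if $\mathfrak{u}_{R,i}(\delta)=0$, the operation gives back $v$; if $\mathfrak{u}_{R,i}(\delta)\geq 1\geq v$, the hypothesis $v>\mathfrak{u}_{R,i}(\delta)$ fails). Hence starting from any $v\in W$, after at most $\lceil v/c\rceil\le\lceil 1/c\rceil$ nontrivial subtractions the value drops to $0$, so every element of $V$ has the form $w - (\text{sum of at most }\lceil 1/c\rceil\text{ terms each of the form }\mathfrak{u}_{R,i}(\delta))$ with $w\in W$; there are only finitely many such expressions, so $V$ is finite. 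This completes the proof.
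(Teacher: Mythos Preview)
Your proposal is correct and follows essentially the same approach as the paper: build $\Delta$ by closing the distances of $\mathcal{M}$ under capped addition with a ceiling above all $\sup I_{R,i}$, and build $V$ by closing $W\cup\{0\}$ under the subtraction $v\mapsto v-\mathfrak{u}_{R,i}(\delta)$, with finiteness coming from the fact that each such subtraction removes at least some fixed $c>0$. The paper presents (i) directly by writing $V$ explicitly as $\{0\}\cup W\cup([0,1]\cap\{w-\sum q_i\})$ and dispatches (ii) with ``proved similarly,'' whereas you reduce (i) to (ii) and spell out the closure argument; these are the same proof in slightly different packaging, and your observation about the second bullet being a hypothesis on $\Delta$ alone is a fair reading of what the paper leaves implicit.
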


\begin{proof} (i) Let
$$ P=\{d^A(x,y)\,:\, x\neq y\in M\}. $$
Let $\delta$ be sufficiently large such that $\delta\geq \sup(P)$ and for any $n$-ary $R\in\mathcal{L}$ and $1\leq i\leq n$, $\delta\geq \sup(I_{R,i})$. Let $\Delta$ be the smallest distance value set such that $P\subseteq \Delta$ and $\sup(\Delta)=\delta\in \Delta$. Since $P$ is finite, so is $\Delta$.

Let
$$ Q=\{\mathfrak{u}_{R,i}(\delta)\,:\, R\in\mathcal{L} \mbox{ is $n$-ary}, 1\leq i\le n, \delta\in \Delta\}, $$
$$ W=\{R^M(\overline{x})\,:\, R\in\mathcal{L} \mbox{ is $n$-ary}, \overline{x}\in M^n\}, $$
and
$$ V=\{0\}\cup W\cup ([0,1]\cap \left\{w-\sum_{i=1}^{\ell}q_i\,:\, w\in W,\ q_1,\dots, q_\ell\in Q\right\}). $$
Since $\mathcal{L}$ is semiproper, $Q$, $W$ and $V$ are all finite. It is clear that $(\Delta, V)$ is a good value pair for $\mathcal{L}$ and $\mathcal{M}$ is $(\Delta, V)$-valued.

(ii) is proved similarly.
\end{proof}

\begin{definition}\label{def:Fraisee} Let $\mathcal{L}$ be a continuous signature and $\mathcal{K}$ be a set of continuous $\mathcal{L}$-pre-structures.
\begin{enumerate}
\item[(i)] $\mathcal{K}$ has the {\em hereditary property} (HP for short) if for any continuous $\mathcal{L}$-pre-structures $\mathcal{M}$ and $\mathcal{N}$, if $\mathcal{M}$ is a substructure of $\mathcal{N}$ and $\mathcal{N}\in\mathcal{K}$, then $\mathcal{M}\in\mathcal{K}$.
\item[(ii)] $\mathcal{K}$ has the {\em joint embedding property} (JEP for short) if for any $\mathcal{M}, \mathcal{N}\in \mathcal{K}$ there exist $\mathcal{P}\in\mathcal{K}$ and isomorphic embeddings $\varphi:\mathcal{M}\to\mathcal{P}$ and $\psi:\mathcal{N}\to\mathcal{P}$.
\begin{center}
\begin{tikzcd}
& \mathcal{M}\arrow[dr, dashed, "\varphi"] & \\
 & & \mathcal{P} \\
& \mathcal{N}\arrow[ur, dashed, "\psi"] &
\end{tikzcd}
\end{center}
\item[(iii)] $\mathcal{K}$ is a {\em Fra\"iss\'e class} if $\mathcal{K}$ has the HP, JEP, and AP.
\end{enumerate}
\end{definition}

Our main theorem of the section is the following.

\begin{theorem}\label{thm:Fraisee} Let $\mathcal{L}$ be a semiproper continuous signature and let $(\Delta, V)$ be a countable good value pair for $\mathcal{L}$. Then the class $\mathcal{K}_{(\Delta, V)}$ is a countable Fra\"iss\'e class.
\end{theorem}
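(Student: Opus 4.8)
The plan is to verify the three defining properties of a Fra\"iss\'e class for $\mathcal{K}_{(\Delta, V)}$: the hereditary property (HP), the joint embedding property (JEP), and the amalgamation property (AP). Countability of the class follows immediately from the countability of $(\Delta,V)$, since up to isomorphism a finite $(\Delta,V)$-valued continuous $\mathcal{L}$-structure on $n$ points is determined by finitely many values drawn from the countable sets $\Delta$ and $V$, and there are only countably many finite $n$. The HP is entirely routine: a substructure of a finite $(\Delta,V)$-valued continuous $\mathcal{L}$-structure is again finite, its metric takes values in $\Delta$, and its relation values lie in $V$, by the very definition of substructure.

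First I would handle the AP, which is the substantive point. The idea is to re-run the strong amalgamation construction from the proof of (1)$\Rightarrow$(3) of Theorem~\ref{thm:finap}, but now to check that it stays inside $\mathcal{K}_{(\Delta,V)}$. Given $\mathcal{M},\mathcal{P},\mathcal{Q}\in\mathcal{K}_{(\Delta,V)}$ with embeddings $\varphi:\mathcal{M}\to\mathcal{P}$, $\psi:\mathcal{M}\to\mathcal{Q}$, form the amalgamated metric space $X$ as there. The new distances $d^X(x,y)=\inf\{d^P(x,z)+d^Q(z,y):z\in M\}$ are actually realized as a minimum because $M$ is finite, so they equal $\min\{a+b : a,b\in\Delta\}$-type expressions; since $\Delta$ is a distance value set and $d^X(x,y)\le \sup(\Delta)$ whenever the summands are in $\Delta$ (using that $\Delta$ is bounded implies $\mathfrak{u}_{R,i}(\sup\Delta)\ge 1$ — irrelevant here — and more simply that $\min\{a+b,\sup\Delta\}\in\Delta$), we get $d^X$ taking values in $\Delta$. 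Then apply Lemma~\ref{lem:gconservative} to the partially defined structure $\mathcal{X}$, but with a $V$-valued version of the conservative extension: define
$$ R^N(\overline{x})=\max\Bigl\{0,\ \sup\{R^X(\overline{y})-d^X_R(\overline{x},\overline{y}) : \overline{y}\in\dom(R^X)\}\Bigr\}. $$
The point is that each $d^X_R(\overline{x},\overline{y})$ is (again by finiteness of everything and the fact that $d^X_{\mathfrak{u}_{R,i}}$ is computed as a finite minimum of sums of $\mathfrak{u}_{R,i}$-values at $\Delta$-distances) a finite sum $\sum q_j$ of values of the form $\mathfrak{u}_{R,i}(\delta)$ with $\delta\in\Delta$, so $R^X(\overline{y})-d^X_R(\overline{x},\overline{y})$, when positive, is of the form $w-\sum_j\mathfrak{u}_{R,i_j}(\delta_j)$ with $w\in V$; by the closure condition in the definition of good value pair (iterated, and noting $V\subseteq[0,1]$), each such value lies in $V$, and the supremum over the finite set $\dom(R^X)$ is attained, hence lies in $V\cup\{0\}\subseteq V$. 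Thus $\mathcal{N}\in\mathcal{K}_{(\Delta,V)}$, and it is a strong amalgam exactly as in Theorem~\ref{thm:finap}.

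For the JEP I would reduce to the AP: since $0\in V$, the one-point structure with all relation values $0$ lies in $\mathcal{K}_{(\Delta,V)}$ (every modulus of continuity gives no constraint on a single point), and embeds into any member of the class; then amalgamating $\mathcal{M}$ and $\mathcal{N}$ over this common one-point substructure produces the required $\mathcal{P}$. Alternatively one argues JEP directly by the same disjoint-amalgam construction with $M=\emptyset$ replaced by a singleton, but routing through AP is cleanest.

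\textbf{The main obstacle} will be the bookkeeping in the AP step: specifically, verifying that the values $d^X_R(\overline{x},\overline{y})$ appearing in the conservative-extension formula really are \emph{finite} sums of terms $\mathfrak{u}_{R,i}(\delta)$ with $\delta\in\Delta$ — this uses both that $\Delta$ is a distance value set (so that the pseudo-metrics $d^X_{\mathfrak{u}_{R,i}}$, which are infima over chains, are attained on chains through the finite point set and evaluate $\mathfrak{u}_{R,i}$ only at $\Delta$-distances) and the superadditivity/linearity structure of $\mathfrak{u}_{R,i}$ on $I_{R,i}$ coming from semiproperness — so that the resulting relation values fall under the closure clause of the good-value-pair definition. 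Once that combinatorial point is pinned down, everything else is a direct citation of Theorem~\ref{thm:finap} and Lemma~\ref{lem:gconservative}.
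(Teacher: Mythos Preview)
Your treatment of countability, HP, and AP is correct and matches the paper's approach essentially line for line: re-run the strong amalgamation from Theorem~\ref{thm:finap}, observe that finiteness forces the infima defining $d^X$ and $d^X_R$ to be attained, so the new distances lie in $\Delta$ (after capping at $\sup\Delta$, which is in $\Delta$ whenever $\Delta$ is bounded) and the conservative-extension values land in $V$ by iterating the closure clause of Definition~\ref{def:goodpair}(ii).

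The gap is in your JEP argument. The one-point structure with all relation values $0$ does \emph{not} embed into an arbitrary member of $\mathcal{K}_{(\Delta,V)}$: an embedding must preserve relation values, so you would need some point $x\in M$ with $R^M(x,\dots,x)=0$ for every $R\in\mathcal{L}$, and nothing guarantees such a point exists. (Concretely: take $\mathcal{L}=\{R\}$ with $R$ unary and $\mathfrak{u}_{R,1}(r)=r$, let $\Delta=\{1\}$, $V=\{0,1/2\}$; then the single-point structure with $R$-value $1/2$ is in $\mathcal{K}_{(\Delta,V)}$ but admits no embedding from the single-point structure with $R$-value $0$.) So the reduction of JEP to AP fails. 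The paper instead proves JEP directly: place $M$ and $N$ at constant distance $\delta$ from one another, where $\delta\in\Delta$ satisfies $\delta\ge\diam(M),\diam(N)$ and $\delta\ge\sup I_{R,i}$ for all $R,i$ (such $\delta$ exists precisely because $(\Delta,V)$ is a good value pair), and then apply Lemma~\ref{lem:gconservative} to the resulting partially defined structure. Your parenthetical ``alternatively one argues JEP directly by the same disjoint-amalgam construction'' is pointing in the right direction, but the actual argument requires choosing this $\delta$ carefully, and that is where the second bullet of Definition~\ref{def:goodpair}(ii) is used.
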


\begin{proof} It is obvious that $\mathcal{K}_{(\Delta, V)}$ is countable. The HP for $\mathcal{K}_{(\Delta, V)}$ is obvious. For the JEP, suppose $\mathcal{M}$ and $\mathcal{N}$ are finite $(\Delta, V)$-valued continuous $\mathcal{L}$-structures. Let $X$ be the disjoint union of $M$ and $N$. Let $\delta\in \Delta$ be such that $\delta\geq\diam(M), \diam(N)$ and for any $n$-ary $R\in\mathcal{L}$ and $1\leq i\leq n$, $\delta\geq \sup(I_{R,i})$. Note that such $\delta$ exists since $(\Delta, V)$ is a good value pair for $\mathcal{L}$. Then define a metric $d^X$ on $X$ by
$$ d^X(x, y)=\left\{\begin{array}{ll} d^{M}(x, y) & \mbox{ if $x, y\in M$} \\
d^N(x,y) & \mbox{ if $x, y\in N$} \\ \delta & \mbox{ otherwise.}
\end{array}\right.
$$
For every $R\in \mathcal{L}$, $R^X$ is naturally defined on
$$\dom(R^X)=M^n\cup N^n $$
and takes values in $V$. It is clear that $\mathcal{X}=(X, d^X, (R^X)_{R\in \mathcal{L}})$ is a partially defined continuous $\mathcal{L}$-pre-structure. Then by Lemma~\ref{lem:gconservative}, $\mathcal{X}$ has a conservative extension $\mathcal{P}$. It is easy to see that $\mathcal{M}$ and $\mathcal{N}$ embed into $\mathcal{P}$ as substructures.

The proof of Theorem~\ref{thm:finap} gives the AP. Only note that for the partially defined structure $\mathcal{X}$ defined in that proof, $d^X$ can be made to take values in $\Delta$ and for any $R\in\mathcal{L}$, $R^X$ takes values in $V$. Then in the application of the proof of Lemma~\ref{lem:gconservative}, for the amalgam $\mathcal{P}$ and any $R\in\mathcal{L}$, $R^P$ takes values in $V$. Thus the resulting amalgam $\mathcal{P}$ is an element of $\mathcal{K}_{(\Delta, V)}$.
\end{proof}

By a standard argument there exists a Fra\"iss\'e limit of the class $\mathcal{K}_{(\Delta, V)}$ when $\mathcal{L}$ is semiproper and $(\Delta, V)$ a countable good value pair for $\mathcal{L}$. We denote it as $\mathbb{U}_{(\Delta, V)}$.

$\mathbb{U}_{(\Delta, V)}$ is the unique countable continuous $\mathcal{L}$-pre-structure that is universal for all finite $(\Delta, V)$-valued continuous $\mathcal{L}$-structures and is ultrahomogeneous. It is also universal for all countable $(\Delta, V)$-valued continuous $\mathcal{L}$-pre-structures.

It is well known that for any countable distance value set $\Delta$, the class of all finite metric spaces whose distance takes values in $\Delta$ (known as $\Delta$-metric spaces) form a Fra\"iss\'e class (see e.g. \cite{EGLMM}), and the Fra\"iss\'e limit is denoted $\mathbb{U}_{\Delta}$. As usual, $\mathbb{U}_{\Delta}$ is characterized by the property that it is universal for all finite $\Delta$-metric spaces and it is ultrahomogenous.

By an argument identical to the proof of Theorem~\ref{thm:LUU} (i)$\Rightarrow$(ii), we have that if $\mathcal{L}$ is a Lipschitz continuous signature and $(\Delta, V)$ is a countable good value pair for $\mathcal{L}$, then the metric space underlying $\mathbb{U}_{(\Delta, V)}$ is isometric to $\mathbb{U}_{\Delta}$.

When $\mathcal{L}$ is semiproper and $(\Delta, V)$ is a countable good value pair for $\mathcal{L}$, we consider the automorphism group of $\mathbb{U}_{(\Delta, V)}$ and denote it as $\Aut(\mathbb{U}_{(\Delta, V)})$. Since $\mathbb{U}_{(\Delta, V)}$ is countable, we may regard $\Aut(\mathbb{U}_{(\Delta, V)})$ as a subgroup of the infinite permutation group $S_\infty$. Under the usual topology of $S_\infty$, it is a closed subgroup, hence is itself a Polish group. From the above discussions, we know that $\Aut(\mathbb{U}_{(\Delta, V)})$ is universal among all $\Aut(\mathcal{M})$ for $(\Delta,V)$-valued continuous $\mathcal{L}$-pre-structures $\mathcal{M}$.

When $\Delta=\mathbb{Q}^+$, we know by Lemma~\ref{lem:goodvaluepair} (ii) that there is a countable $V$ such that $\mathbb{Q}\cap [0,1]\subseteq V$ and $(\Delta, V)$ is a good value pair for $\mathcal{L}$. Moreover, there is a smallest such $V$, which we fix. We denote $\mathbb{U}_{(\Delta, V)}$ also by $\mathbb{QU}_{\mathcal{L}}$ following the convention for the analogous classical structure. We also call $\mathbb{QU}_{\mathcal{L}}$ the {\em rational Urysohn continuous $\mathcal{L}$-pre-structure}.

In the following we show that if $\mathcal{L}$ is proper, then the completion of $\mathbb{QU}_{\mathcal{L}}$ is isomorphic to the unique separable Urysohn continuous $\mathcal{L}$-structure $\mathbb{U}_{\mathcal{L}}$.

\begin{theorem}\label{thm:QUL} Let $\mathcal{L}$ be a proper continuous signature. Then the completion of $\mathbb{QU}_{\mathcal{L}}$ is isomorphic to $\mathbb{U}_{\mathcal{L}}$.
\end{theorem}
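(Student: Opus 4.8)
The plan is to show that $\mathcal{U}:=\overline{\mathbb{QU}_{\mathcal{L}}}$ --- the metric completion of $\mathbb{QU}_{\mathcal{L}}$ together with the unique continuous extensions of the interpretations $R^{\mathbb{QU}_{\mathcal{L}}}$ --- is a separable continuous $\mathcal{L}$-structure with the Urysohn property. Since $\mathbb{U}_{\mathcal{L}}$ is the unique such structure (Section~4), this yields $\mathcal{U}\cong\mathbb{U}_{\mathcal{L}}$. That $\mathcal{U}$ is a continuous $\mathcal{L}$-structure follows from the properness of $\mathcal{L}$ exactly as for $\overline{\mathcal{M}_\omega}$ in Section~4: each $\mathfrak{u}_{R,i}$ is upper semicontinuous on $I_{R,i}$, and off $I_{R,i}$ the condition (\ref{eqn:ucl}) is automatic since there $\mathfrak{u}_{R,i}\ge 1$ while $R^{\mathcal{U}}$ takes values in $[0,1]$; and $\mathcal{U}$ is separable because $\mathbb{QU}_{\mathcal{L}}$ is countable.

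The core of the argument is a ``rational'' analogue of Lemma~\ref{lem:EMU}: for every finite substructure $\mathcal{A}$ of $\mathcal{U}$, every one-point extension $\mathcal{A}_x$ of $\mathcal{A}$, and every $\epsilon>0$, there is $u\in\mathbb{QU}_{\mathcal{L}}$ with $|d^{A_x}(x,a)-d^{\mathcal{U}}(u,a)|<\epsilon$ for all $a\in A$ and $|R^{A_x}(\overline{y})-R^{\mathcal{U}}(\overline{y}(u|x))|<\epsilon$ for every $n$-ary $R\in\mathcal{L}$ and $\overline{y}\in A_x^n$. To prove it, let $(\Delta,V)$ with $\Delta=\mathbb{Q}^+$ be the good value pair fixed in the definition of $\mathbb{QU}_{\mathcal{L}}$. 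Using density of $\mathbb{QU}_{\mathcal{L}}$ in $\mathcal{U}$, pick a finite $F=\{b_0,\dots,b_m\}\subseteq\mathbb{QU}_{\mathcal{L}}$ with each $d^{\mathcal{U}}(a_i,b_i)$ very small; as a substructure of $\mathbb{QU}_{\mathcal{L}}$, $\mathcal{F}$ is automatically $(\Delta,V)$-valued. Running the perturbation argument from the proof of Theorem~\ref{prop:EMo}, with the countable dense group $G$ used there replaced by the good value pair $(\Delta,V)$, one builds a $(\Delta,V)$-valued one-point extension $\mathcal{F}_u\in\mathcal{K}_{(\Delta,V)}$ of $\mathcal{F}$ that $(\epsilon/2)$-approximates $\mathcal{A}_x$: the new distances $d^{F_u}(u,b_i)\in\mathbb{Q}^+$ are chosen slightly above $d^{A_x}(x,a_i)$ so that all triangle inequalities persist, and the new relation values are chosen in $V$ after the customary ``$(1-\epsilon_0/\delta)$-contraction'' so that (\ref{eqn:ucl}) persists. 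The clauses defining a good value pair are precisely what permit this to be done within $\mathcal{K}_{(\Delta,V)}$: $\Delta$ being a distance value set keeps $d^{M_u}$ inside $\Delta$, the clause ``$v>\mathfrak{u}_{R,i}(\delta)\Rightarrow v-\mathfrak{u}_{R,i}(\delta)\in V$'' keeps the perturbed relation values inside $V$, and the bounded/unbounded clause covers the case $d^{F_u}(u,b_i)\notin I_{R,i}$. Finally, being the Fra\"iss\'e limit of $\mathcal{K}_{(\Delta,V)}$, the structure $\mathbb{QU}_{\mathcal{L}}$ has the extension property for that class, so $\mathcal{F}_u$ embeds over $\mathcal{F}$ into $\mathbb{QU}_{\mathcal{L}}$; the image of $u$ is the point we want, and the two required estimates follow by combining the $(\epsilon/2)$-approximation with the $\sum_i\mathfrak{u}_{R,i}(d^{\mathcal{U}}(a_i,b_i))$-sized corrections furnished by (\ref{eqn:ucl}) in $\mathcal{U}$, exactly as in the last part of the proof of Lemma~\ref{lem:EMU}.

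Granting this lemma, I would verify the Urysohn property of $\mathcal{U}$ by reproducing, essentially verbatim, the proof of Theorem~\ref{prop:UM}: given a finite substructure $\mathcal{M}$ of $\mathcal{U}$ and a one-point extension $\mathcal{M}_x$ of $\mathcal{M}$, one uses the rational approximate extension lemma in place of Lemma~\ref{lem:EMU}, together with Lemma~\ref{lem:dEe} applied inside the pre-structure $\mathcal{U}$, to construct a $d^{\mathcal{U}}$-Cauchy sequence $(u_m)$ in $\mathbb{QU}_{\mathcal{L}}$ with $d^{\mathcal{U}}(u_m,a)\to d^{M_x}(x,a)$ for every $a\in M$ and $R^{\mathcal{U}}(\overline{z}(u_m|x))\to R^{M_x}(\overline{z})$ for every $n$-ary $R\in\mathcal{L}$ and $\overline{z}\in M_x^n$. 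The limit $u=\lim_m u_m\in\mathcal{U}$ then realizes $\mathcal{M}_x$ over $\mathcal{M}$ --- the relation equalities pass to the limit because each $\mathfrak{u}_{R,i}$ is continuous at $0$ --- so there is an isomorphic embedding of $\mathcal{M}_x$ into $\mathcal{U}$ fixing $\mathcal{M}$ pointwise. Hence $\mathcal{U}$ has the Urysohn property, and the uniqueness of the separable Urysohn continuous $\mathcal{L}$-structure gives $\overline{\mathbb{QU}_{\mathcal{L}}}\cong\mathbb{U}_{\mathcal{L}}$.

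I expect the main obstacle to be the rational approximate extension lemma --- specifically, the simultaneous perturbation of the new distances into $\mathbb{Q}^+$ and of the new relation values into $V$ while keeping (\ref{eqn:ucl}) and every triangle inequality intact. This is the technical heart of the proof of Theorem~\ref{prop:EMo}; the only genuinely new point is the bookkeeping verifying that the perturbation stays within the class $\mathcal{K}_{(\Delta,V)}$, for which the good-value-pair axioms were tailored. The Cauchy-completion step and the appeal to uniqueness are then straightforward adaptations of the arguments in Sections~4.3--4.4.
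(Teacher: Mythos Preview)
Your proposal is correct and follows the same strategy as the paper: establish a $(\Delta,V)$-valued analogue of Lemma~\ref{lem:EMU} by rerunning the perturbation argument of Theorem~\ref{prop:EMo} and invoking the rational Urysohn property of $\mathbb{QU}_{\mathcal{L}}$, then repeat the proof of Theorem~\ref{prop:UM} verbatim to obtain the Urysohn property for the completion. One minor correction to your exposition: the perturbed relation values land in $V$ simply because $\mathbb{Q}\cap[0,1]\subseteq V$ is dense in $[0,1]$, not because of the subtraction-closure axiom in Definition~\ref{def:goodpair} --- that axiom is what is needed for the amalgamation step in Theorem~\ref{thm:Fraisee}, not for the perturbation itself.
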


\begin{proof} Let $\Delta=\mathbb{Q}^+$ and let $V$ be the smallest such that $\mathbb{Q}\cap[0,1]\subseteq V$ and $(\Delta, V)$ is a good value pair for $\mathcal{L}$.

$\mathbb{QU}_{\mathcal{L}}$ obviously has the following {\em rational Urysohn property}: given any finite $(\Delta, V)$-valued continuous $\mathcal{L}$-structure $\mathcal{M}$, a one-point extension $\mathcal{N}$ of $\mathcal{M}$ that is $(\Delta, V)$-valued, and an isomorphic embedding $\varphi$ from $\mathcal{M}$ into $\mathbb{QU}_{\mathcal{L}}$, there is an isomorphic embedding $\psi$ from $\mathcal{N}$ into $\mathbb{QU}_{\mathcal{L}}$ such that $\psi\,\rest M=\varphi$.

Using the rational Urysohn property to replace the Urysohn property, the proofs of Theorem~\ref{prop:EMo} and Lemma~\ref{lem:EMU} can be repeated so that the following statement holds: given any finite substructure $\mathcal{A}$ of $\bar{\mathbb{QU}}_{\mathcal{L}}=\bar{\mathcal{M}}$, a one-point extension $\mathcal{A}_x$ of $\mathcal{A}$, and $\epsilon>0$, there is $u\in \mathbb{QU}_{\mathcal{L}}$ such that
$$ |d^{A_x}(x, a)-d^{\bar{M}}(u,a)|<\epsilon\ \ \mbox{ for all $a\in A$,} $$
and for any $n$-ary $R\in\mathcal{L}$ and $\overline{y}\in A_x^n$,
$$ |R^{A_x}(\overline{y})-R^{\bar{M}}(\overline{y}(u|x))|<\epsilon. $$

Then by repeating the proof of Theorem~\ref{prop:UM}, again replacing the Urysohn property by the rational Urysohn property in the assumption, we get the Urysohn property for the completion of $\mathbb{QU}_{\mathcal{L}}$. This shows that the completion of $\mathbb{QU}_{\mathcal{L}}$ is isomorphic to $\mathbb{U}_{\mathcal{L}}$.
\end{proof}

If $\mathcal{L}$ is proper, by Theorem~\ref{thm:QUL} every element of $\Aut(\mathbb{QU}_{\mathcal{L}})$ extends uniquely to an element of $\Aut(\mathbb{U}_{\mathcal{L}})$. Let $\eta: \Aut(\mathbb{QU}_{\mathcal{L}}) \to \Aut(\mathbb{U}_{\mathcal{L}})$ be this canonical extension map. Then it is easy to see that $\eta$ is a group isomorphic embedding.

Before closing this section we show that $\eta(\Aut(\mathbb{QU}_{\mathcal{L}}))$ is dense in $\Aut(\mathbb{U}_{\mathcal{L}})$. For this we need to fix some notation. For any continuous $\mathcal{L}$-pre-structure $\mathcal{M}$ with one-point extensions $\mathcal{M}_x$ and $\mathcal{M}_y$, let
$$ d^E_{\mathcal{M}}(x,y)=\max\{\,\mu(x,y),\,\rho(x,y),\,\lambda(x,y)\,\} $$
be defined as in Subsection~\ref{subsec:4.1}. Let $A>\max\{NJ,1\}$, where $N$ and $J$ are defined as in the proof of Theorem~\ref{prop:EMo}.

\begin{lemma}\label{lem:QUapprox} Let $\mathcal{M}=\mathbb{QU}_{\mathcal{L}}$, $\mathcal{A}$ be a finite substructure of $\mathcal{M}$, $\mathcal{A}_x$ and $\mathcal{A}_y$ be one-point extensions of $\mathcal{A}$, and $\epsilon>0$. Suppose $y\in M$, $d^E_{\mathcal{A}}(x,y)<\epsilon$, and there is $x_0\in M$ such that $d^E_{\mathcal{A}}(x, x_0)=0$. Then there is some $x'\in M$ such that $d^E_{\mathcal{A}}(x, x')=0$ and $d^{M}(x',y)<\epsilon$.
\end{lemma}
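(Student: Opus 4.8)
The plan is to realize $x$ near $y$ by constructing an appropriate finite $(\Delta,V)$-valued one-point extension of $\mathcal{A}_y$ and then invoking the rational Urysohn property of $\mathbb{QU}_{\mathcal{L}}$ established in the proof of Theorem~\ref{thm:QUL}. If $\mathcal{A}_x$ and $\mathcal{A}_y$ are isomorphic over $\mathcal{A}$, then $d^E_{\mathcal{A}}(x,y)=0$ and one may simply take $x'=y$, so assume they are not. Since $d^E_{\mathcal{A}}(x,x_0)=0$ with $x_0\in M=\mathbb{QU}_{\mathcal{L}}$, the substructure of $\mathbb{QU}_{\mathcal{L}}$ with domain $A\cup\{x_0\}$ is isomorphic to $\mathcal{A}_x$ over $\mathcal{A}$; in particular $\mathcal{A}_x$ is $(\Delta,V)$-valued, so $d^{A_x}(x,a)\in\Delta=\mathbb{Q}^+$ for $a\in A$ and $R^{A_x}$ takes values in $V$. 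Also $\mathcal{A}_y$ is literally a substructure of $\mathbb{QU}_{\mathcal{L}}$. Thus the goal reduces to building a finite $(\Delta,V)$-valued continuous $\mathcal{L}$-structure $\mathcal{N}$ with domain $A\cup\{y,x'\}$ (with $x'$ a fresh point realizing the one-point-extension type of $x$ over $\mathcal{A}$), which is a one-point extension of $\mathcal{A}_y$, with $\mathcal{N}\rest(A\cup\{x'\})\cong\mathcal{A}_x$ over $\mathcal{A}$ and $d^{\mathcal{N}}(x',y)<\epsilon$.

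For the metric of $\mathcal{N}$ I keep the metrics of $\mathcal{A}_x$ (with $x'$ in place of $x$) and of $\mathcal{A}_y$ on their respective parts and set $d^{\mathcal{N}}(x',y)=\rho'$ for a suitable rational $\rho'$. By the triangle-inequality estimate in the proof of Lemma~\ref{lem:dEM} one has $d^E_{\mathcal{A}}(x,y)\le\min\{d^{A_x}(x,a)+d^{A_y}(y,a):a\in A\}$, and the right-hand side is a sum of two rationals; hence either $d^E_{\mathcal{A}}(x,y)$ equals that minimum and is itself rational, or there is a rational strictly between the two. Either way I may choose a rational $\rho'$ with $d^E_{\mathcal{A}}(x,y)\le\rho'<\epsilon$ and $\rho'\le\min\{d^{A_x}(x,a)+d^{A_y}(y,a):a\in A\}$; these bounds, together with $\rho'\ge\lambda(x,y)$, guarantee that $d^{\mathcal{N}}$ is a metric with values in $\Delta$.

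To define the relations, let $\mathcal{X}$ be the partially defined continuous $\mathcal{L}$-pre-structure on $(A\cup\{y,x'\},d^{\mathcal{N}})$ whose $n$-ary relations are defined on $(A\cup\{x'\})^n\cup(A\cup\{y\})^n$ from those of $\mathcal{A}_x$ and $\mathcal{A}_y$. That $\mathcal{X}$ really is a partially defined continuous $\mathcal{L}$-pre-structure, i.e. that each $R^{\mathcal{X}}$ is $1$-Lipschitz with respect to $d^{\mathcal{N}}_R$, is the computation of the proof of Lemma~\ref{lem:preEM}: the substructure of $\mathcal{E}(\mathcal{A})$ with domain $A\cup\{x',y\}$ (which exists by Lemma~\ref{lem:dEe}) carries exactly the same relations on these tuples and the distance $d^E_{\mathcal{A}}(x,y)$ between $x'$ and $y$, and replacing that one distance by the larger value $\rho'$ leaves the relations fixed while making no pseudometric $d^{\mathcal{N}}_{\mathfrak{u}_{R,i}}$ smaller (no path gets cheaper), so the $1$-Lipschitz inequalities are preserved. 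Applying Lemma~\ref{lem:gconservative} we obtain the conservative extension $\mathcal{N}$ of $\mathcal{X}$, a genuine continuous $\mathcal{L}$-structure with $\mathcal{N}\rest(A\cup\{x'\})=\mathcal{A}_x$ and $\mathcal{N}\rest(A\cup\{y\})=\mathcal{A}_y$, hence a one-point extension of $\mathcal{A}_y$. Moreover $\mathcal{N}$ is $(\Delta,V)$-valued: since $\mathbb{Q}\cap[0,1]\subseteq V$ and $\mathcal{X}$ is finite, the closed form for $R^{\mathcal{N}}$ from the proof of Lemma~\ref{lem:gconservative} expresses each value as some $R^{\mathcal{X}}(\bar{y})$ minus a finite sum of terms $\mathfrak{u}_{R,i}(\delta)$ with $\delta\in\Delta$, which stays in $V$ by the good-value-pair condition, exactly as in the proof of Theorem~\ref{thm:Fraisee}.

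It remains to apply the rational Urysohn property of $\mathbb{QU}_{\mathcal{L}}$ to the one-point extension $\mathcal{N}$ of the substructure $\mathcal{A}_y\subseteq\mathbb{QU}_{\mathcal{L}}$, with $\varphi$ the identity on $A\cup\{y\}$: this yields an isomorphic embedding $\psi:\mathcal{N}\to\mathbb{QU}_{\mathcal{L}}$ fixing $A\cup\{y\}$. Putting $x':=\psi(x')\in M$, the substructure of $\mathbb{QU}_{\mathcal{L}}$ with domain $A\cup\{x'\}$ is isomorphic over $\mathcal{A}$ to $\mathcal{A}_x$, so $d^E_{\mathcal{A}}(x,x')=0$, while $d^{M}(x',y)=d^{\mathcal{N}}(x',y)=\rho'<\epsilon$, as required. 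The main work, and so the expected obstacle, is the construction of $\mathcal{N}$ — concretely the two structural claims that $\mathcal{X}$ is a partially defined continuous $\mathcal{L}$-pre-structure and that its conservative extension remains $(\Delta,V)$-valued; both reduce to arguments already present in the proofs of Lemmas~\ref{lem:preEM} and \ref{lem:gconservative} and of Theorem~\ref{thm:Fraisee}, the only delicate bookkeeping being the choice of the rational $\rho'$ inside the window $[d^E_{\mathcal{A}}(x,y),\epsilon)$.
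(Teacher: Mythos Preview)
Your argument is correct and follows essentially the same route as the paper's proof: choose a rational $\rho'$ in the window $[d^E_{\mathcal{A}}(x,y),\epsilon)$ bounded above by $\min_{a\in A}\{d^{A_x}(x,a)+d^{A_y}(y,a)\}$, build a $(\Delta,V)$-valued two-point extension of $\mathcal{A}$ realizing both one-point types with $d(x',y)=\rho'$, and invoke the rational Urysohn property of $\mathbb{QU}_{\mathcal{L}}$. The paper obtains this structure by taking the amalgam $\mathcal{A}_{xy}\subseteq\mathcal{E}(\mathcal{A})$ from Lemma~\ref{lem:dEe} and replacing the single distance $d(x,y)$ by the rational $\rho'$; you instead set up the partially defined structure with $\rho'$ from the outset and apply Lemma~\ref{lem:gconservative}, which makes the verification that the result is $(\Delta,V)$-valued more transparent, but the two constructions produce the same object.
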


\begin{proof} Let $\Delta=\mathbb{Q}^+$ and let $V$ be the smallest such that $\mathbb{Q}\cap[0,1]\subseteq V$ and $(\Delta, V)$ is a good value pair for $\mathcal{L}$.

By Lemma~\ref{lem:dEe} there is an amalgam $\mathcal{A}_{xy}$ of both $\mathcal{A}_x$ and $\mathcal{A}_y$ such that $A_{xy}=A\cup\{x, y\}$ and
$d^{A_{xy}}(x,y)=d^E_{\mathcal{A}}(x,y)$. We define a finite continuous $\mathcal{L}$-structure $\mathcal{F}$ where $\mathcal{F}$ and $\mathcal{A}_{xy}$ agree on everything except that $d^F(x,y)$ is a rational number such that both
$$ d^F(x,y)\leq \inf \{d^{A_x}(x, z)+d^{A_y}(y,z)\,:\, z\in A\}$$
and
$$d^{A_{xy}}(x,y)\leq d^F(x,y)<\epsilon. $$

It follows from our assumptions that $\mathcal{F}$ is a finite $(\Delta, V)$-valued continuous $\mathcal{L}$-structure that is an extension of $\mathcal{A}_y$. By the rational Urysohn property of $\mathcal{M}$, we obtain a point $x'\in M$ such that $d^E_{\mathcal{A}}(x, x')=0$ and $d^M(x',y)=d^F(x, y)<\epsilon$.
\end{proof}

\begin{theorem}\label{thm:AutQUdense} $\eta(\Aut(\mathbb{QU}_{\mathcal{L}}))$ is dense in $\Aut(\mathbb{U}_{\mathcal{L}})$.
\end{theorem}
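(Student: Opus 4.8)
The plan is to use the standard back-and-forth criterion for density in the automorphism group of a Fra\"iss\'e-type limit: to show $\eta(\Aut(\mathbb{QU}_{\mathcal{L}}))$ is dense in $\Aut(\mathbb{U}_{\mathcal{L}})$, it suffices to show that for every $g\in\Aut(\mathbb{U}_{\mathcal{L}})$ and every finite substructure $\mathcal{A}$ of $\mathbb{U}_{\mathcal{L}}$, there is $h\in\Aut(\mathbb{QU}_{\mathcal{L}})$ such that $\eta(h)$ agrees with $g$ on $A$. Since basic open neighborhoods of $g$ are determined by finite tuples and $\mathbb{QU}_{\mathcal{L}}$ is dense in $\mathbb{U}_{\mathcal{L}}$, we may in fact assume $A\subseteq\mathbb{QU}_{\mathcal{L}}$; moreover, after perturbing, we may assume $g\!\upharpoonright\! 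A$ maps $A$ into $\mathbb{QU}_{\mathcal{L}}$ — this last point is where Lemma~\ref{lem:QUapprox} does the work, letting us replace the image $gA$ (a finite substructure of $\mathbb{U}_{\mathcal{L}}$, hence realized up to arbitrarily small error by points of $\mathbb{QU}_{\mathcal{L}}$) by an isomorphic copy inside $\mathbb{QU}_{\mathcal{L}}$ that is $\epsilon$-close pointwise.

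Concretely, I would proceed as follows. Fix $g\in\Aut(\mathbb{U}_{\mathcal{L}})$, a finite substructure $\mathcal{A}\subseteq\mathbb{QU}_{\mathcal{L}}$ with $A=\{a_1,\dots,a_k\}$, and $\epsilon>0$. First I would find, one point at a time, points $b_1,\dots,b_k\in\mathbb{QU}_{\mathcal{L}}$ such that the map $a_i\mapsto b_i$ is a partial isomorphism of $\mathbb{QU}_{\mathcal{L}}$ and $d^U(b_i,g(a_i))<\epsilon$ for each $i$. At stage $i$, having placed $b_1,\dots,b_{i-1}$, the point $g(a_i)$ together with $b_1,\dots,b_{i-1}$ determines a one-point extension $\mathcal{B}^{(i-1)}_{x}$ of the finite substructure $\mathcal{B}^{(i-1)}$ on $\{b_1,\dots,b_{i-1}\}$ (namely, the structure induced by $\{b_1,\dots,b_{i-1},g(a_i)\}$ after transporting $g(a_i)$'s data via $g^{-1}$ to match $\mathcal{A}$); because $g(a_i)\in\mathbb{U}_{\mathcal{L}}$, it is approximated by a point of $\mathbb{QU}_{\mathcal{L}}$ in the sense of $d^E$, and Lemma~\ref{lem:QUapprox} (applied with $y=$ an approximating point and $x$ the abstract one-point extension) yields $b_i\in\mathbb{QU}_{\mathcal{L}}$ with $d^E_{\mathcal{B}^{(i-1)}}(x,b_i)=0$ and $d^M(b_i,y)$ small, hence $d^U(b_i,g(a_i))<\epsilon$. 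Here $d^E_{\mathcal{B}^{(i-1)}}(x,b_i)=0$ is exactly the statement that $\{b_1,\dots,b_i\}$ induces a substructure isomorphic (over $\mathcal{B}^{(i-1)}$) to the one induced by $\{b_1,\dots,b_{i-1},g(a_i)\}$, i.e. $a_j\mapsto b_j$ remains a partial isomorphism.

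Once the partial isomorphism $a_i\mapsto b_i$ of $\mathbb{QU}_{\mathcal{L}}$ is in hand, I would invoke ultrahomogeneity of $\mathbb{QU}_{\mathcal{L}}$ (which holds since it is the Fra\"iss\'e limit of $\mathcal{K}_{(\mathbb{Q}^+,V)}$ by Theorem~\ref{thm:Fraisee}) to extend it to an $h\in\Aut(\mathbb{QU}_{\mathcal{L}})$. Then $\eta(h)\in\Aut(\mathbb{U}_{\mathcal{L}})$ satisfies $d^U(\eta(h)(a_i),g(a_i))=d^U(h(a_i),g(a_i))=d^U(b_i,g(a_i))<\epsilon$ for all $i$, so $\eta(h)$ lies in the given basic neighborhood of $g$. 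Since $g$, $\mathcal{A}$, $\epsilon$ were arbitrary, density follows.

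The main obstacle is the bookkeeping in the inductive step: one must be careful that the ``one-point extension'' fed into Lemma~\ref{lem:QUapprox} is genuinely a structure over the already-chosen $\mathcal{B}^{(i-1)}$, and that the hypotheses of the lemma — in particular the existence of a point $x_0\in M$ with $d^E_{\mathcal{B}^{(i-1)}}(x,x_0)=0$, and a point $y\in M$ with $d^E_{\mathcal{B}^{(i-1)}}(x,y)<\epsilon$ — are verified. Both follow from the density of $\mathbb{QU}_{\mathcal{L}}$ in $\mathbb{U}_{\mathcal{L}}$ and the $d^E$-approximation statement established in the proof of Theorem~\ref{thm:QUL} (the analog of Lemma~\ref{lem:EMU} for $\mathbb{QU}_{\mathcal{L}}$): the point $g(a_i)\in\mathbb{U}_{\mathcal{L}}$ determines a one-point extension of $\mathcal{B}^{(i-1)}$ which, by that approximation, is $d^E$-approximated arbitrarily well by points of $\mathbb{QU}_{\mathcal{L}}$, supplying both $x_0$ (exact realization is not needed — any point within $d^E < \epsilon/2$ works after shrinking $\epsilon$, but in fact one can take a genuine realization since $\mathbb{QU}_{\mathcal{L}}$ has the rational Urysohn property for $(\mathbb{Q}^+,V)$-valued extensions and one rationalizes the relevant values) and $y$. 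Everything else is routine.
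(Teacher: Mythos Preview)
Your approach is the same as the paper's: both build the points $b_1,\dots,b_k$ inductively via Lemma~\ref{lem:QUapprox}, then invoke ultrahomogeneity of $\mathbb{QU}_{\mathcal{L}}$. However, there is a real gap in the bookkeeping you flagged but did not resolve.

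You claim you can maintain $d^U(b_i,g(a_i))<\epsilon$ for all $i$. This is false as stated. To apply Lemma~\ref{lem:QUapprox} at step $i$ you need $d^E_{\mathcal{B}^{(i-1)}}(x,y)<\epsilon$, where $x$ is the one-point extension of $\mathcal{B}^{(i-1)}$ transported from $\{a_1,\dots,a_i\}$ and $y\in\mathbb{QU}_{\mathcal{L}}$ approximates $g(a_i)$. But $x$ and $g(a_i)$ (viewed as extensions of $\mathcal{B}^{(i-1)}$) are \emph{not} $d^E$-close just because $g(a_i)$ is close to a rational point: computing $\lambda$ gives $|d(a_i,a_j)-d(g(a_i),b_j)|=|d(g(a_i),g(a_j))-d(g(a_i),b_j)|\le d(g(a_j),b_j)$, which is the error from a previous step; and the $\rho$ term picks up an additional multiplicative constant (the paper's $A>\max\{NJ,1\}$) from the $\|\overline{u}\|^{-1}$ normalization against the Lipschitz bounds. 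So the honest estimate is $d^E_{\mathcal{B}^{(i-1)}}(x,g(a_i))\lesssim A\max_{j<i}d(b_j,g(a_j))$, and the errors compound geometrically. The paper computes $\mu,\rho,\lambda$ separately at each stage and arrives at $d^{\bar M}(g(x_i),z_i)<2(1+iA^i)\epsilon$; only after this does arbitrariness of $\epsilon$ give density.

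The fix is routine---replace your uniform $\epsilon$ by a bound of the form $C_i\epsilon$ with $C_i$ depending only on $i$, $k$, and the signature constants, then absorb $C_k$ into the choice of $\epsilon$---but your write-up as it stands asserts something that does not hold.
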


\begin{proof} Let $\mathcal{M}=\mathbb{QU}_{\mathcal{L}}$ and $\bar{\mathcal{M}}=\mathbb{U}_{\mathcal{L}}$. Let $g\in \Aut(\bar{\mathcal{M}})$, $x_1,\dots, x_n\in \bar{\mathcal{M}}$ and $\epsilon>0$. We will define $y_1,\dots, y_n, z_1,\dots, z_n\in M$ such that the map $y_i\mapsto z_i$, $1\leq i\leq n$, is a partial isomorphism of $\mathcal{M}$, and for all $1\leq i\leq n$,
$$ d^{\bar{M}}(x_i,y_i)<\epsilon $$
and
$$ d^{\bar{M}}(g(x_i), z_i)<2(1+iA^i)\epsilon. $$

First, let $y_1,\dots, y_n\in M$ be such that $d^{\bar{M}}(x_i,y_i)<\epsilon$ for all $1\leq i\leq n$. Let $u_1, \dots, u_n\in M$ be such that $d^E_{\varnothing}(g(y_i), u_i)<\epsilon$ and $d^{\bar{M}}(g(y_i), u_i)<\epsilon$. We define $z_1,\dots, z_n\in M$ by induction on $1\leq i\leq n$ with the following inductive hypothesis: letting $A_{i-1}=\{z_1,\dots, z_{i-1}\}$, the map $y_j\mapsto z_j$, $1\leq j\leq i-1$, is a partial isomorphism of $\mathcal{M}$ and $d^{\bar{M}}(u_j, z_j)<2jA^j\epsilon$ for all $1\leq j\leq i-1$. Let $F_{i-1}=\{y_1,\dots, y_{i-1}\}$. Then by our inductive hypothesis $\mathcal{A}_{i-1}$ and $\mathcal{F}_{i-1}$ are isomorphic. Let $\pi_{i-1}: F_{i-1}\to A_{i-1}$ be the isomorphism given by $\pi_{i-1}(y_j)=z_j$ for $1\leq j\leq i-1$. Consider the one-point extension of $\mathcal{F}_{i-1}$ by $y_i$. Via $\pi_{i-1}$ this gives a one-point extension of $\mathcal{A}_{i-1}$ by a point which we denote by $v_i$. Consider also the one-point extension of $\mathcal{A}_{i-1}$ by $u_i$. In the following calculation we denote $\mathcal{A}_{i-1}$ by $\mathcal{A}$ and $\mathcal{F}_{i-1}$ by $\mathcal{F}$ for notational simplicity.

For any unary $R\in \mathcal{L}$, we have
$$\begin{array}{rcl} \mathfrak{u}^*_{R,1}(|R^{A_{u_i}}(u_i)-R^{A_{v_i}}(v_i)|)
&=&
\mathfrak{u}^*_{R,1}(|R^{A_{u_i}}(u_i)-R^{F_{y_i}}(y_i)|) \\
&=&
\mathfrak{u}^*_{R,1}(|R^{A_{u_i}}(u_i)-R^{g(F)_{g(y_i)}}(g(y_i))|) \\
&\leq& d^E_{\varnothing}(u_i,g(y_i))<\epsilon.
\end{array}
$$
Thus $\mu(u_i,v_i)<\epsilon$.

Let $1\leq j\leq i-1$ be such that $\lambda(u_i, v_i)=|d^{A_{u_i}}(u_i, z_j)-d^{A_{v_i}}(v_i,z_j)|$. Then
$$\begin{array}{rcl} \lambda(u_i, v_i)&=&|d^{A_{u_i}}(u_i, z_j)-d^{A_{v_i}}(v_i,z_j)| \\
&=& |d^{A_{u_i}}(u_i, z_j)-d^{F_{y_i}}(y_i,y_j)| \\
&=& |d^{\bar{M}}(u_i, z_j)-d^{\bar{M}}(y_i,y_j)| \\
&=& |d^{\bar{M}}(u_i,z_j)-d^{\bar{M}}(g(y_i), g(y_j))| \\
&\leq & d^{\bar{M}}(u_i, g(y_i))+d^{\bar{M}}(g(y_j),z_j) \\
&\leq & d^{\bar{M}}(u_i, g(y_i))+d^{\bar{M}}(g(y_j),u_j)+d^{\bar{M}}(u_j, z_j) \\
&\leq & \epsilon+\epsilon+2jA^j\epsilon \leq 2iA^i\epsilon.
\end{array}
$$

Finally, let $R\in\mathcal{L}$ be $n$-ary where $n\geq 2$, and let $\overline{w}\in A_{u_i}^n\setminus A^n$. Define $\overline{w}'\in g(F)_{g(y_i)}^n$ by
$$ w'_k=\left\{\begin{array}{ll} g(y_j) & \mbox{ if $w_k=z_j$ for some $1\leq j\leq i-1$} \\
g(y_i) & \mbox{ if $w_k=u_i$.}
\end{array}\right.
$$
We have
$$\begin{array}{rcl} & & \|\overline{w}\|^{-1}\left|R^{A_{u_i}}(\overline{w})-R^{A_{v_i}}(\overline{w}(v_i|u_i))\right| \\ &=& \|\overline{w}\|^{-1}\left|R^{A_{u_i}}(\overline{w})-R^{g(F)_{g(y_i)}}(\overline{w}')\right| \\
&\leq& A(d^{\bar{M}}(u_i,g(y_i))+\max\{d^{\bar{M}}(z_j, g(y_j))\,:\, 1\leq j\leq i-1\}) \\
&\leq& A(d^{\bar{M}}(u_i,g(y_i))+\max\{d^{\bar{M}}(z_j, u_j)+ d^{\bar{M}}(u_j, g(y_j))\,:\, 1\leq j\leq i-1\}) \\
&\leq & A(\epsilon+2(i-1)A^{i-1}\epsilon+\epsilon)\leq 2iA^i\epsilon.
\end{array}
$$
This implies that $\rho(u_i,v_i)<2iA^i\epsilon$.

Combining the above computations, we get $d^E_{\mathcal{A}}(u_i,v_i)<2iAi\epsilon$. By Lemma~\ref{lem:QUapprox}, there is some $z_i\in M$ such that the map $y_j\mapsto z_j$, $1\leq j\leq i$, is a partial isomorphism of $\mathcal{M}$ and $d^M(u_j, z_j)<2iA^i\epsilon$. This completes the induction.

We note that for any $1\leq i\leq n$,
$$ d^{\bar{M}}(g(x_i),z_i)\leq d^{\bar{M}}(g(x_i), g(y_i))+d^{\bar{M}}(g(y_i), u_i)+d^{\bar{M}}(u_i, z_i)\leq 2(1+iA^i)\epsilon $$
as desired.
\end{proof}

\section{Coherent EPPA for Continuous Structures}
In this section we continue to work with a semiproper continuous signature $\mathcal{L}$. We prove that the class of all finite continuous $\mathcal{L}$-structures has the coherent EPPA as defined by Hrushovski \cite{Hru} and Siniora--Solecki \cite{Siniora}.

\begin{theorem}\label{thm:EPPA} Let $\mathcal{L}$ be a semiproper continuous signature and let $\mathcal{M}$ be a finite continuous $\mathcal{L}$-structure. There is a finite continuous $\mathcal{L}$-structure $\mathcal{N}$ such that
\begin{enumerate}
\item[(i)] $\mathcal{N}$ is an extension of $\mathcal{M}$;
\item[(ii)] there is a map $\phi$ from the set $\Part(\mathcal{M})$ of all partial automorphism of $\mathcal{M}$ to the set $\Aut(\mathcal{N})$ of all automorphism of $\mathcal{N}$ such that for any $\pi\in \Part(\mathcal{M})$, $\phi(\pi)$ extends $\pi$;
\item[(iii)] for the map $\phi$ in {\rm (ii)} we have that for all $\pi, \tau\in\Part(\mathcal{M})$, if $\range(\tau)=\dom(\pi)$ then
$$\phi(\pi\circ \tau)=\phi(\pi)\circ \phi(\tau). $$
\end{enumerate}
\end{theorem}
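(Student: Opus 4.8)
The plan is to discretize the problem so that a finite classical relational language suffices, apply the Herwig--Lascar extension theorem in its coherent form (Herwig--Lascar \cite{HL}, Siniora--Solecki \cite{Siniora}), and then ``complete'' the resulting classical witness back to a continuous $\mathcal{L}$-structure, mimicking Solecki's reduction \cite{Solecki} of EPPA for finite metric spaces. First, using Lemma~\ref{lem:goodvaluepair}(i), fix a \emph{finite} good value pair $(\Delta,V)$ for $\mathcal{L}$ with $\mathcal{M}$ being $(\Delta,V)$-valued, and put $\delta^{*}=\sup(\Delta)\in\Delta$ and $m=\min(\Delta\setminus\{0\})$. Since every finite continuous $\mathcal{L}$-structure is $(\Delta,V)$-valued for some such pair, it suffices to find the required $\mathcal{N}$ inside $\mathcal{K}_{(\Delta,V)}$ (a finite continuous $\mathcal{L}$-pre-structure being automatically a structure).

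Encode a finite $(\Delta,V)$-valued continuous $\mathcal{L}$-structure $\mathcal{A}$ by the classical structure $\widehat{\mathcal{A}}$, on the same domain, over the finite relational signature $\mathcal{L}^{+}$ consisting of a binary symbol $D_{\delta}$ for each $\delta\in\Delta$ and an $n$-ary symbol $R_{v}$ for each $n$-ary $R\in\mathcal{L}$ and each $v\in V$, where $D_{\delta}(x,y)$ means $d^{A}(x,y)=\delta$ and $R_{v}(\overline{x})$ means $R^{A}(\overline{x})=v$. This coding is faithful and identifies $\Part(\mathcal{A})$ with $\Part(\widehat{\mathcal{A}})$ and $\Aut(\mathcal{A})$ with $\Aut(\widehat{\mathcal{A}})$. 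Let $\mathcal{C}$ be the class of all finite $\mathcal{L}^{+}$-structures into which no member of a finite family $\mathcal{F}$ admits a homomorphism, where $\mathcal{F}$ collects: the \emph{multiplicity} obstructions ($D_{\delta}\wedge D_{\delta'}$ on one pair with $\delta\neq\delta'$, $D_{0}$ on distinct points, $D_{\delta}$ with $\delta\neq0$ on a loop, and the analogues for the $R_{v}$); the \emph{short triangle-inequality violations}, i.e.\ labelled cycles of length at most $\lceil\delta^{*}/m\rceil+1$ along which one edge's label exceeds the sum of the others; and the \emph{short uniform-continuity violations}, recording for tuples $\overline{x},\overline{y}$ carrying labels $R_{v},R_{v'}$ a choice of one short labelled path per coordinate along which $|v-v'|$ exceeds the corresponding sum of values of $\mathfrak{u}_{R,i}$. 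The point is that $\mathcal{F}$ can be taken finite: because $\Delta$ and $V$ are finite, hence bounded and discretely spaced, any genuine failure of the triangle inequality or of (\ref{eqn:ucl}) --- the latter read through the path pseudometrics $d^{X}_{\mathfrak{u}}$ and $d^{X}_{R}$ as in Lemma~\ref{lem:1L} --- is already witnessed along paths whose number of edges is bounded in terms of $\delta^{*}/m$ and the spacing of $V$. Arranging such a bound uniformly for all the uniform-continuity constraints is exactly where semiproperness enters, via the superadditivity of the unary moduli and the linearity of the moduli of higher arity.

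Next one checks that every $B\in\mathcal{C}$ has a \emph{canonical} completion $\overline{B}$, on the same underlying set, to the code of a finite $(\Delta,V)$-valued continuous $\mathcal{L}$-structure, adding no $D_{\delta}$- or $R_{v}$-fact already present in $B$. Concretely: complete the metric by the truncated shortest-path metric within each connected component of the distance graph of $B$ and by the constant $\delta^{*}$ between distinct components --- this stays $\Delta$-valued because $\Delta$ is a distance value set, and it preserves the distance labels of $B$ precisely because $B$ omits the short triangle-inequality violations; then complete each relation by the conservative-extension formula of Lemma~\ref{lem:gconservative} applied to the resulting partially defined continuous $\mathcal{L}$-pre-structure --- this stays $V$-valued by the defining property of a good value pair, and it preserves the relation labels of $B$ precisely because $B$ omits the short uniform-continuity violations (which is exactly the assertion that each partial relation is $1$-Lipschitz for the completed $d^{\overline{B}}_{R}$ on its domain). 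The assignment $B\mapsto\overline{B}$ uses only distances, relation values, and the graph structure of $B$, so it is functorial; in particular every automorphism of $B$ is an automorphism of $\overline{B}$.

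Finally, $\mathcal{C}$ is closed under substructures and is defined by the finitely many forbidden homomorphic images in $\mathcal{F}$, and $\widehat{\mathcal{M}}\in\mathcal{C}$; hence the coherent Herwig--Lascar theorem yields a finite $B\in\mathcal{C}$ having $\widehat{\mathcal{M}}$ as an induced substructure together with a map $\phi_{0}\colon\Part(\widehat{\mathcal{M}})\to\Aut(B)$ such that $\phi_{0}(\pi)$ extends $\pi$ for all $\pi$ and $\phi_{0}(\pi\circ\tau)=\phi_{0}(\pi)\circ\phi_{0}(\tau)$ whenever $\range(\tau)=\dom(\pi)$. Let $\mathcal{N}$ be the finite continuous $\mathcal{L}$-structure coded by $\overline{B}$. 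Since the completion alters nothing on the domain of $\widehat{\mathcal{M}}$, $\mathcal{M}$ is a substructure of $\mathcal{N}$; since $B\mapsto\overline{B}$ is functorial, $\Aut(B)\leq\Aut(\overline{B})=\Aut(\mathcal{N})$; and composing $\phi_{0}$ with this inclusion (under the identification $\Part(\mathcal{M})=\Part(\widehat{\mathcal{M}})$) gives a map $\phi\colon\Part(\mathcal{M})\to\Aut(\mathcal{N})$ that is a group homomorphism on every composable pair and satisfies $\phi(\pi)\supseteq\pi$, which is (i)--(iii). The hard part is the middle two steps: engineering the finite family $\mathcal{F}$ so that membership in $\mathcal{C}$ is equivalent to being completable to a $(\Delta,V)$-valued continuous $\mathcal{L}$-structure, and verifying that the completion is canonical. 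For the metric this is essentially Solecki's argument; the genuinely new work is pushing the bounded-length analysis and the conservative-extension completion through the uniform-continuity relations, which is precisely what semiproperness makes possible.
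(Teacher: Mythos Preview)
Your proposal is correct and follows essentially the same route as the paper: encode into a finite classical relational signature, forbid a finite family of short violations of the metric and uniform-continuity constraints, invoke the coherent Herwig--Lascar theorem of Siniora--Solecki, and then complete the resulting classical witness to a continuous $\mathcal{L}$-structure via the shortest-path metric and the conservative-extension formula of Lemma~\ref{lem:gconservative}. The one point you leave implicit, and which the paper supplies explicitly via the Fra\"iss\'e limit $\mathbb{U}_{(\Delta,V)}$ of Theorem~\ref{thm:Fraisee}, is the hypothesis of Theorem~1.11 of \cite{Siniora} that \emph{some} (possibly infinite) $\mathcal{F}$-free extension of $\widehat{\mathcal{M}}$ already realizes every partial automorphism of $\widehat{\mathcal{M}}$ as a full automorphism; the paper also simplifies slightly by restricting to the connected component of $M$ in the distance graph rather than completing across components with $\delta^{*}$, but that difference is cosmetic.
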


\begin{proof} Let
$$ P=\{ d^M(x, y)\,:\, x\neq y\in M\} $$
and
$$ V=\{ R^M(\overline{x})\,:\, \overline{x}\in M^n, R\in\mathcal{L} \mbox{ is $n$-ary}\}. $$
We introduce a new binary relation symbol $D_p$ for each $p\in P$ and an $n$-ary relation symbol $R_{v}$ for each $n$-ary $R\in\mathcal{L}$ and $v\in V$. Let $\mathcal{L}^*$ be the classical signature consisting of all $D_p$ and $R_v$ for $p\in P$, $R\in\mathcal{L}$ and $v\in V$. Let $\mathcal{T}$ be the collection of all finite classical $\mathcal{L}^*$-structures $\mathcal{S}$ satisfying one of the following conditions:
\begin{enumerate}
\item[(a)] for some $m\geq 1$ and $p, p_1, \dots, p_m\in P$ such that
$$ \displaystyle\sum_{i=1}^m p_i<p, $$
$\mathcal{S}$ has at most $m+1$ elements and there are $z_0, z_1, \dots, z_m\in S$ such that
$$ \mathcal{S}\models D_{p}(z_0, z_m)\wedge D_{p}(z_m, z_0)\wedge \bigwedge_{1\leq i\leq m} (D_{p_i}(z_{i-1}, z_i)\wedge D_{p_i}(z_i,z_{i-1})); $$
or
\item[(b)] for some $n$-ary $R\in\mathcal{L}$, $v, v'\in V$, $m_1,\dots, m_n\geq 1$ and $$p_{1,1}, \dots, p_{1, m_1}, p_{2, 1},\dots, p_{2, m_2},\dots, p_{n, 1}, \dots, p_{n, m_n}\in P$$ such that
$$ \displaystyle\sum_{i=1}^n\sum_{j=1}^{m_i}\mathfrak{u}_{R,i}(p_{i,j})<|v-v'|, $$
$\mathcal{S}$ has at most $\sum_{i=1}^n(m_i+1)=n+\sum_{i=1}^nm_i$ elements and there are
$$ z_{1, 0}, \dots, z_{1, m_1}, z_{2,0}, \dots, z_{2, m_2}, \dots, z_{n, 0}, \dots, z_{n, m_n}\in S$$ such that
$$ \mathcal{S}\models \bigwedge_{1\leq i\leq n}\bigwedge_{1\leq j\leq m_i} (D_{p_{i,j}}(z_{i,j-1}, z_{i,j})\wedge D_{p_{i,j}}(z_{i,j}, z_{i,j-1})) $$
and
$$\mathcal{S}\models R_{v}(z_{1,0}, z_{2,0},\dots, z_{n, 0}) \wedge R_{v'}(z_{1, m_1}, z_{2, m_2}, \dots, z_{n, m_n}); $$
or
\item[(c)] for some $n$-ary $R\in\mathcal{L}$ and $v\neq v'\in V$, $\mathcal{S}$ has at most $n$ elements and there are $x_1, \dots, x_n\in S$ such that
$$ \mathcal{S}\models R_v(x_1,\dots, x_n)\wedge R_{v'}(x_1,\dots, x_n). $$
\end{enumerate}
Since $D$ and $V$ are finite, $\mathcal{T}$ is also finite.

For any continuous $\mathcal{L}$-structure $\mathcal{A}$ we obtain a classical $\mathcal{L}^*$-structure $\mathcal{A}^*$ in the obvious way: for $x, y\in A^*$ and $p\in P$, define
$$ \mathcal{A^*}\models D_p(x, y) \iff d^A(x,  y)=p, $$
and for any $n$-ary $R\in\mathcal{L}$, $v\in V$ and $\overline{x}\in ({A}^*)^n$, define
$$ \mathcal{A^*}\models R_v(\overline{x})\iff R^{U}(\overline{x})=v. $$
By Lemma~\ref{lem:goodvaluepair} (i) there is a finite good value pair $(\Delta, W)$ for $\mathcal{L}$ such that $\mathcal{M}$ is $(\Delta, W)$-valued. Moreover,  $P\subseteq \Delta$ and $V\subseteq W$. By Theorem~\ref{thm:Fraisee} the class $\mathcal{K}_{(\Delta, W)}$ is a countable Fra\"iss\'e class and thus has a Fra\"iss\'e limit $\mathbb{U}_{(\Delta, W)}$. Then $\mathbb{U}_{(\Delta, W)}^*$ is a classical $\mathcal{L}^*$-structure which extends $\mathcal{M}^*$ such that any partial automorphism of $\mathcal{M}^*$ extends to an automorphism of $\mathbb{U}_{(\Delta, W)}^*$. Moreover, $\mathbb{U}_{(\Delta, W)}^*$ is $\mathcal{T}$-free under homomorphisms, i.e., there is no $\mathcal{S}\in\mathcal{T}$ and homomorphism from $\mathcal{S}$ into $\mathbb{U}_{(\Delta, W)}^*$.

By Theorem~1.11 of \cite{Siniora} there is a finite $\mathcal{L}^*$-structure $\mathcal{X}$ such that $\mathcal{X}$ is an extension of $\mathcal{M}^*$ and there is a map $\psi: \Part(\mathcal{M}^*)\to \Aut(\mathcal{X})$ such that for any $\pi\in \Part(\mathcal{M}^*)$, $\psi(\pi)$ extends $\pi$, and for all $\pi, \tau\in\Part(\mathcal{M}^*)$, if $\dom(\pi)=\range(\tau)$, then
$$\psi(\pi\circ \tau)=\psi(\pi)\circ \psi(\tau). $$
Define an equivalence relation $\sim$ on $X$ by $x\sim y$ iff there are $x=z_0, \dots, z_m=y\in X$ and $p_1,\dots, p_m\in P$ such that
$$ \mathcal{X}\models \bigwedge_{1\leq i\leq m} (D_{p_i}(z_{i-1}, z_i)\wedge D_{p_i}(z_i, z_{i-1})). $$
Clearly $M$ is contained in one of the $\sim$-equivalence classes. We denote this class by $Y$. Let $\mathcal{Y}$ be the substructure of $\mathcal{X}$ with domain $Y$. We define a metric $d^Y$ on $Y$ as follows. For any $x, y\in Y$, define $d^Y(x, y)=0$ if $x=y$, and
$$\begin{array}{rcl} d^Y(x, y)&=&\inf\left\{ \displaystyle\sum_{i=1}^m p_i\right. \,:\, p_1,\dots, p_m\in P, \exists z_0=x, z_1,\dots, z_m=y\in Y \\
& & \ \ \ \ \mbox{ such that } \left.\mathcal{X}\models \displaystyle\bigwedge_{1\leq i\leq m} (D_{p_i}(z_{i-1}, z_i)\wedge D_{p_i}(z_i,z_{i-1}))\right\}
\end{array}$$
if $x\neq y$. Since $\mathcal{Y}$ is $\mathcal{T}$-free, it follows that $d^Y$ is a metric on $Y$ and that $(Y, d^Y)$ is an extension of $(M, d^M)$ as a metric space.

For each nonempty $\pi\in \Part(\mathcal{M}^*)$, $\psi(\pi)\in \Aut(\mathcal{X})$ must fix $Y$ setwise, thus $\psi(\pi)\rest Y$ is an automorphism of $\mathcal{Y}$ still extending $\pi$, which implies that $\psi(\pi)\rest Y$ is an isometry of $(Y, d^Y)$ extending $\pi$ as a partial isometry of $(M, d^M)$.

We continue to define a partially defined continuous $\mathcal{L}$-pre-structure on $Y$. For any $n$-ary $R\in \mathcal{L}$, $v\in V$, and $\overline{x}\in Y^n$, let $R^Y(\overline{x})=v$ iff $\mathcal{X}\models R_v(\overline{x})$. Thus
$$ \dom(R^Y)=\{\overline{x}\in Y^n\,:\, \exists v\in V\ \mathcal{X}\models R_v(\overline{x})\}. $$
Since $\mathcal{Y}$ is $\mathcal{T}$-free, it follows that for any $n$-ary $R\in\mathcal{L}$ and $\overline{x}, \overline{y}\in \dom(R^Y)$, we have $R^Y(\overline{x}), R^Y(\overline{y})\in V$, and
$$ |R^Y(\overline{x})-R^Y(\overline{y})|\leq d^Y_R(\overline{x}, \overline{y}). $$
Hence $(Y, d^Y, (R^Y)_{R\in\mathcal{L}})$ is a partially defined continuous $\mathcal{L}$-pre-structure.

Let $\mathcal{N}$ be a continuous $\mathcal{L}$-pre-structure that is a conservative extension of $$(Y, d^Y, (R^Y)_{R\in\mathcal{L}})$$ given by Lemma~\ref{lem:gconservative}. Then $\mathcal{N}$ is an extension of $\mathcal{M}$. For any $\pi\in\Part(\mathcal{M})$, let $\phi(\pi)=\psi(\pi)\rest Y$. To complete our proof, it suffices to show that for any $\pi\in \Part(\mathcal{M})$, $\phi(\pi)\in \Aut(\mathcal{N})$. For this, we only  note that for any $n$-ary $R\in \mathcal{L}$ and $\overline{z}\in N^n=Y^n$,
$$ R^N(\overline{z})=\max\{0, \sup\{R^Y(\overline{x})-d^Y_R(\overline{x}, \overline{z})\,:\, \overline{x}\in \dom(R^Y)\}. $$
Thus $R^N(\overline{z})=R^N(\phi(\pi)(\overline{z}))$ since $\phi(\pi)(\overline{x})\in \dom(R^Y)$ iff $\overline{x}\in \dom(R^Y)$, and for $\overline{x}\in\dom(R^Y)$, $R^Y(\overline{x})=R^Y(\phi(\pi)(\overline{x}))$.
\end{proof}

In Theorem~\ref{thm:EPPA} the first two clauses define the property EPPA for $\Kfin$ and (i)--(iii) together give the definition of coherent EPPA. Before closing this section we show that these properties are equivalent to the semiproperness of the continuous signature $\mathcal{L}$. Note, however, that we assume a slightly stronger background condition than before for the continuous signature $\mathcal{L}$.

\begin{theorem}\label{thm:EPPAeq} Let $\mathcal{L}$ be a continuous signature with only finitely many relation symbols, where all the associated moduli of continuity are strictly increasing. The following are equivalent:
\begin{enumerate}
\item $\mathcal{L}$ is semiproper.
\item $\Kfin$ has the EPPA.
\item $\Kfin$ has the coherent EPPA.
\end{enumerate}
\end{theorem}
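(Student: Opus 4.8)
The implication $(1)\Rightarrow(3)$ is exactly Theorem~\ref{thm:EPPA}, and $(3)\Rightarrow(2)$ is trivial, so the work is in $(2)\Rightarrow(1)$. The plan is to run arguments parallel to the proof of $(2)\Rightarrow(1)$ in Theorem~\ref{thm:finap}, but replacing the amalgamation diagram by an EPPA witness. The point is that whenever a modulus fails superadditivity, fails boundedness of $I_{R,i}$, or (for $n\geq 2$) fails to be linear on $I_{R,i}$, we can build a small finite continuous $\mathcal{L}$-structure $\mathcal{M}$ carrying a partial automorphism $\pi$ whose extension to an automorphism of any extension $\mathcal{N}\supseteq\mathcal{M}$ would force a violation of $(\mathrm{UC}_{\mathcal{L}})$ in $\mathcal{N}$.

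First I would show $I_{R,i}$ is bounded and $\mathfrak u_{R,i}$ is superadditive on it. Fix $n$-ary $R$, $1\le i\le n$, and $r_1,r_2>0$ with $r_1+r_2\in I_{R,i}$. Take $\mathcal{M}$ to have three points $a,b,c$ with $d^M(a,b)=r_1$, $d^M(b,c)=r_2$, $d^M(a,c)=r_1+r_2$ (all other relations identically $0$), and set $R^M$ on the relevant tuples so that the values at the $i$-th slot being $a$ versus being $c$ differ by $\min\{1,\mathfrak u_{R,i}(r_1)+\mathfrak u_{R,i}(r_2)\}$, arranged consistently with the two-step Lipschitz bound along $a\to b\to c$; since the moduli are strictly increasing this is achievable. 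Consider the partial automorphism $\pi$ of $\mathcal{M}$ with domain $\{a,b\}$ sending $a\mapsto b,\ b\mapsto c$ (possible because $d^M(a,b)=r_1=d^M(b,c)$ — here the strict-increasing hypothesis is used to make the single pair a genuine substructure isomorphism, and one must also check the unary/relational values match, which I arrange by symmetry in the definition of $R^M$). If $\mathcal{N}\supseteq\mathcal{M}$ carries $g\in\Aut(\mathcal N)$ extending $\pi$, then applying $g$ to a tuple through $a$ yields a tuple through $g(a)=b$, and a second application yields one through $g(b)=c$; comparing the original tuple (through $a$) with its $g^2$-image (through $c$), $(\mathrm{UC}_{\mathcal{L}})$ in $\mathcal N$ for two single-coordinate steps gives $\min\{1,\mathfrak u_{R,i}(r_1)+\mathfrak u_{R,i}(r_2)\}\le\mathfrak u_{R,i}(d^N(a,b))+\mathfrak u_{R,i}(d^N(b,c))=\mathfrak u_{R,i}(r_1)+\mathfrak u_{R,i}(r_2)$, while the single step $a\to c$ would force the difference $\le\mathfrak u_{R,i}(r_1+r_2)$ — wait, more directly: we directly get $\mathfrak u_{R,i}(r_1)+\mathfrak u_{R,i}(r_2)\le\mathfrak u_{R,i}(d^N(a,c))=\mathfrak u_{R,i}(r_1+r_2)$ since $d$ is unchanged on $M$. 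Boundedness of $I_{R,i}$ then follows by the same argument as in Theorem~\ref{thm:finap}: if $I_{R,i}=[0,\infty)$ with supremum $M\le 1$, pick $r_1,r_2$ with $\mathfrak u_{R,i}(r_j)>M/2$ and get $\mathfrak u_{R,i}(r_1+r_2)>M$, a contradiction.

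For the case $n\ge2$, I would adapt the second half of the proof of $(2)\Rightarrow(1)$ of Theorem~\ref{thm:finap} to upgrade superadditivity to equality (hence linearity $\mathfrak u_{R,i}(r)=K_{R,i}r$ on $I_{R,i}$). Again fix $i=1$, $r_1,r_2$ with $r_1+r_2\in I_{R,1}$, and pick $r$ exceeding an upper bound of $I_{R,n}$ (using the boundedness just established). Build a finite structure with a point $u_0$ at distance $r_1+r_2$ from $x_0$, a point $x_1$ with $d(x_0,x_1)=d(u_0,x_1)=r$, a point $x_2$ with $d(x_0,x_2)=r_1$, $d(u_0,x_2)=r_2$; put a nonzero value of $R$ (equal to $\mathfrak u_{R,1}(r_1+r_2)$) only on the tuple with first coordinate $x_0$ and last coordinate $x_1$. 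The partial automorphism $\pi:x_0\mapsto u_0$ (with whatever else is needed to make this a substructure-iso; the $R$-values vanish on both sides so only the metric and the irrelevant symbols matter) extends to $g\in\Aut(\mathcal N)$, and the tuple $\overline y$ through $(x_0,\dots,x_0,x_1)$ versus $g(\overline y)$ through $(u_0,x_0,\dots,x_0,x_1)$, interpolated via $x_2$, yields $\mathfrak u_{R,1}(r_1+r_2)\le\mathfrak u_{R,1}(r_1)+\mathfrak u_{R,1}(r_2)$, which combined with superadditivity gives equality. The Cauchy functional equation for a monotone function on $I_{R,1}$ forces linearity.

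The main obstacle I anticipate is \emph{not} the metric bookkeeping but ensuring that the maps I want to be partial automorphisms genuinely are substructure isomorphisms of $\mathcal{M}$ — i.e., that the finite structures can be designed so that the relevant pairs are isometric \emph{and} all relation values agree — while still forcing the desired inequality on any automorphic extension. This is why the hypothesis is strengthened from ``nondecreasing'' to ``strictly increasing'': it guarantees that distinct distances give distinct modulus values, so the only way two single points can sit in isomorphic one-point configurations is if their distances to the common part literally coincide, which lets me control exactly which partial maps are legal. I would spend the bulk of the write-up verifying, case by case (just as in Theorem~\ref{thm:finap}), that each witness structure is a bona fide finite continuous $\mathcal{L}$-structure — this amounts to checking the $1$-Lipschitz condition of Lemma~\ref{lem:1L} for the finitely many tuples involved, which is routine given the superadditivity/linearity assumptions available at each stage of the induction.
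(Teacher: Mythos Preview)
Your plan for $(2)\Rightarrow(1)$ has genuine gaps in both constructions.

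\textbf{Superadditivity.} Your partial map $\pi\colon a\mapsto b,\ b\mapsto c$ can only be an isometry if $d^M(a,b)=d^M(b,c)$, i.e.\ $r_1=r_2$; you slip into asserting ``$r_1=d^M(b,c)$'' although you set $d^M(b,c)=r_2$. More seriously, the construction is circular: you want $\mathcal{M}$ to satisfy $(\mathrm{UC}_{\mathcal{L}})$ while the values at the $i$-th slot being $a$ versus $c$ differ by $\min\{1,\mathfrak u_{R,i}(r_1)+\mathfrak u_{R,i}(r_2)\}$; since $d^M(a,c)=r_1+r_2$, this \emph{already requires} $\mathfrak u_{R,i}(r_1)+\mathfrak u_{R,i}(r_2)\le\mathfrak u_{R,i}(r_1+r_2)$. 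The automorphism $g$ adds nothing, since $R^N(g(\bar x))=R^N(\bar x)$ gives no new inequality between values at $a$ and at $c$ beyond what $(\mathrm{UC}_{\mathcal{L}})$ on $\mathcal{M}$ already forced. The paper avoids this by placing four points $y,s,t,z$ on a line with $d(y,s)=a$, $d(t,z)=b$, and $d(s,t)=\delta>\sup I_{R,i}$; the large gap $\delta$ makes $(\mathrm{UC}_{\mathcal{L}})$ vacuous on the long edges, so one can freely set $R^M$ so that the $s$- and $t$-profiles agree. The partial automorphism is the swap $s\leftrightarrow t$; its extension $f$ transports $y$ to $y'=f(y)$ with $d^N(y',t)=a$, hence $d^N(y',z)\le a+b$, and comparing the $f$-image tuple with the tuple through $z$ gives the inequality.

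\textbf{Linearity for $n\ge2$.} With $\pi\colon x_0\mapsto u_0$ only, the extension $g$ sends $\overline y=(x_0,\dots,x_0,x_1)$ to $(u_0,\dots,u_0,g(x_1))$, not to $(u_0,x_0,\dots,x_0,x_1)$ as you write; $g(x_1)$ is uncontrolled. You cannot enlarge the domain to include $x_1$, because then the partial-isomorphism condition would force $R^M(x_0,\dots,x_0,x_1)=R^M(u_0,\dots,u_0,x_1)$, i.e.\ $\mathfrak u_{R,1}(r_1+r_2)=0$. The paper instead uses an equilateral triangle $r,s,t$ of side $a+b$ with the cyclic partial automorphism $r\to s\to t\to r$ (legal because $R^M$ vanishes on all tuples not involving the far point $z$); the image $y'=f(y)$ of an auxiliary point $y$ then sits at distances $a$ and $b$ from $r$ and $t$, and interpolating through $y'$ gives subadditivity.

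The common idea you are missing is that the partial automorphism must act on points whose $R$-profiles are \emph{forced} to agree (by placing them far apart or in a symmetric configuration), so that the automorphic extension produces a \emph{new} point at a controlled distance, rather than trying to move points whose $R$-values you have already loaded with the target inequality.
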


\begin{proof} (1)$\Rightarrow$(3) by Theorem~\ref{thm:EPPA}. (3)$\Rightarrow$(2) is obvious. We show (2)$\Rightarrow$(1). For this, let $R\in\mathcal{L}$ be $n$-ary and let $1\leq i\leq n$.

We first show that $I_{R,i}$ is bounded. Toward a contradiction, assume $I_{R,i}$ is unbounded, that is, $I_{R,i}=[0,+\infty)$. Let $M=\sup\{\,\mathfrak{u}_{R,i}(r)\,:\, r\geq 0\}\leq 1$. Since $\mathfrak{u}_{R,i}$ is strictly increasing, we have $\mathfrak{u}_{R,i}(r)<M$ for all $r\geq 0$. Let $a, b>0$ be such that $\mathfrak{u}_{R,i}(a)+\mathfrak{u}_{R,i}(b)>M$. Let $\epsilon>0$ be such that $\epsilon< M-\mathfrak{u}_{R,i}(a+b)$. Let $\delta>0$ be sufficiently large such that $\mathfrak{u}_{R,i}(a+\delta+b)>M-\epsilon$. Consider the metric space $(M, d^M)$ consisting of four points $y, s, t, z$ on the real line where $s-y=a$, $t-s=\delta$ and $z-t=b$. For $\overline{x}=(x_1,\dots, x_n)\in M^n$, define
$$ R^M(\overline{x})=\left\{\begin{array}{ll} M-\epsilon & \mbox{ if $x_i=y$} \\
\mathfrak{u}_{R,i}(b) & \mbox{ if $x_i=s$ or $x_i=t$} \\
0 & \mbox{ if $x_i=z$.}
\end{array}\right.
$$
For all other $\tilde{R}\in \mathcal{L}$, define $\tilde{R}^M$ to be identically $0$. It is easy to check that this defines a finite continuous $\mathcal{L}$-structure $\mathcal{M}$. Let $p: \{s, t\}\to \{s, t\}$ be defined as $p(s)=t$ and $p(t)=s$. Then $p$ is a partial isomorphism of $\mathcal{M}$. By the EPPA, there is a finite continuous $\mathcal{L}$-structure $\mathcal{N}$ and an automorphism $f$ of $\mathcal{N}$ such that $\mathcal{N}$ is an extension of $\mathcal{M}$ and $f$ is an extension of $p$. Let $y'=f(y)$. Then $d^N(y',t)=d^N(f(y), f(s))=d^N(y,s)=d^M(y,s)=a$.
Define $\overline{x}^0$, $\overline{x}^1$, $\overline{x}^2$ as follows.
$$\begin{array}{ccc} x^0_j=\left\{\begin{array}{ll} y & \mbox{ if $j=i$} \\ s & \mbox{ if $j\neq i$;}
\end{array}\right.
&
x^1_j=\left\{\begin{array}{ll} y' & \mbox{ if $j=i$} \\ t & \mbox{ if $j\neq i$;}
\end{array}\right.
&
x^2_j=\left\{\begin{array}{ll} z & \mbox{ if $j=i$} \\ t & \mbox{ if $j\neq i$.}
\end{array}\right.
\end{array}
$$
Then $\overline{x}^1=f(\overline{x}^0)$ and thus $R^N(\overline{x}^1)=R^N(\overline{x}^0)=R^M(\overline{x}^0)=
M-\epsilon$. It follows that
$$M-\epsilon =|R^N(\overline{x}^1)-
R^N(\overline{x}^2)| \leq \mathfrak{u}_{R,i}(d^N(y',z)) \leq \mathfrak{u}_{R,i}(d^N(y',t)+d^N(t,z))=\mathfrak{u}_{R,i}(a+b),
$$
contradicting our choice of $\epsilon$.

Next we show that $\mathfrak{u}_{R,i}$ is superadditive on $I_{R,i}$. For this we use a construction similar to the above one. Let $a, b>0$ be such that $a+b\in I_{R,i}$. Let $\delta> \sup I_{R,i}$. Consider the metric space $(M, d^M)$ consisting of four points $y, s, t, z$ on the real line where $s-y=a$, $t-s=\delta$ and $z-t=b$. For $\overline{x}=(x_1,\dots, x_n)\in M^n$, define
$$ R^M(\overline{x})=\left\{\begin{array}{ll} \mbox{min}\{1,\mathfrak{u}_{R,i}(a)+\mathfrak{u}_{R,i}(b)\} & \mbox{ if $x_i=y$} \\
\mathfrak{u}_{R,i}(b) & \mbox{ if $x_i=s$ or $x_i=t$} \\
0 & \mbox{ if $x_i=z$.}
\end{array}\right.
$$
For all other $\tilde{R}\in \mathcal{L}$, define $\tilde{R}^M$ to be identically $0$. It is easy to check that this defines a finite continuous $\mathcal{L}$-structure $\mathcal{M}$. Let $p: \{s, t\}\to \{s, t\}$ be defined as $p(s)=t$ and $p(t)=s$. Then $p$ is a partial isomorphism of $\mathcal{M}$. By the EPPA, there is a finite continuous $\mathcal{L}$-structure $\mathcal{N}$ and an automorphism $f$ of $\mathcal{N}$ such that $\mathcal{N}$ is an extension of $\mathcal{M}$ and $f$ is an extension of $p$. Let $y'=f(y)$. Then $d^N(y',t)=d^N(f(y), f(s))=d^N(y,s)=d^M(y,s)=a$.
Define $\overline{x}^0$, $\overline{x}^1$, $\overline{x}^2$ as before.
$$\begin{array}{ccc} x^0_j=\left\{\begin{array}{ll} y & \mbox{ if $j=i$} \\ s & \mbox{ if $j\neq i$;}
\end{array}\right.
&
x^1_j=\left\{\begin{array}{ll} y' & \mbox{ if $j=i$} \\ t & \mbox{ if $j\neq i$;}
\end{array}\right.
&
x^2_j=\left\{\begin{array}{ll} z & \mbox{ if $j=i$} \\ t & \mbox{ if $j\neq i$.}
\end{array}\right.
\end{array}
$$
Then $\overline{x}^1=f(\overline{x}^0)$ and thus $R^N(\overline{x}^1)=R^N(\overline{x}^0)=R^M(\overline{x}^0)=
\mbox{min}\{1,\mathfrak{u}_{R,i}(a)+\mathfrak{u}_{R,i}(b)\}$. It follows that
$$\begin{array}{rcl}\mbox{min}\{1,\mathfrak{u}_{R,i}(a)+\mathfrak{u}_{R,i}(b)\} &=&|R^N(\overline{x}^1)-
R^N(\overline{x}^2)| \\
&\leq& \mathfrak{u}_{R,i}(d^N(y',z)) \\
&\leq& \mathfrak{u}_{R,i}(d^N(y',t)+d^N(t,z))=\mathfrak{u}_{R,i}(a+b).
\end{array}
$$
Since $\mathfrak{u}_{R,i}(a+b)<1$, we have $\mathfrak{u}_{R,i}(a)+\mathfrak{u}_{R,i}(b)\leq \mathfrak{u}_{R,i}(a+b)$ as desired.

Finally we show that if $R\in\mathcal{L}$ is $n$-ary with $n\geq 2$ and $1\leq i\leq n$, then there is $K_{R,i}>0$ such that $\mathfrak{u}_{R,i}(r)=K_{R,i}r$ for all $r\in I_{R,i}$. For this we show that $\mathfrak{u}_{R,i}$ is subadditive on $I_{R,i}$. Let $1\leq k\leq n$ be such that $k\neq i$. Let $a, b>0$ be such that $a+b\in I_{R,i}$. Consider the metric space $(M, d^M)$ consisting of five points $r, s, t, z, y$, where $r,s,t$ form an equilateral triangle with side length $a+b$, $d^N(y,s)=b$, $d^N(y, t)=a$, $d^N(y, r)=\mbox{min}\{2a+b, a+2b\}$, and $d^N(z,r)=d^N(z,s)=d^N(z,t)=d^N(z,y)=\delta$ for some $\delta>\sup I_{R,i}$. For $\overline{x}=(x_1,\dots, x_n)\in M^n$, define
$$ R^M(\overline{x})=\left\{\begin{array}{ll} \mathfrak{u}_{R,i}(a+b) & \mbox{ if $x_i=r$ and $x_k=z$} \\ 0 & \mbox{ otherwise.}\end{array}\right. $$
For all other $\tilde{R}\in\mathcal{L}$, define $\tilde{R}^M$ to be identically $0$. It is easy to check that this defines a finite continuous $\mathcal{L}$-structure $\mathcal{M}$. Let $p:\{r,s,t\}\to \{r,s,t\}$ be such that $p(r)=s$, $p(s)=t$ and $p(t)=r$. Then $p$ is a partial isomorphism of $\mathcal{M}$. By the EPPA, we obtain a finite continuous $\mathcal{L}$-structure $\mathcal{N}$ and an automorphism $f$ of $\mathcal{N}$ such that $\mathcal{N}$ is an extension of $\mathcal{N}$ and $f$ is an extension of $p$. Let $y'=f(y)\in N$. Then
$d^N(y', r)=d^N(f(y), f(t))=d^N(y, t)=a$ and $d^N(y', t)=d^N(f(y), f(s))=d^N(y, s)=b$. Define $\overline{x}^0$, $\overline{x}^1$, $\overline{x}^2$ as follows.
$$\begin{array}{ccc} x^0_j=\left\{\begin{array}{ll} r & \mbox{ if $j=i$} \\ z & \mbox{ if $j\neq i$;}
\end{array}\right.
&
x^1_j=\left\{\begin{array}{ll} t & \mbox{ if $j=i$} \\ z & \mbox{ if $j\neq i$;}
\end{array}\right.
&
x^2_j=\left\{\begin{array}{ll} y' & \mbox{ if $j=i$} \\ z & \mbox{ if $j\neq i$.}
\end{array}\right.
\end{array}
$$
Then
$$\begin{array}{rcl} \mathfrak{u}_{R,i}(a+b)&=&|R^N(\overline{x}^0)-R^N(\overline{x}^1)| \\
&\leq & |R^N(\overline{x}^0)-R^N(\overline{x}^2)|+|R^N(\overline{x}^2)-R^N(\overline{x}^1)| \\
&\leq& \mathfrak{u}_{R,i}(d^N(r,y'))+\mathfrak{u}_{R,i}(d^N(y',t))
=\mathfrak{u}_{R,i}(a)+\mathfrak{u}_{R,i}(b).
\end{array}
$$

\end{proof}

\section{Actions by Automorphisms on Continuous Structures}

In this section we prove that the actions by automorphisms on finite continuous $\mathcal{L}$-structures form a Fra\"iss\'e class for any semiproper continuous signature $\mathcal{L}$.

\begin{definition}\label{def:C} Let $\mathcal{L}$ be a semiproper continuous signature. Let $\mathcal{C}_{\mathcal{L}}$ be the class of all actions $\Gamma\actson \mathcal{M}$ such that
\begin{itemize}
\item $\mathcal{M}$ is a finite continuous $\mathcal{L}$-structure,
\item $\Gamma$ is a finite group, and
\item $\Gamma\actson \mathcal{M}$ is an action by automorphisms.
\end{itemize}
\end{definition}

Our objective is to show that $\mathcal{C}_{\mathcal{L}}$ is a Fra\"iss\'e class. By this we mean that $\mathcal{C}_{\mathcal{L}}$ has the hereditary property, the joint embedding property, and the amalgamation property. They are defined below.

\begin{definition} Let $\mathcal{L}$ be a semiproper continuous signature. Let $\Lambda, \Gamma$ be groups, $\mathcal{M}, \mathcal{N}$ be continuous $\mathcal{L}$-pre-structures, and $\tau:\Lambda\actson \mathcal{M}$ and $\pi:\Gamma\actson \mathcal{N}$ be actions by automorphisms.
\begin{enumerate}
\item An {\em embedding} from $\tau$ into $\pi$ is a pair $(e, f)$ where $e$ is a group isomorphic embedding from $\Lambda$ into $\Gamma$ and $f$ is an embedding from $\mathcal{M}$ to $\mathcal{N}$ as continuous $\mathcal{L}$-pre-structures such that for all $g\in \Lambda$ and $x\in M$, $e(g)\cdot_{\pi}f(x)=f(g\cdot_{\tau}x)$.
\item We say that $\mathcal{C}_{\mathcal{L}}$ has the {\em hereditary property} (HP for short) if for any $\tau:\Lambda\actson \mathcal{M}$ and $\pi:\Gamma\actson \mathcal{N}$, whenever $\pi\in\mathcal{C}_{\mathcal{L}}$ and there is an embedding from $\tau$ into $\pi$, then $\tau\in\mathcal{C}_{\mathcal{L}}$.
\item We say that $\mathcal{C}_{\mathcal{L}}$ has the {\em joint embedding property} (JEP for short) if given any $\tau,\pi\in\mathcal{C}_{\mathcal{L}}$ there is a $\kappa\in\mathcal{C}_{\mathcal{L}}$ and embeddings from $\tau$ into $\kappa$ and from $\pi$ into $\kappa$.
\item We say that $\mathcal{C}_{\mathcal{L}}$ has the {\em amalgamation property} (AP for short) if given any $\mu, \tau, \pi\in \mathcal{C}_{\mathcal{L}}$ and embeddings $(e,f)$ from $\mu$ into $\tau$ and $(p,q)$ from $\mu$ into $\pi$, there exist a $\kappa\in\mathcal{C}_{\mathcal{L}}$ and embeddings $(g,h)$ from $\tau$ into $\kappa$ and $(r,s)$ from $\pi$ into $\kappa$ such that
    $$ g\circ e=r\circ p \mbox{ and } h\circ f=s\circ q. $$
\end{enumerate}
\end{definition}

 Our proof will follow the general line of arguments for Theorems 3.9 and 4.8 of \cite{EGLMM}. Two key ingredients of the proof are analogs of some results of Rosendal (Lemma 16 of \cite{Rosendal} and Theorem 7 of \cite{RosendalI}, also c.f. Lemma 4.7 of \cite{EGLMM}, Theorem 4.6 of \cite{EGLMM} and Theorem 3.3 of \cite{EG}). We develop these results first.

\subsection{Extensions of actions by automorphisms}

In this subsection we generalize a lemma of Rosendal (Lemma 16 of \cite{Rosendal}; also c.f. Lemma 4.7 of \cite{EGLMM}) to finite continuous $\mathcal{L}$-structures for a semiproper continuous signature $\mathcal{L}$.

We will prove a more general result with the stronger assumption that $\mathcal{L}$ is a proper continuous signature. The same proof will give the desired result for semiproper $\mathcal{L}$. So, for the time being let us assume $\mathcal{L}$ is proper.

Assume $\Gamma$ is a group, $\Lambda\leq \Gamma$ is a subgroup, $\mathcal{M}\subseteq \mathcal{N}$ are continuous $\mathcal{L}$-pre-structures, and $\Gamma\actson \mathcal{M}$ and $\Lambda\actson \mathcal{N}$ are compatible actions by automorphisms.

Define a pseudo-metric $\partial$ on $N\times\Gamma$ by
$$\begin{array}{rcl} & & \partial((y_1, g_1), (y_2, g_2)) \\
&=&\left\{\begin{array}{l} d^N(g_2^{-1}g_1\cdot y_1, y_2)
\ \ \ \ \ \ \ \mbox{ if either $y_1, y_2\in M$ or $g_2^{-1}g_1\in \Lambda$} \\
\inf_{x\in M}\{d^N(y_1, g_1^{-1}\cdot x)+d^N(y_2, g_2^{-1}\cdot x)\} \ \ \ \ \ \ \  \mbox{ otherwise.}
\end{array}\right.
\end{array}
$$
Then $\partial$ is a pseudo-metric on $N\times \Gamma$ and for all $(y_1, g_1), (y_2, g_2)\in N\times \Gamma$,
$$ \partial((y_1, g_1), (y_2, g_2))\leq \inf_{x\in X}\{d^N(y_1, g_1^{-1}\cdot x)+d^N(y_2, g_2^{-1}\cdot x)\}. $$

Define an equivalence relation $\sim$ on $N\times \Gamma$ by
$$ (y_1, g_1)\sim (y_2, g_2)\iff \partial ((y_1, g_1),(y_2, g_2))=0. $$
Then $(y_1, g_1)\sim (y_2, g_2)$ iff
$$(y_1, y_2\in M \mbox{ or } g_2^{-1}g_1\in\Lambda) \mbox{ and } g_2^{-1}g_1\cdot y_1=y_2. $$
Let $[y, g]$ denote the $\sim$-equivalence class of $(y, g)$. Let
$$ P=(N\times \Gamma)/\sim $$
and let
$$ d^P([y_1, g_1], [y_2, g_2])=\partial((y_1, g_1), (y_2, g_2)). $$
Then $d^P$ is a metric on $P$.

Let $\varphi: N\to P$ be defined as $\varphi(y)=[y,1]$. Then $\varphi$ is an isometric embedding from $(N,d^N)$ into $(P, d^P)$, since
$$ d^P(\varphi(y_1),\varphi(y_2))=\partial((y_1, 1), (y_2, 1))=d^N(y_1, y_2) $$
for any $y_1, y_2\in N$.

Define $\Gamma\actson P$ by
$$ g\cdot [y, h]=[y, gh] $$
for $g, h\in \Gamma$ and $y\in N$.
It is easy to see that this is well defined.

Now suppose $R\in\mathcal{L}$ is an $n$-ary relation symbol. Define
$$ R^P([y_1, g_1], \dots, [y_1, g_n])=R^N(z_1, \dots, z_n) $$
if $z_1, \dots, z_n\in N$, $g\in \Gamma$, and
$$ [y_1, g_1]=[z_1, g], \dots, [y_n, g_n]=[z_n, g]. $$
Thus
$$\begin{array}{c} [y_1, g_1], \dots, [y_1, g_n]\in \dom(R^P)\iff \ \ \ \ \ \ \ \ \ \ \ \ \ \ \ \ \ \ \ \ \ \ \ \ \ \ \  \\
\ \ \ \ \ \ \ \ \ \ \ \ \ \ \ \exists g\in \Gamma, z_1, \dots, z_n\in N\ \bigwedge_{i=1}^n [y_i, g_i]=[z_i, g].
\end{array}$$
We verify that $R^P$ is well defined on its domain. For this let
$$([y_1, g_1], \dots, [y_n, g_n])\in \dom(R^P).$$ Suppose $z_1, \dots, z_n, z_1', \dots, z_n'\in N$ and $g, g'\in \Gamma$ such that for all $1\leq i\leq n$,
$$ [y_i, g_i]=[z_i, g]=[z_i', g']. $$
We need to show that $R^N(z_1, \dots, z_n)=R^N(z_1',\dots, z_n')$. Note that we have $g^{-1}g'\cdot z_i'=z_i$ for all $1\leq i\leq n$, and thus
$$ R^N(z_1,\dots, z_n)=R^N(g^{-1}g'\cdot z_1', \dots, g^{-1}g'\cdot z_n')=R^N(z_1', \dots, z_n'). $$

We also note that $\dom(R^P)$ is invariant under the $\Gamma$ action. That is, if $[\overline{y}, h]\in \dom(R^P)$ and $g\in\Gamma$ then
$$ g\cdot [\overline{y}, h]=[\overline{y}, gh]\in \dom(R^P) $$
and
$$ R^P(g\cdot[\overline{y}, h])=R^N(\overline{y})=R^P([\overline{y}, h]). $$

\begin{lemma}\label{lem:RP} For any $R\in\mathcal{L}$, $R^P$ on $\dom(R^P)$ is $1$-Lipschitz with respect to $d^P_R$.
\end{lemma}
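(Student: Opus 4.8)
The plan is to verify, for each $n$-ary $R\in\mathcal L$ and each $\overline p,\overline q\in\dom(R^P)$, the inequality $|R^P(\overline p)-R^P(\overline q)|\le d^P_R(\overline p,\overline q)$, following the pattern of the proof of Lemma~\ref{lem:preEM}. I will use three elementary facts about the construction, each a direct computation from the definition of $\partial$: (i) $\Gamma$ acts on $(P,d^P)$ by isometries, and this action preserves $R^P$ and $\dom(R^P)$, with $R^P(g\cdot[\overline y,h])=R^P([\overline y,h])$; (ii) writing $P_g:=\{[y,g]:y\in N\}$, the map $y\mapsto[y,g]$ is an isometric copy of $(N,d^N)$ inside $(P,d^P)$ on which $R^P$ restricts to $R^N$, and $\varphi(x)=[g^{-1}x,g]$ for every $x\in M$, so $\varphi(M)=P_g\cap P_{g'}$ for $g\Lambda\ne g'\Lambda$ and $\varphi(M)\subseteq P_g$ for all $g$; (iii) $R^M$ is $\Gamma$-invariant, i.e. $R^M(g\cdot\overline u)=R^M(\overline u)$ for $\overline u\in M^n$, since $\Gamma\actson\mathcal M$ is by automorphisms. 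By (i) we may translate so that, when checking the inequality for a given pair, $\overline p=\varphi(\overline z)$ and $\overline q=[\overline v,h]$ for some $\overline z,\overline v\in N^n$ and $h\in\Gamma$, with $R^P(\overline p)=R^N(\overline z)$ and $R^P(\overline q)=R^N(\overline v)$.

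The geometric heart of the matter is that a $d^P$-chain joining two points of $P$ may wander through several copies $P_g$, but by (ii) it must pass through $\varphi(M)$ whenever it leaves a copy and returns; chains through other copies therefore do not help. For $n\ge2$, where $\mathfrak u_{R,i}$ is linear with slope $K_{R,i}$ on $I_{R,i}$, this yields $d^P_{\mathfrak u_{R,i}}(p,q)\ge\min\{K_{R,i}\,d^P(p,q),\,1\}$ for all $p,q\in P$: a chain all of whose segments lie in $I_{R,i}$ has $\mathfrak u_{R,i}$-weight equal to $K_{R,i}$ times its $d^P$-length, hence $\ge K_{R,i}\,d^P(p,q)$ by the triangle inequality, while a chain with a segment outside $I_{R,i}$ has weight $\ge1$. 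Since $R^P$ is $[0,1]$-valued, it suffices for $n\ge2$ to prove $|R^P(\overline p)-R^P(\overline q)|\le\sum_iK_{R,i}\,d^P(p_i,q_i)$ in the case where every $d^P(p_i,q_i)\in I_{R,i}$, the remaining coordinates being absorbed by $|R^P(\overline p)-R^P(\overline q)|\le1\le d^P_R(\overline p,\overline q)$. For $n=1$, since $\dom(R^P)=P$, it suffices to prove the uniform continuity inequality $|R^P(p)-R^P(q)|\le\mathfrak u_{R,1}(d^P(p,q))$ for all $p,q\in P$ and then invoke the telescoping (converse) direction of Lemma~\ref{lem:1L}, which gives $1$-Lipschitzness with respect to $d^P_R=d^P_{\mathfrak u_{R,1}}$.

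In both cases one routes the comparison through $\varphi(M)$ coordinatewise. Given $\epsilon>0$, for each $i$ pick $x_i\in M$ with $d^P(p_i,\varphi(x_i))+d^P(\varphi(x_i),q_i)<d^P(p_i,q_i)+\epsilon$ (possible from the definition of $d^P$ via $\partial$; immediate when $p_i$ or $q_i$ lies in $\varphi(M)$, and the distance is attained when $h\in\Lambda$ or the relevant base points lie in $M$). The tuple $\overline r=(\varphi(x_1),\dots,\varphi(x_n))\in\varphi(M)^n\subseteq\dom(R^P)$ shares the copy $P_1$ with $\overline p$ and the copy $P_h$ with $\overline q$, so the $1$-Lipschitz property of $R^N$ with respect to $d^N_R$ inside each copy gives
\[
|R^P(\overline p)-R^P(\overline q)|\le|R^P(\overline p)-R^P(\overline r)|+|R^P(\overline r)-R^P(\overline q)|\le\sum_i\bigl(\mathfrak u_{R,i}(d^P(p_i,\varphi(x_i)))+\mathfrak u_{R,i}(d^P(\varphi(x_i),q_i))\bigr).
\]
For unary $R$ one takes the single intermediate point $\varphi(x_1)$ and uses $R^M(x_1)=R^M(h^{-1}x_1)$ from (iii) to align the two uniform-continuity estimates with the two legs of $\partial$. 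Now superadditivity of $\mathfrak u_{R,i}$ on $I_{R,i}$ (equivalently linearity there, for $n\ge2$) bounds the right-hand side by $\sum_i\mathfrak u_{R,i}(d^P(p_i,q_i)+\epsilon)$; letting $\epsilon\to0$ and using that, $\mathcal L$ being proper, each $\mathfrak u_{R,1}$ is upper semicontinuous (hence right-continuous) on $I_{R,1}$ — for $n\ge2$ the modulus is even continuous there — gives $|R^P(\overline p)-R^P(\overline q)|\le\sum_i\mathfrak u_{R,i}(d^P(p_i,q_i))$, which in the two regimes above is exactly $\sum_iK_{R,i}\,d^P(p_i,q_i)$ (for $n\ge2$, all arguments in $I_{R,i}$) or $\mathfrak u_{R,1}(d^P(p,q))$ (for $n=1$), completing the verification.

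The step I expect to be the main obstacle is making the ``chains do not gain by leaving a copy'' estimate precise in tandem with the coordinatewise routing: one must check, with the $S_0,S_1,S_2,S_3$ bookkeeping of Lemmas~\ref{thm:finap} and~\ref{lem:preEM}, that every auxiliary tuple introduced stays in $\dom(R^P)$ — which holds because $\varphi(M)$ lies in every copy $P_g$, so replacing coordinates by points of $\varphi(M)$ never enlarges the set of copies to which a tuple is confined — and one must routinely dispatch the coordinates on which $d^P_R$ is already at least $1$ using the $[0,1]$-valuedness of $R^P$. For a merely semiproper $\mathcal L$ the same argument applies, the only change being that the appeal to upper semicontinuity is replaced by the (then available) fact that the relevant infima over $\varphi(M)$ are attained.
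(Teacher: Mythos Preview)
Your overall strategy — reduce to $\overline p\in P_1^n$, $\overline q\in P_h^n$ and route each coordinate through a point $\varphi(x_i)\in\varphi(M)$ that lies in both copies — is exactly the paper's idea, and your unary argument together with the appeal to upper semicontinuity is the same as the paper's. There is, however, a genuine gap in the step where you assert that for every $i$ one can choose $x_i\in M$ with
\[
d^P(p_i,\varphi(x_i))+d^P(\varphi(x_i),q_i)<d^P(p_i,q_i)+\epsilon.
\]
This can fail precisely in the case your parenthetical tries to wave through, namely $h\in\Lambda$ with $z_i,v_i\in N\setminus M$. In that situation $[v_i,h]=[hv_i,1]$, so $p_i,q_i$ both lie in the single copy $P_1$, and the left-hand side above equals $d^N(z_i,x_i)+d^N(x_i,hv_i)$; its infimum over $x_i\in M$ is in general strictly larger than $d^N(z_i,hv_i)=d^P(p_i,q_i)$ (take $z_i,hv_i$ close to each other but far from $M$). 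So the routing does not approximate $d^P(p_i,q_i)$ here, and the displayed inequality you derive only yields a bound by $\sum_i\mathfrak u_{R,i}\bigl(\inf_{x\in M}(d^N(z_i,x)+d^N(x,hv_i))\bigr)$, which need not be $\le\sum_i\mathfrak u_{R,i}(d^P(p_i,q_i))$. Your remark that ``the distance is attained when $h\in\Lambda$'' is true of $d^P(p_i,q_i)$ itself but does nothing for the detour sum; it is the wrong quantity being ``attained.''

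The fix is the explicit case split the paper makes: if $h\in\Lambda$ (more generally $h^{-1}g\in\Lambda$, or all base points in $M$), then $\overline p$ and $\overline q$ lie in a common copy $P_1$, and your fact (ii) gives directly
\[
|R^P(\overline p)-R^P(\overline q)|=|R^N(\overline z)-R^N(h\overline v)|\le\sum_i\mathfrak u_{R,i}(d^N(z_i,hv_i))=\sum_i\mathfrak u_{R,i}(d^P(p_i,q_i)),
\]
using that $\Lambda\actson\mathcal N$ is by automorphisms. Only in the complementary case does one route through $M$, and there your $\epsilon$-approximation is legitimate because $d^P(p_i,q_i)$ is then literally the infimum $\inf_{x\in M}\{d^N(z_i,x)+d^N(v_i,h^{-1}x)\}$ (the paper also notes that this equality persists on coordinates where $z_i,v_i\in M$, by taking $x=z_i$). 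With this one extra sentence of case analysis your proof is complete and coincides with the paper's.
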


\begin{proof} We first consider unary $R\in\mathcal{L}$. For any $[y_1, g_1], [y_2, g_2]\in P$, if $y_1, y_2\in M$ or $g_2^{-1}g_1\in \Lambda$ then
$$\begin{array}{rcl}& &|R^P([y_1, g_1])-R^P([y_2, g_2])| =|R^N(g_2^{-1}g_1\cdot y_1)-R^N(y_2)| \\
&\leq& \mathfrak{u}_{R,1}(d^N(g_2^{-1}g_1\cdot y_1, y_2))=\mathfrak{u}_{R,1}(d^P([y_1, g_1], [y_2, g_2])).
\end{array}
$$
Otherwise, for any $x\in M$,
$$\begin{array}{rcl}& &|R^P([y_1, g_1])-R^P([y_2, g_2])| =|R^N(y_1)-R^N(y_2)|  \\
&\leq& |R^N(y_1)-R^N(g_1^{-1}\cdot x)|+|R^N(g_2^{-1}\cdot x)-R^N(y_2)|\\
&\leq& \mathfrak{u}_{R,1}(d^N(y_1, g_1^{-1}\cdot x))+\mathfrak{u}_{R,1}(d^N(y_2, g_2^{-1}\cdot x)) \\
&\leq  &\mathfrak{u}_{R,1}(d^N(y_1, g_1^{-1}\cdot x)+d^N(y_2, g_2^{-1}\cdot x))
\end{array}
$$
if $\mathfrak{u}_{R,1}(d^N(y_1, g_1^{-1}\cdot x)+d^N(y_2, g_2^{-1}\cdot x))<1$, and otherwise we have nonetheless
$$ |R^P([y_1, g_1])-R^P([y_2, g_2])| \leq  \mathfrak{u}_{R,1}(d^N(y_1, g_1^{-1}\cdot x)+d^N(y_2, g_2^{-1}\cdot x)). $$
Taking the infimum of the right hand side over $x\in M$, and using the upper semicontinuity of $\mathfrak{u}_{R,1}$, we get that
$$  |R^P([y_1, g_1])-R^P([y_2, g_2])| \leq  \mathfrak{u}_{R,1}(d^P([y_1,g_1], [y_2,g_2])). $$
Thus (\ref{eqn:ucl}) holds for $R^P$. It follows that for any $[y_1, g_1], [y_2, g_2]\in P$,
$$ |R^P([y_1, g_1])-R^P([y_2, g_2])| \leq  d^P_R([y_1,g_1], [y_2,g_2]). $$

Next we consider $n$-ary $R\in\mathcal{L}$ where $n\geq 2$. Let $y_1, \dots, y_n, z_1, \dots, z_n\in N$ and $g, h\in \Gamma$. If $h^{-1}g\in \Lambda$ or $y_1, \dots, y_n, z_1, \dots, z_n\in M$, then
$$\begin{array}{rcl}& &|R^P([y_1, g], \dots, [y_n, g])-R^P([z_1, h],\dots, [z_n, h])| \\
&=&|R^N(h^{-1}g\cdot y_1, \dots, h^{-1}g\cdot y_n)-R^N(z_1,\dots, z_n)| \\
&\leq& d^N_R(h^{-1}g\cdot \overline{y}, \overline{z})=d^P_R([\overline{y}, g], [\overline{z}, h]).
\end{array}
$$
Otherwise, for any $\overline{x}\in M^n$, we have
$$\begin{array}{rcl}& &|R^P([y_1, g], \dots, [y_n, g])-R^P([z_1, h],\dots, [z_n, h])| \\
&\leq &|R^N(y_1,\dots, y_n)-R^N(g^{-1}\cdot x_1, \dots, g^{-1}\cdot x_n)|+\\
& & |R^N(h^{-1}\cdot x_1, \dots, h^{-1}\cdot x_n)-R^N(z_1,\dots, z_n)| \\
&\leq& \displaystyle\sum_{i=1}^n K_{R,i}d^N(y_i, g^{-1}\cdot x_i)+K_{R,i}d^N(z_i, h^{-1}\cdot x_i) \\
&=  &\displaystyle\sum_{i=1}^n K_{R,i}(d^N(y_i, g^{-1}\cdot x_i)+d^N(z_i, h^{-1}\cdot x_i)).
\end{array}
$$
Taking the infimum of the right hand side over all $\overline{x}\in M$, we get
$$|R^P([\overline{y}, g])-R^P([\overline{z}, h])|\leq d^P_R([\overline{y}, g], [\overline{z}, h]). $$
Here we use the fact that if $y_i, z_i\in M$ then we also have
$$ d^P([y_i, g], [z_i,h])=d^N(h^{-1}g\cdot y_i, z_i)=\inf_{x_i\in M}\{d^N(y_i, g^{-1}\cdot x_i)+d^N(z_i, h^{-1}\cdot x_i)\}. $$
\end{proof}

Thus $\mathcal{P}=(P, d^P, (R^P)_{R\in \mathcal{L}})$ is a partially defined continuous $\mathcal{L}$-pre-structure. We define a conservative extension $\mathcal{Q}$ of $\mathcal{P}$ with an action $\Gamma\actson \mathcal{Q}$. Let $(Q, d^Q)=(P, d^P)$. For any $n$-ary $R\in\mathcal{L}$ and $\overline{w}\in Q^n$, define
$$R^Q(\overline{w})=\max\{0, \sup\{ R^P(\overline{z})-d^P_R(\overline{w},\overline{z})\,:\, \overline{z}\in \dom(R^P)\}\}. $$
Then for any $g\in \Gamma$,
$$\begin{array}{rcl} R^Q(g\cdot \overline{w})&=&\max\{0, \sup\{R^P(g\cdot\overline{z})-d^P_R(g\cdot\overline{w}, g\cdot\overline{z})\,:\, \overline{z}\in \dom(R^P)\} \\
&=&\max\{0,\sup\{ R^P(\overline{z})-d^P_R(\overline{w},\overline{z})\,:\, \overline{z}\in \dom(R^P)\} \\
&=& R^Q(\overline{w}).
\end{array}
$$
We show that $\mathcal{Q}$ is a continuous $\mathcal{L}$-pre-structure, i.e., (\ref{eqn:ucl}) holds for $\mathcal{Q}$. For unary $R\in\mathcal{L}$ there is nothing to prove since all $[y,g]\in \dom(R^P)$ and we have Lemma~\ref{lem:RP}. For $n\geq 2$ and $n$-ary $R\in\mathcal{L}$ the proof is identical to the proof of Lemma~\ref{lem:gconservative}.

Thus we obtain an extension $\mathcal{Q}$ of $\mathcal{N}$ with an action $\Gamma\actson \mathcal{Q}$. It is clear that the action $\Gamma\actson \mathcal{Q}$ is compatible with both $\Gamma\actson\mathcal{M}$ and $\Lambda\actson \mathcal{N}$.

Thus we have proved the following.

\begin{theorem}\label{lem:Fraext} Let $\mathcal{L}$ be a proper continuous signature. Let $\Gamma$ be a group, $\Lambda\leq \Gamma$ be a subgroup, $\mathcal{M}\subseteq \mathcal{N}$ be continuous $\mathcal{L}$-pre-structures, and $\Gamma\actson \mathcal{M}$ and $\Lambda\actson \mathcal{N}$ be compatible actions by automorphisms. Then there is a continuous $\mathcal{L}$-pre-structure $\mathcal{Q}$ and an action $\Gamma\actson \mathcal{Q}$ by automorphisms such that $\mathcal{Q}$ is an extension of $\mathcal{N}$, the action $\Gamma\actson \mathcal{Q}$ is compatible with both $\Gamma\actson \mathcal{M}$ and $\Lambda\actson \mathcal{N}$.

Moreover, if $\mathcal{L}$ is semiproper and $\mathcal{N}$ and $\Gamma$ are finite, then the same holds with a finite $\mathcal{Q}$.
\end{theorem}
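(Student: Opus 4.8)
The bulk of this theorem is already carried out in the discussion preceding the statement, so the plan is to assemble those pieces into a proof and then dispatch the \emph{moreover} clause. First I would recall the construction: one forms the pseudo-metric space $(N\times\Gamma,\partial)$, passes to the metric quotient $P=(N\times\Gamma)/{\sim}$, notes that $\varphi\colon y\mapsto[y,1]$ embeds $(N,d^N)$ isometrically into $(P,d^P)$, and lets $\Gamma$ act on $P$ by $g\cdot[y,h]=[y,gh]$. One then checks that each $R^P$ is well defined on $\dom(R^P)$, that $\dom(R^P)$ is $\Gamma$-invariant, and that $R^P$ is $\Gamma$-invariant. By Lemma~\ref{lem:RP} each $R^P$ is $1$-Lipschitz with respect to $d^P_R$, so $\mathcal{P}=(P,d^P,(R^P)_{R\in\mathcal{L}})$ is a partially defined continuous $\mathcal{L}$-pre-structure.

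Applying Lemma~\ref{lem:gconservative} gives a conservative extension $\mathcal{Q}$; since the defining supremum formula for $R^Q$ is manifestly $\Gamma$-invariant, the $\Gamma$-action descends to an action on $\mathcal{Q}$ by automorphisms. Because $\mathcal{Q}$ is conservative over $\mathcal{P}$ and the image under $\varphi$ of any tuple from $N$ lies in $\dom(R^P)$, $\varphi$ identifies $\mathcal{N}$ with a substructure of $\mathcal{Q}$; and unravelling the definition of $\sim$ shows $g\cdot\varphi(y)=\varphi(g\cdot y)$ both for $y\in M,\ g\in\Gamma$ (using the $\Gamma$-action on $\mathcal{M}$) and for $y\in N,\ g\in\Lambda$ (using the $\Lambda$-action on $\mathcal{N}$). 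Hence $\Gamma\actson\mathcal{Q}$ is compatible with $\Gamma\actson\mathcal{M}$ and with $\Lambda\actson\mathcal{N}$, which proves the first assertion.

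For the \emph{moreover} clause, assume $\mathcal{L}$ is semiproper and $\mathcal{N},\Gamma$ are finite. Then $N\times\Gamma$ is finite, hence so is $P$, and since the conservative extension of Lemma~\ref{lem:gconservative} retains the underlying metric space, $\mathcal{Q}$ is finite and therefore a continuous $\mathcal{L}$-structure. The only place where properness rather than semiproperness was used is the unary case of Lemma~\ref{lem:RP}, where upper semicontinuity of $\mathfrak{u}_{R,1}$ lets one pull the infimum over $x\in M$ through $\mathfrak{u}_{R,1}$; when $M$ is finite this infimum is attained and monotonicity of $\mathfrak{u}_{R,1}$ alone suffices, which is exactly the reduction relating Theorems~\ref{thm:finap} and \ref{thm:allap}. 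With that substitution the whole construction goes through for semiproper $\mathcal{L}$ with finite data, yielding a finite $\mathcal{Q}$.

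I expect the only genuine work to be Lemma~\ref{lem:RP} — that the quotient relation $R^P$ stays $1$-Lipschitz for $d^P_R$ — since that is where one must split into the cases $g_2^{-1}g_1\in\Lambda$ versus arguments lying in $M$, handle the mixed case by routing through $M$, and invoke (for arity $\geq 2$) the linearity $\mathfrak{u}_{R,i}(r)=K_{R,i}r$ on $I_{R,i}$ and (for unary $R$, in the proper case) upper semicontinuity. The remaining verifications — that $\partial$ is a pseudo-metric, that $\varphi$ is isometric, that $R^P$ is well defined on a $\Gamma$-invariant domain, and the compatibility of the actions — are routine bookkeeping.
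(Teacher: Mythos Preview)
Your proposal is correct and follows essentially the same approach as the paper: the theorem is a summary of the construction carried out in the preceding subsection, and you have accurately identified the key steps (quotient of $N\times\Gamma$, Lemma~\ref{lem:RP}, conservative extension via Lemma~\ref{lem:gconservative}, $\Gamma$-invariance of the supremum formula) as well as the precise point where upper semicontinuity is replaced by finiteness for the \emph{moreover} clause.
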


Before closing this subsection we note that the above extension properties are in fact equivalent to the semiproperness and the properness of $\mathcal{L}$ respectively. Here again we assume the slightly strong background condition for the continuous signature $\mathcal{L}$.

\begin{theorem}\label{thm:Fraexteq} Let $\mathcal{L}$ be a continuous signature with only finitely many relation symbols, where all the associated moduli of continuity are strictly increasing. The following are equivalent:
\begin{enumerate}
\item[(i)] $\mathcal{L}$ is semiproper.
\item[(ii)] Let $\Gamma$ be a group, $\Lambda\leq \Gamma$ be a subgroup, $\mathcal{M}\subseteq \mathcal{N}$ be finite continuous $\mathcal{L}$-structures, and $\Gamma\actson \mathcal{M}$ and $\Lambda\actson \mathcal{N}$ be compatible actions by automorphisms. Then there is a finite continuous $\mathcal{L}$-structure $\mathcal{Q}$ and an action $\Gamma\actson \mathcal{Q}$ by automorphisms such that $\mathcal{Q}$ is an extension of $\mathcal{N}$, the action $\Gamma\actson \mathcal{Q}$ is compatible with both $\Gamma\actson \mathcal{M}$ and $\Lambda\actson \mathcal{N}$.
\item[(iii)] Let $\Gamma$ be a group, $\Lambda\leq \Gamma$ be a subgroup, $\mathcal{M}\subseteq \mathcal{N}$ be finite continuous $\mathcal{L}$-structures, and $\Gamma\actson \mathcal{M}$ and $\Lambda\actson \mathcal{N}$ be compatible actions by automorphisms. Then there is a continuous $\mathcal{L}$-(pre)-structure $\mathcal{Q}$ and an action $\Gamma\actson \mathcal{Q}$ by automorphisms such that $\mathcal{Q}$ is an extension of $\mathcal{N}$, the action $\Gamma\actson \mathcal{Q}$ is compatible with both $\Gamma\actson \mathcal{M}$ and $\Lambda\actson \mathcal{N}$.
\end{enumerate}
\end{theorem}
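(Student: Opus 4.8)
The plan is to establish the cycle (i)$\Rightarrow$(ii)$\Rightarrow$(iii)$\Rightarrow$(i), the groups in (ii) and (iii) being taken finite as throughout this section. The implication (i)$\Rightarrow$(ii) is exactly the ``moreover'' clause of Theorem~\ref{lem:Fraext}: given semiproper $\mathcal{L}$, finite continuous $\mathcal{L}$-structures $\mathcal{M}\subseteq\mathcal{N}$, a finite group $\Gamma$, a subgroup $\Lambda\leq\Gamma$, and compatible actions by automorphisms, that theorem produces a \emph{finite} continuous $\mathcal{L}$-structure $\mathcal{Q}$ carrying the required $\Gamma$-action. The implication (ii)$\Rightarrow$(iii) is immediate, since a finite continuous $\mathcal{L}$-structure is in particular a continuous $\mathcal{L}$-(pre)-structure.

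For (iii)$\Rightarrow$(i) I would re-run, now in the framework of (iii), the three counterexample constructions from the proof of Theorem~\ref{thm:EPPAeq}. Fix an $n$-ary $R\in\mathcal{L}$ and $1\leq i\leq n$; I want (a) $I_{R,i}$ bounded, (b) $\mathfrak{u}_{R,i}$ superadditive on $I_{R,i}$, and (c) when $n\geq 2$, $\mathfrak{u}_{R,i}$ also subadditive on $I_{R,i}$, hence additive, hence—using monotonicity exactly as at the end of the proof of Theorem~\ref{thm:finap}—linear of the form $\mathfrak{u}_{R,i}(r)=K_{R,i}r$ on $I_{R,i}$. For each of (a), (b), (c) let $\mathcal{N}$ be the finite continuous $\mathcal{L}$-structure built in the corresponding part of the proof of Theorem~\ref{thm:EPPAeq}, with its partial automorphism $p$, which there is a permutation (of order $2$, $2$, $3$ respectively) of a two- or three-element subset $S\subseteq N$, all other symbols of $\mathcal{L}$ being interpreted trivially in $\mathcal{N}$. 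Let $\mathcal{M}_0$ be the substructure of $\mathcal{N}$ with domain $S$; inspecting the defining clauses for $R^{\mathcal{N}}$ shows that $R^{\mathcal{N}}$ is constant on tuples from $S$, so $p$ is in fact an \emph{automorphism} of $\mathcal{M}_0$ (and is visibly an isometry of $S$). Now take $\Gamma=\langle p\rangle$, $\Lambda=\{1\}$, let $\Gamma$ act on $\mathcal{M}_0$ through $p$, and let $\Lambda$ act trivially on $\mathcal{N}$; since $\mathcal{M}_0$ is a substructure of $\mathcal{N}$ and both actions restrict trivially to $\Lambda$, this is a legitimate instance of the hypothesis of (iii) (with $\mathcal{M}_0$ playing the role of $\mathcal{M}$). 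Thus (iii) yields a continuous $\mathcal{L}$-(pre)-structure $\mathcal{Q}$ extending $\mathcal{N}$ and carrying an action $\Gamma\actson\mathcal{Q}$ compatible with $\Gamma\actson\mathcal{M}_0$, so the generator of $\Gamma$ acts on $\mathcal{Q}$ as an automorphism $f$ extending $p$. From here the three derivations of a contradiction in the proof of Theorem~\ref{thm:EPPAeq} apply verbatim, with $\mathcal{Q}$ in place of the EPPA-extension and $f$ in place of its extending automorphism: each uses only that $\mathcal{Q}$ extends $\mathcal{N}$, that $f\in\Aut(\mathcal{Q})$ extends $p$, and that (\ref{eqn:ucl}) holds in $\mathcal{Q}$—none of which requires $\mathcal{Q}$ to be finite or complete. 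Hence $\mathcal{L}$ is semiproper.

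The genuine mathematical difficulties have all been settled already—Theorem~\ref{lem:Fraext} for the forward direction and the counterexamples packaged inside Theorem~\ref{thm:EPPAeq} for the converse—so the remaining work is organizational. The one step deserving care is the reduction just described: one must verify that each of the three counterexample structures of Theorem~\ref{thm:EPPAeq} really does present a valid instance of (iii), i.e.\ that the accompanying partial automorphism restricts to an honest automorphism of the relevant substructure and that the ``compatible actions by automorphisms'' hypothesis can be met with $\Lambda$ trivial, and that none of the contradiction computations covertly used finiteness or completeness of the ambient structure—which a direct check confirms they do not.
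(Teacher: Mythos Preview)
Your proposal is correct and follows essentially the same route as the paper: (i)$\Rightarrow$(ii) via the ``moreover'' clause of Theorem~\ref{lem:Fraext}, (ii)$\Rightarrow$(iii) trivially, and (iii)$\Rightarrow$(i) by recasting the three counterexample structures from the proof of Theorem~\ref{thm:EPPAeq} as instances of (iii) with $\Gamma=\langle p\rangle$ and $\Lambda$ trivial. The paper's own proof is a one-line pointer to exactly these ingredients, and you have correctly unpacked the only nontrivial check---that in each of the three constructions $p$ is a genuine automorphism of the substructure on its support, so that the hypotheses of (iii) are met.
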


\begin{proof} (i)$\Rightarrow$(ii) by Theorem~\ref{lem:Fraext}. (ii)$\Rightarrow$(iii) is obvious. For (iii)$\Rightarrow$(i) we use the same constructions and similar arguments in the proof of
(2)$\Rightarrow$(1) of Theorem~\ref{thm:EPPAeq}.
\end{proof}

\begin{theorem}\label{thm:Fraexteq2} Let $\mathcal{L}$ be a continuous signature with only finitely many relation symbols, where all the associated moduli of continuity are strictly increasing. The following are equivalent:
\begin{enumerate}
\item[(i)] $\mathcal{L}$ is proper.
\item[(ii)] Let $\Gamma$ be a group, $\Lambda\leq \Gamma$ be a subgroup, $\mathcal{M}\subseteq \mathcal{N}$ be continuous $\mathcal{L}$-pre-structures, and $\Gamma\actson \mathcal{M}$ and $\Lambda\actson \mathcal{N}$ be compatible actions by automorphisms. Then there is a continuous $\mathcal{L}$-pre-structure $\mathcal{Q}$ and an action $\Gamma\actson \mathcal{Q}$ by automorphisms such that $\mathcal{Q}$ is an extension of $\mathcal{N}$, the action $\Gamma\actson \mathcal{Q}$ is compatible with both $\Gamma\actson \mathcal{M}$ and $\Lambda\actson \mathcal{N}$.
\end{enumerate}
\end{theorem}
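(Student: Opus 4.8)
The implication (i)$\Rightarrow$(ii) is exactly Theorem~\ref{lem:Fraext}, so I would just cite it. The content is in (ii)$\Rightarrow$(i). Since properness is semiproperness plus upper semicontinuity of $\mathfrak{u}_{R,1}$ on $I_{R,1}$ for every unary $R\in\mathcal{L}$, I would split this. Semiproperness comes for free: if $\mathcal{M}\subseteq\mathcal{N}$ are \emph{finite} continuous $\mathcal{L}$-structures then they are in particular continuous $\mathcal{L}$-pre-structures, so (ii) gives the conclusion of condition (iii) of Theorem~\ref{thm:Fraexteq}, and that theorem then yields that $\mathcal{L}$ is semiproper. So it remains to prove upper semicontinuity, and for this I would argue by contradiction, making the counterexample from the proof of (2)$\Rightarrow$(1) of Theorem~\ref{thm:allap} equivariant with a single involution.

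Suppose $\mathfrak{u}_{R,1}$ fails upper semicontinuity at some $r_0\in I_{R,1}$; as in the proof of Theorem~\ref{thm:allap}, since $\mathfrak{u}_{R,1}$ is nondecreasing and (now) $I_{R,1}$ is bounded, this forces $r_0<\sup I_{R,1}$ and $\mathfrak{u}_{R,1}(r_0)<\inf\{\mathfrak{u}_{R,1}(r):r>r_0\}$, so I may pick $t_0$ with $\mathfrak{u}_{R,1}(r_0)<t_0<\min\{1,\inf\{\mathfrak{u}_{R,1}(r):r>r_0\}\}$. I would take $\Gamma=\{1,g\}\cong\mathbb{Z}/2\mathbb{Z}$ and $\Lambda=\{1\}$. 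Let $\mathcal{M}$ have domain $\{a_k:k\geq 1\}\cup\{b_k:k\geq 1\}$, with $d^M(a_j,a_k)=d^M(b_j,b_k)=|2^{-j}-2^{-k}|$, $d^M(a_j,b_k)=L$ for a fixed constant $L>\sup I_{R,1}$, and all relations identically $0$; let $g$ act on $\mathcal{M}$ by swapping $a_k\leftrightarrow b_k$, which is an automorphism. Extend $\mathcal{M}$ to $\mathcal{N}$ by two new points $u,v$ with $d^N(u,a_k)=r_0+2^{-k}$, $d^N(v,b_k)=2^{-k}$, $d^N(u,b_k)=d^N(v,a_k)=d^N(u,v)=L$, $R^N(u)=t_0$, $R^N(v)=0$, and all other relation values $0$. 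A routine verification (the triangle inequalities being the only slightly delicate point, all of them forced by the two geometric sequences) shows $\mathcal{N}$ is a genuine continuous $\mathcal{L}$-pre-structure: the only nonconstant relation data is $R^N(u)=t_0$, and the only instances of $(\mathrm{UC}_{\mathcal{L}})$ to check reduce to $t_0\leq\mathfrak{u}_{R,1}(s)$ for $s\in\{r_0+2^{-k},L\}$, which holds because every such $s$ exceeds $r_0$ and hence $\mathfrak{u}_{R,1}(s)\geq\inf\{\mathfrak{u}_{R,1}(r):r>r_0\}>t_0$. The trivial $\Lambda$-action on $\mathcal{N}$ is vacuously compatible with $\Gamma\actson\mathcal{M}$.

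Now suppose (ii) held: there is a continuous $\mathcal{L}$-pre-structure $\mathcal{Q}\supseteq\mathcal{N}$ with a $\Gamma$-action by automorphisms restricting to $\Gamma\actson\mathcal{M}$. Since $g$ is an automorphism of $\mathcal{Q}$ with $g\cdot b_k=a_k$, for each $k$ we get $d^Q(a_k,g\cdot v)=d^Q(g\cdot b_k,g\cdot v)=d^N(b_k,v)=2^{-k}$ and $R^Q(g\cdot v)=R^Q(v)=0$, while $d^Q(u,a_k)=d^N(u,a_k)=r_0+2^{-k}$ and $R^Q(u)=t_0$. The triangle inequality in $\mathcal{Q}$ then gives $d^Q(u,g\cdot v)\leq r_0+2^{1-k}$ for all $k$, hence $d^Q(u,g\cdot v)\leq r_0$, and $(\mathrm{UC}_{\mathcal{L}})$ for $R$ in $\mathcal{Q}$ forces $t_0=|R^Q(u)-R^Q(g\cdot v)|\leq\mathfrak{u}_{R,1}(d^Q(u,g\cdot v))\leq\mathfrak{u}_{R,1}(r_0)<t_0$, a contradiction. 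Thus each $\mathfrak{u}_{R,1}$ is upper semicontinuous on $I_{R,1}$ and $\mathcal{L}$ is proper. I expect the main obstacle to be precisely the placement that makes both halves of this work simultaneously: $v$ must sit at the large distance $L$ from every $a_k$ so that $\mathcal{N}$ remains a legitimate pre-structure in spite of $R^N(u)=t_0$, yet the involution $g$ transports $v$ to within $2^{-k}$ of each $a_k$ in $\mathcal{Q}$, so that the forbidden near-coincidence (distance $r_0$ between a point of $R$-value $t_0$ and one of $R$-value $0$) only appears after passing to the equivariant extension.
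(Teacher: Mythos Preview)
Your proof is correct and follows essentially the same approach as the paper's: cite Theorem~\ref{lem:Fraext} for (i)$\Rightarrow$(ii), obtain semiproperness via Theorem~\ref{thm:Fraexteq}, and for upper semicontinuity build a pre-structure carrying a $\mathbb{Z}/2\mathbb{Z}$-involution that swaps two Cauchy-type sequences, add two fresh points with a large $R$-gap, and derive a contradiction from $(\mathrm{UC}_{\mathcal{L}})$ applied to one fresh point and the image of the other under the involution in $\mathcal{Q}$. The paper's construction differs only cosmetically---it places all points on a line (so the two sequences sit at variable distance rather than your fixed $L$) and takes the key relation value to be $\inf\{\mathfrak{u}_{R,1}(r):r>a\}$ rather than a $t_0$ strictly below it---but the mechanism and contradiction are identical.
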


\begin{proof} (i)$\Rightarrow$(ii) by Theorem~\ref{lem:Fraext}. For (ii)$\Rightarrow$(i), we have from the implication (iii)$\Rightarrow$(i) of Theorem~\ref{thm:Fraexteq} that $\mathcal{L}$ is semiproper. It remains to show that for any unary $R\in\mathcal{L}$, $\mathfrak{u}_{R,1}$ is upper semicontinuous. For this, let $a$ be a point of the interior of $I_{R,1}$. Let $\delta>\sup I_{R,1}$. Let $(N, d^N)$ be a metric space consisting of the following points on the real line
$$r_1, r_2, \dots, r_n, \dots, r, y, s_1, s_2,\dots, s_n, \dots $$
where $r-r_n=2^{-n}$, $y-r=\delta$, $s_n-y=a+2^{-n}$. Define $R^N(y)=\inf\{\mathfrak{u}_{R,1}(r)\,:\, r>a\}$ and $R^N(x)=0$ for $x\neq y$. For all other $\tilde{R}\in\mathcal{L}$, define $\tilde{R}^N$ to be identically $0$. It is easy to check that this defines a continuous $\mathcal{L}$-pre-structure. Let $\mathcal{M}$ be the substructure of $\mathcal{N}$ with $M=\{r_1,\dots, r_n, \dots, s_1,\dots, s_n, \dots\}$. Let $\Gamma=\langle g\rangle$ where $g^2=1$, and let $\Gamma\actson \mathcal{M}$ by $g\cdot r_n=s_n$ and $g\cdot s_n=r_n$. Let $\Lambda$ be the trivial group. By (ii) there is a continuous $\mathcal{L}$-pre-structure $\mathcal{Q}$ and an action $\Gamma\actson \mathcal{Q}$ by automorphisms such that $\mathcal{Q}$ is an extension of $\mathcal{N}$ and the action $\Gamma\actson \mathcal{Q}$ is compatible with $\Gamma\actson \mathcal{M}$. Let $s=g\cdot r\in Q$. Then $d^Q(s, s_n)=d^Q(g\cdot r, g\cdot r_n)=d^Q(r, r_n)=d^N(r, r_n)=2^{-n}$.  It follows that $d^Q(y,s)=\lim_n d^Q(y,s_n)=a$. Also $R^Q(s)=R^Q(r)=0$. We have
$$ \inf\{\mathfrak{u}_{R,1}(r)\,:\, r>a\}=|R^Q(y)-R^Q(s)|\leq \mathfrak{u}_{R,1}(d^Q(y, s))=\mathfrak{u}_{R,1}(a).
$$
\end{proof}

\subsection{Finite approximations of actions}

In this section we prove a generalization of a theorem of Rosendal (Theorem 7 of \cite{RosendalI}, also c.f. Theorem 3.3 of \cite{EG} and Theorems 3.7 and 4.6 of \cite{EGLMM}) to continuous $\mathcal{L}$-structures for semiproper $\mathcal{L}$.

In order to state the theorem we need the definition of the HL-property of a group (c.f. \cite{HL} and \cite{EG}; HL stands for Herwig--Lascar).

\begin{definition} Let $G$ be a group.
 \begin{enumerate}
 \item[(i)] Let $H_1, \dots, H_n\leq G$. A {\em left system} of equations on $H_1, \dots, H_n$ is a finite set of equations with variables $x_1,\dots, x_m$ and constants $g_1,\dots, g_\ell\in G$ such that each equation is of the form
     $$ x_iH_j=g_kH_j \mbox{ or } x_iH_j=x_rg_kH_j $$
     where $1\leq i, r\leq m$, $1\leq k\leq \ell$ and $1\leq j\leq n$.
 \item[(ii)] We say that $G$ has the {\em HL-property} if for every finitely generated $H_1,\dots, H_n$ $\leq G$ and left system of equations on $H_1,\dots, H_n$ that does not have a solution, there exist normal subgroups of finite index $N_1, \dots, N_n\unlhd G$ such that the same left system of equations on $N_1H_1, \dots, N_nH_n$ does not have a solution.
 \end{enumerate}
 \end{definition}

 Note that by introducing new variables we may also include equations of the form
 $$ x_ig_kH_j=x_rg_sH_j $$
 in a left system.

\begin{theorem} \label{thm:Rosendal} Let $\mathcal{L}$ be a semiproper continuous signature. Let $\Gamma$ be a countable group with the HL-property. Assume that $\pi: \Gamma\actson \mathcal{M}$ is an action by automorphisms on a continuous $\mathcal{L}$-pre-structure $\mathcal{M}$. Then for any finite $A\subseteq M$ and finite $F\subseteq \Gamma$ there exist a finite continuous $\mathcal{L}$-structure $\mathcal{N}$ and an action by automorphisms $\pi':\Gamma\actson \mathcal{N}$ such that $\mathcal{N}$ is an extension of $\mathcal{A}$ and for all $\gamma\in F$ and $a\in A$, if $\gamma\cdot_{\pi}a\in A$ then
we have $\gamma\cdot_{\pi'}a=\gamma\cdot_{\pi}a$.
\end{theorem}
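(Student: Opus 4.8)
The plan is to follow the strategy of Rosendal's theorem (Theorem 7 of \cite{RosendalI}, c.f. Theorem 3.3 of \cite{EG} and Theorems 3.7, 4.6 of \cite{EGLMM}) and translate the continuous structure $\mathcal{M}$ into a classical relational structure so that the HL-property of $\Gamma$ can be invoked. First I would fix a finite good value pair $(\Delta, V)$ for $\mathcal{L}$ such that the finite substructure $\mathcal{A}$ is $(\Delta, V)$-valued (Lemma~\ref{lem:goodvaluepair}(i)), and then associate to $\mathcal{M}$ a classical relational structure over a signature $\mathcal{L}^*$ consisting of binary symbols $D_p$ (for $p\in\Delta$, encoding $d^M(x,y)=p$) and $n$-ary symbols $R_v$ (for $n$-ary $R\in\mathcal{L}$ and $v\in V$, encoding $R^M(\overline{x})=v$), exactly as in the proof of Theorem~\ref{thm:EPPA}. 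The action $\pi:\Gamma\actson\mathcal{M}$ induces an action of $\Gamma$ on this classical structure by automorphisms. One must be careful that $\mathcal{M}$ is not assumed $(\Delta,V)$-valued globally, only $\mathcal{A}$ is; so the right object is the classical structure whose domain is $A$ (or a finite $\Gamma$-invariant piece built from $A$ and $F$), with the $\Gamma$-action recorded as extra ``colored'' information, or alternatively one passes to the substructure of $\mathcal{M}$ generated by $F\cdot A$ after observing that only finitely many distance and relation values occur there. This reduction is the first key step.

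Next I would apply the classical version of Rosendal's theorem (Theorem 7 of \cite{RosendalI}) to the classical $\mathcal{L}^*$-structure and the HL-property of $\Gamma$: this produces a finite classical $\mathcal{L}^*$-structure $\mathcal{X}$ with an action $\Gamma\actson\mathcal{X}$ by automorphisms, extending the given finite piece, such that $\Gamma$ acts on $\mathcal{X}$ in a way agreeing with $\pi$ on $A$ for elements of $F$, and — crucially — such that $\mathcal{X}$ omits the forbidden configurations. The forbidden configurations are precisely the families $\mathcal{T}$ of finite $\mathcal{L}^*$-structures described in the proof of Theorem~\ref{thm:EPPA}: triangle-inequality violations for the $D_p$'s, uniform-continuity violations mixing $D_p$'s and $R_v$'s (using superadditivity of $\mathfrak{u}_{R,i}$ and the $K_{R,i}r$ form for $n\geq 2$, which is where semiproperness enters), and the functionality conditions forcing at most one $D_p$ and one $R_v$ per tuple. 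Since the original classical structure coming from $\mathcal{M}$ is $\mathcal{T}$-free, the Herwig--Lascar/Rosendal machinery yields a finite $\mathcal{T}$-free extension $\mathcal{X}$ with the $\Gamma$-action. The HL-property is exactly the hypothesis that makes this extension step possible while keeping $\Gamma$ acting; this is where the assumption that $\Gamma$ has the HL-property is used, and the main technical obstacle is to verify that the classical structure built from $\mathcal{M}$ together with the recorded $\Gamma$-action can be fed into the classical theorem in the right form (in particular that the relevant subgroups — stabilizers of points of $A$ — are finitely generated or can be handled, and that $F$ is accommodated).

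Finally I would reconstruct a finite continuous $\mathcal{L}$-structure $\mathcal{N}$ from $\mathcal{X}$, exactly as in the last part of the proof of Theorem~\ref{thm:EPPA}: pass to a $\sim$-equivalence class $Y$ of $\mathcal{X}$ containing $A$ (where $x\sim y$ iff they are linked by a $D_p$-path), define $d^Y(x,y)$ as the infimum of path-sums of the $p$'s, note that $\mathcal{T}$-freeness (condition (a)) guarantees $d^Y$ is a genuine metric extending $d^A$, set $R^Y(\overline{x})=v$ iff $\mathcal{X}\models R_v(\overline{x})$ (well-defined and uniformly continuous by $\mathcal{T}$-freeness, conditions (b) and (c)), and then apply Lemma~\ref{lem:gconservative} to get a conservative extension $\mathcal{N}$ which is a finite continuous $\mathcal{L}$-structure extending $\mathcal{A}$. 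Because $\Gamma$ acts on $\mathcal{X}$ by automorphisms, each $\gamma$ preserves the $\sim$-relation and hence fixes $Y$ setwise, so the action restricts to $\Gamma\actson Y$ by isometries of $(Y,d^Y)$ and by automorphisms of $(R^Y)_{R\in\mathcal{L}}$; and since $R^N$ is defined from $R^Y$ by the canonical formula $R^N(\overline{z})=\max\{0,\sup\{R^Y(\overline{x})-d^Y_R(\overline{x},\overline{z}):\overline{x}\in\dom(R^Y)\}\}$, the $\Gamma$-action extends to automorphisms of $\mathcal{N}$, giving $\pi':\Gamma\actson\mathcal{N}$. The agreement $\gamma\cdot_{\pi'}a=\gamma\cdot_\pi a$ for $\gamma\in F$, $a\in A$ with $\gamma\cdot_\pi a\in A$ follows from the corresponding agreement in $\mathcal{X}$. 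The main obstacle, as noted, is the middle step: correctly packaging $\mathcal{M}$ and $\pi$ so that the classical HL/Rosendal theorem applies and delivers a finite $\mathcal{T}$-free $\Gamma$-extension — the bookkeeping of which subgroups and which finite data of $\pi$ are involved is the delicate point, and semiproperness of $\mathcal{L}$ is exactly what ensures the forbidden family $\mathcal{T}$ is finite and that $\mathcal{T}$-freeness translates back into the uniform continuity conditions $(\mbox{UC}_{\mathcal{L}})$.
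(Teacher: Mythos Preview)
Your plan is natural — mirror the EPPA proof by translating to a classical signature and invoking a black-box Rosendal/Herwig--Lascar theorem — but it diverges from what the paper actually does and runs into trouble in two places.

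First, the paper does \emph{not} invoke a single black-box classical theorem. It explicitly opens up the HL-property: after quantizing the metric on $\mathcal{M}$ to a finite value set $P$ (your proposal does not contain this step, and without it the translated classical signature would be infinite on the orbit $\Gamma\cdot A$), it picks orbit representatives $a_1,\dots,a_m\in A$, defines finitely generated subgroups $H_i\leq\Gamma$, writes down five families (C1)--(C5) of left systems of equations over $H_1,\dots,H_m$ that have no solution in $\Gamma$, and then applies the HL-property directly to obtain a finite-index normal $K\unlhd\Gamma$ such that the same systems over $KH_1,\dots,KH_m$ still have no solution. The finite structure $\mathcal{N}$ is then built on the disjoint union $Y=\Gamma/KH_1\sqcup\cdots\sqcup\Gamma/KH_m$ with $\Gamma$ acting by left multiplication, and Lemma~\ref{lem:gconservative} is applied at the end. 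So the finitely-generated-stabilizer bookkeeping you flag as ``the delicate point'' is exactly the work the paper carries out, and it cannot be skipped. The paper's own remark says it is combining the metric argument of Rosendal with the relational argument of Etedadialiabadi--Gao; there is no off-the-shelf classical statement covering both at once.

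Second, and more concretely, your reconstruction step contains a wrong inference. You write ``each $\gamma$ preserves the $\sim$-relation and hence fixes $Y$ setwise,'' where $Y$ is the $\sim$-class containing $A$. Preserving an equivalence relation only means $\Gamma$ permutes the $\sim$-classes; it does not mean $\Gamma$ fixes any particular class. In Theorem~\ref{thm:EPPA} this worked because each $\psi(\pi)$ extended a nonempty partial automorphism of $\mathcal{M}$, so it sent some point of $M\subseteq Y$ to a point of $M\subseteq Y$ and therefore fixed $Y$. But an arbitrary $\gamma\in\Gamma$ need not map any point of $A$ into $A$, and there is no reason $\gamma\cdot a$ should lie in the $\sim$-class of $a$: their $d^M$-distance need not lie in your finite $\Delta$, so there may be no $D_p$-edge linking them. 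The paper sidesteps this entirely --- its $Y$ is the coset space on which $\Gamma$ acts by left multiplication, so $\Gamma$-invariance is built in and no $\sim$-class is ever extracted. If you want to salvage your route you would first have to quantize the metric on the full orbit $\Gamma\cdot A$ (as the paper does with $d^{\tilde M}$) so that every pair in the orbit is $D_p$-linked for some $p$; at that point you are essentially reproducing the paper's direct construction anyway.
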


The remainder of this subsection is devoted to a proof of Theorem~\ref{thm:Rosendal}. Our strategy is to follow Rosendal's proof of Theorem 7 \cite{RosendalI} (a)$\Rightarrow$(b) for the metric part (also c.f. \cite{EG0}) and then the Etedadialiabadi--Gao's proof of Theorem 3.3 \cite{EG} (i)$\Rightarrow$(ii) for the relations part. Rosendal's proof uses the so-called property (RZ) or RZ-property of the group $\Gamma$, and Etedadialiabadi--Gao's proof uses the HL-property. Since the RZ-property is implied by the HL-property, we will see that the two parts of the proof produces compatible requirements, which can be resolved by invoking the HL-property of $\Gamma$ once. In fact, we remark that the proof can go through with assuming RZ-property only; however, we use the HL-property as the presentation is notationally simpler. Our application of the HL-property will give a finite partially defined continuous $\mathcal{L}$-structure, and we complete the proof by Lemma~\ref{lem:gconservative}.

Suppose a proper continuous signature $\mathcal{L}$, a continuous $\mathcal{L}$-structure $\mathcal{M}$, a group $\Gamma$ with the HL-property, an action by automorphism $\pi: \Gamma\actson \mathcal{M}$, and a finite $A\subseteq M$ are given.

Without loss of generality we may assume that the action $\pi$ is faithful, i.e., for all $\gamma\in\Gamma$ such that $\gamma\neq 1_{\Gamma}$, there is some $x\in M$ such that $\gamma\cdot_{\pi} x\neq x$. In the following, when there is no danger of confusion, we omit the subscript in $\cdot_{\pi}$.

Let
$$ P_A=\{ d^M(x, y)\,:\, x\neq y\in A\} $$
$$ \delta_A=\max\{\mbox{diam}(A), \sup\{I_{R,i}\,:\, R\in\mathcal{L}\mbox{ is unary}, 1\leq i\leq n\}\} $$
and
$$ P=\{\delta_A\}\cup \left(\left\{\displaystyle\sum_{i=1}^m p_i\,:\, p_1,\dots, p_m\in P_A\right\}\cap [0, \delta_A]\right). $$
Since $A$ is finite, $P$ is finite.

We define a continuous $\mathcal{L}$-pre-structure $\tilde{\mathcal{M}}$ as follows. Let $\tilde{M}=M$ and
$$ d^{\tilde{M}}(x, y)=\left\{\begin{array}{ll} \min\{p\in P\,:\, d^M(x, y)\leq p\} & \mbox{ if $d^M(x,y)\leq \delta_A$} \\ \delta_A & \mbox{ otherwise.}\end{array}\right. $$
Then $d^{\tilde{M}}$ is a metric on $M$, and the action $\pi:\Gamma\actson M$ is still an action by isometries on $(M, d^{\tilde{M}})$ (c.f. Lemma 4 of \cite{RosendalI}).  For any $R\in\mathcal{L}$, we define $R^{\tilde{M}}=R^M$. Since $d^M(x,y)\leq d^{\tilde{M}}(x,y)$ for any $x, y\in M$ with $d(x,y)\leq \delta_A$, we have that (\ref{eqn:ucl}) holds for $R^{\tilde{M}}$ for all $R\in\mathcal{L}$. This shows that
$$ \tilde{\mathcal{M}}=(\tilde{M}, d^{\tilde{M}}, (R^{\tilde{M}})_{R\in\mathcal{L}}) $$
is a continuous $\mathcal{L}$-pre-structure.

Note that the action $\pi: \Gamma\actson \tilde{\mathcal{M}}$ is still by automorphism, and that $\mathcal{A}$ is a substructure of $\tilde{\mathcal{M}}$. Thus without loss of generality we may assume $\tilde{\mathcal{M}}=\mathcal{M}$. In particular, we have that for all $x, y\in M$, $d^M(x,y)\in P$.

Also without loss of generality we may assume that for any $x\in M$ there is $a\in A$ and $\gamma\in \Gamma$ with $x=\gamma\cdot a$. In fact, let
$$ M'=\{x\in M\,:\, \exists a\in A\ \exists \gamma\in\Gamma\ x=\gamma\cdot a\}. $$
Then $M'$ is obviously a $\pi$-invariant subset, and hence $\mathcal{M}'$ a $\pi$-invariant substructure with $A\subseteq M'$. Thus without loss of generality we may assume $\mathcal{M}=\mathcal{M}'$.

Fix $a_1, \dots, a_m\in A$ such that $\{\Gamma a_i\,:\, 1\leq i\leq m\}$ form a partition of $M$, where
$$ \Gamma a_i=\{\gamma\cdot a_i\,:\, \gamma\in \Gamma\}. $$

Define $\rho:\Gamma\to \Part(\mathcal{A})$ by letting, for any $\gamma\in\Gamma$ and $a\in A$, $\rho(\gamma)(a)=\gamma\cdot a$ if $\gamma\cdot a\in A$, and $\rho(\gamma)(a)$ is undefined otherwise.

Let $F\subseteq \Gamma$ be finite. Since $\pi$ is faithful, by extending $A$ and $F$ with finitely many elements if necessary, we may assume without loss of generality that
\begin{enumerate}
\item[(i)] $1_{\Gamma}\in F$;
\item[(ii)] $F=F^{-1}$;
\item[(iii)] for all $\gamma\in F$ such that $\gamma\neq 1_{\Gamma}$, there is $a\in A$ such that $a\neq\rho(\gamma)(a)\in A$;
\item[(iv)] for all $\gamma\in \Gamma$ there is some $\eta\in F$ such that $\rho(\gamma)=\rho(\eta)$.
\end{enumerate}

For $1\leq i\leq m$, define
$$ H_i=\{ \gamma_1\cdots\gamma_{\ell}\,:\, \gamma_1,\dots, \gamma_{\ell}\in F\mbox{ and } \rho(\gamma_1)(\cdots(\rho(\gamma_{\ell})(a_i))\cdots)=a_i \}. $$
Since $A$ and $F$ are finite, $H_i$ is a finitely generated subgroup of $\Gamma$. Note that for any $\gamma\in H_i$, we have $\gamma\cdot a_i=a_i$.

Let $X$ be the disjoint union of left-coset spaces
$$ X=\Gamma/H_1\sqcup \cdots \sqcup\Gamma/H_m. $$
Let $\Gamma$ act on $X$ by left multiplication.

We define a pseudometric $d^X$ on $X$ as follows. For $\gamma, \eta\in \Gamma$ and $1\leq i, j\leq m$,
$$ d^X(\gamma H_i, \eta H_j)= d^M(\gamma\cdot a_i, \eta\cdot a_j). $$
Then it is easy to see that $d^X$ is well defined, and the $\Gamma$ action on $X$ is by isometries.

Define $e: A\to X$ by
$$ e(a)=\left\{\begin{array}{ll} H_i & \mbox{ if $a=a_i$} \\ \gamma H_i & \mbox{ if $a\neq a_i$ and $a=\gamma\cdot a_i$ for $\gamma\in F$.}\end{array}\right. $$
To see that $e$ is well defined, first note that for any $a\in A\cap\Gamma a_i$ but $a\neq a_i$, by (iv) there is $\gamma\in F$ such that $\gamma\cdot a_i=a$, and thus $e(a)$ is defined. Next, assume $\gamma, \gamma'\in F$ such that $\gamma\cdot a_i=\gamma'\cdot a_i\in A$.
Then $\gamma^{-1}\in F$ and $\rho(\gamma^{-1})(\rho(\gamma')(a_i))=a_i$, and hence $\gamma^{-1}\gamma'\in H_i$ and $\gamma H_i=\gamma'H_i$.

We claim that $e$ is an isometric embedding from $(A, d^A)$ into $(X, d^X)$. To see this, assume $a=\gamma\cdot a_i\in A$ and $b=\eta\cdot a_j\in A$ for $\gamma, \eta\in F$. Then
$$ d^X(e(a), e(b))=d^X(\gamma H_i, \eta H_j)=d^M(\gamma\cdot a_i, \eta\cdot a_j)=d^A(a, b). $$

For any $n$-ary $R\in \mathcal{L}$, define $R^X$ on
$$ \dom(R^X)=\{(\gamma\gamma_1 H_{i_1}, \dots, \gamma\gamma_n H_{i_n})\,:\, \gamma\in\Gamma,\ \gamma_j\cdot a_{i_j}\in A \mbox{ for all } 1\leq j\leq n\} $$
by
$$ R^X(\gamma\gamma_1 H_{i_1}, \dots, \gamma\gamma_n H_{i_n})=R^M(\gamma_1\cdot a_{i_1}, \dots, \gamma_n\cdot a_{i_n}). $$

We claim that for any $n$-ary $R\in \mathcal{L}$ and $b_1, \dots, b_n\in A$, letting $\gamma_j\in F$ be such that $b_j=\gamma_j\cdot a_{i_j}$ for $1\leq j\leq n$, we have
$$ R^X(\gamma_1 H_{i_1},\dots, \gamma_n H_{i_n})=R^A(b_1,\dots, b_n). $$
This is because
$$ R^X(\gamma_1H_{i_1},\dots, \gamma_nH_{i_n})
= R^M(\gamma_1\cdot a_{i_1},\dots, \gamma_n\cdot a_{i_n})
= R^A(b_1,\dots, b_n).
$$

We claim that the following conditions hold:

\begin{enumerate}
\item[(C1)] For any $\gamma, \eta, \gamma', \eta'\in F$ and $1\leq i, j\leq m$, if $\gamma\cdot a_i, \gamma'\cdot a_{i}, \eta\cdot a_j, \eta'\cdot a_{j}\in A$ and
    $$ d^A(\gamma\cdot a_i, \eta\cdot a_j)\neq d^A(\gamma'\cdot a_{i}, \eta'\cdot a_{j}), $$
    then there does not exist $g\in \Gamma$ such that
    $$ g\gamma H_i=\gamma'H_i \mbox{ and } g\eta H_j=\eta' H_j. $$
\item[(C2)] For any $\gamma, \eta\in F$ and $1\leq i\leq m$, if $\gamma\cdot a_i, \eta\cdot a_i\in A$ and $\gamma\cdot a_i\neq \eta\cdot a_i$, then $\gamma^{-1}\eta\not\in  H_i$, i.e., $\gamma H_i\neq \eta H_i$.

\item[(C3)] For any $\gamma,\eta, \gamma_1, \eta_1, \dots, \gamma_{\ell}, \eta_{\ell}\in F$ and $1\leq i_0, \dots, i_{\ell}\leq m$, if $\gamma\cdot a_{i_0}, \eta\cdot a_{i_{\ell}}\in A$ and for all $1\leq j\leq \ell$, $\gamma_j\cdot a_{i_{j-1}}, \eta_j\cdot a_{i_{j}}\in A$, and
    $$ d^A(\gamma\cdot a_{i_0}, \eta\cdot a_{i_{\ell}})>\displaystyle\sum_{j=1}^{\ell} d^A(\gamma_j\cdot a_{i_{j-1}}, \eta_j\cdot a_{i_j}), $$
    then there do not exist $g_1,\dots, g_{\ell}\in \Gamma$ such that
    $$\begin{array}{rcl}
    \gamma H_{i_0}&=&g_1 \gamma_1 H_{i_0} \\
    g_1 \eta_1 H_{i_1}&=&g_2 \gamma_2 H_{i_1}, \\
    \cdots & &\cdots \\
    g_{\ell-1} \eta_{\ell-1} H_{i_{\ell-1}}& =& g_{\ell} \gamma_{\ell} H_{i_{\ell-1}} \\
     g_{\ell} \eta_{\ell} H_{i_{\ell}}&=& \eta H_{i_{\ell}}.
    \end{array}
    $$
\item[(C4)] For any $n$-ary $R\in\mathcal{L}$, $\gamma_1,\dots, \gamma_n, \eta_1,\dots, \eta_n\in F$ and $1\leq i_1,\dots, i_n\leq m$, if for all $1\leq j\leq n$, $\gamma_j\cdot a_{i_j}, \eta_j\cdot a_{i_j}\in A$ and
    $$ R^A(\gamma_1\cdot a_{i_1},\dots, \gamma_n\cdot a_{i_n})\neq R^A(\eta_1\cdot a_{i_1},\dots, \eta_n\cdot a_{i_n}), $$
    there there does not exist $g\in \Gamma$ such that
    $$\begin{array}{rcl} \gamma_1H_{i_1}&=&g\eta_1H_{i_1} \\
    \cdots & & \cdots \\
    \gamma_n H_{i_n}& =& g\eta_nH_{i_n}.
    \end{array}
    $$
\item[(C5)] For any $n$-ary $R\in\mathcal{L}$ where $n\geq 2$, $$E=\{\epsilon_1,\dots, \epsilon_t\}\subseteq\{1, \dots, n\},$$
    $$\gamma_1, \dots, \gamma_n, \eta_1,\dots, \eta_n\in F,$$
    $$ \gamma_{1,1},\dots, \gamma_{1,\ell_1}, \eta_{1,1},\dots, \eta_{1,\ell_1}\in F, $$
    $$ \cdots\cdots $$
    $$ \gamma_{t,1},\dots, \gamma_{t, \ell_t}, \eta_{t,1}, \dots, \eta_{t,\ell_t}\in F, $$
    $$1\leq i_1, \dots, i_n, j_1,\dots, j_n \leq m,$$
    and
    $$ 1\leq i_{1,0}, i_{1,1}, \dots, i_{1,\ell_1}, \dots, i_{t,0}, i_{t,1},\dots, i_{t, \ell_t}\leq m, $$
    if for all $1\leq k\leq n$, $$\gamma_k\cdot a_{i_k}, \eta_k\cdot a_{j_k}\in A,$$
    for all $1\leq s\leq t$ and $1\leq p\leq \ell_s$,
    $$ i_{s,0}=i_{\epsilon_s}, i_{s, \ell_s}=j_{\epsilon_s}, $$
    $$\gamma_s\cdot a_{i_{\epsilon_s}}, \eta_s\cdot a_{j_{\epsilon_s}}, \gamma_{s, p}\cdot a_{i_{s,p-1}}, \eta_{s, p}\cdot a_{i_{s,p}}\in A,$$
    for all $1\leq s\leq t$,
    $$ C_s=\displaystyle\sum_{p=1}^{\ell_s} d^A(\gamma_{s,p}\cdot a_{i_{s,p-1}}, \eta_{s,p}\cdot a_{i_{s, p}})<\delta_A, $$
    and
    $$\begin{array}{c}|R^A(\gamma_1\cdot a_{i_1},\dots, \gamma_n\cdot a_{i_n})-R^A(\eta_1\cdot a_{j_1},\dots, \eta_n\cdot a_{j_n})|\ \ \ \ \ \  \ \ \ \ \  \ \ \ \ \  \ \\
    \ \ \ \ \ \ \ \ \ \ \ \ \ \ \  \ \ \ \ \ \ \ \ \ \ \ \ \ \ \ \ \ \ >\ \displaystyle\sum_{k=\epsilon_s\in E} K_{R,k}C_s+\displaystyle\sum_{k\not\in E} K_{R,k}\delta_A,
    \end{array}
    $$
    then there do not exist $g, h, g_{1,1}, \dots, g_{1,\ell_1}, \dots, g_{t,1}, \dots, g_{t, \ell_t}\in \Gamma$ such that for all $1\leq s\leq t$,
    $$\begin{array}{rcl}
    g\gamma_{\epsilon_s}H_{i_{s,0}}&=&g_{s,1}\gamma_{s,1}H_{i_{s,0}}, \\ g_{s,1}\eta_{s,1}H_{i_{s,1}}&=&g_{s,2}\gamma_{s,2}H_{i_{s,1}}, \\
     \dots & & \dots \\
    g_{s,\ell_s}\eta_{s, \ell_s}H_{i_{s, \ell_s}}&=&h\eta_{\epsilon_s}H_{i_{s, \ell_s}}.
    \end{array}
    $$

\end{enumerate}

The conditions (C1), (C2) and (C4) are easy to verify. For (C3), let $a=\gamma\cdot a_{i_0}$, $b=\eta\cdot a_{i_{\ell}}$ and for all $1\leq j\leq \ell$, let $b_j=\gamma_j\cdot a_{i_{j-1}}$ and $c_j=\eta_j\cdot a_{i_j}$. Note that all these elements are in $A$. By our assumption, we have
$$ d^A(a, b)>\displaystyle\sum_{j=1}^{\ell} d^A(b_j, c_j). $$
If there were $g_1, \dots, g_{\ell}\in \Gamma$ for which the left system of equations hold, then we have $e(a)=g_1\cdot e(b_1)$, $g_{\ell}\cdot e(c_{\ell})=e(b)$, and for all $1\leq j\leq \ell-1$,
$ g_j\cdot e(c_j)=g_{j+1}\cdot e(b_j)$. Thus
$$ \begin{array}{rcl} d^X(e(a), e(b)) &\leq & \displaystyle\sum_{j=1}^{\ell-1} d^X(g_j\cdot e(b_j), g_j\cdot e(c_j)) \\
&=& \displaystyle\sum_{j=1}^{\ell-1} d^X(e(b_j), e(c_j)) \\
&=& \displaystyle\sum_{j=1}^{\ell-1} d^A(b_j, c_j)<d^A(a, b)=d^X(e(a), e(b)),
\end{array}
$$
a contradiction.

For (C5), assume that all the hypotheses hold but there are
$$g, h, g_{1,1},\dots,g_{1,\ell_1}, \dots, g_{t,1},\dots, g_{t,\ell_1}\in \Gamma$$ such that the displayed left system of equations hold. Then for all $1\leq s\leq t$,
$$\begin{array}{rcl} g\gamma_{\epsilon_s}\cdot a_{i_{s,0}}&=&g_{s,1}\gamma_{s, 1}\cdot a_{i_{s,0}},\\
g_{s,1}\eta_{s,1}\cdot a_{i_{s,1}}& =& g_{s,2}\gamma_{s,2}\cdot a_{i_{s,1}},\\
 \dots & & \dots \\
g_{s,\ell_s}\eta_{s,\ell_s}\cdot a_{i_{s, \ell_s}}& =& h\eta{\epsilon_s}\cdot a_{i_{s, \ell_s}}.
\end{array}
$$
Thus for $k=\epsilon_s\in E$,
$$ d^M(g\gamma_k\cdot a_{i_k}, h\eta_k\cdot a_{j_k})\leq \displaystyle\sum_{p=1}^{\ell_s} d^A(\gamma_{s, p}\cdot a_{i_{s,p-1}}, \eta_{s,p}\cdot a_{i_{s,p}})=C_s<\delta_A. $$
Since $\mathcal{M}$ is a continuous $\mathcal{L}$-pre-structure and the action of $\Gamma$ on $\mathcal{M}$ is by automorphisms, we have
$$ \begin{array}{rcl}
& & |R^A(\gamma_1\cdot a_{i_1},\dots, \gamma_n\cdot a_{i_n})-R^A(\eta_1\cdot a_{j_1},\dots, \eta_n\cdot a_{j_n})| \\
&=& |R^M(g\gamma_1\cdot a_{i_1},\dots, g\gamma_n\cdot a_{i_n})-R^M(h\eta_1\cdot a_{j_1},\dots, h\eta_n\cdot a_{j_n})| \\
&\leq & \displaystyle\sum_{k=1}^n K_{R,k}d^M(g\gamma_k\cdot a_{i_k}, h\eta_k\cdot a_{j_k}) \\
&\leq & \displaystyle\sum_{k=\epsilon_s\in E} K_{R,k}C_s+\sum_{k\not\in E} K_{R,k}\delta_A,
\end{array}
$$
a contradiction.

All of the conditions (C1)--(C5) are of the form which state that certain left systems of equations do not have solutions. Together, they amount to a finite number of such left systems. By the HL-property of $\Gamma$, we obtain normal subgroups of finite index
$K_1, \dots, K_m\unlhd \Gamma$ such that the same left systems of equations on $K_1H_1, \dots, K_mH_m$ still do not have solutions. By taking $K=\bigcap_{1\leq i\leq m} K_i$ we may assume without loss of generality that $K_1=\cdots=K_m=K$. This will simplify our notation and our discussions. The fact that the left systems on $KH_1,\dots, KH_m$ do not have solutions can in turn be expressed as the following conditions.

\begin{enumerate}
\item[(K1)] For any $\gamma, \eta, \gamma', \eta'\in F$ and $1\leq i, j\leq m$, if $\gamma\cdot a_i, \gamma'\cdot a_{i}, \eta\cdot a_j, \eta'\cdot a_{j}\in A$ and
    $$ d^A(\gamma\cdot a_i, \eta\cdot a_j)\neq d^A(\gamma'\cdot a_{i}, \eta'\cdot a_{j}), $$
    then there does not exist $g\in \Gamma$ such that
    $$ g\gamma KH_i=\gamma'KH_i \mbox{ and } g\eta KH_j=\eta' KH_j. $$
\item[(K2)] For any $\gamma, \eta\in F$ and $1\leq i\leq m$, if $\gamma\cdot a_i, \eta\cdot a_i\in A$ and $\gamma\cdot a_i\neq \eta\cdot a_i$, then $\gamma^{-1}\eta\not\in  KH_i$, i.e., $\gamma KH_i\neq \eta KH_i$.

\item[(K3)] For any $\gamma,\eta, \gamma_1, \eta_1, \dots, \gamma_{\ell}, \eta_{\ell}\in F$ and $1\leq i_0, \dots, i_{\ell}\leq m$, if $\gamma\cdot a_{i_0}, \eta\cdot a_{i_{\ell}}\in A$ and for all $1\leq j\leq \ell$, $\gamma_j\cdot a_{i_{j-1}}, \eta_j\cdot a_{i_{j}}\in A$, and
    $$ d^A(\gamma\cdot a_{i_0}, \eta\cdot a_{i_{\ell}})>\displaystyle\sum_{j=1}^{\ell} d^A(\gamma_j\cdot a_{i_{j-1}}, \eta_j\cdot a_{i_j}), $$
    then there do not exist $g_1,\dots, g_{\ell}\in \Gamma$ such that
    $$\begin{array}{rcl}
    \gamma KH_{i_0}&=&g_1 \gamma_1 KH_{i_0} \\
    g_1 \eta_1 KH_{i_1}&=&g_2 \gamma_2 KH_{i_1}, \\
    \cdots & &\cdots \\
    g_{\ell-1} \eta_{\ell-1} KH_{i_{\ell-1}}& =& g_{\ell} \gamma_{\ell} KH_{i_{\ell-1}} \\
     g_{\ell} \eta_{\ell} KH_{i_{\ell}}&=& \eta KH_{i_{\ell}}.
    \end{array}
    $$
\item[(K4)] For any $n$-ary $R\in\mathcal{L}$, $\gamma_1,\dots, \gamma_n, \eta_1,\dots, \eta_n\in F$ and $1\leq i_1,\dots, i_n\leq m$, if for all $1\leq j\leq n$, $\gamma_j\cdot a_{i_j}, \eta_j\cdot a_{i_j}\in A$ and
    $$ R^A(\gamma_1\cdot a_{i_1},\dots, \gamma_n\cdot a_{i_n})\neq R^A(\eta_1\cdot a_{i_1},\dots, \eta_n\cdot a_{i_n}), $$
    there there does not exist $g\in \Gamma$ such that
    $$\begin{array}{rcl} \gamma_1KH_{i_1}&=&g\eta_1KH_{i_1} \\
    \cdots & & \cdots \\
    \gamma_n KH_{i_n}& =& g\eta_nKH_{i_n}.
    \end{array}
    $$
\item[(K5)] For any $n$-ary $R\in\mathcal{L}$ where $n\geq 2$, $$E=\{\epsilon_1,\dots, \epsilon_t\}\subseteq\{1, \dots, n\},$$
    $$\gamma_1, \dots, \gamma_n, \eta_1,\dots, \eta_n\in F,$$
    $$ \gamma_{1,1},\dots, \gamma_{1,\ell_1}, \eta_{1,1},\dots, \eta_{1,\ell_1}\in F, $$
    $$ \cdots\cdots $$
    $$ \gamma_{t,1},\dots, \gamma_{t, \ell_t}, \eta_{t,1}, \dots, \eta_{t,\ell_t}\in F, $$
    $$1\leq i_1, \dots, i_n, j_1,\dots, j_n \leq m,$$
    and
    $$ 1\leq i_{1,0}, i_{1,1}, \dots, i_{1,\ell_1}, \dots, i_{t,0}, i_{t,1},\dots, i_{t, \ell_t}\leq m, $$
    if for all $1\leq k\leq n$, $$\gamma_k\cdot a_{i_k}, \eta_k\cdot a_{j_k}\in A,$$
    for all $1\leq s\leq t$ and $1\leq p\leq \ell_s$,
    $$ i_{s,0}=i_{\epsilon_s}, i_{s, \ell_s}=j_{\epsilon_s}, $$
    $$\gamma_s\cdot a_{i_{\epsilon_s}}, \eta_s\cdot a_{j_{\epsilon_s}}, \gamma_{s, p}\cdot a_{i_{s,p-1}}, \eta_{s, p}\cdot a_{i_{s,p}}\in A,$$
    for all $1\leq s\leq t$,
    $$ C_s=\displaystyle\sum_{p=1}^{\ell_s} d^A(\gamma_{s,p}\cdot a_{i_{s,p-1}}, \eta_{s,p}\cdot a_{i_{s, p}})<\delta_A, $$
    and
    $$\begin{array}{c}|R^A(\gamma_1\cdot a_{i_1},\dots, \gamma_n\cdot a_{i_n})-R^A(\eta_1\cdot a_{j_1},\dots, \eta_n\cdot a_{j_n})|\ \ \ \ \ \  \ \ \ \ \  \ \ \ \ \  \ \\
    \ \ \ \ \ \ \ \ \ \ \ \ \ \ \  \ \ \ \ \ \ \ \ \ \ \ \ \ \ \ \ \ \ >\ \displaystyle\sum_{k=\epsilon_s\in E} K_{R,k}C_s+\displaystyle\sum_{k\not\in E} K_{R,k}\delta_A,
    \end{array}
    $$
    then there do not exist $g, h, g_{1,1}, \dots, g_{1,\ell_1}, \dots, g_{t,1}, \dots, g_{t, \ell_t}\in \Gamma$ such that for all $1\leq s\leq t$,
    $$\begin{array}{rcl}
    g\gamma_{\epsilon_s}KH_{i_{s,0}}&=&g_{s,1}\gamma_{s,1}KH_{i_{s,0}}, \\ g_{s,1}\eta_{s,1}KH_{i_{s,1}}&=&g_{s,2}\gamma_{s,2}KH_{i_{s,1}}, \\
     \dots & & \dots \\
    g_{s,\ell_s}\eta_{s, \ell_s}KH_{i_{s, \ell_s}}&=&h\eta_{\epsilon_s}KH_{i_{s, \ell_s}}.
    \end{array}
    $$
\end{enumerate}

Let $Y$ be the disjoint union of the left-coset spaces
$$ Y=\Gamma/KH_1\sqcup\cdots \sqcup\Gamma/KH_m. $$
Let $\Gamma$ act on $Y$ by left multiplication.

We define a metric $d^Y$ on $Y$ as follows. For $\gamma, \eta\in \Gamma$ and $1\leq i, j\leq m$,
$$d^Y(\gamma KH_i, \eta KH_j)=\min\bigl\{\delta_A, \inf\{ p_1+\cdots+p_{\ell}\,:\, (p_1,\dots, p_\ell)\in S_{\gamma, \eta, i, j}\}\bigr\}, $$
where $(p_1, \dots, p_\ell)\in S_{\gamma, \eta, i, j}$ if there are $$g_1, \dots, g_\ell \in \Gamma, $$
$$ \gamma_1, \eta_1, \dots, \gamma_{\ell}, \eta_{\ell}\in F, $$
and
$$ 1\leq i_0, \dots, i_\ell\leq m $$
such that
$$ i=i_0, j=i_{\ell}, $$
for all $1\leq k\leq \ell$,
$$ \gamma_k\cdot a_{i_{k-1}}, \eta_k\cdot a_{i_k}\in A, $$
$$ p_k=d^A(\gamma_k\cdot a_{i_{k-1}}, \eta_k\cdot a_{i_k}), $$
and
$$\begin{array}{rcl}
    \gamma KH_{i_0}&=&g_1 \gamma_1 KH_{i_0} \\
    g_1 \eta_1 KH_{i_1}&=&g_2 \gamma_2 KH_{i_1}, \\
    \cdots & &\cdots \\
    g_{\ell-1} \eta_{\ell-1} KH_{i_{\ell-1}}& =& g_{\ell} \gamma_{\ell} KH_{i_{\ell-1}} \\
     g_{\ell} \eta_{\ell} KH_{i_{\ell}}&=& \eta KH_{i_{\ell}}.
    \end{array}
    $$

It is clear that $d^Y$ is invariant under the action of $\Gamma$.

\begin{lemma}\label{lem:dN} $d^Y$ is a metric on $Y$.
\end{lemma}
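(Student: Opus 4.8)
The plan is to check the four requirements for $d^Y$ to be a metric, namely $d^Y\geq 0$, reflexivity, symmetry, and the triangle inequality, noting first that $d^Y$ is well defined because each set $S_{\gamma,\eta,i,j}$ depends only on the cosets $\gamma KH_i$ and $\eta KH_j$. Nonnegativity is immediate, since every $p_k\geq 0$ and $\delta_A\geq 0$. For $d^Y(\gamma KH_i,\gamma KH_i)=0$ I would exhibit the one-step chain $\ell=1$, $i_0=i_1=i$, $\gamma_1=\eta_1=1_\Gamma\in F$, $g_1=\gamma$: then $p_1=d^A(a_i,a_i)=0$ and both coset equations read $\gamma KH_i=\gamma KH_i$.

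For symmetry, given a chain $(g_k,\gamma_k,\eta_k,i_k)_k$ witnessing $(p_1,\dots,p_\ell)\in S_{\gamma,\eta,i,j}$, I reverse it: set $i'_k=i_{\ell-k}$, $\gamma'_k=\eta_{\ell-k+1}$, $\eta'_k=\gamma_{\ell-k+1}$ (all still in $F$), $g'_k=g_{\ell-k+1}$; reading the original list of coset equations from the bottom up yields a valid chain in $S_{\eta,\gamma,j,i}$ with $p'_k=p_{\ell-k+1}$, hence the same total, so $d^Y(\gamma KH_i,\eta KH_j)=d^Y(\eta KH_j,\gamma KH_i)$. For the triangle inequality I concatenate: given near-optimal chains for $(\gamma KH_i,\eta KH_j)$ of total $s_1$ and for $(\eta KH_j,\theta KH_k)$ of total $s_2$, their juxtaposition is a chain for $(\gamma KH_i,\theta KH_k)$ of total $s_1+s_2$ — the last equation $g_\ell\eta_\ell KH_j=\eta KH_j$ of the first chain and the first equation $\eta KH_j=g'_1\gamma'_1 KH_j$ of the second combine into the required middle-type equation $g_\ell\eta_\ell KH_j=g'_1\gamma'_1 KH_j$. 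Combining with $d^Y\leq\delta_A$ and letting the two chains approach optimality gives $d^Y(\gamma KH_i,\theta KH_k)\leq d^Y(\gamma KH_i,\eta KH_j)+d^Y(\eta KH_j,\theta KH_k)$; the case where one summand equals $\delta_A$ is trivial, since then the right-hand side is already $\geq\delta_A\geq d^Y$.

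The substantive point — and the step I expect to be the main obstacle — is positivity: $d^Y(\gamma KH_i,\eta KH_j)=0$ must force $\gamma KH_i=\eta KH_j$. Assuming (as we may, the case $|M|=1$ being trivial) that $\delta_A>0$, this means $\inf_{S_{\gamma,\eta,i,j}}(p_1+\cdots+p_\ell)=0$. Here finiteness of $A$ is decisive: every $p_k$ lies in the finite set $\{d^A(a,b):a,b\in A\}$, whose smallest positive element $\mu_0$ is strictly positive, so some witnessing chain has total $<\mu_0$ and hence $p_k=0$ for all $k$. Then $\gamma_k\cdot a_{i_{k-1}}=\eta_k\cdot a_{i_k}$; since $\{\Gamma a_1,\dots,\Gamma a_m\}$ partitions $M$ this forces $i_{k-1}=i_k$, so all $i_k$ equal a common index $i=j$, with $\gamma_k\cdot a_i=\eta_k\cdot a_i\in A$ for each $k$.

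Finally, to turn these point-equalities into coset-equalities I would use $F=F^{-1}$ and the definition of $H_i$: the element $\eta_k^{-1}\gamma_k$ is the two-term product $\eta_k^{-1}\cdot\gamma_k$ of elements of $F$, and $\rho(\eta_k^{-1})(\rho(\gamma_k)(a_i))=\eta_k^{-1}\cdot(\gamma_k\cdot a_i)=\eta_k^{-1}\cdot(\eta_k\cdot a_i)=a_i$ (all intermediate values lying in $A$), so $\eta_k^{-1}\gamma_k\in H_i$ and $\gamma_k KH_i=\eta_k KH_i$. Substituting these equalities into the chain of coset equations collapses it to $\gamma KH_i=g_1\gamma_1 KH_i=g_1\eta_1 KH_i=g_2\gamma_2 KH_i=\cdots=g_\ell\eta_\ell KH_i=\eta KH_i$, which is the desired conclusion. (Conditions (K1)--(K5) are not needed for this lemma; they enter later, e.g. via (K3), to identify $d^Y$ on the image of $A$ with $d^A$.)
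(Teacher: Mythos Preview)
Your proof is correct and follows essentially the same approach as the paper's: concatenation of chains for the triangle inequality, and for positivity the finiteness of the value set $\{d^A(a,b):a,b\in A\}$ to reduce to an all-zero chain, then the partition $\{\Gamma a_i\}$ to force $i_{k-1}=i_k$, and finally the collapse via $\gamma_k KH_i=\eta_k KH_i$. You are more explicit than the paper about symmetry, reflexivity, well-definedness, and the verification that $\eta_k^{-1}\gamma_k\in H_i$ (via $F=F^{-1}$), all of which the paper leaves implicit; your closing remark that (K1)--(K5) are not needed here is also accurate.
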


\begin{proof} We first verify that if $i\neq j$ or ($i=j$ but $\gamma^{-1}\eta\not\in KH_i$) then
$$d^Y(\gamma KH_i, \eta KH_j)>0.$$
If $S_{\gamma, \eta, i, j}=\varnothing$ then $d^Y(\gamma KH_i, \eta KH_j)=\delta_A>0$. So suppose $S_{\gamma, \eta, i, j}\neq \varnothing$. Note that, however, since $F$ and $A$ are finite, the number of elements $(p_1, \dots, p_{\ell})\in S_{\gamma, \eta, i, j}$ such that $p_1, \dots, p_\ell\neq 0$ and $p_1+\cdots+p_{\ell}\leq \delta_A$ is finite. Now suppose $i\neq j$. Then since $\Gamma a_i\cap \Gamma a_j=\varnothing$, for any $(p_1, \dots, p_\ell)\in S_{\gamma, \eta, i, j}$, $p_1+\cdots+p_{\ell}>0$ because some term is positive. Thus $d^Y(\gamma KH_i, \eta KH_j)>0$. Next suppose $i=j$ but $\gamma^{-1}\eta\not\in KH_i$. In this case if some $i_k\neq i$ for $1\leq k\leq\ell$, then by the same argument $p_1+\dots+p_\ell>0$. Otherwise, $i_k=i$ for all $1\leq k\leq \ell$. In this case
$$ p_1+\cdots+p_{\ell}=\displaystyle\sum_{j=1}^{\ell}d^A(\gamma_j\cdot a_i, \eta_j\cdot a_i). $$
Again, if some of them are positive, then we are done. If all of them are zero, then $\gamma_j^{-1}\eta_j\in H_i$ for all $1\leq j\leq \ell$. In particular $\gamma_j KH_i=\eta_j KH_i$ for all $1\leq j\leq \ell$. By the defining relation for $(p_1,\dots, p_\ell)\in S_{\gamma,\eta, i, j}$ we get
$$ \begin{array}{rcl}
    \gamma KH_{i}&=&g_1 \gamma_1 KH_{i} \\
    g_1 \eta_1 KH_{i}&=&g_2 \gamma_2 KH_{i}, \\
    \cdots & &\cdots \\
    g_{\ell-1} \eta_{\ell-1} KH_{i}& =& g_{\ell} \gamma_{\ell} KH_{i} \\
     g_{\ell} \eta_{\ell} KH_{i}&=& \eta KH_{i}.
    \end{array}
    $$
But now all the terms of these equations are equal. So we get
$\gamma KH_i=\eta KH_i$.

Next we verify the triangle inequality. Suppose
$$ d^Y(\gamma KH_i, \eta KH_j)=p_1+\cdots+p_{\ell} $$
and
$$ d^Y(\eta KH_j, \mu KH_k)=q_1+\cdots+q_{t} $$
for $(p_1,\dots, p_\ell)\in S_{\gamma, \eta, i, j}$ and $(q_1, \dots, q_t)\in S_{\eta, \mu, j, k}$. Concatenating the defining relations for $(p_1,\dots, p_\ell)$ and $(q_1,\dots, q_t)$, we obtain a left system for $(p_1,\dots, p_{\ell}, q_1,\dots, q_t)\in S_{\gamma, \mu, i, k}$. Thus $$d^Y(\gamma KH_i, \mu KH_k)\leq d^Y(\gamma KH_i, \eta KH_j)+d^Y(\eta KH_j, \mu KH_k). $$
\end{proof}

\begin{lemma}\label{lem:dYdA} For any $\gamma, \eta\in F$ and $1\leq i,j\leq m$, if $\gamma\cdot a_i, \eta\cdot a_j\in A$, then
$$ d^A(\gamma\cdot a_i,\eta\cdot a_j)= d^Y(\gamma KH_i, \eta KH_j). $$
\end{lemma}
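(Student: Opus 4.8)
The plan is to prove the two inequalities $d^Y(\gamma KH_i,\eta KH_j)\le d^A(\gamma\cdot a_i,\eta\cdot a_j)$ and $d^A(\gamma\cdot a_i,\eta\cdot a_j)\le d^Y(\gamma KH_i,\eta KH_j)$ separately. For the first, I would exhibit a one-step witness in the set $S_{\gamma,\eta,i,j}$: take $\ell=1$, $i_0=i$, $i_1=j$, $\gamma_1=\gamma$, $\eta_1=\eta$ and $g_1=1_\Gamma$. Then the chain of left-coset equations defining membership in $S_{\gamma,\eta,i,j}$ collapses to the tautologies $\gamma KH_i=\gamma KH_i$ and $\eta KH_j=\eta KH_j$, while $\gamma_1\cdot a_{i_0}=\gamma\cdot a_i\in A$ and $\eta_1\cdot a_{i_1}=\eta\cdot a_j\in A$ by hypothesis; hence the one-entry tuple $\bigl(d^A(\gamma\cdot a_i,\eta\cdot a_j)\bigr)$ lies in $S_{\gamma,\eta,i,j}$. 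Since moreover $d^A(\gamma\cdot a_i,\eta\cdot a_j)\le\diam(A)\le\delta_A$, the infimum in the definition of $d^Y(\gamma KH_i,\eta KH_j)$, and hence $d^Y(\gamma KH_i,\eta KH_j)$ itself (after the $\min$ with $\delta_A$), is at most $d^A(\gamma\cdot a_i,\eta\cdot a_j)$.

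For the reverse inequality I would argue by contradiction using condition (K3). If $d^Y(\gamma KH_i,\eta KH_j)=\delta_A$ there is nothing to prove, since $d^A(\gamma\cdot a_i,\eta\cdot a_j)\le\diam(A)\le\delta_A$. Otherwise $d^Y(\gamma KH_i,\eta KH_j)<\delta_A$ equals the infimum of the admissible sums, so for any $\epsilon>0$ there is a tuple $(p_1,\dots,p_\ell)\in S_{\gamma,\eta,i,j}$ with $\sum_k p_k<d^Y(\gamma KH_i,\eta KH_j)+\epsilon$, together with its witnessing data: elements $g_1,\dots,g_\ell\in\Gamma$, $\gamma_1,\eta_1,\dots,\gamma_\ell,\eta_\ell\in F$, and indices $1\le i_0,\dots,i_\ell\le m$ with $i_0=i$, $i_\ell=j$, with $\gamma_k\cdot a_{i_{k-1}},\eta_k\cdot a_{i_k}\in A$ and $p_k=d^A(\gamma_k\cdot a_{i_{k-1}},\eta_k\cdot a_{i_k})$, satisfying the defining chain of left-coset equations. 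Were $d^A(\gamma\cdot a_i,\eta\cdot a_j)>\sum_k p_k$, then all the hypotheses of (K3) would hold for $\gamma,\eta$ and this data, so (K3) would assert that no such $g_1,\dots,g_\ell$ can exist — contradicting the witness just chosen. Hence $d^A(\gamma\cdot a_i,\eta\cdot a_j)\le\sum_k p_k<d^Y(\gamma KH_i,\eta KH_j)+\epsilon$, and letting $\epsilon\to 0$ yields $d^A(\gamma\cdot a_i,\eta\cdot a_j)\le d^Y(\gamma KH_i,\eta KH_j)$. (One could equally take $\epsilon=0$, since the infimum is attained by the finiteness observation in the proof of Lemma~\ref{lem:dN}.)

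I do not expect a genuine obstacle here: the content of the lemma is essentially that (K3) is exactly the right condition to prevent the quotient metric $d^Y$ from shrinking distances within the image of $e$. The only point requiring care is the bookkeeping, namely checking that the quantifier pattern and index structure appearing in the definition of $S_{\gamma,\eta,i,j}$ match those in the statement of (K3) verbatim, so that (K3) can be applied with $\gamma,\eta$ playing the role of the outer constants; once that match is made explicit, the proof is a direct unwinding of the definitions together with a single appeal to (K3).
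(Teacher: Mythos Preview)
Your proposal is correct and follows essentially the same approach as the paper: the trivial one-step witness gives $d^Y\le d^A$, and an application of (K3) (in contrapositive form) gives $d^A\le d^Y$. The paper's proof is slightly terser---it uses directly that the infimum is attained (as you note is justified by the finiteness argument in Lemma~\ref{lem:dN}) rather than passing through an $\epsilon$-approximation---but the substance is identical.
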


\begin{proof} Suppose
$$ d^Y(\gamma KH_i,\eta KH_j)=p_1+\cdots+p_{\ell} $$
where $(p_1,\dots, p_\ell)\in S_{\gamma, \eta, i, j}$ is witnessed by
$g_1,\dots, g_\ell\in \Gamma$, $\gamma_1,\eta_1,\dots, \gamma_\ell, \eta_\ell\in F$ and $1\leq i_0, \dots, i_\ell\leq m$. Then by the defining left system of equations for $(p_1,\dots, p_\ell)\in S_{\gamma, \eta, i,j}$ and (K3), we have
$$ d^A(\gamma\cdot a_i, \eta\cdot a_j)\leq p_1+\dots+p_\ell=d^Y(\gamma KH_i, \eta KH_j). $$
Conversely, from the trivial system of equations
$$\begin{array}{c} \gamma KH_i=\gamma KH_i \\ \eta KH_j=\eta KH_j \end{array}
$$
we get that
$$ d^Y(\gamma KH_i, \eta KH_j)\leq d^A(\gamma\cdot a_i, \eta\cdot a_j). $$
\end{proof}

Next we define $R^Y$ for an $n$-ary $R\in\mathcal{L}$ on
$$ \dom(R^Y)=\{(\gamma\gamma_1KH_{i_1}, \dots, \gamma\gamma_nH_{i_n})\,:\, \gamma\in \Gamma, \gamma_j\in F, \gamma_j\cdot a_{i_j}\in A \mbox{ for all $1\leq j\leq n$}\} $$
by
$$ R^Y(\gamma\gamma_1KH_{i_1}, \dots, \gamma\gamma_nH_{i_n})=R^A(\gamma_1\cdot a_{i_1},\dots, \gamma_n\cdot a_{i_n}). $$
By (K4), this is well defined. It is clear that $\dom(R^Y)$ and $R^Y$ are invariant under the action of $\Gamma$.

\begin{lemma} $\mathcal{Y}=(Y, d^Y, (R^Y)_{R\in\mathcal{L}})$ is a partially defined continuous $\mathcal{L}$-pre-structure.
\end{lemma}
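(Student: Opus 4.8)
The plan is to verify the only nontrivial clause in the definition of a partially defined continuous $\mathcal{L}$-pre-structure: that for every $n$-ary $R\in\mathcal{L}$ the function $R^Y$ on $\dom(R^Y)$ is $1$-Lipschitz with respect to $d^Y_R$; the metric axioms for $d^Y$ are Lemma~\ref{lem:dN}, and the well-definedness of $R^Y$ has already been noted via (K4). As in the forward direction of Lemma~\ref{lem:1L} it suffices to prove $|R^Y(\overline{u})-R^Y(\overline{v})|\le d^Y_R(\overline{u},\overline{v})$ for all $\overline{u},\overline{v}\in\dom(R^Y)$, and I would treat the unary case and the case $n\ge 2$ separately, since they use different parts of semiproperness.

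For unary $R$ one has $\dom(R^Y)=Y$, and taking the trivial choice $\gamma_1=1_\Gamma$ in the definition of $R^Y$ (legitimate since $1_\Gamma\in F$ and $a_i\in A$) shows $R^Y(gKH_i)=R^A(a_i)=R^M(a_i)$ for all $g\in\Gamma$ and $1\le i\le m$. Hence it is enough to prove $|R^M(a_i)-R^M(a_j)|\le\mathfrak{u}_{R,1}(d^Y(\gamma KH_i,\eta KH_j))$; chaining along a path and using the triangle inequality for $|R^Y(\cdot)-R^Y(\cdot)|$ then upgrades this to the $d^Y_{\mathfrak{u}_{R,1}}$-Lipschitz bound. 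If $d^Y(\gamma KH_i,\eta KH_j)=\delta_A$ this is immediate, since $|R^M(a_i)-R^M(a_j)|\le 1\le\mathfrak{u}_{R,1}(\delta_A)$ by the choice of $\delta_A$; otherwise the infimum defining $d^Y$ is attained — the set of paths with positive steps and total length $\le\delta_A$ is finite, as noted in the proof of Lemma~\ref{lem:dN} — and on such a path with steps $p_1,\dots,p_\ell$, applying (\ref{eqn:ucl}) in $\mathcal{M}$ to each step after translating its endpoints into the orbits $\Gamma a_{i_{k-1}},\Gamma a_{i_k}$ (here one uses that $\Gamma\actson\mathcal{M}$ is by automorphisms), together with the superadditivity of $\mathfrak{u}_{R,1}$ on $I_{R,1}$ and the trivial estimate whenever a partial sum leaves $I_{R,1}$, yields $|R^M(a_i)-R^M(a_j)|\le\sum_k\mathfrak{u}_{R,1}(p_k)\le\mathfrak{u}_{R,1}\bigl(\sum_k p_k\bigr)$.

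For $n$-ary $R$ with $n\ge 2$, condition (K5) carries the argument, in the role that Lemma~\ref{lem:preEM} plays for one-point extensions. Given $\overline{u},\overline{v}\in\dom(R^Y)$, write $\overline{u}=(\gamma\gamma_1KH_{i_1},\dots,\gamma\gamma_nKH_{i_n})$ and $\overline{v}=(\eta\eta_1KH_{j_1},\dots,\eta\eta_nKH_{j_n})$ with a common global $\gamma$ and a common global $\eta$, so that $R^Y(\overline{u})=R^A(\gamma_1\cdot a_{i_1},\dots,\gamma_n\cdot a_{i_n})$ and similarly for $\overline{v}$. For each coordinate $k$ with $d^Y(u_k,v_k)<\delta_A$ I would extract a finite left system witnessing $d^Y(u_k,v_k)$ as a sum of $d^A$-values (again attained, as $Y$ is finite), and concatenate all of these into one left system of exactly the shape occurring in (K5), with $g=\gamma$, $h=\eta$, and $E=\{k:d^Y(u_k,v_k)<\delta_A\}$. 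Since we have exhibited a solution, the conclusion of (K5) is violated, so its hypothesis must fail; having arranged every part of that hypothesis except the displayed strict inequality, we conclude the latter fails, i.e. $|R^Y(\overline{u})-R^Y(\overline{v})|\le\sum_{k\in E}K_{R,k}\,d^Y(u_k,v_k)+\sum_{k\notin E}K_{R,k}\delta_A=\sum_k K_{R,k}\,d^Y(u_k,v_k)$. Since $\mathfrak{u}_{R,k}(r)=K_{R,k}r$ on $I_{R,k}$, this last quantity equals $d^Y_R(\overline{u},\overline{v})$ — any step of size at least $\sup I_{R,k}$ already makes the corresponding $\mathfrak{u}_{R,k}$-sum at least $1\ge|R^Y(\overline{u})-R^Y(\overline{v})|$, so such steps cost nothing.

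I expect the main obstacle to be the bookkeeping in the case $n\ge 2$: fitting the coordinate-wise path systems for $u_k\to v_k$ into the precise combinatorial template of (K5) — in particular arranging that they all start from $\gamma$ and end at $\eta$ — and reconciling $\sum_k K_{R,k}\,d^Y(u_k,v_k)$ with the path-pseudometric $d^Y_R$ in the boundary cases where some $d^Y$-distance equals $\delta_A$ or leaves some $I_{R,k}$. I would also record that upper semicontinuity of the moduli is never invoked here (every infimum in sight is over a finite set, hence attained), so the same proof gives the lemma for semiproper $\mathcal{L}$, as needed for Theorem~\ref{thm:Rosendal}.
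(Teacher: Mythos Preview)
Your proposal is correct and follows essentially the same route as the paper: split into the unary and $n\ge2$ cases, in the unary case telescope along a realizing $d^Y$-path using $\Gamma$-invariance of $R^M$ and superadditivity of $\mathfrak{u}_{R,1}$, and in the $n\ge2$ case assemble the coordinate-wise path data into exactly the template of (K5) with $g=\gamma$, $h=\eta$ to force the inequality. The one cosmetic difference is that the paper states the target as $\sum_k K_{R,k}\,d^Y(u_k,v_k)$ and applies (K5) directly, whereas you phrase it contrapositively (``a solution exists, so the hypothesis of (K5) fails''); these are the same argument, and your remark that the passage from $\sum_k K_{R,k}\,d^Y(u_k,v_k)$ to the genuine $d^Y_R$-bound is handled by the trivial estimate $|R^Y(\overline{u})-R^Y(\overline{v})|\le 1$ whenever some step leaves $I_{R,k}$ is a point the paper leaves implicit.
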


\begin{proof} Suppose first $R$ is unary. We verify (\ref{eqn:ucl}) for $R$. Let $\gamma, \eta\in \Gamma$ and $1\leq i,j\leq m$. Assume
$$ d^Y(\gamma KH_i, \eta KH_j)=p_1+\cdots+p_\ell $$
where $(p_1,\dots, p_\ell)\in S_{\gamma, \eta, i, j}$ is witnessed by $g_1, \dots, g_\ell\in \Gamma$, $\gamma_1,\eta_1,\dots, \gamma_\ell, \eta_\ell\in F$, and $1\leq i_0, \dots, i_{\ell}\leq m$. For $1\leq k\leq \ell$, let $b_k=\gamma_k\cdot a_{i_{k-1}}\in A$ and $c_k=\eta_k\cdot a_{i_k}\in A$. Then
$$R^Y(\gamma KH_i)=R^A(a_i)=R^A(\gamma_1\cdot a_i)=R^A(b_1), $$
$$ R^Y(\eta KH_j)=R^A(a_j)=R^A(\eta_\ell\cdot a_j)=R^A(c_\ell),$$
and
$$d^Y(\gamma KH_i, \eta KH_j)= p_1+\cdots+p_\ell =\displaystyle\sum_{k=1}^{\ell} d^A(b_k, c_k). $$
We may assume $d^Y(\gamma KH_i, \eta KH_j)\in I_{R,1}$; otherwise the desired inequality trivially holds. Now by the defining equations of $S_{\gamma, \eta, i, j}$, we have
$$\begin{array}{rcl}
|R^Y(\gamma KH_i)-R^Y(\eta KH_j)|
&\leq & \displaystyle\sum_{k=1}^{\ell} |R^Y(\gamma_kKH_{i_{k-1}})-R^Y(\eta_kKH_{i_k})| \\
&= & \displaystyle\sum_{k=1}^{\ell} |R^A(b_k)-R^A(c_k)| \\
&\leq& \displaystyle\sum_{k=1}^{\ell} \mathfrak{u}_{R,1}(d^A(b_k, c_k)) \\
&\leq& \mathfrak{u}_{R,1}(d^Y(\gamma KH_i, \eta KH_j)).
\end{array}
$$
Now it follows that
$$ |R^Y(\gamma KH_i)-R^Y(\eta KH_j)|\leq d_R^Y(\gamma KH_i, \eta KH_j). $$

Next suppose $R$ is $n$-ary for $n\geq 2$. Let $\gamma, \eta\in\Gamma$, $\gamma_1, \dots, \gamma_n, \eta_1, \dots, \eta_n\in F$ and $1\leq i_1,\dots, i_n, j_1, \dots, j_n\leq m$. Suppose $\gamma_k\cdot a_{i_k}, \eta_k\cdot a_{j_k}\in A$ for all $1\leq k\leq n$. We need to show that
$$\begin{array}{c} |R^Y(\gamma\gamma_1KH_{i_1}, \dots, \gamma\gamma_nKH_{i_n})-R^Y(\eta\eta_1KH_{j_1},\dots, \eta\eta_nKH_{j_n})|
 \ \ \ \ \ \ \ \ \ \ \ \  \ \ \\
\ \ \ \ \ \  \ \ \ \ \  \ \ \ \  \ \ \ \ \ \ \  \ \ \ \ \ \ \ \ \ \ \ \ \ \ \ \ \ \ \ \ \ \ \ \ \ \ \ \leq\ \displaystyle\sum_{k=1}^n K_{R,k}d^Y(\gamma\gamma_kKH_{i_k}, \eta\eta_kKH_{j_k}).
\end{array} $$
Let $E$ be the set of $k\in\{1, \dots, n\}$ such that
$$ d^Y(\gamma\gamma_kKH_{i_k}, \eta\eta_kKH_{j_k})<\delta_A. $$
Enumerate the elements of $E$ as $\epsilon_1,\dots, \epsilon_t$. Then for each $1\leq s\leq t$, by the definition of $d^Y$, there are
$$g_{s,1},\dots, g_{s,\ell_s}\in \Gamma,$$
$$\gamma_{s,1},\dots, \gamma_{s, \ell_s}, \eta_{s,1}, \dots, \eta_{s, \ell_s}\in F, $$
and
$$ 1\leq i_{s, 0}, i_{s,1},\dots, i_{s, \ell_s}\leq m $$
such that
$$ i_{\epsilon_s}=i_{s,0},\ j_{\epsilon_s}=i_{s, \ell_s}, $$
for all $1\leq p\leq \ell_s$,
$$ \gamma_{s, p}\cdot a_{i_{s,p-1}}, \eta_{s,p}\cdot a_{i_{s,p}}\in A, $$
$$ d^Y(\gamma\gamma_{\epsilon_s}KH_{i_{\epsilon_s}},\eta\eta_{\epsilon_s}KH_{j_{\epsilon_s}})
=\displaystyle\sum_{p=1}^{\ell_s} d^A(\gamma_{s,p}\cdot a_{i_{s,p-1}}, \eta_{s,p}\cdot a_{i_{s,p}})=C_s, $$
and
$$\begin{array}{rcl}
    \gamma\gamma_{\epsilon_s}KH_{i_{s,0}}&=&g_{s,1}\gamma_{s,1}KH_{i_{s,0}}, \\ g_{s,1}\eta_{s,1}KH_{i_{s,1}}&=&g_{s,2}\gamma_{s,2}KH_{i_{s,1}}, \\
     \dots & & \dots \\
    g_{s,\ell_s}\eta_{s, \ell_s}KH_{i_{s, \ell_s}}&=&\eta\eta_{\epsilon_s}KH_{i_{s, \ell_s}}.
    \end{array}
    $$
For $k\not\in E$,
$$ d^Y(\gamma\gamma_kKH_{i_k}, \eta\eta_kKH_{j_k})=\delta_A. $$
Thus
$$
\displaystyle\sum_{k=1}^n K_{R,k}d^Y(\gamma\gamma_kKH_{i_k}, \eta\eta_kKH_{j_k})=
\displaystyle\sum_{k=\epsilon_s\in E} K_{R,k}C_s+\displaystyle\sum_{k\not\in E} K_{R,k}\delta_A.
$$By (K5) we have
$$\begin{array}{l} |R^A(\gamma_1\cdot a_{i_1}, \dots, \gamma_n\cdot a_{i_n})-R^A(\eta_1\cdot a_{j_1},\dots, \eta_1\cdot a_{j_n})| \\
\leq \displaystyle\sum_{k=\epsilon_s\in E} K_{R,k}C_s+\displaystyle\sum_{k\not\in E} K_{R,k}\delta_A \\
= \displaystyle\sum_{k=1}^n K_{R,k}d^Y(\gamma\gamma_kKH_{i_k}, \eta\eta_kKH_{j_k}).
\end{array}
$$
However, note that
$$ \begin{array}{rcl}
& & |R^Y(\gamma\gamma_1KH_{i_1}, \dots, \gamma\gamma_nKH_{i_n})-R^Y(\eta\eta_1KH_{j_1},\dots, \eta\eta_nKH_{j_n})| \\
&=& |R^Y(\gamma_1KH_{i_1}, \dots, \gamma_nKH_{i_n})-R^Y(\eta_1KH_{j_1},\dots, \eta_nKH_{j_n})| \\
&=& |R^A(\gamma_1\cdot a_{i_1}, \dots, \gamma_n\cdot a_{i_n})-R^A(\eta_1\cdot a_{j_1},\dots, \eta_1\cdot a_{j_n})|.
\end{array}
$$
We thus obtained the desired inequality.
\end{proof}

Let $\mathcal{N}$ be the conservative extension of $\mathcal{Y}$ given by Lemma~\ref{lem:gconservative}. In particular, for every $n$-ary $R\in \mathcal{L}$ and $\overline{x}\in N^n$,
$$ R^N(\overline{x})=\max\{0,\sup\{R^Y(\overline{y})-d^Y(\overline{x},\overline{y})\,:\, \overline{y}\in \dom(R^Y)\}\}. $$
It is clear that $R^N$ is invariant under the action of $\Gamma$.

Define $f: A \to N$ by
$$ f(a)=\left\{\begin{array}{ll} KH_i & \mbox{ if $a=a_i$} \\
\gamma KH_i & \mbox{ if $a\neq a_i$ and $a=\gamma\cdot a_i$ for $\gamma\in F$.}
\end{array}\right.
$$
It is easy to see that $f$ is well defined. We claim that $f$ is an isomorphic embedding from $\mathcal{A}$ into $\mathcal{N}$. For injectivity, let $\gamma, \eta\in F$ such that $\gamma\cdot a_i, \eta\cdot a_i\in A$ but $\gamma\cdot a_i\neq \eta\cdot a_i$. Then by (K2), $\gamma KH_i\neq \eta KH_i$. Isometry follows from Lemma~\ref{lem:dYdA}. The preservation of $R$-value follows directly from the definition of $R^Y$.

Finally, we verify that for all $g\in F$ and $a\in A$, if $g\cdot a\in A$ then
$$ f(g\cdot a)=g f(a). $$
Assume $a=\gamma \cdot a_i$ and $g\cdot a=\eta\cdot a_i$ for $\gamma, \eta\in F$ and $1\leq i\leq m$. Then
$$\rho(\eta^{-1})(\rho(g)(\rho(\gamma)(a_i))))=a_i.$$
Thus $\eta^{-1}g\gamma\in H_i\subseteq KH_i$, and
$$ f(g\cdot a)=f(\eta\cdot a_i)=\eta KH_i=g\gamma KH_i=gf(a). $$

This completes the proof of Theorem~\ref{thm:Rosendal}.

We remark that the same method can be used to give a direct proof of Theorem~\ref{thm:EPPA} (also c.f. \cite{EG0}).

\subsection{The Fra\"iss\'e class of actions by automorphisms}
In this final subsection we prove that for any semiproper continuous signature $\mathcal{L}$ the class $\mathcal{C}_{\mathcal{L}}$ defined in Definition~\ref{def:C} is a Fra\"iss\'e class. The argument follows closely the proof of Theorems 3.9 and 4.8 of \cite{EGLMM}. We give some details for the convenience of the reader. The key fact is the following theorem.

\begin{theorem} \label{thm:FraAP} Let $\mathcal{L}$ be a semiproper continuous signature. Then $\mathcal{C}_{\mathcal{L}}$ has the AP.
\end{theorem}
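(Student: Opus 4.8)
The plan is to follow the strategy of Theorems~3.9 and 4.8 of \cite{EGLMM}. Suppose $\mu:\Delta\actson\mathcal{M}$, $\tau:\Lambda\actson\mathcal{P}$ and $\pi:\Gamma\actson\mathcal{Q}$ lie in $\mathcal{C}_{\mathcal{L}}$, with embeddings $(e,f):\mu\to\tau$ and $(p,q):\mu\to\pi$. Identifying $\mathcal{M}$ with its images, I would first reduce to the case where $\Delta\leq\Lambda$, $\Delta\leq\Gamma$, $\mathcal{M}$ is a substructure of both $\mathcal{P}$ and $\mathcal{Q}$, and $\mu$ is the common restriction of $\tau$ and $\pi$ to $\Delta$ and $\mathcal{M}$. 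Using the strong amalgamation property of finite continuous $\mathcal{L}$-structures (Theorem~\ref{thm:finap}), form the amalgam $\mathcal{W}$ of $\mathcal{P}$ and $\mathcal{Q}$ over $\mathcal{M}$ produced in its proof, so that $\mathcal{P}$ and $\mathcal{Q}$ are substructures of $\mathcal{W}$ with underlying set $P\cup Q$ and $P\cap Q=M$. Since the $\Delta$-actions on $\mathcal{P}$ and $\mathcal{Q}$ agree on $\mathcal{M}$ and $\Delta$ permutes $M$, $\Delta$ acts by isometries on the metric space underlying $\mathcal{W}$; and because the interpretation of each relation symbol in $\mathcal{W}$ is given by the $\Delta$-invariant formula of Lemma~\ref{lem:gconservative} (the same computation as $R^Q(g\cdot\overline{w})=R^Q(\overline{w})$ in the proof of Theorem~\ref{lem:Fraext}), this extends to an action $\Delta\actson\mathcal{W}$ by automorphisms whose restrictions to $\mathcal{P}$ and $\mathcal{Q}$ are $\tau$ and $\pi$.

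Next, let $G=\Lambda\ast_{\Delta}\Gamma$ be the amalgamated free product; it is a countable group in which $\Lambda$ and $\Gamma$ embed and intersect in $\Delta$, it is residually finite, and it has the Herwig--Lascar property (these facts about amalgams of finite groups over a common subgroup are the ones already used in \cite{EGLMM}). Iterating Theorem~\ref{lem:Fraext} along the Bass--Serre tree of $G$ --- equivalently, taking the direct limit, started from $\mathcal{W}$, of iterated amalgams of copies of $\mathcal{P}$ and $\mathcal{Q}$ over copies of $\mathcal{M}$ indexed by the cosets $G/\Lambda$, $G/\Gamma$ and $G/\Delta$ --- I would build a continuous $\mathcal{L}$-pre-structure $\mathcal{Z}$ together with an action $\sigma:G\actson\mathcal{Z}$ by automorphisms such that $\mathcal{W}$ is a substructure of $\mathcal{Z}$, the $\Lambda$-action restricts to $\tau$ on $\mathcal{P}$, and the $\Gamma$-action restricts to $\pi$ on $\mathcal{Q}$. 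Checking that this construction is coherent --- in particular that $\mathcal{P}$ and $\mathcal{Q}$ really sit inside $\mathcal{Z}$ with $\mathcal{P}\cap\mathcal{Q}=\mathcal{M}$ and that the metric and the relations are consistently defined --- rests on the normal form theorem for $G$, and this is the main obstacle of the proof; the semiproper case of Theorem~\ref{lem:Fraext} is precisely what lets each gluing step stay inside the class of finite continuous $\mathcal{L}$-pre-structures.

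Finally, I would apply Theorem~\ref{thm:Rosendal} to $\sigma$ with the finite set $A=P\cup Q$ and the finite set $F=\Lambda\cup\Gamma\subseteq G$, obtaining a finite continuous $\mathcal{L}$-structure $\mathcal{N}$ extending $\mathcal{W}$ and an action $\pi':G\actson\mathcal{N}$ by automorphisms with $\gamma\cdot_{\pi'}a=\gamma\cdot_{\sigma}a$ whenever $\gamma\in F$, $a\in A$ and $\gamma\cdot_{\sigma}a\in A$. Let $K$ be the intersection of $\ker\pi'$ with a finite-index normal subgroup of $G$ that meets $\Lambda\cup\Gamma$ only in the identity (such a subgroup exists by residual finiteness of $G$). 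Then $\bar G=G/K$ is finite and acts by automorphisms on $\mathcal{N}$ through the quotient $G/\ker\pi'$, so $\kappa=(\bar G\actson\mathcal{N})$ lies in $\mathcal{C}_{\mathcal{L}}$, and $\Lambda$ and $\Gamma$ embed into $\bar G$. Since $\mathcal{P}$ and $\mathcal{Q}$ are substructures of $\mathcal{N}$ and, for $\lambda\in\Lambda$ and $x\in P$, we have $\lambda\cdot_{\pi'}x=\lambda\cdot_\tau x$ (and symmetrically for $\Gamma$ and $\mathcal{Q}$), the pair $(\Lambda\hookrightarrow\bar G,\ \mathcal{P}\hookrightarrow\mathcal{N})$ is an embedding of $\tau$ into $\kappa$ and $(\Gamma\hookrightarrow\bar G,\ \mathcal{Q}\hookrightarrow\mathcal{N})$ an embedding of $\pi$ into $\kappa$. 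The two resulting maps $\Delta\to\bar G$ coincide, as they already coincide in $G=\Lambda\ast_\Delta\Gamma$, and the two resulting maps $\mathcal{M}\to\mathcal{N}$ are both the inclusion, since $P\cap Q=M$ in $\mathcal{W}$; hence the required square commutes and $\kappa$ is an amalgam of $\tau$ and $\pi$ over $\mu$.
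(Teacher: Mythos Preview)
Your proposal is correct and follows essentially the same strategy as the paper: amalgamate the two structures over $\mathcal{M}$, iterate Theorem~\ref{lem:Fraext} to build an action of $\Lambda*_\Delta\Gamma$ on a countable continuous $\mathcal{L}$-pre-structure, then apply Theorem~\ref{thm:Rosendal} to cut down to a finite action. The paper presents the iteration as an explicit alternating chain $\mathcal{N}_0\subseteq\mathcal{N}_1\subseteq\cdots$ (extending by $\Gamma_1$ at odd stages and $\Gamma_2$ at even stages) rather than in Bass--Serre language, and in the last step simply takes the subgroup of $\Aut(\mathcal{N})$ generated by the images of $\Lambda\cup\Gamma$ rather than passing to a finite quotient via residual finiteness; these are cosmetic differences, and your final step is arguably a bit more explicit about why the factor groups embed.
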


\begin{proof} Assume $\Gamma_1$ and $\Gamma_2$ are finite groups, and $\Lambda$ is a subgroup of both $\Gamma_1$ and $\Gamma_2$. Assume $\mathcal{M}_1$ and $\mathcal{M}_2$ are finite continuous $\mathcal{L}$-structures, and $\mathcal{A}$ a substructure of both $\mathcal{M}_1$ and $\mathcal{M}_2$. Assume $\pi_1:\Gamma_1\actson \mathcal{M}_1$, $\pi_2:\Gamma_2\actson \mathcal{M}_2$ and $\tau:\Lambda\actson \mathcal{A}$. Furthermore, assume that
$$ \tau=\pi_1\rest (\Lambda\times A)=\pi_2\rest (\Lambda\times A). $$
Let $\Gamma=\Gamma_1*_{\Lambda}\Gamma_2$. By Proposition 4.4 of \cite{EGLMM}, $\Gamma$ has the HL-property.

Since by Theorem~\ref{thm:finap}, $\Kfin$ has the SAP, we may obtain a strong amalgam of $\mathcal{M}_1$ and $\mathcal{M}_2$ over $\mathcal{A}$, which we denote by $\mathcal{N}_0$, such that $\pi_1$, $\pi_2$, $\tau$ are actions respectively on $\mathcal{M}_1$, $\mathcal{M}_2$, $\mathcal{A}$ as substructures of $\mathcal{N}$, still satisfying the compatibility condition above. Moreover, by the proof of Theorem~\ref{thm:finap}, we may assume $N_0=M_1\cup M_2$ and $M_1\cap M_2=A$. Now $\Lambda$ as a subgroup of both $\Gamma_1$ and $\Gamma_2$ acts on $\mathcal{N}_0$ in a natural way, and this action is still by automorphisms.

We now inductively define a sequence of extensions
$$ \mathcal{N}_1\subseteq \mathcal{N}_2\subseteq \cdots $$
of $\mathcal{N}_0$. To define $\mathcal{N}_1$, apply Lemma~\ref{lem:Fraext} to the actions $\Lambda\actson \mathcal{N}_0$ and $\Gamma_1\actson \mathcal{M}_1$. We obtain $\mathcal{N}_1$ and an action $\Gamma_1\actson \mathcal{N}_1$ that is compatible with the actions $\Lambda\actson\mathcal{N}_0$ and $\Gamma_1\actson M_1$. This action also induces an action $\Lambda\actson \mathcal{N}_1$. To define $\mathcal{N}_2$, we apply Lemma~\ref{lem:Fraext} to this induced action $\Lambda\actson \mathcal{N}_1$ and $\Gamma_2\actson \mathcal{M}_2$. The result is an extension $\mathcal{N}_2$ together with an action $\Gamma_2\actson \mathcal{N}_2$ that is compatible with the actions $\Lambda\actson \mathcal{N}_1$ and $\Gamma_2\actson \mathcal{M}_2$. In general, suppose $\mathcal{N}_{2k}$ has been defined with an action $\Gamma_2\actson \mathcal{N}_{2k}$ and $\Gamma_1\actson N_{2k-1}$. We apply Lemma~\ref{lem:Fraext} to the induced action $\Lambda\actson \mathcal{N}_{2k}$ and the action $\Gamma_1\actson N_{2k-1}$ to obtain $\mathcal{N}_{2k+1}$ with an action $\Gamma_1\actson \mathcal{N}_{2k+1}$ that is compatible with the two actions mentioned before. Similarly, apply Lemma~\ref{lem:Fraext} to the induced action $\Lambda\actson \mathcal{N}_{2k+1}$ and the action $\Gamma_2\actson \mathcal{N}_{2k}$ to obtain $\mathcal{N}_{2k+2}$ and action $\Gamma_2\actson \mathcal{N}_{2k+2}$.

Let $\mathcal{N}_\infty=\bigcup_n \mathcal{N}_n$. Then $\Gamma=\Gamma_1*_\Lambda\Gamma_2$ acts on $\mathcal{N}_\infty$ naturally, and this action is by automorphisms. We denote this action by $\pi$.

Now applying Theorem~\ref{thm:Rosendal} to $\Gamma\actson \mathcal{N}_\infty$, $\mathcal{N}_0$ and $\Gamma_1\cup\Gamma_2$, we obtain a finite continuous $\mathcal{L}$-structure $\mathcal{N}$ and an action by automorphisms $\pi':\Gamma\actson \mathcal{N}$ such that $\mathcal{N}$ is an extension of $\mathcal{N}_0$ and for all $\gamma\in \Gamma_1\cup\Gamma_2$ and $x\in \mathcal{N}_0$, if $\gamma\cdot_\pi x\in A$, then $\gamma\cdot_\pi x=\gamma\cdot_{\pi'} x$.

Let $G$ be the subgroup of $\Aut(\mathcal{N})$ generated by $\Gamma_1\cup\Gamma_2$. Since $\mathcal{N}$ is finite, so is $G$. Now the action $G\actson \mathcal{N}$ is an amalgam of $\Gamma_1\actson \mathcal{M}_1$ and $\Gamma_2\actson\mathcal{M}_2$ over $\Lambda\actson \mathcal{A}$.
\end{proof}

The following theorem implies the joint embedding property (JEP) for $\mathcal{C}_{\mathcal{L}}$.

\begin{theorem}\label{thm:FraJEP} Let $\mathcal{L}$ be a semiproper continuous signature. Let $\Gamma, \Lambda$ be groups, let $\mathcal{M}$ and $\mathcal{N}$ be continuous $\mathcal{L}$-pre-structures, and $\Gamma\actson \mathcal{M}$ and $\mathcal{N}\actson \mathcal{N}$ are actions by automorphisms. Suppose $\diam(M), \diam(N)<\infty$. Then there exists a continuous $\mathcal{L}$-pre-structure $\mathcal{P}$, an action by automorphisms $\Gamma\times\Lambda\actson \mathcal{P}$  and embeddings $i$ from $\Gamma\actson\mathcal{M}$ into $\Gamma\times\Lambda\actson\mathcal{P}$ and $j$ from $\Lambda\actson\mathcal{N}$ into $\Gamma\times\Lambda\actson\mathcal{P}$. In particular, the JEP holds for $\mathcal{C}_{\mathcal{L}}$.
\end{theorem}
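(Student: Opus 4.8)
The plan is to imitate the disjoint-union construction used for the JEP in Theorem~\ref{thm:Fraisee}, carrying the product group action along, and then to conservatively extend the resulting partial structure. First I would fix a single large value $\delta$ with $\delta\geq \diam(M),\diam(N)$ and $\delta\geq \sup I_{R,i}$ for every $n$-ary $R\in\mathcal{L}$ and $1\leq i\leq n$; such a $\delta$ exists because $\mathcal{L}$ is semiproper, so each $I_{R,i}$ is bounded, and in particular $\mathfrak{u}_{R,i}(\delta)\geq 1$ for all $R,i$. Let $X$ be the disjoint union of $M$ and $N$, and define $d^X$ to restrict to $d^M$ on $M$, to $d^N$ on $N$, and to the constant $\delta$ on any pair with one coordinate in each; the triangle inequality is routine using $\diam(M),\diam(N)\leq\delta$. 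Let $\Gamma\times\Lambda$ act on $X$ by letting $(g,h)$ act as $g$ on $M$ (fixing $N$ pointwise) and as $h$ on $N$ (fixing $M$ pointwise); since all cross distances equal $\delta$, this action is by isometries. For each $n$-ary $R\in\mathcal{L}$ set $\dom(R^X)=M^n\cup N^n$ and let $R^X$ agree with $R^M$ on $M^n$ and with $R^N$ on $N^n$; then $\dom(R^X)$ and $R^X$ are invariant under the action.

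Next I would check that $\mathcal{X}=(X,d^X,(R^X)_{R\in\mathcal{L}})$ is a partially defined continuous $\mathcal{L}$-pre-structure, i.e. that $R^X$ is $1$-Lipschitz with respect to $d^X_R$. Fix $\overline{x},\overline{y}\in\dom(R^X)$. If $d^X_R(\overline{x},\overline{y})\geq 1$ there is nothing to prove, since $|R^X(\overline{x})-R^X(\overline{y})|\leq 1$. Otherwise each $d^X_{\mathfrak{u}_{R,i}}(x_i,y_i)<1$; but any chain joining a point of $M$ to a point of $N$ uses a step of $d^X$-length $\delta$, contributing $\mathfrak{u}_{R,i}(\delta)\geq 1$, so no such chain is near-optimal, and therefore $\overline{x},\overline{y}$ lie in the same piece and $d^X_{\mathfrak{u}_{R,i}}(x_i,y_i)$ coincides with the value computed inside that piece. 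Then $|R^X(\overline{x})-R^X(\overline{y})|$ is bounded by $d^M_R(\overline{x},\overline{y})$ or $d^N_R(\overline{x},\overline{y})$, hence by $d^X_R(\overline{x},\overline{y})$, using that $\mathcal{M}$ and $\mathcal{N}$ satisfy (\ref{eqn:ucl}) together with Lemma~\ref{lem:1L}.

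Then I would apply Lemma~\ref{lem:gconservative} to obtain a conservative extension $\mathcal{P}=(X,d^X,(R^P)_{R\in\mathcal{L}})$ of $\mathcal{X}$, given by $R^P(\overline{z})=\max\{0,\sup\{R^X(\overline{y})-d^X_R(\overline{z},\overline{y}):\overline{y}\in\dom(R^X)\}\}$. Exactly as in the proof of Theorem~\ref{lem:Fraext}, this Kat\v{e}tov-style formula is manifestly invariant under any automorphism of $(X,d^X)$ preserving $\dom(R^X)$ and the $R^X$-values, so $\Gamma\times\Lambda\actson X$ is in fact an action by automorphisms of $\mathcal{P}$. Finally, the pair $i=(g\mapsto(g,1_\Lambda),\ \iota_M)$, with $\iota_M$ the inclusion of $M$ into $P=X$, is an embedding of $\Gamma\actson\mathcal{M}$ into $\Gamma\times\Lambda\actson\mathcal{P}$: $\iota_M$ is an isomorphic embedding of continuous $\mathcal{L}$-pre-structures because $d^X$ extends $d^M$ and $R^P$ restricted to $M^n$ is $R^X$ restricted to $M^n$, which is $R^M$, and $(g,1_\Lambda)\cdot x=g\cdot x$ for $x\in M$; symmetrically $j=(h\mapsto(1_\Gamma,h),\ \iota_N)$ embeds $\Lambda\actson\mathcal{N}$. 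When $\mathcal{M},\mathcal{N}$ are finite and $\Gamma,\Lambda$ are finite, $\mathcal{P}$ has underlying set $M\sqcup N$ and hence is a finite continuous $\mathcal{L}$-structure while $\Gamma\times\Lambda$ is finite, so $\Gamma\times\Lambda\actson\mathcal{P}\in\mathcal{C}_{\mathcal{L}}$; this yields the JEP for $\mathcal{C}_{\mathcal{L}}$.

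The only real subtlety is the $1$-Lipschitz verification in the second paragraph: one must ensure that routing a chain through the ``other'' piece never produces a shortcut affecting small $d^X_R$-distances. This is precisely what the choice $\mathfrak{u}_{R,i}(\delta)\geq 1$ buys, and it is the same phenomenon already exploited in the JEP argument within the proof of Theorem~\ref{thm:Fraisee}; the compatibility of the group action with the conservative extension is then free, as it only uses that the extension is defined by a canonical isometry-invariant supremum, exactly as in Theorem~\ref{lem:Fraext}.
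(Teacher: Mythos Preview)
Your proposal is correct and follows essentially the same approach as the paper: take the disjoint union with cross-distance $\delta$ large enough that $\mathfrak{u}_{R,i}(\delta)\geq 1$, define $R^X$ on $M^n\cup N^n$, apply Lemma~\ref{lem:gconservative}, and let $\Gamma\times\Lambda$ act coordinatewise. The paper simply asserts that $\mathcal{X}$ is a partially defined pre-structure and that the product action is by automorphisms of $\mathcal{P}$; you have supplied the details of both verifications (the $1$-Lipschitz check via the $\delta$-barrier and the invariance of the Kat\v{e}tov formula), so your write-up is if anything more complete than the paper's.
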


\begin{proof} Let $X$ be the disjoint union of $M$ and $N$. Let $\delta\geq \diam(M), \diam(N)$ and for all $n$-ary $R\in\mathcal{L}$ and all $1\leq i\leq n$, $\delta\geq \sup I_{R,i}$. Then define a metric $d^X$ on $X$ by
$$ d^X(x, y)=\left\{\begin{array}{ll} d^{M}(x, y) & \mbox{ if $x, y\in M$} \\
d^N(x,y) & \mbox{ if $x, y\in N$} \\ \delta & \mbox{ otherwise.}
\end{array}\right.
$$
For every $R\in \mathcal{L}$, $R^X$ is naturally defined on
$$\dom(R^X)=M^n\cup N^n. $$
It is clear that $\mathcal{X}=(X, d^X, (R^X)_{R\in \mathcal{L}})$ is a partially defined continuous $\mathcal{L}$-pre-structure. Then by Lemma~\ref{lem:gconservative}, $\mathcal{X}$ has a conservative extension $\mathcal{P}$. It is easy to see that $\mathcal{M}$ and $\mathcal{N}$ embed into $\mathcal{P}$ as substructures.

Define an action $\Gamma\times\Lambda\actson \mathcal{P}$ by
$$ (\gamma, \lambda)\cdot x=\left\{\begin{array}{ll}\gamma(x) & \mbox{ if $x\in M$} \\
\lambda(x) & \mbox{ if $x\in N$.}\end{array}\right. $$
It is easy to see that this action is by automorphisms on $\mathcal{P}$. Also,  $\Gamma\actson \mathcal{M}$ and $\Lambda\actson \mathcal{N}$ embed into $\Gamma\times\Lambda\actson \mathcal{P}$.
\end{proof}

\begin{corollary}\label{cor:Fra} Let $\mathcal{L}$ be a semiproper continuous signature. Then $\mathcal{C}_{\mathcal{L}}$ is a Fra\"iss\'e class.
\end{corollary}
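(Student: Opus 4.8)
The plan is to verify the three defining properties of a Fra\"iss\'e class for $\mathcal{C}_{\mathcal{L}}$ — the hereditary property, the joint embedding property, and the amalgamation property — observing that the bulk of the work has already been carried out in the preceding theorems, so that the corollary is essentially an assembly step.

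First I would dispose of the hereditary property, which I expect to be entirely routine. Given $\tau:\Lambda\actson\mathcal{M}$ and $\pi:\Gamma\actson\mathcal{N}$ with $\pi\in\mathcal{C}_{\mathcal{L}}$ and an embedding $(e,f)$ from $\tau$ into $\pi$, one checks that $\mathcal{M}$ is (isomorphic to) a substructure of the finite continuous $\mathcal{L}$-structure $\mathcal{N}$, hence is itself a finite continuous $\mathcal{L}$-structure, that $\Lambda$ embeds into the finite group $\Gamma$ and so is finite, and that $\tau$ acts by automorphisms since $\pi$ does and $(e,f)$ intertwines the two actions. Thus $\tau\in\mathcal{C}_{\mathcal{L}}$, giving HP.

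Next I would invoke Theorem~\ref{thm:FraJEP}: for finite $\Gamma\actson\mathcal{M}$ and $\Lambda\actson\mathcal{N}$ in $\mathcal{C}_{\mathcal{L}}$ (in particular both of finite, hence finite, diameter), that theorem produces a continuous $\mathcal{L}$-pre-structure $\mathcal{P}$ with an action $\Gamma\times\Lambda\actson\mathcal{P}$ by automorphisms and embeddings of both given actions into it; since $\mathcal{M}$ and $\mathcal{N}$ are finite, one takes $\mathcal{P}$ finite and $\Gamma\times\Lambda$ finite, so the joint extension lies in $\mathcal{C}_{\mathcal{L}}$, yielding JEP. Finally, the amalgamation property is exactly the content of Theorem~\ref{thm:FraAP}. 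Having HP, JEP, and AP in hand, $\mathcal{C}_{\mathcal{L}}$ is a Fra\"iss\'e class by definition.

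I do not anticipate a genuine obstacle here: the only nontrivial ingredient is the AP, and that has already been established in Theorem~\ref{thm:FraAP} (whose own hard core is the finite-approximation result Theorem~\ref{thm:Rosendal} together with the extension Lemma/Theorem~\ref{lem:Fraext}). The sole care needed in the present corollary is to make sure finiteness of the amalgam/joint extension is preserved so that the outputs genuinely belong to $\mathcal{C}_{\mathcal{L}}$ rather than to a larger class of actions on infinite structures — but this is already arranged in the statements of Theorems~\ref{thm:FraAP} and~\ref{thm:FraJEP}.
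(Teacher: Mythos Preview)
Your proposal is correct and follows essentially the same approach as the paper: the paper's proof simply notes that HP is obvious and that the result follows immediately from Theorems~\ref{thm:FraAP} and~\ref{thm:FraJEP}. Your version is a bit more explicit about checking HP and about the finiteness bookkeeping, but the structure and ingredients are identical.
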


\begin{proof} Noting that the HP is obvious, this follows immediately from Theorems~~\ref{thm:FraAP} and \ref{thm:FraJEP}.
\end{proof}

\section{Dense Locally Finite Subgroups of the Automorphism Groups}
In this last section of the paper we prove that Hall's universal locally finite group $\mathbb{H}$ can be embedded as a dense subgroup of $\Aut(\mathbb{U}_{\mathcal{L}})$ for proper $\mathcal{L}$.

P. Hall first constructed $\mathbb{H}$ in \cite{Hall} and proved that it is the unique countable locally finite group that is universal for all finite groups and is ultrahomogeneous. He also showed that $\mathbb{H}$ is universal for all countable locally finite groups. It is well known that $\mathbb{H}$ is the Fra\"iss\'e limit of the countable Fra\"iss\'e class of all finite groups. It was shown in \cite{EGLMM} that $\mathbb{H}$ embeds as a dense subgroup of the isometry group of $\mathbb{U}_\Delta$ for any countable distance value set $\Delta$; the same is true for the isometry group of $\mathbb{U}$ (Theorems 3.14 and 3.16 of \cite{EGLMM}).

Here we consider first a semiproper continuous signature $\mathcal{L}$ and a countable good value pair $(\Delta, V)$ for $\mathcal{L}$. By Theorem~\ref{thm:Fraisee}, the class $\mathcal{K}_{(\Delta,V)}$ of all finite $(\Delta, V)$-valued continuous $\mathcal{L}$-structures is a countable Fra\"iss\'e class. We consider the subclass of $\mathcal{C}_{\mathcal{L}}$ defined as
$$\begin{array}{rcl} \mathcal{C}_{(\Delta,V)}&=&\mbox{ the subclass of $\mathcal{C}_{\mathcal{L}}$ consisting of actions by automorphisms $\Gamma\actson \mathcal{M}$} \\
 & & \mbox{ where $\Gamma$ is finite and $\mathcal{M}\in\mathcal{K}_{(\Delta, V)}$.}
 \end{array}
$$
The entire Section 7 can be repeated for $\mathcal{C}_{(\Delta,V)}$ to give the following result.

\begin{theorem}\label{thm:cFra} Let $\mathcal{L}$ be a semiproper continuous signature and let $(\Delta, V)$ be a countable good value pair for $\mathcal{L}$. Then $\mathcal{C}_{(\Delta, V)}$ is a countable Fra\"iss\'e class.
\end{theorem}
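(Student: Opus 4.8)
The plan is to verify that $\mathcal{C}_{(\Delta,V)}$ inherits the three Fra\"iss\'e axioms---HP, JEP, and AP---from the corresponding facts about $\mathcal{C}_{\mathcal{L}}$ established in Section 7, with the only extra bookkeeping being that all distances stay in $\Delta$ and all relation values stay in $V$. Countability of $\mathcal{C}_{(\Delta,V)}$ is immediate, since $\mathcal{K}_{(\Delta,V)}$ is countable by Theorem~\ref{thm:Fraisee} and there are only countably many actions of finite groups on a fixed finite structure. The HP is again obvious: a substructure (in the sense of embeddings of actions) of a $(\Delta,V)$-valued structure is $(\Delta,V)$-valued.

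For JEP, I would rerun the proof of Theorem~\ref{thm:FraJEP}, but instead of choosing an arbitrary large $\delta$ for the ``bridging'' distance between the two copies, I would choose $\delta\in\Delta$ with $\delta\geq\diam(M),\diam(N)$ and $\delta\geq\sup(I_{R,i})$ for every $n$-ary $R\in\mathcal{L}$ and $1\leq i\leq n$; such a $\delta$ exists because $(\Delta,V)$ is a good value pair for $\mathcal{L}$ (this is exactly the argument already used in the JEP part of Theorem~\ref{thm:Fraisee}). Then the partially defined structure $\mathcal{X}$ is $(\Delta,V)$-valued, and its conservative extension produced by Lemma~\ref{lem:gconservative} takes relation values in $V$ (again by the remark in the proof of Theorem~\ref{thm:Fraisee} that the formula for $R^M$ in Lemma~\ref{lem:gconservative} keeps the values inside $V$). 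The product action $\Gamma\times\Lambda\actson\mathcal{P}$ is defined exactly as before and is by automorphisms.

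For AP, I would revisit the proof of Theorem~\ref{thm:FraAP}. The key point is that every structure-building step there can be arranged to stay $(\Delta,V)$-valued. First, by Theorem~\ref{thm:Fraisee} the class $\mathcal{K}_{(\Delta,V)}$ has the SAP, so the initial strong amalgam $\mathcal{N}_0$ of $\mathcal{M}_1$ and $\mathcal{M}_2$ over $\mathcal{A}$ can be taken inside $\mathcal{K}_{(\Delta,V)}$; one checks (as in the AP part of Theorem~\ref{thm:Fraisee}) that the metric defined by the infimum formula lands in $\Delta$ and the conservative extension keeps relation values in $V$. Next, the extension lemma Theorem~\ref{lem:Fraext} (used to build the tower $\mathcal{N}_1\subseteq\mathcal{N}_2\subseteq\cdots$) and the finite-approximation theorem Theorem~\ref{thm:Rosendal} (used to pass back from $\mathcal{N}_\infty$ to a finite $\mathcal{N}$) must be observed to preserve $(\Delta,V)$-valuedness. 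For Theorem~\ref{lem:Fraext}: the quotient metric $d^P$ is built from $d^N$ by infima of sums, so it stays in $\Delta$ since $\Delta$ is a distance value set and $\delta_A\in\Delta$ can be used as the cap; and $R^P$ simply copies values of $R^N$, hence lands in $V$; the final conservative extension keeps values in $V$. For Theorem~\ref{thm:Rosendal}: in that construction the truncation metric $d^{\tilde M}$ is replaced by values in the finite set $P\subseteq\Delta$ (choosing $\delta_A\in\Delta$), the metric $d^Y$ on the coset space is again an infimum of sums of elements of $P$ capped at $\delta_A$, and $R^Y$ copies values of $R^A\in V$. Thus the finite $\mathcal{N}$ produced is in $\mathcal{K}_{(\Delta,V)}$, and the amalgamating action $G\actson\mathcal{N}$ (with $G$ the finite subgroup of $\Aut(\mathcal{N})$ generated by $\Gamma_1\cup\Gamma_2$) lies in $\mathcal{C}_{(\Delta,V)}$.

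\textbf{Main obstacle.} The delicate point is confirming that $d^Y$ in the proof of Theorem~\ref{thm:Rosendal} is $\Delta$-valued: distances there arise as infima of finite sums $p_1+\cdots+p_\ell$ with $p_k\in P\subseteq\Delta$, then truncated at $\delta_A$. Since $\Delta$ is closed under $(a,b)\mapsto\min\{a+b,\sup\Delta\}$ and $P$ is finite, each such truncated sum lies in $\Delta$, and the infimum over the finite relevant set is attained, so $d^Y$ takes values in $\Delta$; the same reasoning handles $d^P$ in Theorem~\ref{lem:Fraext}. Once this is in place, every application of Lemma~\ref{lem:gconservative} in these proofs keeps relation values in $V$ by the good-value-pair property exactly as in Theorem~\ref{thm:Fraisee}, and the whole of Section 7 goes through verbatim with $\mathcal{K}_{\mathcal{L}}$ replaced by $\mathcal{K}_{(\Delta,V)}$ and $\mathcal{C}_{\mathcal{L}}$ by $\mathcal{C}_{(\Delta,V)}$.
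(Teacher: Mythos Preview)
Your proposal is correct and follows exactly the approach the paper takes: the paper's entire proof is the single sentence ``The proof is identical to the proof of Corollary~\ref{cor:Fra},'' preceded by the remark that ``the entire Section~7 can be repeated for $\mathcal{C}_{(\Delta,V)}$.'' You have simply unpacked that sentence, supplying the $(\Delta,V)$-bookkeeping (choosing $\delta\in\Delta$, capping metrics at $\sup\Delta$, and invoking the good-value-pair closure to keep the conservative-extension formula inside $V$) that the paper leaves implicit.
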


\begin{proof} The proof is identical to the proof of Corollary~\ref{cor:Fra}.
\end{proof}

Let $\Gamma_\infty\actson \mathcal{M}_\infty$ be the Fra\"iss\'e limit of $\mathcal{C}_{(\Delta, V)}$. Then $\mathcal{M}_\infty$ is a countable $(\Delta, V)$-valued continuous $\mathcal{L}$-pre-structure, $\Gamma_\infty$ is a locally finite group, and the action is by automorphisms. By the arguments for Theorem 3.14 of \cite{EGLMM}, we prove the following lemmas.

\begin{lemma} $\mathcal{M}_\infty$ is isomorphic to $\mathbb{U}_{(\Delta,V)}$.
\end{lemma}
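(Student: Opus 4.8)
The plan is to identify $\mathcal{M}_\infty$ with $\mathbb{U}_{(\Delta,V)}$ via the usual Fra\"iss\'e-theoretic characterization: $\mathbb{U}_{(\Delta,V)}$ is the unique countable $(\Delta,V)$-valued continuous $\mathcal{L}$-pre-structure that is universal for $\mathcal{K}_{(\Delta,V)}$ and ultrahomogeneous. That $\mathcal{M}_\infty$ is countable and $(\Delta,V)$-valued is immediate, since $\Gamma_\infty\actson\mathcal{M}_\infty$ is a direct limit of members of $\mathcal{C}_{(\Delta,V)}$ and hence $\mathcal{M}_\infty$ is a union of finite $(\Delta,V)$-valued continuous $\mathcal{L}$-structures.

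The main step will be to show that $\mathcal{M}_\infty$ has the one-point extension property for $\mathcal{K}_{(\Delta,V)}$: for every finite substructure $\mathcal{A}$ of $\mathcal{M}_\infty$, every one-point extension $\mathcal{A}_x\in\mathcal{K}_{(\Delta,V)}$ of $\mathcal{A}$ admits an isomorphic embedding into $\mathcal{M}_\infty$ fixing $\mathcal{A}$ pointwise. To prove this I would pass to the trivial group: let $\{1\}$ act trivially (hence by automorphisms) on both $\mathcal{A}$ and $\mathcal{A}_x$, so that $\{1\}\actson\mathcal{A}$ and $\{1\}\actson\mathcal{A}_x$ are objects of $\mathcal{C}_{(\Delta,V)}$ and the pair consisting of the identity $\{1\}\to\{1\}$ and the inclusion $\mathcal{A}\hookrightarrow\mathcal{A}_x$ is an embedding of the former into the latter, the compatibility condition being vacuous. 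Similarly, the pair consisting of the trivial embedding $\{1\}\to\Gamma_\infty$ and the inclusion $\mathcal{A}\hookrightarrow\mathcal{M}_\infty$ is an embedding of $\{1\}\actson\mathcal{A}$ into $\Gamma_\infty\actson\mathcal{M}_\infty$. Since $\Gamma_\infty\actson\mathcal{M}_\infty$ is the Fra\"iss\'e limit of $\mathcal{C}_{(\Delta,V)}$ (Theorem~\ref{thm:cFra}), it has the extension property, so this embedding extends to an embedding of $\{1\}\actson\mathcal{A}_x$ into $\Gamma_\infty\actson\mathcal{M}_\infty$; the structure component of that extension is an isomorphic embedding of $\mathcal{A}_x$ into $\mathcal{M}_\infty$ agreeing with the inclusion on $\mathcal{A}$, and injectivity places the image of $x$ outside $A$, as required.

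Granted the one-point extension property, the remainder is routine: a standard back-and-forth argument (as with the Urysohn property in Section~4) shows that $\mathcal{M}_\infty$ is universal for $\mathcal{K}_{(\Delta,V)}$ and ultrahomogeneous, and uniqueness of the Fra\"iss\'e limit of $\mathcal{K}_{(\Delta,V)}$ then yields $\mathcal{M}_\infty\cong\mathbb{U}_{(\Delta,V)}$. The one genuinely content-bearing point is the translation between morphisms of $\mathcal{C}_{(\Delta,V)}$ with trivial acting group and plain isomorphic embeddings of continuous $\mathcal{L}$-pre-structures, which lets the extension property of the richer class $\mathcal{C}_{(\Delta,V)}$ deliver the extension property for $\mathcal{K}_{(\Delta,V)}$; no real obstacle arises beyond this bookkeeping.
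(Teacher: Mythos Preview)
Your proposal is correct and follows essentially the same approach as the paper: verify the $(\Delta,V)$-Urysohn (one-point extension) property for $\mathcal{M}_\infty$ by equipping $\mathcal{A}$ and $\mathcal{A}_x$ with the trivial group action and invoking the extension property of the Fra\"iss\'e limit $\Gamma_\infty\actson\mathcal{M}_\infty$. Your write-up is in fact slightly more explicit than the paper's about the bookkeeping (e.g.\ noting that injectivity forces the image of $x$ outside $A$), but the argument is the same.
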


\begin{proof} $\mathbb{U}_{(\Delta, V)}$ is is the unique countable continuous $\mathcal{L}$-pre-structure with the {\em $(\Delta, V)$-Urysohn property}: given any finite $(\Delta, V)$-valued continuous $\mathcal{L}$-structure $\mathcal{M}$, a one-point extension $\mathcal{N}$ of $\mathcal{M}$ that is $(\Delta, V)$-valued, and an isomorphic embedding $\varphi$ from $\mathcal{M}$ into $\mathbb{U}_{(\Delta, V)}$, there is an isomorphic embedding $\psi$ from $\mathcal{N}$ into $\mathbb{U}_{(\Delta,V)}$ such that $\psi\,\rest M=\varphi$.

We verify that $\mathcal{M}_\infty$ has this property. It is clear that $\mathcal{M}_\infty$ is $(\Delta, V)$-valued. Suppose $A\subseteq \mathcal{M}_\infty$ and let $\mathcal{A}_x$ be a $(\Delta, V)$-valued one-point extension of $\mathcal{A}$. Let $\Gamma$ be the trivial group. Then $\Gamma\actson \mathcal{A}$ embeds into $\Gamma\actson \mathcal{A}_x$ trivially. By the universality and ultrahomogeneity of $\Gamma_\infty\actson \mathcal{M}_\infty$, we obtain an embedding $\varphi$ of $\Gamma\actson \mathcal{A}_x$ into $\Gamma_\infty\actson \mathcal{M}_\infty$ such that $\varphi\rest M$ is the identity. Thus $\varphi$ witnesses the the $(\Delta, V)$-Urysohn property of $\mathcal{M}_\infty$.
\end{proof}

\begin{lemma} $\Gamma_\infty$ acts faithfully on $\mathcal{M}_\infty$.
\end{lemma}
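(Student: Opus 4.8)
The plan is to show faithfulness by exhibiting, for every nontrivial $g\in\Gamma_\infty$, a point of $\mathcal{M}_\infty$ that $g$ moves; the natural way to do this is via the universality of the Fra\"iss\'e limit $\Gamma_\infty\actson\mathcal{M}_\infty$ together with the existence of a faithful action in $\mathcal{C}_{(\Delta,V)}$ of a prescribed finite group on a suitable structure. First I would fix a nontrivial $g\in\Gamma_\infty$. Since $\Gamma_\infty$ is locally finite (being the group part of a Fra\"iss\'e limit of a class of actions of finite groups, it is a direct limit of finite groups), there is a finite subgroup $\Gamma\leq\Gamma_\infty$ with $g\in\Gamma$ and $g\neq 1_\Gamma$.

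Next I would use universality: by a standard Fra\"iss\'e argument the limit $\Gamma_\infty\actson\mathcal{M}_\infty$ contains an embedded copy of every member of $\mathcal{C}_{(\Delta,V)}$, so it suffices to produce an action $\Gamma\actson\mathcal{N}\in\mathcal{C}_{(\Delta,V)}$ that is faithful (then embedding this action into $\Gamma_\infty\actson\mathcal{M}_\infty$ via the embedding $(e,f)$ with $e\colon\Gamma\hookrightarrow\Gamma_\infty$ the inclusion transfers faithfulness: if $g$ fixed every point of $\mathcal{M}_\infty$ it would fix every point of $f(\mathcal{N})$, hence act trivially on $\mathcal{N}$, a contradiction). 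So the remaining task is: given a finite group $\Gamma$, build a finite $(\Delta,V)$-valued continuous $\mathcal{L}$-structure $\mathcal{N}$ with a faithful action by automorphisms. I would take $\mathcal{N}$ to have underlying set $\Gamma$ itself with $\Gamma$ acting by left translation; equip $\Gamma$ with the metric $d^N(x,y)=\delta$ for $x\neq y$, where $\delta\in\Delta$ is chosen with $\delta\geq\sup I_{R,i}$ for all $R\in\mathcal{L}$ and all $i$ (such $\delta$ exists since $(\Delta,V)$ is a good value pair, whence $\mathfrak{u}_{R,i}(\sup(\Delta))\geq 1$ when $\Delta$ is bounded, and in the unbounded case $I_{R,i}$ is bounded by semiproperness), and set $R^N\equiv 0$ for every $R\in\mathcal{L}$. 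Then $d^N$ takes values in $\Delta$, the relations take the value $0\in V$, so $\mathcal{N}\in\mathcal{K}_{(\Delta,V)}$; the condition (\ref{eqn:ucl}) holds trivially since all relations are constant; and left translation is an isometry fixing $0\equiv R^N$, hence an automorphism. Left translation is obviously faithful on $\Gamma$. Therefore $\Gamma\actson\mathcal{N}\in\mathcal{C}_{(\Delta,V)}$ is a faithful action, and embedding it into $\Gamma_\infty\actson\mathcal{M}_\infty$ shows $g$ moves a point.

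The only mild subtlety — and the step I expect to be the main obstacle — is making sure the embedded copy of $\Gamma\actson\mathcal{N}$ inside $\Gamma_\infty\actson\mathcal{M}_\infty$ really does place $g$ as an element acting nontrivially, i.e.\ that the group homomorphism $e$ in the embedding restricts to the identity on our chosen $\Gamma$, not merely an abstract isomorphic copy. This is handled by the universal mapping property of the Fra\"iss\'e limit in the category of actions: the inclusion $\Gamma\hookrightarrow\Gamma_\infty$ is itself an embedding of the (empty-structure, or one-point) action, and universality lets us extend it to an embedding of $\Gamma\actson\mathcal{N}$; alternatively one builds $\mathcal{N}$ as a one-point-at-a-time extension of a one-point substructure of $\mathcal{M}_\infty$ on which $\Gamma$ already acts, and applies ultrahomogeneity. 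Either route is routine given the machinery of Section~5 and Section~7. Once faithfulness is established it follows, exactly as in \cite{EGLMM}, that $\Gamma_\infty$ embeds as a dense subgroup of $\Aut(\mathbb{U}_{(\Delta,V)})$, and in particular, taking $\Delta=\mathbb{Q}^+$ and passing to completions via Theorem~\ref{thm:QUL}, that $\mathbb{H}$ embeds densely into $\Aut(\mathbb{U}_{\mathcal{L}})$ when $\mathcal{L}$ is proper.
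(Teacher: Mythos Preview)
Your proposal is correct and follows essentially the same approach as the paper: both take a finite subgroup $\Gamma\leq\Gamma_\infty$ containing $g$, build the left-translation action of $\Gamma$ on itself equipped with a constant nonzero $\Delta$-metric and identically-zero relations, and invoke universality/ultrahomogeneity of the Fra\"iss\'e limit to realize this action inside $\Gamma_\infty\actson\mathcal{M}_\infty$ with the group embedding being the inclusion. Your explicit discussion of the ``mild subtlety'' (that $e$ must be the given inclusion $\Gamma\hookrightarrow\Gamma_\infty$) is more careful than the paper's treatment, which simply asserts it; note also that since all $R^N$ are identically~$0$, the extra requirement $\delta\geq\sup I_{R,i}$ is unnecessary---any $\delta\in\Delta$ works, as in the paper.
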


\begin{proof} Let $g\in \Gamma_\infty$ be a non-identity element. Let $\Gamma$ be the subgroup of $\Gamma_\infty$ generated by $g$. Since $\Gamma_\infty$ is locally finite, $\Gamma$ is finite. Consider the finite continuous $\mathcal{L}$-structure $\mathcal{M}$ defined as follows. Let $\delta\in \Delta$. Let $M=\Gamma$. Define $d^M(x,y)=\delta$ for any $x\neq y\in M$. For any $R\in\mathcal{L}$, define $R^M$ to be identically $0$. Then $\mathcal{M}$ is a finite $(\Delta, V)$-valued continuous $\mathcal{L}$-structure and the left multiplication action $\Gamma\actson \mathcal{M}$ is an action by automorphisms. By the universality and ultrahomogeneity of $\Gamma_\infty\actson \mathcal{M}_\infty$, we know that $\mathcal{M}$ can be realized as a substructure of $\mathcal{M}_\infty$. If follows that there is $x\in \mathcal{M}_\infty$ such that $g\cdot x\neq x$.
\end{proof}

\begin{lemma} $\Gamma_\infty$ is isomorphic to $\mathbb{H}$.
\end{lemma}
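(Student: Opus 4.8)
The plan is to identify $\Gamma_\infty$ through P.\ Hall's characterization: $\mathbb{H}$ is the unique countable locally finite group which is universal for all finite groups and ultrahomogeneous (every isomorphism between finite subgroups extends to an automorphism), equivalently the Fra\"iss\'e limit of the class of all finite groups. The group $\Gamma_\infty$ arising from the Fra\"iss\'e limit $\Gamma_\infty\actson\mathcal{M}_\infty$ of $\mathcal{C}_{(\Delta,V)}$ is already known to be countable and locally finite, so it remains to check the two extremal features, in both cases by transporting the question to $\Gamma_\infty\actson\mathcal{M}_\infty$. \emph{Universality.} Given a finite group $\Gamma$, I would use the finite $(\Delta,V)$-valued continuous $\mathcal{L}$-structure $\mathcal{M}$ with $M=\Gamma$, $d^M(x,y)=\delta$ for all $x\neq y$ (a fixed $\delta\in\Delta$), and $R^M\equiv 0$ for every $R\in\mathcal{L}$, with $\Gamma$ acting by left translation — precisely the structure used in the faithfulness lemma above. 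Then $\Gamma\actson\mathcal{M}\in\mathcal{C}_{(\Delta,V)}$, so by universality of $\Gamma_\infty\actson\mathcal{M}_\infty$ there is an embedding $(e,f)$ of $\Gamma\actson\mathcal{M}$ into it, and the group component $e$ embeds $\Gamma$ into $\Gamma_\infty$.

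\emph{Ultrahomogeneity.} Let $\theta\colon\Lambda_1\to\Lambda_2$ be an isomorphism between finite subgroups of $\Gamma_\infty$. It suffices to produce finite sub-actions $\Lambda_i\actson\mathcal{A}_i$ of $\Gamma_\infty\actson\mathcal{M}_\infty$ ($i=1,2$) and an isomorphism $f\colon\mathcal{A}_1\to\mathcal{A}_2$ of continuous $\mathcal{L}$-structures such that $(\theta,f)$ is an isomorphism from $\Lambda_1\actson\mathcal{A}_1$ onto $\Lambda_2\actson\mathcal{A}_2$: by ultrahomogeneity of the Fra\"iss\'e limit, $(\theta,f)$ then extends to an automorphism $(\psi,\Phi)$ of $\Gamma_\infty\actson\mathcal{M}_\infty$, and $\psi\in\Aut(\Gamma_\infty)$ extends $\theta$. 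To find these data, I would take any nonempty finite $\Lambda_1$-invariant substructure $\mathcal{A}_1\subseteq\mathcal{M}_\infty$, transport $\Lambda_1\actson\mathcal{A}_1$ along $\theta$ to an abstract action $\Lambda_2\actson\mathcal{B}$ on a fresh copy $\mathcal{B}$ of $\mathcal{A}_1$ (with copying bijection $j$ and $\lambda\cdot_{\mathcal{B}}j(a)=j(\theta^{-1}(\lambda)\cdot a)$), and then invoke the key fact: \emph{every finite action $\Lambda\actson\mathcal{B}$ in $\mathcal{C}_{(\Delta,V)}$ with $\Lambda\leq\Gamma_\infty$ admits an embedding into $\Gamma_\infty\actson\mathcal{M}_\infty$ whose group component is the inclusion $\Lambda\hookrightarrow\Gamma_\infty$.} Applying this to $\Lambda_2\actson\mathcal{B}$ yields a $\Lambda_2$-equivariant embedding $f'\colon\mathcal{B}\to\mathcal{M}_\infty$; putting $\mathcal{A}_2=f'(\mathcal{B})$ and $f=f'\circ j$, a direct computation (using the definition of the transported action and the $\Lambda_2$-equivariance of $f'$) shows that $(\theta,f)$ is an isomorphism of the two sub-actions.

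The key fact is the crux, and I expect it to be the main obstacle. I would prove it by the disjoint-union-with-large-distance device already used for the joint embedding property in this paper. Fix a nonempty finite $\Lambda$-invariant substructure $\mathcal{C}_0\subseteq\mathcal{M}_\infty$, so $\Lambda\actson\mathcal{C}_0$ is a sub-action of $\Gamma_\infty\actson\mathcal{M}_\infty$ with group exactly $\Lambda$. Form the structure $\mathcal{D}$ on the disjoint union of $C_0$ and $B$ with all cross-distances equal to a suitable $\delta\in\Delta$ (dominating the diameters of $\mathcal{C}_0$ and $\mathcal{B}$ and all $\sup I_{R,i}$, which exists since $(\Delta,V)$ is a good value pair), completed to a $(\Delta,V)$-valued continuous $\mathcal{L}$-structure via Lemma~\ref{lem:gconservative}, and let $\Lambda$ act diagonally. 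Since $\Lambda$ preserves each piece, acts by automorphisms on each, and the cross-distances and cross-relation values are constant, this diagonal action is by automorphisms, so $\Lambda\actson\mathcal{D}\in\mathcal{C}_{(\Delta,V)}$ and it contains $\Lambda\actson\mathcal{C}_0$ and $\Lambda\actson\mathcal{B}$ as sub-actions, still with group $\Lambda$. As $\Lambda\actson\mathcal{C}_0$ is a finite sub-action of $\Gamma_\infty\actson\mathcal{M}_\infty$, the extension property of the Fra\"iss\'e limit extends this inclusion embedding to an embedding of $\Lambda\actson\mathcal{D}$; its group component is again the inclusion $\Lambda\hookrightarrow\Gamma_\infty$ because the group of $\mathcal{D}$ is still $\Lambda$. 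Restricting to the sub-action on $\mathcal{B}$ gives the required $\Lambda$-equivariant embedding, proving the key fact. Together with universality, countability, and local finiteness, this gives $\Gamma_\infty\cong\mathbb{H}$; the whole argument runs in parallel with the proof of Theorem~3.14 of \cite{EGLMM}.
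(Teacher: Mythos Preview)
Your proof is correct, but it takes a longer route than the paper's. The paper uses Hall's \emph{extension property} characterization of $\mathbb{H}$: given finite groups $G\leq H$ and an embedding $\varphi\colon G\to\Gamma_\infty$, one must extend $\varphi$ to $H$. The paper dispatches this in one line by letting both $G$ and $H$ act trivially on a singleton $(\Delta,V)$-structure and invoking the Fra\"iss\'e extension property of $\Gamma_\infty\actson\mathcal{M}_\infty$ for the chain $G\actson\{\ast\}\hookrightarrow H\actson\{\ast\}$. Your approach instead verifies universality and ultrahomogeneity separately, which forces you to isolate and prove the ``key fact'' (embedding a finite $\Lambda$-action with prescribed group component $\Lambda\hookrightarrow\Gamma_\infty$) via the disjoint-union-at-large-distance device.

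What you gain is explicitness: the paper's one-liner tacitly requires that $G\actson\{\ast\}$ embed into $\Gamma_\infty\actson\mathcal{M}_\infty$ with group part exactly $\varphi$, i.e.\ that $\varphi(G)$ have a fixed point in $\mathcal{M}_\infty$---and the cleanest way to see this is precisely your key fact (or an empty-structure variant). So your argument unpacks a step the paper leaves implicit. What you lose is economy: once one accepts that step, the extension-property formulation avoids the transport-along-$\theta$ and the separate universality check entirely.
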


\begin{proof} $\mathbb{H}$ is the unique countable locally finite group with the following extension property: given any finite groups $G\leq H$ and a group isomorphic embedding $\varphi$ from $G$ into $\mathbb{H}$, there is a group isomorphic embedding $\psi$ from $H$ into $\mathbb{H}$ such that $\psi\,\rest G=\varphi$.

We verify that $\Gamma_\infty$ has this property. Assume $G, H, \varphi$ are given. Consider the trivial structure $\mathcal{M}$ which is a singleton and such that all relations symbols are interpreted as identically $0$ functions. Then by the universality and ultrahomogeneity of $\Gamma_\infty\actson \mathcal{M}_\infty$ there is an embedding $\psi$ of $H\actson \mathcal{M}$ into $\Gamma_\infty\actson \mathcal{M}_\infty$ such that $\psi\,\rest G=\varphi$. This proves the extension property as required.
\end{proof}

Since $\mathcal{M}_\infty$ is countable, we equip $\mathcal{M}_\infty$ with the discrete topology and $\Aut(\mathcal{M}_\infty)$ with the corresponding pointwise convergence topology. Thus $\Aut(\mathcal{M}_\infty)$ becomes a Polish group. In fact, it is isomorphic to a closed subgroup of $S_\infty$, the infinite permutation group.

\begin{lemma} $\Gamma_\infty$ is dense in $\Aut(\mathcal{M}_\infty)$.
\end{lemma}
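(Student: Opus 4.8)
The plan is a standard density argument built on the fact that $\Gamma_\infty\actson\mathcal{M}_\infty$ is the Fra\"iss\'e limit of $\mathcal{C}_{(\Delta,V)}$ together with the identification $\mathcal{M}_\infty\cong\mathbb{U}_{(\Delta,V)}$ and the faithfulness of the action established in the preceding lemma, which lets us regard $\Gamma_\infty\leq\Aut(\mathcal{M}_\infty)$. Since $\mathcal{M}_\infty$ carries the discrete topology, a basic open neighborhood of $g\in\Aut(\mathcal{M}_\infty)$ has the form $\{h\in\Aut(\mathcal{M}_\infty):h\rest A=g\rest A\}$ for a finite $A\subseteq M_\infty$, so it suffices to show: for every $g\in\Aut(\mathcal{M}_\infty)$ and every finite $A\subseteq M_\infty$ there is $\gamma\in\Gamma_\infty$ with $\gamma\cdot a=g(a)$ for all $a\in A$. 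First I would put $A'=g(A)$, $B=A\cup A'$, let $\mathcal{B}$ be the substructure of $\mathcal{M}_\infty$ with domain $B$, and observe that $p:=g\rest A$ is an isomorphism between the substructures $\mathcal{A},\mathcal{A}'$ of $\mathcal{B}$, i.e.\ $p\in\Part(\mathcal{B})$; since $\mathcal{M}_\infty\cong\mathbb{U}_{(\Delta,V)}$, the finite structure $\mathcal{B}$ is $(\Delta,V)$-valued, hence $\mathcal{B}\in\mathcal{K}_{(\Delta,V)}$.

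Next I would manufacture a finite group action in $\mathcal{C}_{(\Delta,V)}$ realizing $p$. Applying the EPPA of Theorem~\ref{thm:EPPA} to $\mathcal{B}$ — and observing, exactly as in the proof of Theorem~\ref{thm:Fraisee}, that running the construction with the ambient good value pair taken to be $(\Delta,V)$ and the cap parameter chosen in $\Delta$ keeps the output $(\Delta,V)$-valued — yields a finite $(\Delta,V)$-valued continuous $\mathcal{L}$-structure $\mathcal{D}$ extending $\mathcal{B}$ together with some $\hat p\in\Aut(\mathcal{D})$ that extends $p$ (only plain EPPA for the single partial automorphism $p$ is needed). Let $\Gamma$ be the finite cyclic subgroup of $\Aut(\mathcal{D})$ generated by $\hat p$; then $\Gamma\actson\mathcal{D}$ is an action by automorphisms, so $\Gamma\actson\mathcal{D}\in\mathcal{C}_{(\Delta,V)}$, and $\hat p\in\Gamma$ satisfies $\hat p\cdot a=g(a)$ for all $a\in A$.

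Finally I would transfer this action into $\Gamma_\infty\actson\mathcal{M}_\infty$ by the one-point extension property of a Fra\"iss\'e limit. Writing $\mathbf 1$ for the trivial group, $\mathbf 1\actson\mathcal{B}$ is an object of $\mathcal{C}_{(\Delta,V)}$; the trivial group embedding $\mathbf 1\hookrightarrow\Gamma$ paired with the inclusion $\mathcal{B}\hookrightarrow\mathcal{D}$ is an embedding of $\mathbf 1\actson\mathcal{B}$ into $\Gamma\actson\mathcal{D}$, and the trivial group embedding $\mathbf 1\hookrightarrow\Gamma_\infty$ paired with the inclusion $\mathcal{B}\hookrightarrow\mathcal{M}_\infty$ is an embedding of $\mathbf 1\actson\mathcal{B}$ into $\Gamma_\infty\actson\mathcal{M}_\infty$. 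Since $\Gamma_\infty\actson\mathcal{M}_\infty$ is the Fra\"iss\'e limit of $\mathcal{C}_{(\Delta,V)}$, the latter embedding extends to an embedding $(\hat e,\hat f)$ of $\Gamma\actson\mathcal{D}$ into $\Gamma_\infty\actson\mathcal{M}_\infty$ with $\hat f\rest B$ equal to the inclusion $B\hookrightarrow M_\infty$. Setting $\gamma=\hat e(\hat p)\in\Gamma_\infty$, for $a\in A$ we get $\gamma\cdot a=\hat e(\hat p)\cdot\hat f(a)=\hat f(\hat p\cdot a)=\hat f(g(a))=g(a)$, using $\hat f(a)=a$, the compatibility of the embedding, $\hat p\cdot a=g(a)\in B$, and $\hat f\rest B=\mathrm{id}$. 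This gives $\gamma\in\Gamma_\infty$ agreeing with $g$ on $A$, proving density. The only step needing genuine care is the middle paragraph, namely checking that the EPPA construction can be arranged to stay inside $\mathcal{K}_{(\Delta,V)}$; the rest is routine Fra\"iss\'e-limit bookkeeping.
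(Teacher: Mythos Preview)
Your proof is correct and takes a genuinely different route from the paper's. The paper splits into cases on whether $\langle g\rangle\leq\Aut(\mathcal{M}_\infty)$ is finite: if finite, it takes $B=\langle g\rangle(A)$ and directly embeds $\langle g\rangle\actson\mathcal{B}$ into the Fra\"iss\'e limit; if infinite (so $\langle g\rangle\cong\mathbb{Z}$), it invokes the HL-property of $\mathbb{Z}$ and Theorem~\ref{thm:Rosendal} to produce a finite action realizing $g\rest A$, and then embeds that. You instead apply EPPA (Theorem~\ref{thm:EPPA}) once to the single partial automorphism $g\rest A$ of $\mathcal{B}=\mathcal{A}\cup g(\mathcal{A})$, obtaining $\hat p\in\Aut(\mathcal{D})$ and the finite cyclic group $\langle\hat p\rangle$ in one stroke, then embed via the Fra\"iss\'e extension property. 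Your argument is more uniform (no case split) and logically lighter, since EPPA is already packaged in Section~6 and you avoid revisiting the HL-property and the heavier Theorem~\ref{thm:Rosendal}; the paper's route has the virtue of reusing exactly the engine that drives the AP for $\mathcal{C}_{(\Delta,V)}$, so nothing new is imported.

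The one point you flag as needing care---that the EPPA extension $\mathcal{D}$ can be taken $(\Delta,V)$-valued---is genuine but unproblematic: since $(\Delta,V)$ is a good value pair, if $\Delta$ is bounded then $\mathfrak{u}_{R,i}(\sup\Delta)\geq 1$, so capping the path metric $d^Y$ in the proof of Theorem~\ref{thm:EPPA} at $\sup\Delta$ preserves (\ref{eqn:ucl}) and keeps distances in $\Delta$; the conservative-extension formula then keeps relation values in $V$ by the closure clause in Definition~\ref{def:goodpair}. The paper faces exactly the same issue (its $\mathcal{N}$ from Theorem~\ref{thm:Rosendal} must land in $\mathcal{K}_{(\Delta,V)}$ to embed into $\mathcal{M}_\infty$) and handles it by the blanket remark preceding Theorem~\ref{thm:cFra}.
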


\begin{proof} Let $g\in \Aut(\mathcal{M}_\infty)$ and let $A\subseteq M_\infty$ be finite. We need to find a $\gamma\in \Gamma_\infty$ such that for every $a\in A$, $g(a)=\gamma\cdot a$. Let $\Gamma$ be the subgroup of $\Aut(\mathcal{M}_\infty)$ generated by $g$.

Suppose first $\Gamma$ is finite. Then let $B=\Gamma(A)$. $B$ is finite, and $\Gamma$ acts on $\mathcal{B}$ by automorphisms by definition. By the universality and ultrahomegeneity of $\Gamma_\infty\actson \mathcal{M}_\infty$, we may realize $\Gamma$ as a subgroup of $\Gamma_\infty$. This gives a $\gamma\in \Gamma_\infty$ such that for every $a\in A$, $g(a)=\gamma\cdot a$.

Next suppose $\Gamma$ is infinite. Then $\Gamma=\langle g\rangle$ is a free abelian group, and by a theorem of Herwig--Lascar (Theorem 3.3 of \cite{HL}), $\Gamma$ has the HL-property. Consider $F=\{g\}$ and $B=A\cup g(A)$. By Theorem~\ref{thm:Rosendal} there is a finite continuous $\mathcal{L}$-structure $\mathcal{N}$ and an action $\pi:\Gamma\actson \mathcal{N}$ such that $\mathcal{N}$ is an extension of $\mathcal{B}$ and for all $a\in A$, $g\cdot_{\pi} a=g(a)$. Let $\Lambda$ be the subgroup of $\Aut(\mathcal{N})$ generated by $g$. Since $\mathcal{N}$ is finite, $\Lambda$ is finite. By the universality and ultrahomogeneity of $\Gamma_\infty\actson
\mathcal{M}_\infty$, we may realize $\Lambda$ as a subgroup of $\Gamma_\infty$ and
$\mathcal{N}$ as a substructure of $\mathcal{M}_\infty$ extending $\mathcal{B}$. This gives a $\gamma\in \Gamma_\infty$ such that for every $a\in A$, $g(a)=\gamma\cdot a$.
\end{proof}

Putting these lemmas together, we have proved the following theorem.

\begin{theorem}\label{thm:HAut} Let $\mathcal{L}$ be a semiproper continuous signature and let $(\Delta, V)$ be a countable good value pair for $\mathcal{L}$. Then $\Aut(\mathbb{U}_{(\Delta, V)})$ contains $\mathbb{H}$ as a dense subgroup.
\end{theorem}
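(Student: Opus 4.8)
The plan is to realize both $\mathbb{H}$ and $\mathbb{U}_{(\Delta,V)}$ simultaneously as the two components of the Fra\"iss\'e limit of the class $\mathcal{C}_{(\Delta,V)}$ of actions by automorphisms of finite groups on finite $(\Delta,V)$-valued continuous $\mathcal{L}$-structures. By Theorem~\ref{thm:cFra} this class is a countable Fra\"iss\'e class, so it has a Fra\"iss\'e limit, an action $\Gamma_\infty\actson\mathcal{M}_\infty$ by automorphisms in which $\Gamma_\infty$ is a countable locally finite group and $\mathcal{M}_\infty$ is a countable $(\Delta,V)$-valued continuous $\mathcal{L}$-pre-structure. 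The theorem will follow once we identify $\mathcal{M}_\infty$ with $\mathbb{U}_{(\Delta,V)}$, identify $\Gamma_\infty$ with $\mathbb{H}$, check that $\Gamma_\infty$ embeds into $\Aut(\mathcal{M}_\infty)$, and show the image is dense.

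First I would identify the two pieces by their characteristic extension properties. To see $\mathcal{M}_\infty\cong\mathbb{U}_{(\Delta,V)}$ I would verify the $(\Delta,V)$-Urysohn property of $\mathcal{M}_\infty$: given a finite $(\Delta,V)$-valued $\mathcal{A}\subseteq\mathcal{M}_\infty$ and a $(\Delta,V)$-valued one-point extension $\mathcal{A}_x$, apply the universality and ultrahomogeneity of $\Gamma_\infty\actson\mathcal{M}_\infty$ to the trivial-group actions on $\mathcal{A}$ and on $\mathcal{A}_x$ to produce an embedding fixing $\mathcal{A}$ pointwise and realizing $x$. Similarly, to see $\Gamma_\infty\cong\mathbb{H}$ I would verify the Hall extension property for finite groups $G\leq H$ by applying universality and ultrahomogeneity to the corresponding actions on a one-point structure with all relations identically $0$. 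To know that $\Gamma_\infty$ genuinely sits inside $\Aut(\mathcal{M}_\infty)$, I would show the action is faithful: for a non-identity $g\in\Gamma_\infty$, the finite group $\langle g\rangle$ acts on itself by left translation with all pairwise distances equal to some fixed $\delta\in\Delta$, and embedding this action into $\Gamma_\infty\actson\mathcal{M}_\infty$ exhibits a point moved by $g$.

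The heart of the argument is density of $\Gamma_\infty$ inside $\Aut(\mathcal{M}_\infty)=\Aut(\mathbb{U}_{(\Delta,V)})$ with the pointwise-convergence topology. Given $g\in\Aut(\mathcal{M}_\infty)$ and a finite $A\subseteq M_\infty$, I need some $\gamma\in\Gamma_\infty$ agreeing with $g$ on $A$. If $\langle g\rangle$ is finite this is immediate from universality and ultrahomogeneity applied to $\langle g\rangle\actson\langle g\rangle(A)$. If $\langle g\rangle$ is infinite cyclic, it has the HL-property by the Herwig--Lascar theorem, so I would invoke Theorem~\ref{thm:Rosendal} to replace $\langle g\rangle\actson\mathcal{M}_\infty$ by a finite action $\langle g\rangle\actson\mathcal{N}$ on a finite continuous $\mathcal{L}$-structure $\mathcal{N}$ extending the substructure on $A\cup g(A)$ and still agreeing with $g$ on $A$; a finite subgroup of $\Aut(\mathcal{N})$ containing $g$ is then realized inside $\Gamma_\infty$ by universality and ultrahomogeneity, giving the required $\gamma$.

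The main obstacle is precisely this last step: reducing an infinite action to a finite approximation on a prescribed finite set is what forces the whole Rosendal/Herwig--Lascar machinery of Section~7 (Theorem~\ref{thm:Rosendal}), whereas the Urysohn, Hall, and faithfulness identifications are routine Fra\"iss\'e-theoretic bookkeeping once $\mathcal{C}_{(\Delta,V)}$ is known to be a Fra\"iss\'e class. Assembling the four facts yields $\mathbb{H}\cong\Gamma_\infty\hookrightarrow\Aut(\mathcal{M}_\infty)\cong\Aut(\mathbb{U}_{(\Delta,V)})$ with dense image, which is the theorem.
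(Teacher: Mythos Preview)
Your proposal is correct and follows essentially the same approach as the paper: build the Fra\"iss\'e limit $\Gamma_\infty\actson\mathcal{M}_\infty$ of $\mathcal{C}_{(\Delta,V)}$, identify $\mathcal{M}_\infty\cong\mathbb{U}_{(\Delta,V)}$ and $\Gamma_\infty\cong\mathbb{H}$ via their characterizing extension properties, check faithfulness via a left-translation action, and prove density by splitting into the finite-cyclic case (direct) and the infinite-cyclic case (Herwig--Lascar plus Theorem~\ref{thm:Rosendal}). The only point to make explicit when writing it up is that the finite structure $\mathcal{N}$ produced by Theorem~\ref{thm:Rosendal} must be $(\Delta,V)$-valued so that the resulting finite action lies in $\mathcal{C}_{(\Delta,V)}$; the paper handles this by remarking that all of Section~7 goes through verbatim for $\mathcal{K}_{(\Delta,V)}$.
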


We have the following corollary.

\begin{corollary}\label{cor:AutUL} Let $\mathcal{L}$ be a proper continuous signature. Then $\Aut(\mathbb{U}_{\mathcal{L}})$ contains $\mathbb{H}$ as a dense subgroup.
\end{corollary}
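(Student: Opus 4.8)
The plan is to obtain the copy of $\mathbb{H}$ by transporting the dense copy of $\mathbb{H}$ inside $\Aut(\mathbb{QU}_{\mathcal{L}})$ produced by Theorem~\ref{thm:HAut} along the canonical extension map $\eta$ of Theorem~\ref{thm:QUL}, and then checking that density survives this transport. First I would specialize the setup: since $\mathcal{L}$ is proper it is in particular semiproper, and with $\Delta=\mathbb{Q}^+$ there is a smallest countable $V\supseteq\mathbb{Q}\cap[0,1]$ making $(\Delta,V)$ a good value pair for $\mathcal{L}$ by Lemma~\ref{lem:goodvaluepair}(ii), so that $\mathbb{U}_{(\Delta,V)}=\mathbb{QU}_{\mathcal{L}}$. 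Theorem~\ref{thm:HAut} then furnishes a group isomorphic embedding $\iota\colon\mathbb{H}\hookrightarrow\Aut(\mathbb{QU}_{\mathcal{L}})$ with dense image.

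Next I would invoke Theorem~\ref{thm:QUL}: the completion of $\mathbb{QU}_{\mathcal{L}}$ is $\mathbb{U}_{\mathcal{L}}$, and, as noted right after that theorem, every element of $\Aut(\mathbb{QU}_{\mathcal{L}})$ extends uniquely to an element of $\Aut(\mathbb{U}_{\mathcal{L}})$, giving the group isomorphic embedding $\eta\colon\Aut(\mathbb{QU}_{\mathcal{L}})\hookrightarrow\Aut(\mathbb{U}_{\mathcal{L}})$. Composing, $\eta\circ\iota\colon\mathbb{H}\hookrightarrow\Aut(\mathbb{U}_{\mathcal{L}})$ is a group isomorphic embedding, so $\mathbb{H}$ is realized as a subgroup of $\Aut(\mathbb{U}_{\mathcal{L}})$; it remains only to see that this subgroup is dense.

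For the density step I would first check that $\eta$ is continuous for the pointwise-convergence topologies: given $\sigma_0\in\Aut(\mathbb{QU}_{\mathcal{L}})$ and a basic neighbourhood of $\eta(\sigma_0)$ determined by points $x_1,\dots,x_n\in\mathbb{U}_{\mathcal{L}}$ and $\epsilon>0$, choose $q_i\in\mathbb{QU}_{\mathcal{L}}$ with $d^{\bar{M}}(x_i,q_i)<\epsilon/3$; since each $\eta(\sigma)$ is an isometry, the basic neighbourhood of $\sigma_0$ given by $q_1,\dots,q_n$ and $\epsilon/3$ is mapped by $\eta$ into the prescribed neighbourhood. Hence $\eta$ restricts to a continuous surjection of $\Aut(\mathbb{QU}_{\mathcal{L}})$ onto $\eta(\Aut(\mathbb{QU}_{\mathcal{L}}))$, so it carries the dense set $\iota(\mathbb{H})$ to a set dense in $\eta(\Aut(\mathbb{QU}_{\mathcal{L}}))$; since $\eta(\Aut(\mathbb{QU}_{\mathcal{L}}))$ is itself dense in $\Aut(\mathbb{U}_{\mathcal{L}})$ by Theorem~\ref{thm:AutQUdense} and density is transitive, $\eta(\iota(\mathbb{H}))$ is dense in $\Aut(\mathbb{U}_{\mathcal{L}})$. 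I do not expect a genuine obstacle here: this corollary is essentially a packaging of Theorems~\ref{thm:QUL}, \ref{thm:AutQUdense}, and~\ref{thm:HAut}, and the only point requiring a little care is matching the topologies — that pointwise convergence of isometries on the dense set $\mathbb{QU}_{\mathcal{L}}$ forces pointwise convergence everywhere — which is exactly what makes $\eta$ continuous and lets density pass from $\eta(\Aut(\mathbb{QU}_{\mathcal{L}}))$ up to all of $\Aut(\mathbb{U}_{\mathcal{L}})$.
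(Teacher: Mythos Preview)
Your proposal is correct and follows essentially the same route as the paper's proof: both assemble Theorems~\ref{thm:HAut}, \ref{thm:QUL}, and \ref{thm:AutQUdense}, use the continuity of the canonical extension map $\eta$, and conclude by transitivity of density. You spell out the continuity check and the ``dense in dense'' step in slightly more detail than the paper does, but the argument is the same.
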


\begin{proof} By Theorem~\ref{thm:HAut} we have that $\Aut(\mathbb{QU}_{\mathcal{L}})$ contains $\mathbb{H}$ as a dense subgroup. Since $\mathbb{U}_{\mathcal{L}}$ is isomorphic to the completion of $\mathbb{QU}_{\mathcal{L}}$ by Theorem~\ref{thm:QUL}, there is an isomorphic embedding $i: \mathbb{QU}_{\mathcal{L}}\to\mathbb{U}_{\mathcal{L}}$ with a dense range. This induces a group isomorphic embedding $\eta: \Aut(\mathbb{QU}_{\mathcal{L}})\to \Aut(\mathbb{U}_{\mathcal{L}})$. By Theorem~\ref{thm:AutQUdense}, $\eta(\Aut(\mathbb{QU}_{\mathcal{L}}))$ is dense in $\Aut(\mathbb{U}_{\mathcal{L}})$. It is easy to see that $\eta$ is continuous. Thus $\eta(\mathbb{H})$ is a dense subgroup of $\Aut(\mathbb{U}_{\mathcal{L}})$ isomorphic to $\mathbb{H}$.
\end{proof}

%    Bibliographies can be prepared with BibTeX using amsplain,
%    amsalpha, or (for "historical" overviews) natbib style.
%\bibliographystyle{amsplain}
%    Insert the bibliography data here.

\thebibliography{999}
%\bibitem{BE2013}
%Ita\"{i} Ben Yaacov, On theories of random variables, {\em Israel Journal of Mathematics} 194 (2013), no. 2, 957--1012.

%\bibitem{BP2010}
%Ita\"{i} Ben Yaacov and Arthur Paul Pedersen, A proof of completeness for continuous first-order logic, {\em The Journal of Symbolic %Logic} 75 (2010), no. 1, 168--190.

\bibitem{BY2014}
I. Ben Yaacov, The linear isometry group of the Gurarij space is universal. {\em Proc. Am. Math. Soc.} 142 (2014), no. 7, 2459--2467.

 \bibitem{BDNT}
I. Ben Yaacov, M. Doucha, A. Nies, T. Tsankov, Metric Scott analysis. {\em Adv. Math.} 318 (2017), 46--87.

\bibitem{BU2008}
I. Ben Yaacov and A. Usvyatsov, Continuous first order logic and local stability, {\em Trans. Amer. Math. Soc.} 362 (2010), no. 10, 5213--5259.

\bibitem{CK}
C.C. Chang, H.J. Keisler, {\em Continuous Model Theory}. Princeton University Press, Princeton, NJ, 1966.

%\bibitem{CC19581}
%C.C. Chang, Proof of an axiom of \L ukasiewicz, Transactions of the American Mathematical Society 87 (1958), no. 1, 55--56.

%\bibitem{CC1958}
%C.C. Chang, Algebraic analysis of many valued logics, {\em Transactions of the American Mathematical Society} 88 (1958), 467--490.

%\bibitem{CC1959}
%C. C. Chang, A new proof of the completeness of the \L ukasiewicz axioms, {\em Transactions of the American Mathematical Society} 93 (1959), 74--80.

%\bibitem{CDM2000}
%Roberto L.O. Cignoli, Itala M. L. D'Ottaviano, and Daniele Mundici, Algebraic foundations of many-valued reasoning, Trends in Logic -- Studia Logica Library, vol. 7, Kluwer Academic Publishers, Dordrecht, 2000.

\bibitem{Coulbois}
T. Coulbois, Free product, profinite topology and finitely generated subgroups, {\em Int. J. Algebra Comput.} 11 (2001), no. 2, 171--184.

\bibitem{EG0}
M. Etedadialiabadi, S. Gao, On extensions of partial isometries, arXiv:1903.09723.

\bibitem{EG}
M. Etedadialiabadi, S. Gao, On extensions of partial isomorphisms, {\em J. Symb. Logic} 87 (2022), no. 1, 416--435.

\bibitem{EGLMM}
M. Etedadialiabadi, S. Gao, F. Le Ma\^itre, J. Melleray, Dense locally finite subgroups of automorphism groups of ultraextensive spaces, {\em Adv. Math.} 391 (2021), 107966, 42pp.

%\bibitem{HA1998}
%Petr H\'{a}jek, {\em Metamathmatics of Fuzzy Logic}. Trends in Logic -- Studia Logica Library, vol. 4, Kluwer Academic Publishers, Dordrecht, 1998.
\bibitem{Hall}
P. Hall, Some constructions for locally finite groups, {\em J. London Math. Soc.} S1-34 (1959), no. 3, 305--319.

\bibitem{HL}
B. Herwig, D. Lascar, Extending partial automorphisms and the profinite topology on free groups, {\em Trans. Amer. Math. Soc.} 352 (2000), no. 5, 1985--2021.

\bibitem{Hru}
E. Hrushovski, Extending partial isomorphisms of graphs, {\em Combinatorica} 12 (1992), no. 4, 411--416.

\bibitem{HKN}
J. Hubi\v{c}ka, M. Kone\v{c}n\'{y}, J. Ne\v{s}et\v{r}il, All those EPPA classes (Strengthenings of the Herwig--Lascar theorem), arXiv: 1902.03855v4.

\bibitem{Katetov}
M. Kat\v{e}tov, On universal metric spaces, in {\em General Topology and Its Relations to Modern Analysis and Algebra, VI} (Prague, 1986), 323--330. Res. Exp. Math. 16. Heldermann, Berlin, 1988.

%\bibitem{ME1958}
%C. A. Meredith, The dependence of an axiom of \L ukasiewicz, Transactions of the American Mathematical Society 87 (1958), no. 1, 54.

%\bibitem{MU1987}
%D. Mundici, Satisfiability in many-valued sentential logic in NP-complete, {\em Theoretical Computer Science} 52 (1987), 145--153.

%\bibitem{RR1959}
%Alan Rose and J. Barkley Rosser, Fragments of many-valued statement calculi, {\em Transactions of the American Mathematical Society} 87 (1958), no. 1, 1--53.
\bibitem{RosendalI}
C. Rosendal, Finitely approximable groups and actions part I: the Ribes--Zalesskii property, {\em J. Symb. Logic} 76 (2011), no. 4, 1297--1306.

\bibitem{Rosendal}
C. Rosendal, Finitely approximable groups and actions part II: generic representations, {\em J. Symb. Logic} 76 (2011), no. 4, 1307--1321.

\bibitem{Siniora}
D. Siniora, S.  Solecki, Coherent extensions of partial automorphisms, free amalgamation and automorphism groups, {\em J. Symb. Logic} 85 (2020), no 1, 199--223.

\bibitem{Solecki}
S. Solecki, Extending partial isometries, {\em Israel J. Math.} 150 (2005), 315--331.

\bibitem{Urysohn}
P. Urysohn, Sur en espace m\'etrique universel. {\em Bull. Sci. Math.} 51 (1927), 43--64, 74--90.

\bibitem{Uspenskij}
V.V. Uspenskij, On the group of isometries of the Urysohn universal metric space, {\em Comment. Math. Univ. Carolin.} 31 (1990), no. 1, 181--182.

\end{document}